\titleformat*{\section}{\center\Large\bfseries}
\titlespacing{\section}{0pt}{0pt}{\parskip}
\newtheorem*{bigtheorem}{Theorem}
\newtheorem{theorem}{Theorem}[section]
\newtheorem{lemma}[theorem]{Lemma}
\newtheorem{proposition}[theorem]{Proposition}
\begin{document}

\title{\bf On the finite simple images of free products of finite groups}
\author {Carlisle S. H. King\\
\it{Imperial College} \\
\it{London}\\
\it{United Kingdom}\\
\it{SW7 2AZ}\\
carlisle.king10@ic.ac.uk}
\date{}
\maketitle

\section*{\center{Abstract}}
Given nontrivial finite groups $A$ and $B$, not both of order 2, we prove that every finite simple group of sufficiently large rank is an image of the free product $A \ast B$. To show this, we prove that every finite simple group of sufficiently large rank is generated by a pair of subgroups isomorphic to $A$ and $B$. This proves a conjecture of Tamburini and Wilson.

\section*{\center{1. Introduction}}
\stepcounter{section}

\par Given a finitely presented group $\Gamma$, a natural question to ask is whether $\Gamma$ has a nontrivial finite image. If so, what are the finite images of $\Gamma$? There is a large literature on the study of finite images of finitely presented groups such as Fuchsian groups, hyperbolic groups, and other families arising geometrically. 
\par A natural refinement of this question is to determine the finite simple images of $\Gamma$. For certain groups, their finite simple images have been studied extensively. The finite simple images of Fuchsian groups $\Gamma$ are studied in many papers, culminating with Everitt who showed in \cite{Eve} that if $\Gamma$ is oriented then $\Gamma$ has all but finitely many alternating groups as images, and with Liebeck and Shalev who showed in \cite{LiSh4} that if $\Gamma$ is has genus $g \ge 2$ then $\Gamma$ has all finite simple groups as images up to finitely many exceptions. The same authors conjecture that for any Fuchsian group $\Gamma$ there exists an integer $N(\Gamma)$ such that all finite simple groups of rank at least $N(\Gamma)$ are images of $\Gamma$. We note that for Fuchsian groups $\Gamma$ of genus 0 or 1 the condition on the rank is necessary, as there can be infinitely many finite simple groups of small rank which are not images of $\Gamma$ (see \cite{Co}). Triangle groups are examples of Fuchsian groups of genus 0 that have received much attention, and there are strong results on the finite simple images of these groups (see for example \cite{LaLuMa}).
\par The finite simple images of the modular group $PSL_2(\mathbb{Z}) \cong C_2 \ast C_3$ are precisely the $(2,3)$-generated finite simple groups, on which there is a large literature (see for example \cite{LiSh1}). More generally, for primes $r$ and $s$ not both equal to 2, the finite simple images of $C_r \ast C_s$ are studied in \cite{LiSh2}: it is shown that there exists a number $N=N(r,s)$ such that all finite simple classical groups of rank at least $N$ are images of $C_r \ast C_s$. 

\par In this paper we study finite simple images of free products $\Gamma=A \ast B$ of arbitrary nontrivial finite groups $A$ and $B$. In \cite{TaWi2}, Tamburini and Wilson conjectured that for any nontrivial finite groups $A, B$, not both of order 2, there exists an integer $N(A,B)$ such that all finite simple groups of rank at least $N(A,B)$ are images of $\Gamma$. The same authors prove in \cite[Theorem~1.2]{TaWi1} that their conjecture holds for alternating groups. If $A$ and $B$ are nontrivial finite groups with $| A | | B | \ge 12$, then by \cite[Theorem~1]{TaWi2} the projective special linear groups $PSL_n(q)$ with $n \ge N(A,B)= 4 | A | | B | + 12$ are images of $\Gamma$. If $A$ and $B$ are not both 2-groups, then by \cite[Theorem~2.3]{LiSh3} the conjecture holds for finite simple classical groups. In this paper, we complete the proof of this conjecture.
\par We note that a condition on the rank of the classical groups is necessary to ensure the existence of subgroups isomorphic to $A$ and $B$.
\begin{bigtheorem} Let $A$ and $B$ be nontrivial finite groups, not both of order 2. There exists a natural number $N=N(A,B)$ such that every alternating group of degree greater than $N$ and every finite simple classical group of rank greater than $N$ is an image of $A \ast B$.
\end{bigtheorem}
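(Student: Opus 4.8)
The plan is to reduce the statement to known results together with one outstanding case, and then to attack that case via a reduction to a small-parameter generation problem. By the universal property of the free product, $G$ is an image of $A\ast B$ exactly when $G$ contains subgroups $\bar A\cong A$ and $\bar B\cong B$ with $G=\langle\bar A,\bar B\rangle$. Also, a finite simple group of large rank is either an alternating group of large degree or a classical group of large Lie rank --- the exceptional groups of Lie type have bounded rank, and there are only finitely many sporadic groups --- so it suffices to construct such a generating pair of subgroups in each alternating group of degree greater than some $N=N(A,B)$ and in each finite simple classical group of rank greater than some $N$. For the alternating groups this is \cite[Theorem~1.2]{TaWi1}, and for the classical groups in the case where $A$ and $B$ are not both $2$-groups it is \cite[Theorem~2.3]{LiSh3}. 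Hence the only remaining case is that $G$ is a finite simple classical group of large rank while $A$ and $B$ are both $2$-groups, not both of order~$2$.

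In this case I would first reduce to small groups. Relabel so that $|B|\ge 4$. Every $2$-group of order at least $4$ contains a subgroup isomorphic to $C_4$ or to $C_2\times C_2$, according as its exponent is greater than $2$ or equal to $2$; fix such a subgroup $B_1\le B$ and fix a subgroup $A_1\le A$ with $A_1\cong C_2$. It then suffices to find a copy $\bar A_1\cong A_1$ and a copy $\bar B_1\cong B_1$ in $G$ with $G=\langle\bar A_1,\bar B_1\rangle$, chosen so that $\bar A_1$ extends to a subgroup $\bar A\cong A$ of $G$ and $\bar B_1$ extends to a subgroup $\bar B\cong B$ of $G$: for then $\langle\bar A,\bar B\rangle\supseteq\langle\bar A_1,\bar B_1\rangle=G$. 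The extension step is routine when the rank of $G$ is large --- arrange $\bar A_1$ to act nontrivially on only boundedly many coordinates of the natural module, and build $\bar A$ as a block subgroup from a faithful $\mathbb F_q$-representation of $A$ compatible with the form defining $G$, using that the remaining coordinates leave ample room --- and likewise for $\bar B$. So the heart of the matter is to show that every finite simple classical group of sufficiently large rank is generated by an involution together with an element of order $4$, and generated by an involution together with a Klein four-group; that is, it is $(2,4)$-generated and is generated by three involutions two of which commute.

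To prove this small-parameter generation I would combine Aschbacher's theorem on the maximal subgroups of classical groups with the Liebeck--Shalev probabilistic method. Working in the natural module $V=\mathbb F_q^n$, one picks an involution $x$ and an element $y$ of order $4$ (respectively a Klein four-group $\langle y_1,y_2\rangle$) that avoid transvections and have small support, so that $|x^G\cap M|/|x^G|$ and $|y^G\cap M|/|y^G|$ are small for every maximal subgroup $M$; estimating $\sum_M |x^G\cap M|\,|y^G\cap M|/(|x^G|\,|y^G|)$ by the known fixed-point-ratio bounds then shows that such a pair generates $G$ with probability tending to $1$. Alternatively one may proceed deterministically: arrange the embeddings so that $H:=\langle\bar A_1,\bar B_1\rangle$ is irreducible on $V$ and contains a transvection (in the linear, symplectic or unitary cases), and invoke McLaughlin's classification of irreducible subgroups generated by transvections, together with its extensions, to conclude that $H$ contains the classical group, so $H=G$. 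For the Klein four-group case the second involution is used to break any residual imprimitive or subfield structure once $\langle x,y_1\rangle$ has been made irreducible.

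The main obstacle, I expect, is to carry this out uniformly across all classical types and with a rank bound independent of $q$. The delicate family is the orthogonal groups, which contain no transvections: there the transvection shortcut fails, and instead one must produce a suitable $2$-element with a single short Jordan block and use fixed-space bounds --- equivalently, bounds on the codimension of the $1$-eigenspace --- to exclude the almost-simple Aschbacher class $\mathcal S$, the one class that elementary dimension or element-order arguments do not already rule out. Most of the work should go into these estimates for the orthogonal groups and into arranging the analysis so that the probabilistic (or explicit) input is required only for the two minimal pairs $(C_2,C_4)$ and $(C_2,C_2\times C_2)$, every other choice of $A$ and $B$ being handled by the reduction above.
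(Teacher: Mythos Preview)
Your overall strategy coincides with the paper's: reduce to the case where $A$ and $B$ are both $2$-groups using \cite{TaWi1} and \cite{LiSh3}, use that $B$ contains $C_4$ or $C_2\times C_2$, and establish generation via a probabilistic fixed-point-ratio argument summed over maximal subgroups. The paper does not pursue the deterministic McLaughlin route you sketch as an alternative.

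There is, however, a genuine tension in your implementation. You propose to arrange $\bar A_1$ and $\bar B_1$ to act nontrivially on only boundedly many coordinates, so that they can afterward be extended to copies of $A$ and $B$ on the remaining coordinates. But an involution $x$ with bounded support has a tiny conjugacy class --- of size $|G|^{O(1/n)}$ rather than $|G|^{1/2+c/n}$ --- and correspondingly large fixed-point ratios on small parabolics: for the stabilizer of a $1$-space in $PSL_n(q)$ one gets $\operatorname{fpr}(x,P_1^G)$ bounded below by a positive constant independent of $n$. The term $|P_1^G|\cdot\operatorname{fpr}(x,P_1^G)\cdot\operatorname{fpr}(y,P_1^G)$ then grows like $q^{n-O(1)}$ rather than tending to zero, and the probabilistic method collapses. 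For the Liebeck--Shalev machinery you need $x$ and $y$ to be as generic as possible (balanced eigenspaces, near-maximal conjugacy classes), which is the opposite of small support.

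The paper resolves this by its \emph{almost-free embedding}: instead of choosing small $\bar A_1,\bar B_1$ and extending afterward, one embeds the full groups $A$ and $B$ into $G$ from the outset via $V\downarrow A = R^2\perp\cdots\perp R^2\perp I$ with $R$ the regular $\mathbb{F}_qA$-module, and then takes $x\in A$, $y\in B$ inside these embedded copies. Any involution $x\in A$ then automatically has about $n/2$ eigenvalues equal to $-1$, forcing $|x^G|>|G|^{1/2+c/n}$, and likewise $|y^G|>|G|^{3/4+c/n}$ for $y$ of order $4$ (Proposition~\ref{conjclassprop1}). The extension to $A$ and $B$ is built in by construction, and the probabilistic estimates go through uniformly. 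In particular the orthogonal groups need no special treatment here; your anticipated difficulty with them is an artefact of the transvection-based deterministic approach, which the paper does not use.
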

\par To do this, we will show that every finite simple group of sufficiently large rank is generated by a pair of subgroups isomorphic to $A$ and $B$.
\par By \cite[Theorem~2]{TaWi1} the theorem holds for alternating groups, and by \cite[Theorem~1.2]{LiSh3} the theorem holds for classical groups if $A$ and $B$ are not both 2-groups. It therefore suffices to prove the result for classical groups with $A$ and $B$ both 2-groups. This result follows from our two main theorems (see Theorems 3.1 and 3.2 below). The proof has a probabilistic flavour: we construct specific subgroups $A$ and $B$ of a classical group $G$ of sufficiently large rank, and show that some pair of conjugates of these subgroups generates $G$ using probabilistic methods. 
\par We now discuss the structure of the paper. By the above paragraph, we can assume $A$ and $B$ are both 2-groups. Assume $|B| > 2$, so $B$ contains a subgroup isomorphic to either $C_4$ or $C_2 \times C_2$. Section 2 introduces the notation and some preliminary results used throughout this paper. It also introduces the ``almost-free embedding'' which we use to identify $A$ and $B$ with subgroups of $G$. In Section 3 we present Theorems \ref{bigthm1} and \ref{bigthm2}, which correspond to the cases where $B$ contains a subgroup isomorphic to $C_4$ and $C_2 \times C_2$ respectively, and from these theorems we deduce the main theorem. These theorems involve counting subgroups isomorphic to $C_4$ and $C_2 \times C_2$ in finite simple classical groups $G$ and their maximal subgroups. The remaining sections are dedicated to showing that $G$ can be generated by almost-free copies of $A$ and $B$.

\paragraph{\it Acknowledgements.} This paper is part of work towards a PhD degree under the supervision of Martin Liebeck, and the author would like to thank him for bringing this problem to his attention and for his guidance throughout. The author is also grateful for the financial support from EPSRC.

\section*{\center{2. Notation and preliminary results}}
\stepcounter{section}

\par In this section we introduce the notation used throughout the paper. We then discuss some preliminary results on algebraic groups and automorphisms and maximal subgroups of classical groups.

\subsection{Notation}

\par Throughout, $G$ will be a finite almost simple classical group with natural module of dimension $n$ over $\mathbb{F}_{q^\delta}$ of characteristic $p$, where $\delta=2$ if $G$ is unitary and $\delta=1$ otherwise, and we let $\operatorname{soc}(G)=S$ (we will often restrict to the case where $G=S$). Let $H$ be the simple adjoint algebraic group over $l=\overline{\mathbb{F}_p}$ such that $(H^F)'=S$ for a Steinberg endomorphism $F$.
\par  In the case where $S$ is orthogonal and $n$ is unspecified, it is understood that $P\Omega_n(q)$ may denote any one of the three families of finite simple orthogonal groups, and similarly for $O_n(q), SO_n(q), \dots$ etc.
\par For a group $M$ we write $M \operatorname{max} G$ if $M$ is a maximal subgroup of $G$. For $x \in G$, we write $x^G = \{ x^g: g \in G \}$ for the conjugacy class of $x$, and for a subgroup $L \le G$ let $L^G =   \{ L^g: g \in G \}$. If $H_1$ is any finite classical group with $Z=Z(H_1)$ and $H_2 \le H_1$, then denote by $\overline{H_1}=H_1/Z$ the image of $H_1$ modulo scalars, and similarly $\overline{H_2}=H_2/(H_2 \cap Z)$.
\par Let $M_{n_1, n_2}(q)$ denote the additive group of $n_1 \times n_2$ matrices with entries in $\mathbb{F}_q$. For $B=(b_{ij}) \in M_{n_1,n_2}(q^2)$, we write $B^q=(b_{ij}^q)$. Denote by $J_i \in GL_i(q)$ the unipotent Jordan block of size $i$.
\par For finite groups $H_1, H_2$, let $I_{s}(H_1)$ denote the set of elements of $H_1$ of order $s$, and $I_{s,H_2}(H_1)$ the subset of $I_s(H_1)$ consisting of all elements of order $s$ lying in some subgroup of $H_1$ isomorphic to $H_2$. We let $i_s(H_1)=|I_s(H_1)|$ and $i_{s,H_2}(H_1) = |I_{s,H_2}(H_1)|$. Let $I_{2 \times 2}(H_1)$ denote the set of subgroups of $H_1$ isomorphic to $C_2 \times C_2$, and similarly define $I_{2 \times 2, H_2}(H_1), i_{2 \times 2}(H_1)$ and $i_{2 \times 2, H_2}(H_1)$.
\par For two functions $f,g$ of variables $x_i, 1 \le i \le m$, we write $f \sim g$ if there exist positive constants $c, c'$ such that
\begin{align*}
c < \frac{f}{g} < c' \hspace{0.5cm} \text{for all} \ x_i, 1 \le i \le m.
\end{align*}

\subsection{Algebraic groups and automorphisms of classical groups}

\par We build on the notation setup in \S~2.1. Recall the definitions of $G, S, H$ and $F$. For a maximal torus $T \le H$, let $\Phi$ denote the associated root system with a base $\Pi \subset \Phi$. Also, denote by $U_\alpha, \alpha \in \Phi$ the root subgroups of $H$, with associated algebraic group morphisms $u_\alpha:l \rightarrow U_\alpha$.
\par We consider elements of the abstract automorphism group $\operatorname{Aut}(H)$. For simplicity, we assume $n \ge 9$ throughout this discussion. As in \cite[Definition~1.15.5]{GoLySo}, we define subgroups $\Phi_H$ and $\Gamma_H$ of $\operatorname{Aut}(H)$ as follows: $\Phi_H$ contains elements $\phi_a, a \in \mathbb{Z}$ inducing a field automorphism on the root subgroup parameters, defined by 
\begin{equation*}
u_\alpha(c)^{\phi_a} = u_\alpha(c^{p^a}) \hspace{0.5cm} \text{for all} \ \alpha \in \Pi, c \in l; 
\end{equation*}
\noindent $\Gamma_H$ contains elements $\gamma_\rho$ for each symmetry $\rho$ of the Dynkin diagram of $H$, inducing an algebraic group automorphism such that \begin{equation*}
u_\alpha(c)^{\gamma_\rho}=u_{\rho(\alpha)}(c) \hspace{0.5cm} \text{for all} \ \alpha \in \Pi, c \in l.
\end{equation*}
\noindent By conjugating by an element of $\operatorname{Inn}(H)$, we can write $F=\gamma_\rho \phi_a$ for some $\gamma_\rho \in \Gamma_H, \phi_a \in \Phi_H$.
\par We consider the structure of $\operatorname{Aut}(S)$. A result of Steinberg states that every element of $\operatorname{Aut}(S)$ can be written as $idfg$, where $i \in \operatorname{Inn}(S)$ and $d, f$ and $g$ are ``diagonal", ``field" and ``graph" automorphisms respectively (see \cite[Theorem~2.5.1]{GoLySo} for more details). As in \cite[Definition~2.5.10]{GoLySo}, define the subgroups $\Phi$ and $\Gamma$ of $\operatorname{Aut}(S)$ as follows: define $\Phi$ as the restriction of $\Phi_H$ to $S$; if $S$ is untwisted, define $\Gamma$ as the restriction of $\Gamma_H$ to $S$; and if $S$ is twisted, let $\Gamma=1$. Write $\operatorname{Inndiag}(S)$ for the subgroup of $\operatorname{Aut}(S)$ generated by inner and diagonal automorphisms. Then by \cite[Lemma 2.5.8, Theorem 2.5.12]{GoLySo}, $H^F=\operatorname{Inndiag}(S), [\Phi, \Gamma]=1$, and $\operatorname{Aut}(S)$ is the split extension of $\operatorname{Inndiag}(S)$ and $\Phi\Gamma$. If $q=p^e$, then the structures of $\Phi$ and $\Gamma$ are as follows:
\begin{align*}
\Phi &= \left\{ 
  \begin{array}{l l}
C_e \hspace{0.3cm} &\text{if $S$ is untwisted,} \\[0cm]
C_{2e} &\text{if $S$ is twisted,} \\[0cm]
\end{array}
\right. \\
\Gamma &= \left\{ 
  \begin{array}{l l}
C_2  &\text{if $S=PSL_n(q), P\Omega^+_n(q)$,} \\[0cm]
1 &\text{otherwise.} 
\end{array}
\right. 
\end{align*}
\par We now introduce our conventions concerning ``field" , ``graph-field'' and ``graph" automorphisms, as in \cite[Definition~2.5.15]{GoLySo}:
\begin{itemize}
\item If $S$ is untwisted, then 

\vspace{-0.2cm}
\begin{itemize}
\item A field automorphism of $S$ is an $\operatorname{Aut}(S)$-conjugate of an element of $\Phi$; 
\vspace{-0.1cm} 
\item A graph-field automorphism of $S$ is an $\operatorname{Aut}(S)$-conjugate of an element of \\ $\Phi\Gamma \ \backslash \left( \Phi \cup \Gamma \right)$; 
\vspace{-0.1cm}
\item A graph automorphism of $S$ is an $\operatorname{Aut}(S)$-conjugate of an element of \\ $\Gamma \operatorname{Inndiag}(S) \backslash \operatorname{Inndiag}(S)$;
\end{itemize}
\item If $S$ is twisted, then 
\vspace{-0.2cm}
\begin{itemize}
\item A field automorphism of $S$ is an $\operatorname{Aut}(S)$-conjugate of an element of $\Phi$ of odd order; \vspace{-0.5cm}
\item A graph automorphism of $S$ is an element of $\operatorname{Aut}(S)$ whose image modulo $\operatorname{Inndiag}$ has even order; \vspace{-0.1cm}
\item There are no graph-field automorphisms of $S$.
\end{itemize}
\end{itemize}
\par For the remainder of this subsection, let $G=P\Delta(S)=\overline{\Delta(S)}$ be the projective similarity group of the same type as $S$, the standard notation for which is $G=PGL^\epsilon_n(q), PGSp_n(q)$ or $PGO^\epsilon_n(q)$ as $S=PSL^\epsilon_n(q), PSp_n(q), P\Omega_n^\epsilon(q)$.
\par Occasionally we will be interested in involutions $x \in \operatorname{Aut}(S)$. In particular, we will be concerned with $C_G(x)$. We briefly discuss the structure of $C_G(x)$ for $q$ odd. 
\par First suppose $x$ is a field or graph-field automorphism of $S$. If $y$ is another involutory automorphism of $S$ of the same type as $x$, then, by \cite[Proposition~4.9.1]{GoLySo}, $x$ and $y$ are $\operatorname{Inndiag}(S)$-conjugate, and $C_G(x)$ contains a subgroup $C_0$ listed in Table 1 below. 
\par For the remaining involutions $x \in \operatorname{Inndiag}(S)$ or $x$ a graph automorphism, $\operatorname{Inndiag}(S)$-classes of such involutions are listed in \cite[Theorem~4.5.1]{GoLySo}, and $C_G(x)$ contains a subgroup $C_0$ isomorphic to a group in Table 1. There are fewer than $cn$ $\operatorname{Inndiag}(S)$-classes of such involutions for some absolute constant $c$.
\par Note that in most cases, for $C_0$ listed in Table 1 we have $C_0/Z(C_0)$ isomorphic to a simple group $S_1$ or a direct product of simple groups $S_1 \times S_2$, with the following exceptions:
\begin{itemize}
\item $G=PGL_n(q), C_0 \cong \overline{SL_2(3) \times SL_{n-2}(3)}$; \vspace{-0.1cm}
\item $G=PGSp_n(q), C_0 \cong \overline{Sp_2(3) \times Sp_{n-2}(3)}$;  \vspace{-0.1cm}
\item $G=PGU_n(q), C_0 \cong \overline{SU_2(3) \times SU_{n-2}(3)}$;  \vspace{-0.1cm}
\item $G=PGO_n(q), C_0 \cong \overline{\Omega_3(3) \times \Omega_{n-3}(3)}$ or $\overline{\Omega^+_4(3) \times \Omega_{n-4}(3)}$. 
\end{itemize}

\renewcommand{\arraystretch}{1.2}
\setlength\LTleft{0.2cm}
\begin{longtable}{c c c c} 
\caption {Subgroups $C_0 \le C_G(x)$ for $x \in I_2(\operatorname{Aut}(S))$, $q$ odd} \\
\label{auttable}
$G$  & Type of $x$ & $C_0$ & Conditions\\[2pt] \hline \vspace{0.2cm}
$PGL_n(q)$ & inner-diagonal & $\overline{SL_m(q) \times  SL_{n-m}(q)}, 1 \le m \le \frac{n}{2}$  \\ 
& & $\overline{SL_{\frac{n}{2}}(q^2)}$ \\
& field & $PSL_n(q^\frac{1}{2})$  \\
& graph-field & $PSU_n(q^\frac{1}{2})$ \\
& graph & $PSp_n(q)$ \\
& & $P\Omega_n(q)$ \\ \hline
$PGU_n(q)$ & inner-diagonal & $\overline{SU_m(q) \times  SU_{n-m}(q)}, 1 \le m < \frac{n}{2}$  \\
& & $\overline{SL_{\frac{n}{2}}(q^2)}$ \\
& graph & $PSp_n(q)$ \\
& & $P\Omega^\epsilon_n(q)$ \\ \hline
$PGSp_n(q)$ & inner-diagonal & $\overline{Sp_m(q) \times  Sp_{n-m}(q)}, 1 \le m \le \frac{n}{2}$  \\
& & $PSp_{\frac{n}{2}}(q^2)$ \\
& & $\overline{SL_{\frac{n}{2}}(q)}$ \\
& & $\overline{SU_{\frac{n}{2}}(q)}$ \\
& field & $PSp_n(q^\frac{1}{2})$ \\ \hline
$PGO_n(q), n$ odd & inner-diagonal & $\overline{\Omega_m(q) \times  \Omega_{n-m}(q)}, 1 \le m \le \frac{n-1}{2}$  \\
& field & $P\Omega_n(q^\frac{1}{2})$ \\ \hline
$PGO^\epsilon_n(q), n$ even & inner-diagonal & $\overline{\Omega_m(q) \times  \Omega_{n-m}(q)}, 1 \le m \le \frac{n}{2}$  \\ 
& & $\overline{\Omega_{\frac{n}{2}}(q^2)}$ \\
& & $\overline{SL_{\frac{n}{2}}(q)}$ \\
& & $\overline{SU_{\frac{n}{2}}(q)}$ \\
& field & $P\Omega^+_n(q^\frac{1}{2})$ & $\epsilon=+$ \\
& graph-field  & $P\Omega^-_n(q^\frac{1}{2})$ & $\epsilon=+$ \\
& graph & $P\Omega_{2i-1}(q) \times P\Omega_{n-2i+1}(q), 1 \le i \le \frac{n}{2}$ \\
& & $P\Omega_{\frac{n}{2}}(q)^2$ & $\frac{n}{2}$ odd \\
& & $P\Omega_{\frac{n}{2}}(q^2)$ & $\frac{n}{2}$ odd \\ \hline

\end{longtable}
\setlength\LTleft{0cm}

\subsection{Maximal subgroups of finite simple classical groups}
In this subsection, we restrict to the case where $G=S$, a finite simple classical group with natural module $V$ of dimension $n$ over $\mathbb{F}_{q^\delta}$, where $\delta=2$ if $G$ is unitary and $\delta=1$ otherwise. A theorem of Aschbacher \cite{As} states that if $M$ is a maximal subgroup of $G$, then $M$ lies in a natural collection $\mathscr{C}_1, \dots, \mathscr{C}_8$, or $M \in \mathscr{S}$. Subgroups in $\mathscr{C}_i$ are described in detail in \cite{KlLi}, where the structure and number of conjugacy classes are given. Subgroups in $\mathscr{S}$ are almost simple groups which act absolutely irreducibly on the natural module $V$ of $G$. 
\par In Table \ref{maxsubgroups} below we give a description of subgroups $M$ in each Aschbacher class $\mathscr{C}_i, 1 \le i \le 8$. The precise structure of $M$ in each class can be found in \cite[\S~4]{KlLi}. We let $P_m$ denote a maximal parabolic subgroup of $G$ stabilizing a totally singular $m$-space for $1 \le m \le \frac{n}{2}$.

\renewcommand{\arraystretch}{1}
\setlength\LTleft{0cm}
\begin{longtable}{p{0.07\linewidth} p{0.9\linewidth}} 
\caption {Subgroups $M \operatorname{max} G$ in Aschbacher class $\mathscr{C}_i, 1 \le i \le 8$.} \label{maxsubgroups}\\
\hspace{0.03cm} Class  & \hspace{0.4\linewidth} Structure of $M$ \\[2pt] \hline \rule{0pt}{\normalbaselineskip}
\hspace{0.12cm} $\mathscr{C}_1$  & i) Non-parabolic: $M$ lies in the image modulo scalars of $GU_m(q) \times GU_{n-m}(q), Sp_m(q) \times Sp_{n-m}(q), O_m(q) \times O_{n-m}(q)$ or $Sp_{n-2}(q) \ (q \ \text{even})$ as $G=PSU_n(q), PSp_n(q), P\Omega_n(q), P\Omega_n(q)$ respectively. \\ \rule{0pt}{\normalbaselineskip}
& ii) Parabolic: $M=P_m$, $1 \le m \le \frac{n}{2}$. \\[2pt] \hline \rule{0pt}{\normalbaselineskip}
\hspace{0.12cm} $\mathscr{C}_2$ & i) $M$ lies in the image modulo scalars of $Cl_{m}(q) \wr S_t$, where $Cl_m(q)=GL^\epsilon_m(q), Sp_m(q),O_m(q), n=mt, t>1$ and $G=PSL^\epsilon_n(q), PSp_n(q), P\Omega_n(q)$ respectively; \\ \rule{0pt}{\normalbaselineskip}
& ii) $M$ lies in the image modulo scalars of $GL_\frac{n}{2}(q^2).2, GL_\frac{n}{2}(q).2, GL_\frac{n}{2}(q).2$ and $G=PSU_n(q),$ $PSp_n(q), P\Omega_n(q)$ respectively. \\[2pt] \hline \rule{0pt}{\normalbaselineskip}
\hspace{0.12cm} $\mathscr{C}_3$ & i) $M$ lies in the image modulo scalars of $Cl_{m}(q^t).t$ for a prime $t$ where $Cl_m(q^t)=GL^\epsilon_m(q^t),Sp_m(q^t), O_m(q^t), n=mt$ as $G=PSL^\epsilon_n(q), PSp_n(q), P\Omega_n(q)$; \\ \rule{0pt}{\normalbaselineskip}
& ii) $M$ lies in the image modulo scalars of $GU_\frac{n}{2}(q).2$ and $G$ is symplectic or orthogonal. \\[2pt] \hline \rule{0pt}{\normalbaselineskip}
\hspace{0.12cm} $\mathscr{C}_4$  & $M \le PGL_d^\epsilon(q) \times PGL_e^\epsilon(q), PGSp_d(q) \times PGO_e(q), PGO_d(q) \times PGO_e(q)$ or $PGSp_d(q) \times PGSp_e(q)$ as $G=PSL^\epsilon_n(q), PSp_n(q), P\Omega_n(q)$ or $P\Omega_n(q)$ respectively, where $n=de$ and $d, e<n$. \\ \hline   \rule{0pt}{\normalbaselineskip} 
\hspace{0.12cm} $\mathscr{C}_5$ & i) $M$ lies in $Cl_n(q^\frac{1}{t})$ for a prime $t$, where $Cl_n(q^\frac{1}{t})=PGL^\epsilon_n(q^\frac{1}{t}), PGSp_n(q^\frac{1}{t}), PGO_n(q^\frac{1}{t})$ as $G=PSL^\epsilon_n(q), PSp_n(q), P\Omega_n(q)$ respectively; \\ \rule{0pt}{\normalbaselineskip}
& ii) $G$ is unitary and $M$ lies in $PGSp_\frac{n}{2}(q)$ or $PGO_\frac{n}{2}(q)$. \\[2pt] \hline \rule{0pt}{\normalbaselineskip}
\hspace{0.12cm} $\mathscr{C}_6$ & i) $G=PSL^\epsilon_n(q)$ and $M$ lies in the image modulo scalars of $t^{1+2m}.Sp_{2m}(t)$ for a prime $t$ such that $n=t^m$, where either $t$ is odd and $t \mid q-\epsilon$ or $t=2$ and $4 \mid q-\epsilon$; \\ \rule{0pt}{\normalbaselineskip}
& ii) $G$ is symplectic or orthogonal and $M$ lies in the image modulo scalars of $2^{1+2m}.O_{2m}(2)$, where $n=2^m$ and $q$ is odd. \\[2pt] \hline \rule{0pt}{\normalbaselineskip}
\hspace{0.12cm} $\mathscr{C}_7$ & $M$ lies in $Cl_m(q) \wr S_t$ where $n=m^t$ and $Cl_m(q)$ has socle $PSL^\epsilon_m(q), PSp_m(q)$ ($qt$ odd), $PSp_m(q)$ ($qt$ even) or $P\Omega_m(q)$ as $G=PSL^\epsilon_n(q), PSp_n(q), P\Omega_n(q), P\Omega_n(q)$ respectively. \\[2pt] \hline \rule{0pt}{\normalbaselineskip}
\hspace{0.12cm} $\mathscr{C}_8$ & i) $G=PSL_n(q)$ and $M$ is the normalizer of $PSp_n(q), P\Omega_n(q)$ or $PSU_n(q^\frac{1}{2})$; \\ \rule{0pt}{\normalbaselineskip}
& ii) $G=Sp_n(q)$ with $q$ even and $M=SO_n(q)$. \\ \hline
\end{longtable}
\setlength\LTleft{0cm}

\subsection{The almost-free embedding}

\par Let $G$ be an almost simple classical group as in \S~2.1, and let $A$ be a nontrivial finite group. We define the almost-free embedding of $A$ into $G$, similar to \cite{LiSh3}. If $|A|=a, L$ is a field and $V$ is an $LA$-module, then we say $V$ is almost-free if $V=U \oplus I$, where $U \ne 0$ is a free $LA$-module and $I$ is the trivial $LA$-module of dimension $s$, where $2 \le s<2a+2$. 
\par Assume $n \ge 2a+2$, and let $n=ka+s$ with $k$ even and $2 \le s<2a+2$, and denote by $V$ the natural module of $G$. We can embed $A$ into $G$ such that $V$ is an almost-free $\mathbb{F}_qA$-module as follows: observe that if $R$ is the regular module for $A$, then $R^2=R \oplus R$ admits non-degenerate $A$-invariant symplectic, quadratic and unitary forms. Therefore we can embed $A$ into $G$ such that $V \downarrow A=R^2 \perp R^2 \perp \dots \perp R^2 \perp I$, where there are $\frac{k}{2} \ R^2$ factors and $I$ is the trivial $A$-module of dimension $s$, allowing us to identify $A$ with a subgroup of $G$.

\section*{\center{3. Main results and reduction of the proof}}
\stepcounter{section}
\par We now begin stating our two theorems mentioned in \S~1, from which we will deduce the main theorem. 
\par Let $A$ and $B$ be nontrivial finite 2-groups, not both of order 2, and assume $|B|>2$. The first result below is proved in Sections 5 and 6. It is used in the proof of the main theorem in the case where $B$ is not elementary abelian. For an integer $s$ and finite groups $H_1, H_2$, recall the definitions of $I_{s,H_2}(H_1)$ and $I_{2 \times 2,H_2}(H_1)$ from \S~2.1.

\begin{theorem} \label{bigthm1}
Let $A$ and $B$ be nontrivial 2-groups such that $B$ contains an element of order 4. There is an integer $N=N(A,B)$ such that for any finite simple classical group $G$ of rank at least $N$, there exist elements $x \in I_{2,A}(G), y \in I_{4,B}(G)$ such that
\begin{equation*}
\sum_{M \operatorname{max} G}  \frac{|x^G \cap M|}{|x^G|} \frac{|y^G \cap M|}{|y^G|}<1.
\end{equation*}
\end{theorem}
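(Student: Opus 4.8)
The strategy is the standard one for proving generation results via the "random (1,1)-generation" method of Liebeck–Shalev: if for some choice of $x \in I_{2,A}(G)$ and $y \in I_{4,B}(G)$ the displayed sum is less than $1$, then a positive proportion of pairs $(x^g, y^h)$ generate $G$, and in particular $G = \langle A^g, B^h \rangle$ for suitable conjugates, since $x$ lies in a copy of $A$ and $y$ in a copy of $B$. So the whole content is the inequality, which is an estimate on fixed-point ratios $\mathrm{fpr}(x,M) = |x^G \cap M|/|x^G|$ summed over maximal subgroups $M$. The plan is to bound this sum by splitting $\mathrm{max}\,G$ according to the Aschbacher classes of Table 2, together with class $\mathscr{S}$, and to choose $x,y$ cleverly — exploiting the almost-free embedding of \S~2.4, which forces $x$ to act on the natural module $V$ with almost all of $V$ decomposed into free $A$-summands (and similarly for $y$ with respect to $B$). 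Concretely, I would take $x$ to be an involution of "large support" coming from an almost-free copy of $A$, so that $C_G(x)$ is small and $|x^G|$ is large, roughly of size $|G|/q^{O(n)}$ with a good constant in the exponent; likewise $y$ of order $4$ from an almost-free $B$.

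The key steps, in order: (1) Fix the almost-free embeddings of $A$ and $B$ and pin down specific elements $x \in A$ of order $2$ and $y \in B$ of order $4$; compute or lower-bound $|x^G|$ and $|y^G|$ using centralizer structure (the free-module structure makes $C_G(x)$ a product of classical groups over $\mathbb{F}_{q^a}$-type pieces of total dimension $O(1)$ relative to $n$, so $|x^G| \gg |G|^{1 - c/n}$, and similarly for $y$). (2) For the geometric classes $\mathscr{C}_1$–$\mathscr{C}_8$: use the known orders $|M|$ from \cite{KlLi} together with the trivial bound $|x^G \cap M| \le |M \cap I_2(G)| \le |M|$ (or sharper, $|x^G \cap M| \le |x^{\mathrm{Aut}}\cap M|$), so that each term is at most $|M|^2/(|x^G||y^G|)$; since there are only $O(n \log\log q)$ classes of maximal subgroups and $|M| \le |G|^{1 - c'}$ with $c'$ bounded below for reductive-type subgroups — or $|M|$ even smaller for the "small" classes like $\mathscr{C}_6$ — one shows each term decays and the number of terms is polynomially controlled. (3) For the parabolic subgroups $P_m$, which are the largest maximals, one cannot use the crude bound: here I would use the refined estimates on $|x^G \cap P_m|$ coming from counting how an almost-free involution can fix an $m$-subspace — the point is that an element acting with free $A$-summands fixes very few subspaces, so $\mathrm{fpr}(x, P_m)$ is itself exponentially small in $n$, and multiplying by $\mathrm{fpr}(y,P_m)$ (also small) beats the number of values of $m$, which is at most $n/2$. (4) For $\mathscr{S}$: bound the contribution using $|M| \le q^{O(\sqrt{n})}$ or the Liebeck bound $|M| < q^{3n}$ together with the known lower bound $|x^G| > q^{c n^2}$-type estimates — actually $|x^G|$ grows like a fixed power of $q^n$ times structure, while $|M|$ for $\mathscr{S}$ is much smaller than $|G|$, so this sum is negligible.

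The main obstacle is clearly the parabolic contribution in step (3): the $P_m$ are the subgroups of largest order, roughly $|G|/q^{m(n-m)}$ up to polynomial factors, and one really does need $|x^G \cap P_m|/|x^G|$ to be genuinely small — of order $q^{-\Omega(n)}$ uniformly in $m$ — rather than just using orders. This requires an honest count (or good upper bound) for the number of totally singular $m$-spaces fixed by an almost-free involution $x$ and by an almost-free order-$4$ element $y$; the almost-freeness is exactly what is designed to make this count small, but carrying it out uniformly across all $m$, all the classical types, and both parities of characteristic — and checking the resulting exponential decay dominates the factor of $n/2$ from the range of $m$ and the further $O(n)$ from the number of $\mathscr{C}_1$-type classes — is the technical heart and is presumably what occupies Sections 5 and 6. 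A secondary difficulty is making the constant $N(A,B)$ genuinely independent of the field size $q$; this forces all the estimates to be of the form $q^{-cn}\cdot(\text{poly in } n)$ with $c$ depending only on $|A|,|B|$, so one must be careful that the centralizer dimensions and the subspace-fixing counts are controlled purely in terms of $a = |A|$, $b = |B|$ and $n$.
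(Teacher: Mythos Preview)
Your overall architecture (split into parabolic vs.\ non-parabolic, handle parabolics by counting fixed totally singular subspaces under almost-free $x$ and $y$) matches the paper, and your description of the parabolic obstacle is accurate: that is exactly what Lemmas~\ref{fprx} and~\ref{fpry} do. However, there is a genuine gap in your treatment of the non-parabolic contribution, and it stems from a wrong size estimate in step~(1).

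An almost-free involution $x$ does \emph{not} have $|x^G|\gg |G|^{1-c/n}$. In the regular $A$-module each involution acts with eigenvalues $+1$ and $-1$ of equal multiplicity, so on $V$ the eigenspaces of $x$ have dimension roughly $n/2$; hence $\dim C_H(x)\approx \tfrac12\dim H$ and one only gets $|x^G|\sim |G|^{1/2}$ (this is Proposition~\ref{conjclassprop1} with $r=2$). Likewise $|y^G|\sim |G|^{3/4}$. With $|x^G||y^G|\sim |G|^{5/4}$, your crude bound $|x^G\cap M|\,|y^G\cap M|\le |M|^2$ gives a term of size $|M|^2/|G|^{5/4}$, and for the large non-parabolic $\mathscr{C}_1$ subgroups (e.g.\ $Sp_2(q)\times Sp_{n-2}(q)$, where $|M|=|G|^{1-O(1/n)}$) this is $|G|^{3/4-o(1)}\to\infty$, not $<1$. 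The claim ``$|M|\le |G|^{1-c'}$ with $c'$ bounded below'' is simply false for these stabilisers.

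What the paper does instead is prove the sharp count $i_4(M)<|M|^{3/4}q^{cn}$ for every non-parabolic maximal $M$ (Proposition~\ref{i4Mprop1}); this is nontrivial and occupies all of \S5 (the estimate $i_4(G)\sim |G|^{3/4}$ for classical groups and their extensions by involutory automorphisms) together with \S6.1 (pushing it down to each Aschbacher class, including wreath products and $\mathscr{S}$). Combining this with the known $i_2(M)/i_2(G)<|G:M|^{-1/2+c/n}$ (Theorem~\ref{i2ratiothm}) yields a per-term bound $|G:M|^{-5/4+c/n}$, and only then does the sum over non-parabolics collapse via the zeta function $\zeta_G(5/4-c/n)\to 0$ (Theorem~\ref{zetathm}). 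This $i_4(M)\lesssim |M|^{3/4}$ step is the idea you are missing; without it the non-parabolic sum does not converge.
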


\par The next result is proved in Sections 7 and 8. It is used in the proof of the main theorem in the case where $B$ is an elementary abelian 2-group. 

\begin{theorem} \label{bigthm2}
Let $A$ and $B$ be nontrivial 2-groups such that $B$ contains a subgroup isomorphic to $C_2 \times C_2$. There is an integer $N=N(A,B)$ such that for any finite simple classical group $G$ of rank at least $N$, there exists an element $x \in I_{2,A}(G)$ and a subgroup $K \in I_{2 \times 2,B}(G)$ such that
\begin{equation*}
\sum_{M \operatorname{max} G} \frac{|x^G \cap M|}{|x^G|} \frac{| \{ K^g: g \in G, K^g \le M \} |}{|K^G|} <1.
\end{equation*}
\end{theorem}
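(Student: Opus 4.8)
\emph{Strategy.} The displayed sum is the union-bound upper bound for the probability that a uniformly random pair $(x^{g},K^{h})$, $g,h\in G$, fails to generate $G$: for a fixed maximal $M$ one has $\Pr(x^{g}\in M)=|x^{G}\cap M|/|x^{G}|$ and $\Pr(K^{h}\le M)=|\{K^{g}:g\in G,\ K^{g}\le M\}|/|K^{G}|$, and $\langle x^{g},K^{h}\rangle\ne G$ forces $\langle x^{g},K^{h}\rangle\le M$ for some $M\operatorname{max}G$; both factors are fixed-point ratios on $G/M$, as one sees for the second by counting incidences between the $G$-conjugates of $K$ and of $M$ and using $N_{G}(M)=M$. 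So the plan is to choose $x\in I_{2,A}(G)$ and $K\in I_{2\times 2,B}(G)$ with $|x^{G}|$ and $|K^{G}|$ as large as possible, and then to bound the two ratios across the Aschbacher classes so that every contribution is exponentially small in the rank and there are only polynomially many of them. I would take $N\ge 2\max(|A|,|B|)+2$ so that the almost-free embeddings $A\hookrightarrow G$ and $B\hookrightarrow G$ of \S2.4 are available, let $x$ be an involution of the embedded $A$, and let $K\cong C_{2}\times C_{2}$ lie inside the embedded $B$ (possible by hypothesis). Since the regular module is free on restriction to any elementary abelian $2$-subgroup, $x$ splits the natural module $V$ into two eigenspaces of dimensions $\tfrac12(n\pm s)$ with $s<2|A|+2$ (into $\tfrac12(n-s)$ Jordan blocks $J_{2}$ and $s$ blocks $J_{1}$ if $p=2$), and $K$ splits $V$ into four eigenspaces each of dimension within $2|B|$ of $n/4$; thus $x$ is a near-regular involution and $K$ a near-regular four-group. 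The centralizer data of Table 1 (or a direct computation) then give $|C_{G}(x)|\le q^{\frac12\dim H+O(n)}$ and $|N_{G}(K)|\le q^{\frac14\dim H+O(n)}$, with implied constants depending only on $|A|$ and $|B|$, hence $|x^{G}|\ge q^{\frac12\dim H-O(n)}$ and $|K^{G}|\ge q^{\frac34\dim H-O(n)}$ — within a factor $q^{O(n)}$ of the largest involution class and the largest Klein-four class of $G$. Pinning these sizes down precisely is exactly the counting of involutions and of $C_{2}\times C_{2}$-subgroups in $G$ referred to in the introduction.

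By Aschbacher's theorem \cite{As} and Table 2, each $M\operatorname{max}G$ lies in one of $\mathscr{C}_{1},\dots,\mathscr{C}_{8},\mathscr{S}$, and I would split the sum according to the size of $M$. For the subspace stabilizers — the whole of $\mathscr{C}_{1}$ (the parabolics $P_{m}$ and the non-degenerate subspace stabilizers), together with the boundedly many further maximal subgroups of comparable order, notably $SO_{n}(q)<Sp_{n}(q)$ for $q$ even — the trivial bound $|x^{G}\cap M|\le i_{2}(M)$ is useless, since such $M$ may have order within a factor $q^{O(n)}$ of $|G|$; here I would use genuine fixed-point-ratio estimates. Analysing which subspaces (respectively which quadratic forms) are fixed by a near-regular involution, and exploiting the four near-equal eigenspaces of $K$, one should obtain bounds of the shape $|x^{G}\cap M|/|x^{G}|\le c\,q^{-\epsilon m(n-m)}$ and $|\{K^{g}\le M\}|/|K^{G}|\le c\,q^{-\epsilon m(n-m)}$ for the stabilizer of an $m$-space (and $\le c\,q^{-\epsilon n}$ for the orthogonal subgroup of $Sp_{n}$); summing the resulting geometric-type series over $1\le m\le n/2$ and over the $O(1)$ subspace types bounds this part of the sum by $\mathrm{poly}(n)\,q^{-cn}$.

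For every remaining maximal subgroup one has $|M|\le|G|^{1/2+o(1)}$, so crude bounds suffice: using $|x^{G}\cap M|\le i_{2}(M)\le|M|^{1/2}q^{O(n)}$ and $|\{K^{g}\le M\}|\le i_{2\times 2}(M)\le i_{2}(M)^{2}$ together with the lower bounds on $|x^{G}|$ and $|K^{G}|$ above makes each such term at most $q^{-c'\dim H+O(n)}$; and since $G$ has only polynomially many (in $n$, up to a factor mild in $q$) conjugacy classes of maximal subgroups, this part is again $\mathrm{poly}(n)\,q^{-cn}$. Adding the two parts, the displayed sum is at most $\mathrm{poly}(n)\,q^{-cn}$, which is $<1$ once $n$ exceeds an absolute constant; taking $N=N(A,B)$ to be the larger of that constant and the threshold on $|A|+|B|$ at which all the $O(n)$-in-the-exponent error terms (the only $|A|$- and $|B|$-dependent ingredients) become negligible then finishes the argument.

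The main obstacle, I expect, is making these two estimates genuinely uniform. For the subspace stabilizers one needs fixed-point-ratio bounds with the correct $m$-dependence simultaneously for all the finite simple classical families — linear, unitary, symplectic, and all three orthogonal types — for totally singular as well as non-degenerate subspaces, and in characteristic $2$ (where $x$ and the involutions of $K$ are unipotent) alongside odd characteristic. For the remaining classes the real work is the promised counting: one must control $i_{2}(M)$ and $i_{2\times 2}(M)$ (and $|x^{G}|$, $|K^{G}|$) sharply enough, uniformly over every Aschbacher class and every classical family, and in the borderline cases — imprimitive $\mathscr{C}_{2}$-subgroups and degree-$2$ field-extension $\mathscr{C}_{3}$-subgroups, of order about $|G|^{1/2}$ — the very crudest bounds leave too little room. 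Throughout, the dependence on $A$ and $B$ enters only through $s$ and through the slight imbalance of the eigenspace dimensions, but it must be carried carefully so that $N(A,B)$ is genuinely a function of $A$ and $B$ alone; the finitely many small-rank or small-$q$ configurations beyond the reach of these asymptotic estimates are excluded by the hypothesis that the rank exceeds $N$.
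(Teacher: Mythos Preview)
Your overall strategy---almost-free embeddings for $A$ and $B$, a near-regular involution $x$ and Klein four-group $K$, then a split of the sum by Aschbacher class with fixed-point-ratio estimates for the large subgroups and counting bounds for the rest---is exactly the paper's. The paper splits as parabolic versus non-parabolic (rather than $\mathscr{C}_1$ versus the rest) and handles all non-parabolic $M$ uniformly via the zeta function $\zeta_G(s)=\sum_{M\operatorname{max}G}|G:M|^{-s}$, but that is a minor reorganisation.

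The genuine gap is the one you flag yourself but do not close. For a non-parabolic class with representative $M$, the contribution is $|G:M|$ times the product of the two ratios; with your crude bound $i_{2\times 2}(M)\le i_2(M)^2\le |M|\,q^{O(n)}$ this gives
\[
|G:M|\cdot\frac{|M|^{1/2}}{|x^G|}\cdot\frac{|M|}{|K^G|}\cdot q^{O(n)}
\;\approx\;\frac{|G|}{|M|}\cdot\frac{|M|^{3/2}}{|G|^{5/4}}\cdot q^{O(n)}
\;=\;\frac{|M|^{1/2}}{|G|^{1/4}}\cdot q^{O(n)},
\]
which is only $q^{O(n)}$ when $|M|\sim|G|^{1/2}$---precisely the $\mathscr{C}_2$ subgroups with $t=2$, the degree-$2$ $\mathscr{C}_3$ subgroups, the $\mathscr{C}_5$ subfield subgroups with $t=2$, and the $\mathscr{C}_8$ classical subgroups of $PSL_n(q)$. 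So the argument does not sum to something less than $1$.

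The paper's resolution is Proposition~\ref{i2x2Mprop1}: for \emph{every} non-parabolic maximal $M$ one has $i_{2\times 2}(M)<|M|^{3/4}q^{cn}$. Replacing your exponent $1$ by $3/4$ turns the displayed estimate into $|G:M|^{-1/4}\cdot q^{O(n)}$, and (combined with $i_2(M)/i_2(G)<|G:M|^{-1/2+c/n}$ from Theorem~\ref{i2ratiothm}) the whole non-parabolic sum is bounded by $\zeta_G(5/4-c/n)\to 0$. Proposition~\ref{i2x2Mprop1} in turn rests on Proposition~\ref{i2x2prop}, the bound $i_{2\times 2}(G)<|G|^{3/4+c/n}$ for almost simple classical $G$ and their degree-$2$ extensions; in characteristic $2$ this requires a direct and rather intricate count of commuting pairs of unipotent involutions (the computations with the matrices $j_{k,n}$, $a_{k,n}$, $b_{k,n}$, $c_{k,n}$ in Lemma~\ref{i2x2prop1}). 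There is no soft substitute for this $3/4$-exponent: it is exactly the ``counting of $C_2\times C_2$-subgroups'' promised in the introduction, and without it the borderline classes cannot be handled.
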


\par We now deduce the main theorem from Theorems \ref{bigthm1} and \ref{bigthm2}. 

\par Let $A$ and $B$ be nontrivial finite groups, not both of order 2, and let $G$ be a finite simple classical group of rank $n$. By \cite[Theorem~2.3]{LiSh3} we may assume $A$ and $B$ are 2-groups. Assume that $|B|>2$, so either $B$ contains an element of order 4 or $B$ is elementary abelian. First suppose that $B$ contains an element of order 4. Let $N=N(A,B)$ be as in Theorem \ref{bigthm1}, and assume $n \ge N$. For $x \in I_{2,A}(G), y \in I_{4,B}(G)$, the number of pairs of conjugates $(x^g, y^h)$ with $g, h \in G$ such that $x^g, y^h \in M$ for some maximal subgroup $M$ of $G$ is at most 
 \vspace{-0.1cm}
\begin{align*}
\sum_{M \operatorname{max} G}  |x^G \cap M| |y^G \cap M|.
\end{align*}

\noindent By Theorem \ref{bigthm1} this is strictly less than $|x^G||y^G|$ for some $x \in I_{2,A}(G), y \in I_{4,B}(G)$, and hence there exists a pair of conjugates $(x^g,y^h)$ that generates $G$.
\par If $B$ is elementary abelian, an entirely similar argument using Theorem \ref{bigthm2} gives the main theorem.

\section*{\center{4. Lower bounds for sizes of almost-free conjugacy classes}}
\stepcounter{section}

\par Recall the definitions of $G$ and $H$ from \S~2.1, and let $A$ be a nontrivial finite group embedded almost-freely into $G$ (as in \S~2.4). In this section we find lower bounds for the sizes of $G$-conjugacy classes of elements $x \in A$ of prime order or order 4 (if such elements exist), and subgroups $K \le A$ isomorphic to the Klein 4-group (if such subgroups exists).
\par We first need a result bounding the sizes of conjugacy classes of elements and Klein four-groups in finite classical groups. This is a small extension of \cite[Corollary~1.8]{LaLiSe}.

\begin{lemma} \label{centorder}
Let $H$ be a simple adjoint algebraic group defined over a field of characteristic $p>0$, and let $F$ be a Steinberg endomorphism of $H$ such that $H^F$ is a group of Lie type over $\mathbb{F}_{q}$. Assume $H^F \ne {}^2F_4(q), {}^2G_2(q), {}^2B_2(q)$. Let $x \in H^F$ and $y_1, y_2 \in I_2(H^F)$ such that $[y_1,y_2]=1$. Also let $C=C_H(x), D=C_H(y_1) \cap C_H(y_2)$ with $b=\operatorname{rank}(Z(C^\circ/R_u(C^\circ))), d=\operatorname{rank}(Z(D^\circ/R_u(D^\circ)))$. Then 
\begin{align*}
\frac{1}{2}\frac{(q-1)^b}{q^b |C:C^\circ|}q^{\dim(H) - \dim(C^\circ)} < |x^{H^F}| < \frac{(q+1)^b}{q^b |C:C^\circ|}q^{\dim(H) - \dim(C^\circ)}, \\
\frac{1}{12} \frac{(q-1)^d}{q^d |D:D^\circ|}q^{\dim(H) - \dim(D^\circ)} < | \langle y_1, y_2 \rangle^{H^F} | < \frac{(q+1)^d}{q^d |D:D^\circ|}q^{\dim(H) - \dim(D^\circ)}.
\end{align*}
\end{lemma}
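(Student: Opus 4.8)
The plan is to reduce both double inequalities to the single-element case proved as \cite[Corollary~1.8]{LaLiSe}, which is exactly the first displayed chain of inequalities (for $x$), and then to bootstrap the Klein four-group count from the element counts by an orbit-counting argument. For the element case, there is in principle nothing to do except cite the corollary, once we check that the hypotheses match: $H$ simple adjoint, $F$ a Steinberg endomorphism, $H^F$ of Lie type over $\mathbb{F}_q$, and the three small Suzuki/Ree families excluded. So the real content is the second line, the bound on $|\langle y_1,y_2\rangle^{H^F}|$.

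For the Klein four-group bound, I would proceed as follows. Set $K=\langle y_1,y_2\rangle$, a subgroup of order $4$ (the case $y_1=y_2$ or $y_1 y_2$ trivial is degenerate and should be excluded or handled separately; assume $K\cong C_2\times C_2$). Write $D=C_H(y_1)\cap C_H(y_2)=C_H(K)$, and note $N_{H^F}(K)$ permutes the three involutions of $K$, so $|N_{H^F}(K):C_{H^F}(K)|\le 6$, giving
\begin{align*}
|K^{H^F}| = \frac{|H^F|}{|N_{H^F}(K)|}, \qquad \frac{|H^F|}{6\,|C_{H^F}(K)|} \le |K^{H^F}| \le \frac{|H^F|}{|C_{H^F}(K)|}.
\end{align*}
Now $C_{H^F}(K)=C_H(K)^F=D^F$, and I can estimate $|D^F|$ by the standard Lang--Steinberg / Steinberg-endomorphism formula together with the structure $D^\circ = R_u(D^\circ)\,.\,(\text{reductive})$: one gets $|D^\circ{}^F| \sim q^{\dim D^\circ}$ with the precise torus contribution $(q-1)^d \le |Z(D^\circ/R_u(D^\circ))^F| \le (q+1)^d$ where $d=\operatorname{rank}(Z(D^\circ/R_u(D^\circ)))$, and $|D^F:D^\circ{}^F| = |D^F/D^\circ{}^F|$ divides $|D:D^\circ|$. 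Likewise $|H^F|\sim q^{\dim H}$, with the lower bound $|H^F| > \tfrac12 q^{\dim H}$ and upper bound $|H^F| < q^{\dim H}\cdot\!\prod(1+q^{-i})$ type estimates available in the literature (these are the same estimates underlying \cite[Corollary~1.8]{LaLiSe}). Combining: dividing the bound on $|H^F|$ by the bound on $|D^F|$ and absorbing the factor $6$ from $|N_{H^F}(K):C_{H^F}(K)|$ into the constant (replacing $\tfrac12$ by $\tfrac{1}{12}$ in the lower bound) yields
\begin{align*}
\frac{1}{12}\,\frac{(q-1)^d}{q^d\,|D:D^\circ|}\,q^{\dim H - \dim D^\circ} < |K^{H^F}| < \frac{(q+1)^d}{q^d\,|D:D^\circ|}\,q^{\dim H - \dim D^\circ}.
\end{align*}

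The cleanest route, and the one I would actually write, avoids re-deriving the raw estimates on $|H^F|$ and $|D^F|$ and instead compares $|K^{H^F}|$ directly to the already-established element bound. Pick the involution $y\in K$ minimizing $\dim C_H(y)$; then $C=C_H(y)\supseteq D$ and $b=\operatorname{rank}(Z(C^\circ/R_u(C^\circ)))$. We have $|K^{H^F}|\le |y^{H^F}|$ trivially (the map $K\mapsto$ chosen involution is at most $3$-to-$1$ onto a subset of $y^{H^F}$, actually $|K^{H^F}|\cdot 3 \ge$ nothing useful in this direction), so this gives the upper bound up to comparing $C$ with $D$; and for the lower bound one counts pairs, observing that each conjugate of $K$ contributes an ordered pair of commuting involutions and bounding the number of such pairs below using the element bound applied to a suitable centralizer chain. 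The bookkeeping of which centralizer ($C_H(y_1)$, $C_H(y_2)$, or $D$) controls the dimension, and keeping the torus-rank factor $(q\pm1)^d$ attached to the \emph{right} group, is the main obstacle: it requires knowing that $\dim H - \dim D^\circ$ and $d$ genuinely record the conjugacy-class dimension and the torus part of $D^F$, which follows from the Lang--Steinberg theorem applied to $D^\circ$ exactly as in the proof of \cite[Corollary~1.8]{LaLiSe}. Once that structural fact is in hand the inequalities are a direct transcription of the single-element argument with the universal constant worsened from $2$ to $12$ to absorb $|N_{H^F}(K):C_{H^F}(K)|\le 6$, so I expect the write-up to be short: state the reduction, invoke \cite[Corollary~1.8]{LaLiSe} for $x$, and run the orbit-stabilizer computation above for $K$.
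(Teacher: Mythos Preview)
Your first approach—citing \cite[Corollary~1.8]{LaLiSe} for the element case, then running the identical estimate on $|D^F|$ for the Klein-four case with the extra factor $|N_{H^F}(K):C_{H^F}(K)|\le|S_3|=6$ absorbed into the constant—is exactly what the paper does, and is correct. The paper's proof is two short paragraphs: it spells out the ingredients from \cite{LaLiSe} (the bound $\tfrac12 q^{\dim H}<|H^F|<q^{\dim H}$, the estimate on $|E^F|$ for $E=C^\circ/R_u(C^\circ)$, and $|U^F|=q^{\dim U}$ for $U=R_u(C^\circ)$), assembles them into the first inequality, and then simply says the second is ``similar, noting that $|N_{H^F}(\langle y_1,y_2\rangle):C_{H^F}(\langle y_1,y_2\rangle)|\le|S_3|$''.

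Your proposed ``cleanest route'' via comparing $|K^{H^F}|$ to a single $|y^{H^F}|$ is a detour: it introduces bookkeeping you yourself flag as the main obstacle, without saving any work, since the structural input you end up needing—that Lang--Steinberg applied to $D^\circ$ yields the same shape of estimate as for $C^\circ$—is exactly the content of the direct argument. Drop that paragraph and just write the first approach.
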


\begin{proof}
\par We follow the proof of \cite[Corollary~1.8]{LaLiSe}. Let $U=R_u(C^\circ), E=C^\circ/U$. Then $|U^F|=q^{\dim(U)}$ by \cite[Lemma~1.7]{LaLiSe}, and by \cite[Proposition~1.6]{LaLiSe},
\begin{align*}
\frac{(q-1)^b}{q^b}q^{\dim(E)} \le |E^F| \le \frac{(q+1)^b}{q^b}q^{\dim(E)}.
\end{align*} 
\noindent From \cite[Lemma~1.2]{LaLiSe} we have $\frac{1}{2} q^{\dim(H)}<|H^F| <q^{\dim(H)}$, and the bounds for $|x^{H^F}|$ follow.

\par The proof for $ | \langle y_1, y_2 \rangle^{G} | $ is similar, noting that $|N_{H^F}(\langle y_1,y_2 \rangle):C_{H^F}(\langle y_1,y_2 \rangle)| \le |S_3|$.
\end{proof}

\par We now find lower bounds for the sizes of conjugacy classes of elements $x \in A \le G$ of order $r$, where $r$ is prime or $r=4$ (if such an element exists), and we restrict to the case where $G=S$ is simple. Write $\mathbb{P}$ for the set of all prime numbers. Recall that, for groups $H_2 \le H_1$, we write $\overline{H_1}=H_1/Z(H_1)$ and $\overline{H_2}=H_2/(Z(H_1) \cap H_2)$.

\begin{proposition} \label{conjclassprop1}
Let $A$ be a nontrivial finite group, and choose $r \in \mathbb{P} \cup \{ 4 \}$ such that $A$ contains an element $x$ of order $r$. Let $G$ be a finite simple classical group with natural module of dimension $n$, and assume $n \ge 2|A| +2$. If $A$ is embedded almost-freely into $G$, then
\begin{equation*}
|x^G| > |G|^{\frac{r-1}{r}+\frac{c}{n}}
\end{equation*}
\noindent for a constant $c=c(A)$ depending only on $A$. In particular, $i_{r,A}(G) > |G|^{\frac{r-1}{r}+\frac{c}{n}}$.

\end{proposition}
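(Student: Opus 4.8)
The plan is to apply Lemma \ref{centorder} to $x$, which requires estimating $b = \operatorname{rank}(Z(C^\circ/R_u(C^\circ)))$ and $\dim(H) - \dim(C^\circ)$, where $C = C_H(x)$ and $H$ is the ambient algebraic group. The key point is the almost-free embedding: since $n = ka + s$ with $k$ even, $V \downarrow A = (R^2)^{k/2} \perp I$ where $R$ is the regular $\mathbb{F}_q A$-module and $\dim I = s < 2a+2$. The centralizer in $GL(V)$, or $Sp$/$O$/$GU$ as appropriate, of the image of $x$ is then controlled by the decomposition of $V$ into eigenspaces (or generalized eigenspaces) of $x$ over the algebraic closure. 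For $x$ of order $r$ acting on a free module $R$, the eigenspaces have dimension $\dim(R)/r = a/r$ for each $r$-th root of unity (replacing $a/r$ by $a/2 + O(1)$ appropriately when $r = 4$ and one tracks the actual rational canonical form); adding the $s$-dimensional trivial summand $I$ concentrates extra multiplicity on the eigenvalue $1$. First I would compute, over $\overline{\mathbb{F}_q}$, the dimensions $n_1, \dots, n_r$ of the eigenspaces of $x$ on $V$, finding $n_j = ka/r$ for $j \neq 1$ and $n_1 = ka/r + s$, so that $\sum n_j = n$ and $\sum n_j^2 = n^2/r + O(n)$ (the cross terms and the $s$-perturbation contribute only $O(n)$ since $s, a$ are bounded in terms of $|A|$).

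Next I would identify $C^\circ$ and compute $\dim(C^\circ)$ and $b$. In the linear case $C^\circ = \prod_j GL_{n_j}$ up to the determinant condition, so $\dim(C^\circ) = \sum n_j^2$ (working in the adjoint group $H$ costs only a bounded correction), hence $\dim(H) - \dim(C^\circ) = (n^2 - \sum n_j^2) + O(1) = n^2(1 - 1/r) + O(n) = \frac{r-1}{r}\dim(H) + O(n)$; and $b$, the reductive rank of the centre of $C^\circ$, is at most the number of factors, which is $r - 1$ or $r$, a bounded constant. The symplectic, orthogonal and unitary cases are analogous: the centralizer is a product of classical groups of the same type on paired eigenspaces together with a classical group on the $\pm 1$-eigenspaces, and one still gets $\dim(H) - \dim(C^\circ) = \frac{r-1}{r}\dim(H) + O(n)$ with $b$ bounded. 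Feeding these into the lower bound of Lemma \ref{centorder} gives
\begin{align*}
|x^{H^F}| > \tfrac{1}{2}\,\frac{(q-1)^b}{q^b |C:C^\circ|}\, q^{\dim(H)-\dim(C^\circ)} \ge q^{\frac{r-1}{r}\dim(H) - O(n)},
\end{align*}
using that $b$ and $|C:C^\circ|$ are bounded and $(q-1)^b/q^b$ is bounded below by a positive constant for $q \ge 2$. Since $|H^F| < q^{\dim(H)}$ and $\dim(H) \sim n^2$, the exponent equals $\frac{r-1}{r}\log_q|H^F| - O(n) = (\frac{r-1}{r} + O(1/n))\log_q |H^F|$, and passing from $|H^F| = |\operatorname{Inndiag}(S)|$ down to $|G| = |S|$ costs only a bounded-index factor (at most $|Z| \cdot |\operatorname{Out}| = q^{O(1)}$, absorbed into the $O(n)$ term), yielding $|x^G| > |G|^{(r-1)/r + c/n}$ for a suitable $c = c(A) < 0$ — wait, $c$ must be negative here, so the bound as stated should be read with the understanding that one can in fact take $c$ of either sign depending on normalization; I would instead present it as $|x^G| > |G|^{\frac{r-1}{r}} \cdot |G|^{c/n}$ and note the $O(n)$ error divided by $\log|G| \sim n^2$ contributes $O(1/n)$, so $c$ can be chosen (negative) uniformly in $A$. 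The final claim $i_{r,A}(G) \ge |x^G|$ is immediate since every $G$-conjugate of $x$ lies in the corresponding conjugate of $A$ and so belongs to $I_{r,A}(G)$.

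The main obstacle is not any single estimate but the bookkeeping of the $O(n)$-size error terms uniformly across all four families of classical groups and across the exceptional twist that $r = 4$ is not prime: one must check that the rational canonical form of an order-$4$ element on the free module still splits $V$ into pieces summing to $n^2/4 + O(n)$ in the sum of squares, that the isometry-group centralizer structure (which involves forms pairing the $i$- and $-i$-eigenspaces) does not inflate $b$ beyond a constant, and that in small-rank or small-$q$ corners the bounded quantities $b$, $|C:C^\circ|$, $[\operatorname{Inndiag}(S):S]$, and the difference between $H$ adjoint versus the matrix group do not secretly depend on $n$. Handling the orthogonal case carefully — where there are several isogeny types and $Z(C^\circ)$ can pick up an extra $\mathbb{G}_m$ or $\mathbb{Z}/2$ factor from the $\pm 1$ eigenspace decomposition — is where I expect the proof to need the most care, but all of this is routine given the structure theory of centralizers of semisimple (and order-$4$) elements in classical algebraic groups.
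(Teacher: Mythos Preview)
Your approach via Lemma \ref{centorder} is exactly the paper's, and your semisimple computation is correct in outline. The genuine gap is that you never treat the case $p \mid r$, where $x$ is \emph{unipotent}. The parenthetical ``(or generalized eigenspaces)'' does not save you: over $\overline{\mathbb{F}_p}$ a unipotent $x$ has a single eigenvalue, so there is no decomposition $V = \bigoplus V_j$ with $\dim C^\circ = \sum n_j^2$; instead one must use the Jordan form $J_r^{ka/r} \oplus J_1^s$ and the centralizer-dimension formulas for unipotent elements (e.g.\ $\dim C_{GL_n}(x) = \sum_i i m_i^2 + 2\sum_{i<j} i m_i m_j$ for Jordan type $\bigoplus J_i^{m_i}$, and the analogous formulas from \cite{LiSe} in the symplectic and orthogonal groups). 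These still give $\dim C_H(x)^\circ < \tfrac{1}{r}\dim H + cn$, but that is a separate calculation you have not done. This matters for exactly the cases the paper cares most about ($A,B$ both $2$-groups, $r \in \{2,4\}$, $q$ even).

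Two smaller points. First, your aside about the sign of $c$ is a non-issue: the constant $c=c(A)$ in the statement is allowed to be negative, and in fact is; the inequality $|x^G| > |G|^{(r-1)/r + c/n}$ is precisely a lower bound of the form $|G|^{(r-1)/r}$ times a controlled correction $|G|^{c/n}$. Second, your remark ``replacing $a/r$ by $a/2 + O(1)$ when $r=4$'' is muddled: for $r=4$ semisimple the four eigenspaces on $R^k$ each have dimension $ka/4$, and the only subtlety (when $q \equiv 3 \pmod 4$) is that the $\pm i$-eigenspaces are Galois-conjugate, which affects the shape of $C_H(x)^\circ$ over $\mathbb{F}_q$ but not $\dim C_H(x)^\circ$.
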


\begin{proof}
\par Let $H$ be the simple adjoint algebraic group over $l=\overline{\mathbb{F}_q}$ of characteristic $p$ such that $(H^F)'=G$ for a Frobenius morphism $F$ of $H$. Also, let $|A|=a, x \in I_r(A)$ and write $n=ka+s$ where $k$ is even and $2 \le s < 2a+2$. We embed $A$ into $G$ almost-freely as in \S~2.4.

\par First suppose $p \nmid r$, so $x$ is semisimple with $r$ distinct eigenvalues. With this embedding, $C_H(x)^\circ$ is isomorphic to a group in the second or third columns of Table \ref{centxAtable}.

\renewcommand{\arraystretch}{2.2}
\begin{longtable}[c]{c c c}
\caption {Centralizers of elements $x \in A \le H$ of order $r, p \nmid r$} \\ \label{centxAtable}
$H$  & $C_H(x)^\circ, r$ odd & $C_H(x)^\circ, r$ even \\[2pt] \hline \vspace{0.2cm}
 $PSL_n$ & $\overline{\bigg(GL_{\frac{ka}{r}}^{r-1} \times GL_{{\frac{ka}{r}}+s} \bigg) \cap SL_n}$ & $\overline{\bigg(GL_\frac{ka}{r}^{r-1} \times  GL_{\frac{ka}{r}+s}  \bigg) \cap SL_n}$  \\[3pt] \hline 
$PSp_n$ & $\overline{GL_{\frac{ka}{r}}^{\frac{r-1}{2}} \times Sp_{{\frac{ka}{r}}+s}}$ & $\overline{GL_\frac{ka}{r}^{\frac{r-2}{2}} \times Sp_{\frac{ka}{r}} \times  Sp_{\frac{ka}{r}+s}}$\\[3pt] \hline 
$PSO_n$ & $\overline{GL_{{\frac{ka}{r}}}^{\frac{r-1}{2}} \times SO_{{\frac{ka}{r}}+s}} $ & $\overline{GL_\frac{ka}{r}^{\frac{r-2}{2}} \times SO_{\frac{ka}{r}} \times  SO_{\frac{ka}{r}+s}}$\\[3pt] \hline
\end{longtable}
\setlength\LTleft{0cm}
\par Observe that in each case we have
\begin{equation} \label{dimeqn1}
\dim C_H(x)^\circ < \frac{1}{r}\dim(G) + cn
\end{equation}
\noindent for some constant $c=c(A)$ only depending on $A$.

\par By Lemma \ref{centorder} and (\ref{dimeqn1}),
\begin{align*}
|x^{H^F}| &> \frac{1}{2}\frac{(q-1)^b}{q^b|C_H(x):C_H(x)^\circ|}q^{(1-\frac{1}{r})\dim(G)-cn}
\end{align*}
\noindent where $b=\operatorname{rank}(Z(C_H(x)^\circ))$. Both the index $|C_H(x):C_H(x)^\circ|$ and $b$ are bounded above by $r$. Hence, for some constant $c'=c'(A)$ depending only on $A$, we have
\begin{align*}
|x^{H^F}| &> q^{(1-\frac{1}{r})\dim(G)+c'n}
\end{align*}

\noindent and the result follows, completing the proof in the case $p \nmid r$.

\par We now consider the case where $p \mid r$, so $x$ has Jordan form $J_r^{\frac{ka}{r}} \oplus J_1^s$. Therefore, by \cite[Theorem~3.1]{LiSe} if $H=PSL_n$ or $p \ne 2$ and by \cite[Theorem~4.2]{LiSe} if $H=PSp_n, PSO_n$ and $p=2$, we have upper bounds for $\operatorname{dim}C_H(x)^\circ$ listed in Table \ref{cent1table}. In each case, we observe that (\ref{dimeqn1}) holds.

\begin{table}[h]
\begin{center}
\def\arraystretch{1.5}
\caption {Upper bounds for $\dim(C_H(x)^\circ), \ p \mid r$} \label{cent1table}
\begin{tabular}{ c@{\hskip 0.4in}  c@{\hskip 0.4in} c }
$H$  & $\dim(C_H(x)^\circ), r$ odd & $\dim(C_H(x)^\circ), r$ even \\[2pt] \hline
$PSL_n$ & $\frac{n^2}{r} + s^2(1-\frac{1}{r}) $ & $\frac{n^2}{r} + s^2(1-\frac{1}{r})$  \\[5pt] \hline 
$PSp_n$ & $ \frac{1}{2r}(n^2+n)+\frac{s^2}{2}(1-\frac{1}{r})+ \frac{s}{2}(1-\frac{1}{r}) $ & $\frac{1}{2r}(n^2+4n)+\frac{s^2}{16}(r+2)+\frac{3s}{8}(r-2)$\\[5pt] \hline 
$PSO_n$ & $ \frac{1}{2r}(n^2-n) + \frac{s^2}{2}(1-\frac{1}{r}) + \frac{s}{2}(\frac{1}{r}-1)$ & $ \frac{n^2}{2r}+\frac{s^2}{8}(4-r)+\frac{s}{8}(2-r)$\\[5pt] \hline
\end{tabular}
\end{center}
\end{table}

\par The result now follows by Lemma \ref{centorder} and (\ref{dimeqn1}) as in the previous case, noting that $b=\operatorname{rank}(Z(C_H(x)^\circ/R_u(C_H(x)^\circ)))$ and $|C_H(x):C_H(x)^\circ|$ are bounded above by $2^3$ (see \cite[Theorem~3.1, Theorem 4.2]{LiSe}).
\end{proof}

\par We now combine \cite[Proposition~4.1]{LiSh1} and \cite[Lemma~1]{LiSh2} to give an upper bound for $i_r(G)$ for a prime $r$. 

\begin{proposition}[\cite{LiSh1, LiSh2}] \label{irprop}
Let $G$ be a finite almost simple classical group with natural module $V$ of dimension $n$, and assume $G \le PGL(V)$. Then for any prime $r$, 
\begin{equation*} 
i_r(G) < |G|^{\frac{r-1}{r}+\frac{c}{n}}
\end{equation*}
\noindent for some constant $c=c(r)$ depending only on $r$. 
\end{proposition}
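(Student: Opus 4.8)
The statement attributes the result to a combination of \cite[Proposition~4.1]{LiSh1} and \cite[Lemma~1]{LiSh2}, so the intended proof is a short synthesis rather than a ground-up argument. The plan is as follows. First I would recall the precise content of \cite[Proposition~4.1]{LiSh1}: for a finite simple classical group $S$ with natural module of dimension $n$, and a prime $r$, one has a bound of the shape $i_r(S) < c \cdot |S|^{(r-1)/r + c'/n}$, or equivalently the number of elements of order $r$ is controlled by the maximal dimension of a centralizer, which for order-$r$ elements is at most roughly $\dim(H)/r + O(n)$; dividing $\dim H$ (as the exponent of $q$ in $|S|$, up to the bounded factor coming from $|H^F:S|$) gives the exponent $(r-1)/r$ and the $O(n)$ correction gives the $c/n$ term. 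The main point is that an element of order $r$ in a classical group has centralizer of codimension at least $(1-\tfrac1r)\dim H - O(n)$, which is exactly the estimate underlying Proposition~\ref{conjclassprop1} but now used as an \emph{upper} bound on class sizes combined with a count of the number of classes.

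Second, I would handle the passage from the simple group $S$ to an arbitrary almost simple $G$ with $S \le G \le \mathrm{Aut}(S)$ and $G \le PGL(V)$. Here \cite[Lemma~1]{LiSh2} is the relevant input: it bounds $i_r(G)$ in terms of $i_r(S)$ together with contributions from elements of order $r$ in the coset $G \setminus S$ — but since $G \le PGL(V) = \mathrm{Inndiag}(S)$, the only automorphisms involved are inner-diagonal, so the extra cosets are bounded in number by $|G:S| \le |\mathrm{Inndiag}(S):S|$, which is at most $\gcd(n, q\mp 1)$ and hence absorbed into the $|G|^{c/n}$ factor. Concretely, $i_r(G) \le |G:S| \cdot \max_{gS}\, i_r(gS)$, and each coset contribution is bounded by a class-size estimate of the same shape as in the simple case (centralizers in $\mathrm{Inndiag}(S)$ of order-$r$ elements still have codimension $\ge (1-\tfrac1r)\dim H - O(n)$), so the exponent $(r-1)/r$ is unchanged and all multiplicative slop is of the form $q^{O(n)} = |G|^{O(1/n)}$.

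Third, I would assemble the constants: the number of $G$-classes of elements of order $r$ is polynomial in $n$ (indeed $O(n^{O(1)})$, by the standard parametrization of semisimple and unipotent classes), which contributes a factor $n^{O(1)} = q^{O(n \log q / \ldots)}$ — more carefully, $n^{O(1)} < q^{O(n)}$ is false in general for fixed $q$, so I would instead note $n^{O(1)} = |G|^{o(1/n) \cdot \text{(something)}}$; the cleaner route, which is what \cite{LiSh1} actually does, is to bound $i_r(G)$ directly by $(\text{number of classes}) \times (\text{largest class size})$ and observe that the number of classes times the bounded index factor is at most $q^{cn}$ for a suitable $c$, since each class has size at least $1$ and the dimension count already leaves an $O(n)$ margin in the exponent that can be spent absorbing polynomial-in-$n$ factors. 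The constant $c=c(r)$ then depends only on $r$ through the codimension estimate $(1-\tfrac1r)\dim H$ and the bound on the number of relevant classes.

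The only real obstacle is bookkeeping: making sure that (i) the transition from $q^{\dim H}$ to $|G|$ (losing the bounded factor $|H^F:S| \le \gcd(n,q\mp1)$ and the factor $\tfrac12$ from Lemma~\ref{centorder}), (ii) the factor $|G:S|$ from passing to the almost simple group, and (iii) the polynomial-in-$n$ number of classes are all genuinely absorbable into a single $|G|^{c/n}$ term with $c$ independent of $q$ and $n$. Each of these is $q^{O(n)}$ or smaller while $|G|^{c/n} = q^{c \dim(H)/n} = q^{\Theta(cn)}$, so choosing $c$ large enough (depending only on $r$) does the job; I do not expect any conceptual difficulty beyond citing the two references correctly and checking that the hypothesis $G \le PGL(V)$ is exactly what rules out field and graph automorphism cosets, whose order-$r$ elements could otherwise have much larger centralizers and spoil the exponent.
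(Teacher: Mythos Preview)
Your proposal is more than the paper provides: the paper gives no proof at all for this proposition, simply stating it as a direct citation of \cite[Proposition~4.1]{LiSh1} and \cite[Lemma~1]{LiSh2}. Your outline of how those two references combine is essentially correct and in the intended spirit, so there is nothing to compare against.

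One small slip worth flagging: your claim that ``$n^{O(1)} < q^{O(n)}$ is false in general for fixed $q$'' is itself false --- for $q \ge 2$ the left side is polynomial in $n$ and the right side exponential, so the inequality holds for all $n$ once the implied constants are chosen appropriately. You recover the right conclusion anyway via the $q^{cn}$ margin in the exponent, but the detour is unnecessary.
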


\par We now consider Klein four-groups, and prove a result analogous to Proposition \ref{conjclassprop1}.

\begin{proposition} \label{conjclassprop2}
Let $A$ be a finite group containing a Klein four-group $K$. Let $G$ be a finite simple classical group with natural module of dimension $n$, and assume $n \ge 2|A|+2$. If $A$ is embedded almost-freely into $G$, then
\begin{equation*}
|K^G|> |G|^{\frac{3}{4}+\frac{c}{n}}
\end{equation*}
\noindent for a constant $c=c(A)$ depending only on $A$.
\end{proposition}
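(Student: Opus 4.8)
The plan is to mimic the proof of Proposition \ref{conjclassprop1}, replacing the element $x$ by the Klein four-group $K \le A \le G$ and using the second set of inequalities in Lemma \ref{centorder}. Write $|A|=a$ and $n=ka+s$ with $k$ even and $2 \le s < 2a+2$, and embed $A$ almost-freely into $G$ as in \S~2.4, so that $V \downarrow A = R^2 \perp \dots \perp R^2 \perp I$ with $\frac{k}{2}$ copies of $R^2$ and $I$ trivial of dimension $s$. Fix generators $y_1, y_2$ of $K$; then $D := C_H(y_1) \cap C_H(y_2) = C_H(K)$, and since $K$ acts on each regular summand $R$ of $\mathbb{F}_q A$ with $K$ acting freely (as $K \le A$ and $R$ is free, each of $y_1, y_2, y_1y_2$ acts fixed-point-freely on the nontrivial part), the fixed space of $K$ on $V$ is exactly $I$, of dimension $s$. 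More precisely, on the free module $R$ the group $K \cong C_2 \times C_2$ decomposes $R$ into its four isotypic components (in odd characteristic) or, in characteristic $2$, $R \downarrow K$ is free of rank $\frac{a}{4}$; in either case one reads off that $D^\circ$ is (modulo scalars and up to the appropriate form) a product of classical groups on pieces of dimension $\frac{ka}{4}$ (four of them) together with a classical group on the $s$-dimensional fixed space, plus general linear factors permuted by the $C_2$'s — entirely analogous to Table \ref{centxAtable}/Table \ref{cent1table} with $r=4$.

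The key estimate to extract is the analogue of (\ref{dimeqn1}), namely
\begin{equation*}
\dim D^\circ < \tfrac{1}{4}\dim(G) + cn
\end{equation*}
for a constant $c=c(A)$: the "bulk" of $D^\circ$ is governed by four classical groups each of dimension roughly $\frac{1}{16}\dim(G)$ in the linear case (or the corresponding fraction for the form cases, after using that the relevant classical group on a space of dimension $m$ has dimension $\sim m^2$), giving $\sim \frac{4}{16}\dim(G) = \frac14 \dim(G)$, and everything involving the $s$-dimensional trivial summand contributes only $O(n)$ since $s<2a+2$ is bounded. In characteristic $2$ one should instead cite the dimension bounds for unipotent centralizers as in the $p\mid r$ case of Proposition \ref{conjclassprop1} (via \cite[Theorem~3.1, Theorem~4.2]{LiSe}), applied to the involutions $y_1,y_2,y_1y_2$ simultaneously; the intersection of the three centralizers again has dimension at most $\frac14\dim(G)+cn$. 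Feeding this into the second inequality of Lemma \ref{centorder} gives
\begin{equation*}
|K^{H^F}| > \tfrac{1}{12}\frac{(q-1)^d}{q^d|D:D^\circ|}q^{\dim(H)-\dim(D^\circ)} > \tfrac{1}{12}\frac{(q-1)^d}{q^d|D:D^\circ|}q^{(1-\frac14)\dim(G)-cn},
\end{equation*}
and since $d=\operatorname{rank}(Z(D^\circ/R_u(D^\circ)))$ and $|D:D^\circ|$ are both bounded by an absolute constant (there are at most some bounded number of $GL$-factors, four of them, plus component-group contributions of size $O(1)$), the factor $\frac{1}{12}\cdot\frac{(q-1)^d}{q^d|D:D^\circ|}$ is at least $q^{-c''}$ for a constant $c''=c''(A)$. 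Combining and using $|G| \sim q^{\dim G}$ together with $\dim(H)=\dim(G)$ and $n \sim \dim(G)/n$ — i.e. absorbing the $-cn$ and $-c''$ terms into a single $\frac{c(A)}{n}\dim(G)$ correction since $\dim(G) \ge n$ — yields $|K^G| = |K^{H^F}| > |G|^{\frac34 + c(A)/n}$, as required. (One must note that $K^G$ and $K^{H^F}$ coincide here because $K \le G = S$ and conjugacy of such a fixed subgroup is detected inside $H^F = \operatorname{Inndiag}(S)$ already by $S$, up to a bounded index that is again absorbed into the $c(A)/n$ term.)

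The main obstacle is bookkeeping rather than conceptual: one has to pin down $D^\circ = C_H(K)^\circ$ precisely enough — especially the characteristic $2$ case, where $y_1,y_2,y_1y_2$ are unipotent involutions and one needs the joint centralizer, not a single one — to be sure the dimension bound $\dim D^\circ < \frac14\dim(G)+cn$ really holds with the constant depending only on $A$ (not on $q$ or $n$). A secondary subtlety is handling the possibility that $I$ is too small to support a nondegenerate form of the right type in the orthogonal/symplectic cases, or that $\frac{ka}{4}$ is not an integer when $4 \nmid a$; but since $K \le A$ forces $4 \mid a$ (as $|K|=4$ divides $|A|=a$), the latter does not arise, and the former is handled exactly as in \S~2.4 by the flexibility in choosing $s$ with $2 \le s < 2a+2$. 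Everything else is the same routine that already worked for Proposition \ref{conjclassprop1} with the bookkeeping constant $\frac{r-1}{r}$ specialized to $r=4$.
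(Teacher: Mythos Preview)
Your approach is essentially the paper's: reduce to bounding $\dim D^\circ$ for $D=C_H(y_1)\cap C_H(y_2)$ and feed this into the Klein-four inequality of Lemma~\ref{centorder}. For $p\ne 2$ your description of $D^\circ$ and the bound $\dim D^\circ<\tfrac14\dim(G)+cn$ matches the paper's argument exactly (it writes down $D^\circ$ as $\overline{(GL_{ka/4}^3\times GL_{ka/4+s})\cap SL_n}$, $\overline{Sp_{ka/4}^3\times Sp_{ka/4+s}}$, or $\overline{SO_{ka/4}^3\times SO_{ka/4+s}}$).

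The one place where your proposal is too optimistic is the $p=2$ case. You suggest citing \cite[Theorems~3.1,~4.2]{LiSe} ``applied to the involutions $y_1,y_2,y_1y_2$ simultaneously''; but those results compute the centralizer of a \emph{single} unipotent element of given Jordan type, and there is no off-the-shelf statement for the joint centralizer of two commuting unipotent involutions. The paper does not try to iterate or cite such a result: instead it writes down explicit matrices for $y_1,y_2$ (as block involutions $j_{ka/2,n}$ and a block-diagonal $j_{ka/4,ka/2}\oplus I_s\oplus j_{ka/4,ka/2}$, with a specific Gram matrix in the form cases), parametrizes $D$ as matrices $\begin{pmatrix}X&&\\P&Y&\\Q&R&X\end{pmatrix}$ with $X,Q$ commuting with $j_{ka/4,ka/2}$ and $P=Pj_{ka/4,ka/2}$, $R=j_{ka/4,ka/2}R$, splits $D=UL$, and then counts dimensions of $L$ and $U$ directly (invoking \cite{LiSe} only for the single centralizer $C_{Sp_{ka/2}}(j_{ka/4,ka/2})$ inside $L$). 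In the symplectic and orthogonal cases there are additional form conditions (\ref{Kcondition1})--(\ref{Kcondition7}) that have to be imposed and counted. This yields the precise values $\dim D^\circ=\tfrac{n^2}{4}+\tfrac{3s^2}{4}-1$, $\tfrac{n^2}{8}+\tfrac{n}{2}+\tfrac{3s^2}{8}$, $\tfrac{n^2}{8}+\tfrac{3s^2}{8}-\tfrac{s}{2}$ in the linear, symplectic, orthogonal cases respectively, from which the bound follows. So the ``bookkeeping'' you flag as the main obstacle is genuinely the content of the $p=2$ argument, and it is done by explicit matrix computation rather than by a citation.
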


\begin{proof}
The proof is similar to that of Proposition \ref{conjclassprop1}. Let $K=\langle y_1, y_2 \rangle$. If $|A|=a$ then let $n=ka+s$, where $k$ is even and $2 \le s < 2a+2$.
\par First suppose $p \ne 2$, so $y_1$ and $y_2$ are semisimple. Let $D=C_H(y_1) \cap C_H(y_2)$. Since $A$ is embedded almost-freely, $D^\circ$ is isomorphic to
\begin{align*}
\overline{\left( GL_{\frac{ka}{4}}^3 \times GL_{{\frac{ka}{4}}+s} \right) \cap SL_n} \hspace{1cm}  &\text{if} \ H=PSL_n, \\
\overline{Sp_{\frac{ka}{4}}^3 \times Sp_{{\frac{ka}{4}}+s}} \hspace{1cm}  &\text{if} \ H=PSp_n, \\
\overline{SO_{{\frac{ka}{4}}}^3 \times SO_{{\frac{ka}{4}}+s}} \hspace{1cm}  &\text{if} \ H=PSO_n.
\end{align*}

\noindent In each case we have
\begin{equation} \label{dimeqn3}
\dim D^\circ < \frac{1}{4}\dim(G) + cn
\end{equation}
\noindent for some constant $c=c(A)$ depending only on $A$.

\par By Lemma \ref{centorder} and (\ref{dimeqn3}),
\begin{align*}
|\langle y_1, y_2 \rangle^{H^F}| &> \frac{1}{12}\frac{(q-1)^d}{q^d|D:D^\circ|}q^{\frac{3}{4}\dim(G)-cn}
\end{align*}
\noindent where $d=\operatorname{rank}(Z(D^\circ))$. Both the index $|D:D^\circ|$ and $d$ are bounded above by $4$. Hence, for some constant $c'=c'(A)$ depending only on $A$, we have
\begin{align*}
|\langle y_1, y_2 \rangle^{H^F}| &> q^{\frac{3}{4} \dim(G)+c'n}
\end{align*}
\noindent and the result follows, which completes the proof for the case $p \ne 2$.

\par We now consider the case where $p=2$, so $y_1$ and $y_2$ are unipotent elements. For $n_1 \le \frac{n_2}{2}$, let $j_{n_1, n_2}$ denote the matrix
 \vspace{-0.2cm}
\begin{align*}
\begin{pmatrix}
I_{n_1} \\
& I_{n_2-2n_1} \\
 I_{n_1}&   & I_{n_1} 
\end{pmatrix} \in GL_{n_2}.
\end{align*}
\noindent Since $A$ is almost-free, we can take $y_1, y_2$ as the image modulo scalars of
\begin{align*}
j_{\frac{ka}{2},n},
\begin{pmatrix}
j_{\frac{ka}{4},\frac{ka}{2}} \\
&  I_s \\
& & j_{\frac{ka}{4},\frac{ka}{2}}
\end{pmatrix}
\end{align*}
\noindent respectively, where for $H$ symplectic or orthogonal this is with respect to a basis with Gram matrix
 \vspace{-0.2cm}
\begin{align*}
\begin{pmatrix}
& & & & J \\
& & & J \\ & & J' \\ & J \\ J
\end{pmatrix}
\end{align*}
 \vspace{-0.1cm}
\noindent where 
 \vspace{-0.1cm}
\begin{align*}
J= \begin{pmatrix}
&  I_{\frac{ka}{8}} \\
 I_{\frac{ka}{8}}
\end{pmatrix}, \ J'= \begin{pmatrix}  & I_{\frac{s}{2}}  \\
I_{\frac{s}{2}}  \end{pmatrix},
\end{align*}
\noindent and if $H$ is orthogonal this is with respect to a basis $\{e_1, \dots, e_n \}$ and quadratic form $f$ defined as
 \vspace{-0.2cm}
\begin{align} \label{quadform}
f(\sum_{i=1}^n \lambda_i e_i)=\sum_{j=1}^4 \sum_{i=1}^\frac{ka}{8} \lambda_{\frac{(j-1)ka}{8}+i} \lambda_{n-\frac{jka}{8}+i}+\sum_{i=\frac{ka}{2}+1}^{\frac{s}{2}} \lambda_{i}\lambda_{\frac{s}{2}+i}.
\end{align}
 \vspace{-0.2cm}
\par Let $D=C_H(y_1) \cap C_H(y_2)$. Then $D$ consists of images of matrices
\begin{align*}
\begin{pmatrix}
X \\
P & Y \\
Q & R & X \\
\end{pmatrix}
\end{align*}
\noindent where $X \in GL_{\frac{ka}{2}}, Y \in GL_s$, $X$ and $Q$ commute with $j_{\frac{ka}{4},\frac{ka}{2}}$, $P=P j_{\frac{ka}{4},\frac{ka}{2}}$ and $R=j_{\frac{ka}{4},\frac{ka}{2}} R$. Observe that $D=UL$, where $U$ is the subgroup with $X=Y=1$ and $L$ is the subgroup with $P=Q=R=0$. Write $X,Q, P$ and $R$ as block matrices
\begin{align*}
X=\begin{pmatrix} X_1 \\ X_2 & X_1 \end{pmatrix}, \ Q=\begin{pmatrix} Q_1 \\ Q_2 & Q_1 \end{pmatrix}, \ P=\begin{pmatrix} P_1 & 0 \end{pmatrix}, \ R=\begin{pmatrix} 0 \\  R_1 \end{pmatrix}
\end{align*}
\noindent with blocks of appropriate dimensions.
\par If $H=PSL_n$, then $X, Y$ lie in $C_{GL_{\frac{ka}{2}}}(j_{\frac{ka}{4},\frac{ka}{2}}), GL_s$ respectively. By \cite[Theorem~3.1]{LiSe},
 \vspace{-0.1cm}
\begin{align*}
\dim(C_{GL_\frac{ka}{2}}{(j_{\frac{ka}{4},\frac{ka}{2}})})= \frac{1}{8}k^2a^2,
\end{align*}
\noindent and so $\dim(L)=\frac{1}{8}k^2a^2+s^2-1$. We have $\dim(U)=\frac{1}{8}k^2a^2+\frac{1}{2}kas$.
\noindent Hence
 \vspace{-0.1cm}
\begin{align*}
\dim(D^\circ) = \frac{n^2}{4} + \frac{3}{4}s^2 -1.
\end{align*}

\par If $H$ is symplectic, then the following conditions hold: 
 \vspace{-0.1cm}
\begin{align}
 X &\in C_{Sp_{\frac{ka}{2}}}(j_{\frac{ka}{4},\frac{ka}{2}}), \hspace{4cm} \label{Kcondition1} \\
Y &\in Sp_s,  \label{Kcondition2} \\
R &= Y^TJ'PX^{-1}J, \label{Kcondition3} \\
X_1^T J Q_1 &= Q_1^T J X_1,  \label{Kcondition4} \\
X_1^T J Q_2 + Q_2^T J X_1+X_2^T J Q_1 + Q_1^T J X_2  &=P_1^T J' P_1. \label{Kcondition5}
\end{align}
\noindent By \cite[Theorem~4.3]{LiSe}, 
 \vspace{-0.2cm}
\begin{align}
\dim(C_{Sp_\frac{ka}{2}}{(j_{\frac{ka}{4},\frac{ka}{2}})}) = \frac{1}{16}k^2a^2+\frac{1}{4}ka, \label{Spcent}
\end{align}
\noindent and so $\dim(L)= \frac{1}{16}k^2a^2+\frac{1}{2}ka+\frac{1}{2}s(s+1)$ by (\ref{Kcondition1}) and (\ref{Kcondition2}). Using conditions (\ref{Kcondition3})-(\ref{Kcondition5}), matrix calculations show that $\dim(U)=\frac{1}{16}k^2a^2+\frac{1}{4}kas+\frac{1}{4}ka$. Hence
 \vspace{-0.2cm}
\begin{align*}
\dim(D^\circ) =\frac{n^2}{8}+\frac{n}{2} + \frac{3s^2}{8}. 
\end{align*}

 \vspace{-0.1cm}
\par If $H$ is orthogonal, then in addition to conditions (\ref{Kcondition1}), (\ref{Kcondition3})-(\ref{Kcondition5}) two further conditions hold (coming from the quadratic form in (\ref{quadform})): 
\begin{align}
Y &\in SO_s,  \hspace{2cm} \label{Kcondition6} \\
(X_1^T J Q_2)_{ii}&=(X_2^TJ Q_1)_{ii}, \ 1 \le i \le \frac{ka}{4}. \label{Kcondition7}
\end{align}

\noindent By (\ref{Kcondition1}), (\ref{Spcent}) and (\ref{Kcondition6}) we have $\dim(L) = \frac{1}{16}k^2a^2+\frac{1}{4}ka+\frac{1}{2}s(s-1)$. From (\ref{Kcondition3})-(\ref{Kcondition5}) and (\ref{Kcondition7}) we calculate $\dim(U)=\frac{1}{16}k^2a^2+\frac{1}{4}kas-\frac{1}{4}ka$. Hence
 \vspace{-0.1cm}
\begin{align*}
\dim(D^\circ) =\frac{n^2}{8} + \frac{3s^2}{8} - \frac{s}{2}.
\end{align*}
 \vspace{-0.5cm}
\par Therefore, in each case (\ref{dimeqn3}) holds for some constant $c=c(A)$ only depending on $A$. 
\par Both the index $|D:D^\circ|$ and $\operatorname{rank}(Z(D^\circ/R_u(D^\circ)))$ are bounded above by $2^{4}$ (see \cite[Theorem~3.1, Theorem 4.2]{LiSe}), and so the result follows as in the previous case by Lemma \ref{centorder} and (\ref{dimeqn3}).
\end{proof}

\section*{\center{5. Elements of order 4: classical groups}}
\stepcounter{section}

\par Recall the definitions of $S,G,H$ and $F$ from \S~2.1. In this section we count elements of order 4 in $G$ where $G$ is contained in $H^F$ or a split extension of $H^F$ by the group generated by an involutory automorphism of $S$. The result will be useful in the proof of Proposition \ref{i4Mprop1} in \S~6, and we only require the result for these specific split extensions.

\begin{proposition} \label{i4prop}
There exist absolute constants $c_1, c_2$ such that if $S$, $G$, $H$ and $F$ are as above with $G \le H^F\langle \alpha \rangle$ for some $\alpha \in I_2(\operatorname{Aut}(S)) \cup \{ 1 \}$, then 
\begin{equation*}
|G|^{\frac{3}{4}+\frac{c_1}{n}} < i_4(G) < |G|^{\frac{3}{4}+\frac{c_2}{n}}.
\end{equation*}
\end{proposition}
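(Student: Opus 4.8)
The plan is to estimate $i_4(G)$ both above and below by a constant power (depending on $n$) of $|G|$, using the structure of elements of order $4$ in classical groups together with the counting machinery already available. For the upper bound, I would first dispose of the case $G \le H^F = \operatorname{Inndiag}(S)$ (and $G$ inside $\operatorname{Aut}(S)$ up to an involution): an element of order $4$ is either semisimple, unipotent, or mixed, but in any case its square is an involution $t$ and $x \in C_G(t)$. Thus $i_4(G) \le \sum_{t \in I_2(G)} |\{x \in C_G(t): x^2 = t\}| \le \sum_{t} i_4(C_G(t))$. Now $C_G(t)$, for $t$ an involution (or $G = H^F\langle\alpha\rangle$ with $\alpha^2 = t$-type), has a subgroup $C_0$ as in Table~\ref{auttable} whose quotient by its centre is (a product of) classical groups of bounded corank, so one can bound $i_4(C_G(t))$ by $|C_G(t)|^{3/4 + c/n}$ using Proposition~\ref{irprop} applied to the composition factors (for the prime $2$, squared appropriately) together with the bound $|C_G(t)| \le |G|^{1 - \kappa/n}$ coming from $\dim C_H(t) \le \tfrac34 \dim H + cn$. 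Since by the discussion after Table~\ref{auttable} there are at most $cn$ classes of involutions and $i_2(G) < |G|^{1/2 + c/n}$ by Proposition~\ref{irprop}, summing over the at most $|G|^{1/2+c/n}$ involutions $t$ and multiplying by the per-centralizer count gives $i_4(G) < |G|^{1/2 + c/n} \cdot |G|^{(3/4)(1 - \kappa/n) + c/n} < |G|^{3/4 + c_2/n}$, after checking the arithmetic of the exponents works out because $\tfrac12 + \tfrac34 = \tfrac54 > 1$ is compensated by the deficiency $\kappa/n$ in the centralizer bound — this exponent bookkeeping is the place to be careful.

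For the lower bound, the cleanest route is to exhibit a single explicit element of order $4$ in $S \le G$ with a small centralizer. Take $A = C_4$ (or any $2$-group containing an element of order $4$) and embed it almost-freely into $S$ as in \S~2.4, with $r = 4$; then Proposition~\ref{conjclassprop1} immediately gives an element $x$ of order $4$ with $|x^S| > |S|^{3/4 + c/n}$, hence $i_4(S) > |S|^{3/4 + c/n}$. Since $S \le G$ with $|G:S|$ bounded by an absolute constant times (a power of) $e \le \log q$, and $G$-conjugacy only fuses $S$-classes into bounded-size unions, we get $i_4(G) \ge i_4(S) > |S|^{3/4 + c/n} > |G|^{3/4 + c_1/n}$ for a suitable $c_1$, absorbing the bounded index into the $c_1/n$ term (using $|G| \le |S|^{1 + c/n}$, valid since $|\operatorname{Out}(S)|$ is polynomially bounded in the rank and at most $q^{O(1)}$). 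One must be mildly careful that the constant $c$ in Proposition~\ref{conjclassprop1} depends on $A = C_4$ only, which is fine, and that the "$n \ge 2|A|+2$" hypothesis is absorbed into the conclusion by taking the absolute constant $c_1$ negative enough (or simply noting the statement is about sufficiently large rank).

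The main obstacle is the upper bound, specifically keeping the error exponents absolute and uniform across the families: one needs that the centralizer $C_0$ in Table~\ref{auttable}, after modding out the centre and the radical, is a product of classical (or bounded-rank) groups over fields of bounded degree over $\mathbb{F}_q$, so that Proposition~\ref{irprop} applies to each factor with a constant $c$ independent of $S$, and that the few "bad" cases listed after the table ($SL_2(3)$ etc.) contribute only a bounded multiplicative factor. I would handle the parabolic-free structure of $C_G(t)$ uniformly via $\dim$-bounds (as in the proof of Proposition~\ref{conjclassprop1}) rather than case-by-case exact orders, so that the whole estimate reduces to: number of involutions times max over involutions of $i_4$ of the centralizer, each term controlled by a power of $|G|$ with exponent $3/4$ plus $O(1/n)$. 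The remaining work — verifying $[G : H^F]$, $[H^F : S]$ are at most $q^{O(1)} \cdot n^{O(1)}$, and that $i_4$ of a central extension or subgroup of bounded index of a classical group obeys the same bound — is routine and follows from Proposition~\ref{irprop} and standard facts about $\operatorname{Out}(S)$.
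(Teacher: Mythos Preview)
Your lower bound is fine and matches the paper: invoke Proposition~\ref{conjclassprop1} with $A=C_4$ to get a single class of size $>|S|^{3/4+c/n}$, and absorb the bounded index $|G:S|$ into the exponent.

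The upper bound, however, has a genuine arithmetic gap, exactly at the place you flagged. Your decomposition $i_4(G)\le\sum_{t\in I_2(G)} i_4(C_G(t))$ is valid, but the second inequality --- replacing $|\{x:x^2=t\}|$ by $i_4(C_G(t))$ --- loses too much. Grouping by conjugacy class of $t$ and writing $|C_G(t)|=|G|^\beta$ (so $\tfrac12-\tfrac{c}{n}\le\beta\le 1$), the contribution of that class, even granting $i_4(C_G(t))\le |C_G(t)|^{3/4+c/n}$, is
\[
|t^G|\cdot|C_G(t)|^{3/4}\;=\;|G|^{1-\beta}\cdot|G|^{3\beta/4}\;=\;|G|^{1-\beta/4},
\]
which for $\beta\approx\tfrac12$ (the ``balanced'' involutions, e.g.\ $t$ with eigenspaces of dimension $\sim n/2$) gives $|G|^{7/8}$, not $|G|^{3/4}$. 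Your claimed compensation ``$\dim C_H(t)\le\tfrac34\dim H+cn$'' is simply false for general involutions (take $t=\mathrm{diag}(-1,1,\dots,1)$ in $SL_n$, where $\dim C_H(t)\sim n^2$), and in any case an $O(1/n)$ deficiency cannot close an $1/8$ gap in the exponent. The loss is real: most elements of order $4$ in $C_G(t)$ do \emph{not} square to $t$, so the bound $|\{x:x^2=t\}|\le i_4(C_G(t))$ overcounts by a large factor.

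What the paper does instead is bound each class of order-$4$ elements directly: for $x\in I_4(H^F)$ one shows $\dim C_H(x)^\circ>\tfrac14\dim H+cn$ (by Cauchy--Schwarz on the eigenspace dimensions when $p\ne 2$, and via the explicit unipotent centralizer dimensions from \cite{LiSe} when $p=2$), so $|x^{H^F}|<q^{(3/4)\dim H+c'n}$; then one multiplies by the number of $H^F$-classes, which is at most polynomial in $qn$. For the coset $H^F\alpha$ with $p$ odd the paper \emph{does} pass through $t=gg^\alpha\in I_2(H^F)$ as you suggest, but the point is to bound $|C_{C_{H^F}(t)}(g\alpha)|$ from \emph{below} (by $|H^F|^{1/4+c/n}$, using Table~\ref{auttable} twice), giving a direct bound on the class size of $g\alpha$; one then separately bounds the number of classes in $I_4(H^F\alpha)$. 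For $p=2$ and $\alpha$ a field or graph-field automorphism the paper uses Shintani descent (Lemma~\ref{shintani}) to identify classes in $H^F\alpha$ with involution classes in a smaller group $H^{F_0}$, and for graph automorphisms it appeals to the outer unipotent class theory of \cite{LaLiSe2}. None of this is captured by your summation-over-involutions scheme.
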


\par Before we begin the proof of Proposition \ref{i4prop}, we briefly discuss the setup for Shintani descent. This technique uses facts about algebraic groups and allows us to count elements of a given order in $G \backslash S$.

 \vspace{-0.2cm}
\subsection{Shintani descent}
\par We follow \cite[\S~2.6]{BuGu}. Let $X$ be a connected linear algebraic group over the algebraic closure of $\mathbb{F}_p$. Let $F_X:X \rightarrow X$ be a Frobenius map, a bijective algebraic group endomorphism with finite fixed point subgroup $X^{F_X}$. Let $Y_0=X^{F_X}$ and $Y=X^{F_X^e}$ for some integer $e$. Denote by $E$ be the cyclic group of automorphisms of $Y$ generated by $F_X$, so $|E|=e$, and let $Y.E$ be the semidirect product of $Y$ and $E$.
\par We consider $Y.E$-conjugacy classes of elements of the form $ g F_X \in Y F_X$. By the Lang-Steinberg theorem, there exists $x \in X$ such that $g=(x^{-1})( {}^{F_X} x)$. Then $x(g F_X)^e x^{-1}$ is fixed by $F_X$, and hence an element of $Y_0$. This allows us to define a well-defined map $f$ from the $Y.E$-conjugacy classes in the coset $Y F_X$ to $Y_0$-conjugacy classes by 
\begin{equation*}
f: (g F_X)^{Y.E} \mapsto (x(g F_X)^e x^{-1})^{Y_0},
\end{equation*}
\noindent and we call this map the {\it Shintani map} of $Y$ corresponding to $F_X$. The following result is taken from \cite[Lemma~2.13]{BuGu}.
\begin{lemma} \label{shintani}
With the notation above, the following hold:
\begin{enumerate}[label=(\roman*)]
\item We have
\begin{equation*}
C_Y(gF_X)=x^{-1}C_{Y_0}(x(gF_X)^ex^{-1})x=C_{x^{-1}Y_0x}((gF_X)^e).
\end{equation*}
\noindent In particular, $|C_Y(gF_X)|=|C_{Y_0}(x(gF_X)^ex^{-1})|$ for all $g \in Y$.
\item The Shintani map $f$ is a bijection.
\end{enumerate}
\end{lemma}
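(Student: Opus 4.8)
The plan is to carry out everything inside the ambient group $\widehat{X}=X\rtimes\langle F_X\rangle$, where $\langle F_X\rangle$ is the cyclic group generated by the endomorphism $F_X$ acting on $X$; here $Y.E$ is realized as the subgroup generated by $Y$ and $F_X$, keeping in mind that $F_X^{e}$ acts trivially on $Y$, so that $(gF_X)^{e}\in Y$ whenever $g\in Y$. The whole argument rests on a single identity. Fix $gF_X\in YF_X$, and, using that $X$ is connected, apply the Lang--Steinberg theorem to pick $x\in X$ with $g=x^{-1}\,{}^{F_X}x$, i.e.\ $\,{}^{F_X}x=xg$. A direct computation in $\widehat{X}$ then gives
\begin{equation*}
x\,(gF_X)\,x^{-1}=F_X ,
\end{equation*}
and iterating $\,{}^{F_X}x=xg$ yields $\,{}^{F_X^{e}}x=xz$ with $z:=(gF_X)^{e}\in Y$. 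From the latter one reads off that $z':=x(gF_X)^{e}x^{-1}=\,{}^{F_X^{e}}x\cdot x^{-1}$ is $F_X$-fixed (using $\,{}^{F_X}z=g^{-1}zg$), so $z'\in Y_0$ and the Shintani map genuinely lands where claimed; well-definedness of $f$ (independence of the choice of $x$ and of the representative $gF_X$ of the $Y.E$-class) is a short variant of the same bookkeeping.

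For part (i) I would conjugate the displayed identity and intersect with $X$: this gives $C_{\widehat X}(gF_X)\cap X=x^{-1}\bigl(C_{\widehat X}(F_X)\cap X\bigr)x=x^{-1}Y_0x$, hence $C_Y(gF_X)=x^{-1}Y_0x\cap Y$. It then remains to identify $x^{-1}Y_0x\cap Y$ with $x^{-1}C_{Y_0}(z')x$; this follows at once once one checks, from $Y=X^{F_X^{e}}$ and $\,{}^{F_X^{e}}x=xz$, that $xYx^{-1}=\{h\in X:\,{}^{F_X^{e}}h=z'hz'^{-1}\}$, since an element of $Y_0$ (on which $F_X^{e}$ acts trivially) lies in this set precisely when it commutes with $z'$. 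Substituting $x^{-1}z'x=z=(gF_X)^{e}$ rewrites $x^{-1}C_{Y_0}(z')x$ as $C_{x^{-1}Y_0x}((gF_X)^{e})$, giving the third form, and the order equality is then immediate.

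For part (ii) I would argue surjectivity and injectivity separately. \emph{Surjectivity:} given $y_0\in Y_0$, the map $x\mapsto\,{}^{F_X^{e}}x\cdot x^{-1}$ is onto the connected group $X$ (Lang--Steinberg for the Frobenius $F_X^{e}$), so choose $x$ with $\,{}^{F_X^{e}}x\cdot x^{-1}=y_0$; one verifies $g:=x^{-1}\,{}^{F_X}x$ lies in $Y=X^{F_X^{e}}$ using $\,{}^{F_X}y_0=y_0$, and then $f$ carries $(gF_X)^{Y.E}$ to $(z')^{Y_0}=(y_0)^{Y_0}$. \emph{Injectivity:} take representatives $g_iF_X$ ($i=1,2$) of two $Y.E$-classes with the same image under $f$, write $g_i=x_i^{-1}\,{}^{F_X}x_i$, $z_i=(g_iF_X)^{e}$, $z_i'=x_iz_ix_i^{-1}\in Y_0$; by hypothesis $z_1'$ and $z_2'$ are $Y_0$-conjugate, and replacing $x_2$ by $cx_2$ for a suitable $c\in Y_0$ (which leaves $g_2$ unchanged, as $c$ is $F_X$-fixed) we may assume $z_1'=z_2'=:z'$. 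Then $v:=x_2^{-1}x_1$ satisfies $vz_1v^{-1}=z_2$ (since $z_i=x_i^{-1}z'x_i$), hence $\,{}^{F_X^{e}}v=({}^{F_X^{e}}x_2)^{-1}\,{}^{F_X^{e}}x_1=(x_2z_2)^{-1}(x_1z_1)=z_2^{-1}vz_1=v$, so $v\in Y$; and $x_i(g_iF_X)x_i^{-1}=F_X$ forces $v(g_1F_X)v^{-1}=g_2F_X$, so the two classes coincide.

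The essential inputs are the Lang--Steinberg theorem (invoked twice) and the connectedness of $X$; everything else is bookkeeping in $\widehat X$. I expect injectivity to be the step needing the most care, precisely because conjugacy in $\widehat X$ by an element of $X$ is far coarser than $Y.E$-conjugacy (in $\widehat X$ \emph{every} $gF_X$ is conjugate to $F_X$): the real content is that the conjugator $v=x_2^{-1}x_1\in X$ can be forced to lie in $Y$, which reduces to the identity $\,{}^{F_X^{e}}v=z_2^{-1}vz_1$ combined with $vz_1=z_2v$. This is \cite[Lemma~2.13]{BuGu}, and the above is the standard argument, going back in essence to Shintani and Kawanaka.
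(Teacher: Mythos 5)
Your proof is correct. Note that the paper itself gives no proof of this lemma: it is quoted verbatim from Burness--Guest \cite[Lemma~2.13]{BuGu}, so there is nothing internal to compare against. Your argument is the standard Shintani descent computation (conjugating $gF_X$ to $F_X$ inside $X\rtimes\langle F_X\rangle$ via a Lang--Steinberg element, then tracking fixed-point conditions), and all the key verifications check out: the identity $x(gF_X)x^{-1}=F_X$, the description $xYx^{-1}=\{h:{}^{F_X^e}h=z'hz'^{-1}\}$ yielding $Y_0\cap xYx^{-1}=C_{Y_0}(z')$ for part (i), and in the injectivity step the crucial point that $v=x_2^{-1}x_1$ satisfies ${}^{F_X^e}v=z_2^{-1}vz_1=v$ and hence lies in $Y$. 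The only cosmetic caution is that $\langle Y,F_X\rangle\le\widehat X$ need not literally equal $Y.E$ (as $F_X^e\ne 1$ in $\widehat X$), but since all your computations concern conjugation formulas on $X$ and the coset $YF_X$, which agree in the two groups, this does not affect the argument.
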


\subsection{Proof of Proposition \ref{i4prop}}

\par Observe that to prove the upper bound in Proposition \ref{i4prop} it suffices to prove the result for $G=H^F \langle \alpha \rangle$, and similarly for the lower bound it suffices to prove the result for $G=S$. In fact, the lower bound holds by Proposition \ref{conjclassprop1}, and so from now on we will only consider the case $G=H^F \langle \alpha \rangle$.

\begin{lemma} \label{i4prop1} 
Proposition \ref{i4prop} holds in the case $\alpha=1$.
\end{lemma}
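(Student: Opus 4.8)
The plan is to prove the two-sided bound on $i_4(H^F)$ directly, using the structure theory of order-$4$ elements in classical groups together with Lemma~\ref{centorder} (or equivalently the more classical conjugacy-class size estimates for classical groups). First I would reduce to counting $H^F$-classes of elements of order $4$: since $i_4(H^F) = \sum_{x} |x^{H^F}|$ over representatives $x$ of such classes, and the number of classes is polynomially bounded in $n$ (bounded by $q^{O(n)}$, in fact by a polynomial in $n$ times a bounded power of $q$ for each Jordan/eigenvalue type, which is absorbed into the $c/n$ exponent), it suffices to (a) produce one class whose size is at least $|H^F|^{3/4 + c_1/n}$ and (b) show every class has size at most $|H^F|^{3/4 + c_2/n}$.

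For the lower bound, the excerpt already grants it via Proposition~\ref{conjclassprop1} applied with $r=4$ (embedding $C_4 \le A$ almost-freely, which forces $n \ge 2|A|+2$; for small $n$ one adjusts the constant), so strictly speaking only the upper bound needs work here — though I would remark that a cleaner self-contained lower bound comes from exhibiting an explicit order-$4$ element (e.g.\ semisimple with $4$ eigenvalue-blocks of nearly equal size when $p$ is odd, or Jordan type $J_4^{\lfloor n/4 \rfloor} J_{\le 3}$ when $p=2$) and computing its centralizer dimension to be $\tfrac14 \dim H + O(n)$. For the upper bound, the key step is: for any $x \in H^F$ of order $4$, the connected centralizer $C_H(x)^\circ$ has $\dim C_H(x)^\circ \ge \tfrac14 \dim H - c' n$ for an absolute constant $c'$. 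When $p \ne 2$, $x$ is semisimple (or $x^2$ is a central involution and one argues similarly), so $C_H(x)$ is a subsystem subgroup determined by the $\le 4$ eigenspaces; the eigenvalue multiplicities are $n_1, \dots, n_4$ with $\sum n_i = n$, and $\dim C_H(x)^\circ$ is minimized (subject to classical-group constraints) when the $n_i$ are as equal as possible, giving the stated bound. When $p = 2$, $x$ has order $4$ so its unipotent Jordan type uses only blocks $J_1, J_2, J_3, J_4$; I would invoke \cite[Theorem~3.1, Theorem~4.2]{LiSe} (already cited in the excerpt) for the centralizer dimension formula and check that the minimum over admissible Jordan types with total size $n$ is again $\tfrac14 \dim H - O(n)$. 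Then Lemma~\ref{centorder} converts this dimension bound into $|x^{H^F}| < |H^F|^{3/4 + c_2/n}$ after absorbing the bounded factors $|C:C^\circ|$, $(q\pm1)^b/q^b$, and the class-count into the $c/n$ term in the exponent (using $|H^F| \sim q^{\dim H}$ and $\dim H \sim n^2$).

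The main obstacle is the case analysis for the upper bound when $p = 2$: one must check, across the three families $PSL_n$, $PSp_n$, $P\Omega_n^\epsilon$ and all Jordan types supported on $\{J_1, J_2, J_3, J_4\}$ (with the extra subtleties in characteristic $2$ coming from the distinction between orthogonal-type and symplectic-type blocks, and the quadratic-form constraints), that no such type gives a centralizer that is ``too large'' — i.e.\ that the centralizer codimension is always at least $\tfrac14 \dim H - O(n)$. This is essentially the same bookkeeping carried out in the proof of Proposition~\ref{conjclassprop1}, and I expect to reuse those centralizer-dimension computations and the tables there almost verbatim; the only genuinely new point is that here $x$ ranges over \emph{all} order-$4$ elements rather than a single almost-free one, so I need the \emph{minimum} of the centralizer dimension over all admissible types, which is attained precisely at (or near) the almost-free configuration. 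Having established both bounds for $G = H^F$, the lemma follows.

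\begin{proof}[Proof sketch outline for Lemma~\ref{i4prop1}]
Write $i_4(H^F) = \sum |x^{H^F}|$ summed over a set of representatives of the $H^F$-classes of elements of order $4$; the number of such classes is at most $q^{O(n)}$ and in fact bounded by a polynomial in $n$ times a bounded power of $q$, so its logarithm base $|H^F|$ is $O(1/n)$. The lower bound $i_4(H^F) > |H^F|^{3/4 + c_1/n}$ is immediate from Proposition~\ref{conjclassprop1} (with $r = 4$), so it remains to bound each $|x^{H^F}|$ from above. For $x$ of order $4$, one shows $\dim C_H(x)^\circ \ge \tfrac14 \dim H - c'n$ for an absolute constant $c'$: if $p \ne 2$ this follows from the subsystem structure of centralizers of semisimple (or near-semisimple) elements, minimizing the centralizer dimension over the $\le 4$ eigenvalue multiplicities; if $p = 2$ it follows from the centralizer dimension formulas of \cite[Theorem~3.1, Theorem~4.2]{LiSe} applied to unipotent elements with Jordan blocks of size at most $4$, as in the proof of Proposition~\ref{conjclassprop1}. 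Lemma~\ref{centorder} then gives $|x^{H^F}| < \tfrac{(q+1)^b}{q^b |C:C^\circ|} q^{\dim H - \dim C^\circ} < |H^F|^{3/4 + c_2/n}$, absorbing the bounded quantities $b$, $|C:C^\circ|$, $(q+1)^b/q^b$ and $|H^F|/q^{\dim H}$ into the exponent correction. Summing over the $q^{O(n)}$ classes preserves the form of the bound, completing the proof.
\end{proof}
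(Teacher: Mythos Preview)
Your proposal is correct and follows essentially the same approach as the paper's proof: the lower bound via Proposition~\ref{conjclassprop1}, the upper bound by showing $\dim C_H(x)^\circ \ge \tfrac14\dim H - c'n$ for every order-$4$ element (splitting into the semisimple case $p\ne 2$ with eigenspace-multiplicity analysis and the unipotent case $p=2$ with the centralizer-dimension formulas from \cite{LiSe}), then converting via Lemma~\ref{centorder} and absorbing the polynomial class count. The paper makes the key inequalities slightly more explicit --- Cauchy--Schwarz for $p$ odd and the identity $\sum i n_i^2 + 2\sum_{i<j} i n_i n_j \ge \tfrac14(\sum i n_i)^2$ for $p=2$ --- and gives sharper class counts ($c'(qn)^3$ and $c'n^3$ rather than $q^{O(n)}$), but these are cosmetic differences.
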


\begin{proof}

\par Firstly suppose $p \ne 2$, and let $x \in I_4(H^F)$. Then $x$ has at most 4 distinct eigenvalues. If $n_i \in \mathbb{Z}_{\ge 0}, 1 \le i \le 4$ are the dimensions of the eigenspaces of $x$, then $C_H(x)^\circ$ is isomorphic to a group in column 2 of Table \ref{prop5.3table}.

\begin{table}[h]
\begin{center}
\def\arraystretch{1.3}
\caption {Centralizers of elements $x \in H$ of order 4, $p \ne 2$}
\label{prop5.3table}
\begin{tabular}{ c  c c }
$H$  & $C_H(x)^\circ$ & Conditions \\ \hline
$PSL_n$ & $\overline{\prod_{n_i} GL_{n_i} \cap SL_n}$ &  \\ \hline 
$PSp_n$ & $\overline{GL_{n_1} \times Sp_{n_3} \times Sp_{n_4}}$ & $n_1=n_2$\\ \hline
$PSO_n$ & $\overline{GL_{n_1} \times SO_{n_3} \times SO_{n_4}} $ & $n_1=n_2$\\ \hline
$PSp_n, PSO_n$ & $\overline{GL_{n_1} \times GL_{n_3}}$ & $n_1=n_2, n_3=n_4$\\ \hline
\end{tabular}
\end{center}
\end{table}

\par By the Cauchy-Schwarz inequality we have 
\begin{align} \label{5.1eqn1}
\dim(C_H(x)^\circ) > \frac{1}{4}\dim(G)+cn
\end{align}
\noindent for some absolute constant $c$. Therefore, by Lemma \ref{centorder} we have
\begin{align} \label{5.1eqn2}
|x^{H^F}| < \frac{(q+1)^b}{q^b |C_H(x):C_H(x)^\circ|}q^{\frac{3}{4}\dim(G) -cn}
\end{align}
\noindent where $b=\operatorname{rank} Z(C_H(x)^\circ) < 4$. 

\par There are fewer than $c'(qn)^3$ $H$-conjugacy classes of elements of order 4 for some absolute constant $c'$. For any $x \in H^F$, the $F$-stable elements of the $H$-conjugacy class $x^H$ split into at most $|C_H(x):C_H(x)^\circ|$ $H^F$-classes. Therefore, using (\ref{5.1eqn2}),
\begin{align*}
i_4(H^F) &< q^{\frac{3}{4}\dim(G)+c'n}
\end{align*}
\noindent for some absolute constant $c'$, and the result follows for $p \ne 2$.

\par We now move onto the case where $p=2$. Let $x \in I_4(H^F)$ have Jordan form $\bigoplus_{i=1}^4 J_i^{n_i}$.  Then, by \cite[Theorem~3.1]{LiSe} if $H$ is linear and \cite[Theorem~4.2]{LiSe} if $H$ is symplectic or orthogonal, we have $\operatorname{dim} C_{H}(x)^\circ$ listed in Table 6.

\begin{table}[h]
\begin{center}
\def\arraystretch{1.55}
\caption {Dimensions of $C_H(x)^\circ, \ p = 2$}
\label{dimcentorder4}
\begin{tabular}{ c  c c }
$H$  & $\dim(C_H(x)^\circ)$ & Notes \\ \hline
$PSL_n$ & $\sum in_i^2+2\sum_{i<j} in_in_j -1$ &  \\ \hline 
$PSp_n$ & $\frac{1}{2} \sum in_i^2+\sum_{i<j} in_in_j +\frac{1}{2}\sum_{i \ \text{odd}} n_i + \sum_{i \ \text{even}} \delta_in_i $ & $\delta_i \in \{ 0,1 \}$\\ \hline
$PSO_n$ & $\frac{1}{2}\sum in_i^2+\sum_{i<j} in_in_j -\frac{1}{2}\sum_{i \ \text{odd}}n_i- \sum_{i \ \text{even}} \delta_in_i $ & $\delta_i \in \{ 0,1 \}$\\ \hline
\end{tabular}
\end{center}
\end{table}

\par One can easily verify that 
\begin{align*}
\sum in_i^2 + 2\sum_{i<j}in_in_j \ge \frac{1}{4}(\sum in_i)^2=\frac{1}{4}n^2,
\end{align*}
\noindent and so (\ref{5.1eqn1}) holds. Therefore, by Lemma \ref{centorder},
\begin{align*} 
|x^{H^F}| < \frac{(q+1)^b}{q^b |C_H(x):C_H(x)^\circ|}q^{\frac{3}{4}\dim(G) -cn}
\end{align*}
\noindent where $b=\operatorname{rank} Z(C_H(x)^\circ/R_u(C_H(x)^\circ)) \le 4$.
\par By \cite[Theorem~4.2]{LiSe}, for an element of order 4 with a given Jordan form, the number of $H$-classes of elements with that form is at most 4. Therefore there are fewer than $c'n^3$ $H$-classes of elements of order 4 for some absolute constant $c'$. By \cite[Theorem~7.1]{LiSe} if $H$ is linear and by \cite[Theorem~7.3]{LiSe} if $H$ is symplectic or orthogonal, for any $x \in I_4(H^F)$ the $F$-stable elements of the $H$-conjugacy class $x^H$ split into at most $4$ $H^F$-classes. The result now follows as in the previous case.
\end{proof}

\begin{lemma}
Proposition \ref{i4prop} holds if $p \ne 2$ and $\alpha \ne 1$.
\end{lemma}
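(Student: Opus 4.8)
The plan is to count elements of order $4$ in the coset $H^F\langle\alpha\rangle \setminus H^F$ via Shintani descent, combining this with the count inside $H^F$ already established in Lemma \ref{i4prop1}. Since the lower bound $i_4(G) > |G|^{3/4 + c_1/n}$ already follows from Proposition \ref{conjclassprop1} applied to $G = S$, I only need the upper bound, so I may take $G = H^F\langle\alpha\rangle$. I split the count as $i_4(G) = i_4(H^F) + |\{x \in G\setminus H^F : x^4 = 1\}|$, and by Lemma \ref{i4prop1} the first term is at most $|G|^{3/4 + c/n}$ (note $|G| \sim |H^F|$ up to a bounded factor $e \le 2\log q \le 2\log n \cdot$const, which is absorbed into the exponent $c/n$ after taking logs — I should be slightly careful here that $\log|G| - \log|H^F|$ is $O(\log q)$, hence $O(n)$ contribution is not quite free, but in fact $e = O(\log q)$ and $\log|H^F| \sim n^2 \log q$, so the ratio of logs is $1 + O(1/n^2)$, harmless). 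So the work is bounding the number of order-$4$ elements outside $H^F$.

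First I would set up Shintani descent as in \S~5.1: write $\alpha$ (modulo inner automorphisms, using $F = \gamma_\rho\phi_a$ from \S~2.2) so that the coset $H^F\langle\alpha\rangle$ is handled by taking $X = H$ with a Frobenius map $F_X$ whose $e$-th power is (conjugate to) $F$, with $e$ small — concretely $e = 2$ for a field or graph-field involution and the graph-automorphism case reduces to a twisted form, so that $Y_0 = X^{F_X}$ is a classical group of roughly half the rank (or a classical group of the other type/twist), matching the subgroups $C_0$ in Table \ref{auttable}. An element $gF_X \in YF_X$ of order $4$ in $Y.E$ (with $|E| = 2$) corresponds under the Shintani bijection $f$ to an element of $Y_0$ whose order is controlled: $(gF_X)^4 = 1$ forces the image $x(gF_X)^4 x^{-1} = 1$, and more usefully $(gF_X)^2 \in YF_X^2$-type analysis shows the image element has order dividing $4$ (indeed $2$, since squaring already lands in $Y$). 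By Lemma \ref{shintani}, $f$ is a bijection and $|C_Y(gF_X)| = |C_{Y_0}(f\text{-image})|$, so the number of such $gF_X$ equals a sum of $Y_0$-class sizes of elements of order dividing $4$ in $Y_0$, i.e. $i_4(Y_0) + i_2(Y_0) + 1$.

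Now I would bound $i_4(Y_0)$ and $i_2(Y_0)$: Lemma \ref{i4prop1} gives $i_4(Y_0) < |Y_0|^{3/4 + c/n_0}$ where $n_0 \sim n/2$ or $n_0 = n$ is the rank of $Y_0$, and Proposition \ref{irprop} gives $i_2(Y_0) < |Y_0|^{1/2 + c/n_0}$, which is the smaller term. The crucial point is to translate these into the claimed bound in terms of $|G|$: since $Y_0$ embeds in $G$ with $|Y_0| \sim |G|^{1/2}$ when $e = 2$ (because $\dim$ is preserved but the field is $q$ vs $q^{1/2}$, or for the graph case the dimensions match up so that $|Y_0| \le |G|^{1/2}$ up to the usual factors), we get that the number of order-$4$ elements in the coset is at most $|Y_0|^{3/4 + c/n_0} \le (|G|^{1/2})^{3/4 + c/n_0} = |G|^{3/8 + c'/n}$, which is far below $|G|^{3/4 + c_2/n}$. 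Hence $i_4(G) < |G|^{3/4 + c/n} + |G|^{3/8 + c'/n} < |G|^{3/4 + c_2/n}$ for a suitable absolute constant $c_2$, as required.

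The main obstacle I anticipate is the bookkeeping of the Shintani setup across the several types of involutory automorphism $\alpha$ (field, graph-field, graph) and the several families of $S$ ($PSL^\epsilon$, $PSp$, $P\Omega^\epsilon$): one must choose $F_X$ and $e$ correctly in each case so that $X^{F_X} = Y_0$ is genuinely the fixed-point group listed in Table \ref{auttable}, and verify that in every case $|Y_0|$ is at most roughly $|G|^{1/2}$ (the worst case being when $Y_0$ has the same rank as $G$, e.g. graph automorphisms of $P\Omega^+$ giving $P\Omega_{n/2}(q)^2$-type groups, where one still has $|Y_0| \ll |G|^{1/2}$ comfortably). A secondary technical point is confirming that an element $gF_X$ with $(gF_X)^4 = 1$ really does map under $f$ to an element of order dividing $4$ in $Y_0$ — this follows from Lemma \ref{shintani}(i) since $f((gF_X)^{Y.E})$ is a conjugate of $(gF_X)^e$ and $e \mid 4$ in all relevant cases, so its order divides $4/\gcd(4,e)$ times... more simply, $x(gF_X)^4x^{-1} = (x(gF_X)^e x^{-1})^{4/e}$ has order dividing $4$. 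Once these case checks are done, the inequality chain is immediate from Lemma \ref{i4prop1} and Proposition \ref{irprop}.
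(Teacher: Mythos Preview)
Your approach differs substantially from the paper's, and it has two real problems.

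First, your Shintani count is wrong. The Shintani map bijects \emph{classes}, and by Lemma~\ref{shintani}(i) it preserves \emph{centralizer orders}, not class sizes. If $gF_X$ has order~$4$ in the coset (so $e=2$ and $(gF_X)^2$ is an involution), then the number of such elements is
\[
\sum_{t^{Y_0}\subset I_2(Y_0)} \frac{|Y|}{|C_{Y_0}(t)|},
\]
not $i_2(Y_0)+i_4(Y_0)+1$. Since $|Y|\sim|G|$ while $|C_{Y_0}(t)|>|Y_0|^{1/2-c/n}\sim|G|^{1/4-c/n}$, this gives at most roughly $|G|^{3/4+c/n}$, which is exactly the required bound---not the $|G|^{3/8}$ you claim. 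So the error is survivable once corrected, but your stated reasoning is incorrect.

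Second, and more seriously, Shintani descent as set up in \S5.1 requires $F_X$ to be a Frobenius endomorphism with \emph{finite} fixed-point group. A pure graph automorphism (e.g.\ inverse-transpose on $PSL_n(q)$ when $q$ is not a square, or the graph automorphism of $P\Omega_n^+(q)$) is not such a map: its fixed-point group in $H$ is a positive-dimensional algebraic subgroup, not a finite group. There is no $F_X$ with $F_X^2=F$ realising these $\alpha$ in the Shintani framework. Your plan therefore does not cover the graph-automorphism case at all, and the parenthetical remark about ``$P\Omega_{n/2}(q)^2$-type groups'' conflates the centralizer $C_G(\alpha)$ with a putative $Y_0$, which are different objects.

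The paper avoids both issues by abandoning Shintani entirely for $p\neq2$: for $g\alpha\in I_4(H^F\alpha)$ it sets $t=(g\alpha)^2=gg^\alpha\in I_2(H^F)$, observes that $g\alpha$ acts on $C_{H^F}(t)$, and uses Table~\ref{auttable} twice (once for $t$, once for the action of $g\alpha$ on the simple factors of $C_{H^F}(t)/Z$) to exhibit a subgroup of $C_{H^F}(g\alpha)$ of order at least $|H^F|^{1/4+c/n}$. This handles all involutory $\alpha$ uniformly. It then bounds the number of $H^F$-classes in $I_4(H^F\alpha)$ by counting classes of involutions in $\operatorname{Out}(C_{H^F}(t))$ for each involution class $t$. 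The Shintani idea is only used in the next lemma, for $p=2$ and $\alpha$ field or graph-field, where the graph case is again handled separately.
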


\begin{proof}
\par By Lemma \ref{i4prop1} it suffices to consider elements in the coset $H^F \alpha$.
\par Let $g\alpha \in I_4(H^F \alpha)$, so $g g^\alpha \in I_2(H^F)$. Observe that $g\alpha$ acts on $C_{H^F}(gg^\alpha)$, and that $C_{H^F}(gg^\alpha)$ contains a subgroup $C=C_0$ listed in Table \ref{auttable}. In most cases $C/Z(C)$ is isomorphic to a simple group or a direct product of simple groups, and $g\alpha$ acts on $C/Z(C)$ ($C$ is a characteristic subgroup of $C_{H^F}(gg^\alpha)$). The exceptions are listed above Table \ref{auttable}, and in these cases let $C$ be last term in the derived series of $C_0$, on which $g\alpha$ acts. Therefore $g \alpha$ acts on $C/Z(C) \cong S_1$ or $S_1 \times S_2$, where $S_1, S_2$ are simple.
\par If $C/Z(C) \cong S_1$, then $C_{S_1}(g\alpha)$ contains a subgroup $C_1$ isomorphic to a group listed in Table \ref{auttable}. We compute that 
\begin{equation*}
|C_1| > |S_1|^{\frac{1}{2}+\frac{c}{n}} > |H^F|^{\frac{1}{4}+\frac{c'}{n}}
\end{equation*}
\noindent for some absolute constants $c, c'$. Therefore
\begin{align*}
|C_C(g\alpha)| > |H^F|^{\frac{1}{4}+\frac{c'}{n}},
\end{align*}
\noindent and so
\begin{align} \label{galphaconjclass}
|g\alpha^{H^F \langle \alpha \rangle}|<|H^F|^{\frac{3}{4}+\frac{c''}{n}}
\end{align}
\noindent for some absolute constant $c''$.
\par If $C/Z(C) \cong S_1 \times S_2$ and $S_1 \ncong S_2$, then $g\alpha$ acts on each factor. As in the previous case, we compute
\begin{align*}
|C_{S_i}(g\alpha)|>|S_i|^{\frac{1}{2}+\frac{c}{n}},
\end{align*}
\noindent and so
\begin{align*} 
|C_C(g\alpha)| > |S_1 \times S_2|^{\frac{1}{2}+\frac{c}{n}} > |H^F|^{\frac{1}{4}+\frac{c'}{n}}.
\end{align*}
\noindent Hence (\ref{galphaconjclass}) holds as above.

\par If $C/Z(C) \cong S_1 \times S_2$ and $S_1 \cong S_2$, then $g\alpha$ may switch the factors. In the case where $g \alpha$ does not switch the factors, we have $|g\alpha^{H^F \langle \alpha \rangle}|$ bounded as in the previous case. Otherwise
\begin{align*}
C_{C/Z(C)}(g\alpha) \cong S_1, 
\end{align*}
\noindent and so
\begin{align*}
|C_C(g \alpha)| > |S_1| > |H^F|^{\frac{1}{4}+\frac{c}{n}}.
\end{align*}
\noindent Hence (\ref{galphaconjclass}) holds for all $g \alpha \in I_4(H^F \alpha)$.

\par We now bound the number of conjugacy classes of elements in $I_4(H^F \alpha)$. Let $g\alpha, h \alpha \in I_4(H^F\alpha)$ and $gg^\alpha=t \in I_2(H^F)$, and suppose $t^x=hh^\alpha$ for some $x \in H^F$. Then $g\alpha$ and $(h\alpha)^{x^{-1}}$ act on $C_{H^F}(t)$. Observe that if $g\alpha$ and $(h\alpha)^{x^{-1}}$ are conjugate in $H^F$, then they are conjugate in $C_{H^F}(t)$, since both square to $t$. Therefore, to count the number of classes in $I_4(H^F\alpha)$, it suffices to count the number of $C_{H^F}(t)$-classes of involutions in $\operatorname{Out}(C_{H^F}(t))$ for each class of involutions $t \in H^F$. In \cite{dieu}, it is proved that any automorphism of a simple group $T$ lifts to automorphism of the quasisimple group $Y$ such that $\overline{Y}=T$; moreover the number of automorphisms of $Y$ restricting to a given automorphism of $T$ is less than $q^{cn}$ for some absolute constant $c$. Therefore, the natural homomorphism $\operatorname{Aut}(C_{H^F}(t)) \rightarrow \operatorname{Aut}(C_{H^F}(t)/Z(C_{H^F}(t)))$ is surjective, with the order of the kernel bounded above by $q^{c'n}$ for some absolute constant $c'$. The number of $C_{H^F}(t)$-classes in $I_2(\operatorname{Out}(C_{H^F}(t)))$ is therefore less than the number of $C_{H^F}(t)/Z(C_{H^F}(t))$-classes in $I_2(\operatorname{Out}(C_{H^F}(t)/Z(C_{H^F}(t))))$ multiplied by $q^{c'n}$. For $t \in I_2(H^F)$ define the subgroup $C \le C_{H^F}(t)$ as above. It therefore suffices to bound the number of $C/Z(C)$-classes in $I_2(\operatorname{Out}(C/Z(C)))$, since $\operatorname{Aut}(C/Z(C))=\operatorname{Aut}(C_{H^F}(t)/Z(C_{H^F}(t)))$.
\par We have $C/Z(C) \cong S_1$ or $S_1 \times S_2$, where $S_1,S_2$ are simple groups. By \cite[Theorem~4.5.1]{GoLySo}, the number of $\operatorname{Inndiag}(S_i)$-classes of involutions in $\operatorname{Out}(S_i)$ ($i=1,2$) is less $c''n$ for some absolute constant $c''$. Since $|\operatorname{Inndiag}(S_i):S_i| \le q+1$ ($i=1,2$), there are fewer than $c'''qn$ $S_i$-classes of involutions in $\operatorname{Out}(S_i)$. Involutory automorphisms of $C/Z(C)$ either act on each factor or switch the factors (if two isomorphic factors are present). Therefore, the number of $C/Z(C)$-classes of involutions in $\operatorname{Out}(C/Z(C))$ is less than $c'''qn(c'''qn+1)$. By \cite[Theorem~4.5.1]{GoLySo} again, the number of classes of involutions in $H^F$ is less than $c''''n$. Hence the number of $H^F$-classes in $I_4(H^F\alpha)$ is less than $q^{dn}$ for some absolute constant $d$. The result now follows.
\end{proof}

\begin{lemma}
Proposition \ref{i4prop} holds if $p=2$ and $\alpha \ne 1$.
\end{lemma}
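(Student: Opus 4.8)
The plan is to follow the same strategy as in the previous lemma (the case $p \neq 2$, $\alpha \neq 1$), with the modifications forced by $p = 2$. By Lemma \ref{i4prop1} it suffices to bound $i_4(H^F\alpha)$ from above, so we work entirely in the coset $H^F\alpha$. As in the previous case, for $g\alpha \in I_4(H^F\alpha)$ we set $t = gg^\alpha \in I_2(H^F)$, observe that $g\alpha$ acts on $C_{H^F}(t)$, and that $C_{H^F}(t)$ contains a subgroup $C = C_0$ listed in Table \ref{auttable} (passing to the last term of the derived series in the finitely many exceptional cases listed above that table), so that $g\alpha$ acts on $C/Z(C) \cong S_1$ or $S_1 \times S_2$ with $S_1, S_2$ simple. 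The difference from the previous lemma is that some of the centralizers recorded in Table \ref{auttable}, and the quantities entering the count of conjugacy classes of involutory automorphisms, now need the $q$ even versions of the structural results; but crucially the \emph{dimension} estimates underlying Lemma \ref{i4prop1} for $p = 2$ are already in place.

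First I would establish the centralizer bound $|g\alpha^{H^F\langle\alpha\rangle}| < |H^F|^{3/4 + c/n}$ for all $g\alpha \in I_4(H^F\alpha)$, exactly as in the previous lemma: if $C/Z(C) \cong S_1$ then $C_{S_1}(g\alpha)$ contains a subgroup isomorphic to one of the groups in Table \ref{auttable}, whose order is at least $|S_1|^{1/2 + c/n} \ge |H^F|^{1/4 + c'/n}$; if $C/Z(C) \cong S_1 \times S_2$ with $g\alpha$ acting on each factor, multiply the two estimates; and if $g\alpha$ switches two isomorphic factors then $C_{C/Z(C)}(g\alpha) \cong S_1$, again of order $> |H^F|^{1/4+c/n}$. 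The only point requiring care is that when $q$ is even the relevant classes of involutions $t$ and their centralizers $C_{H^F}(t)$ should be read off from the $q$-even analogue of \cite[Theorem~4.5.1]{GoLySo} and the $q$-even entries of the tables (or, for $t$ a graph, graph-field or field automorphism, the $q$-even part of \cite[Proposition~4.9.1]{GoLySo}); in each case one still obtains a subgroup of the stated form, so the estimate goes through verbatim. This yields the analogue of (\ref{galphaconjclass}).

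Next I would bound the number of $H^F$-classes inside $I_4(H^F\alpha)$ by $q^{dn}$ for an absolute constant $d$, again following the previous lemma. As there, it suffices to count, for each of the $O(n)$ classes of involutions $t \in H^F$ (by \cite[Theorem~4.5.1]{GoLySo}, valid for all $q$), the number of $C_{H^F}(t)$-classes of involutions in $\operatorname{Out}(C_{H^F}(t))$; using \cite{dieu} the surjection $\operatorname{Aut}(C_{H^F}(t)) \to \operatorname{Aut}(C_{H^F}(t)/Z(C_{H^F}(t)))$ has kernel of order at most $q^{c'n}$, so it reduces to counting involutory outer automorphisms of $C/Z(C) \cong S_1$ or $S_1 \times S_2$, of which there are $O((qn)^2)$ by \cite[Theorem~4.5.1]{GoLySo}. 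Combining with the $O(n)$ bound on classes of $t$ gives $q^{dn}$ classes in $I_4(H^F\alpha)$. Multiplying this by the centralizer bound from the previous paragraph gives $i_4(H^F\alpha) < |H^F|^{3/4 + c_2/n}$, and with Lemma \ref{i4prop1} this completes the proof.

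I expect the main obstacle to be purely bookkeeping rather than conceptual: one must check that every structural input used in the $p \ne 2$ argument — the list of involution classes in $H^F$, the centralizer structure in Table \ref{auttable}, and the lifting statement from \cite{dieu} — has a valid $q$-even counterpart producing a subgroup of the same qualitative shape, and in particular that the handful of cases where $\alpha$ or $t$ is a graph or graph-field automorphism (which behave differently in characteristic $2$) still yield centralizers of order $\ge |H^F|^{1/4 + c/n}$. Since all the needed dimension inequalities for $p = 2$ were already proved in Lemma \ref{i4prop1}, no new estimates are required, and the proof is a routine adaptation.
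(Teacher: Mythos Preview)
Your proposal has a genuine gap, not merely a bookkeeping issue. The argument from the $p\ne 2$ case rests on Table~\ref{auttable}, which is explicitly stated for $q$ odd. When $p=2$, the element $t=gg^\alpha\in H^F$ is \emph{unipotent}, and its centralizer $C_{H^F}(t)$ is not reductive: it has a large unipotent radical (see the explicit description in the proof of Lemma~\ref{i2x2prop1}, for instance), and there is no subgroup $C_0$ with $C_0/Z(C_0)$ isomorphic to a simple group or a product of simple groups of order comparable to $|C_{H^F}(t)|$. So the very first step of your chain of reductions---passing to $C/Z(C)\cong S_1$ or $S_1\times S_2$ with $S_i$ simple---fails. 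There is no ``$q$-even analogue'' of Table~\ref{auttable} with the same qualitative shape.

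The paper's proof uses genuinely different tools in characteristic~$2$. For field and graph-field automorphisms $\alpha$, it invokes the Shintani descent machinery set up in \S5.1: one chooses $F_0$ with $F_0^2=F$ and $F_0$ conjugate to $\alpha$, and Lemma~\ref{shintani} gives a bijection between $H^F\langle\alpha\rangle$-classes of order-$4$ elements in $H^F\alpha$ and $H^{F_0}$-classes of involutions, together with matching centralizer orders; the bound then follows from Proposition~\ref{irprop} applied to $H^{F_0}$. For graph automorphisms of $PSL_n$, the paper views $g\alpha$ as a unipotent element of $Sp_{2n}$ and applies the explicit classification and centralizer formulae from \cite{LaLiSe2}. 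For graph automorphisms of $PSO_n$, it simply embeds $H^F\langle\alpha\rangle\cong SO_n^\epsilon(q)$ into $Sp_n(q)$ and appeals to Lemma~\ref{i4prop1}. None of this is a routine adaptation of the odd-characteristic argument.
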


\begin{proof}
\par By Lemma \ref{i4prop1} it suffices to prove the upper bound for $H^F\alpha$. With the terminology of \S~2.2, either $\alpha$ is a field or graph-field automorphism, or $H=PSL_n, PSO_n$ and $\alpha$ is a graph automorphism. 
\par In the case where $\alpha$ is a field or graph-field automorphism, we consider the Shintani map of $H^F$ corresponding to a Frobenius map $F_0:H \rightarrow H$ such that $F_0^2=F$ and, as an element of $\operatorname{Aut}(S)$, $F_0$ is $\operatorname{Inndiag}(S)$-conjugate to $\alpha$. By Lemma \ref{shintani}, $H^F \langle \alpha \rangle$-classes of elements of order 4 in $H^F \alpha$ are in one to one correspondence with $H^{F_0}$-classes of involutions.  Moreover, for $x \in H^F \alpha$, the centralizer $C_{H^F}(x)$ has the same order as the centralizer of an involution $y \in H^{F_0}$. By \cite[Theorem~7.1, Theorem~7.3]{LiSe}, there are fewer than $cn$ $H^{F_0}$-classes of involutions for some absolute constant $c$. By Proposition \ref{irprop}, 
\begin{equation*}
|C_{H^{F_0}}(y)| > |H^{F_0}|^{\frac{1}{2}+\frac{c'}{n}} \hspace{0.5cm} \forall y \in I_2(H^{F_0})
\end{equation*}
\noindent for some absolute constant $c'$, and the result follows.

\par Now suppose $\alpha$ is a graph automorphism. First consider $H=PSL_n$. By considering the quotient map $SL_n\langle \alpha \rangle \rightarrow PSL_n\langle \alpha \rangle$, it suffices to show the bound holds for $i_4(SL^\epsilon_n(q) \alpha)$. Let $g\alpha \in I_4(SL^\epsilon_n(q)\alpha)$. We consider $g \alpha$ as a unipotent element of $Sp_{2n}$. Let $W$ be the natural module of $Sp_{2n}$. By \cite[Theorem~1.2]{LaLiSe2}, $g\alpha$ has an orthogonal decomposition of the form
\begin{align*}
W \downarrow {g\alpha} = W(2)^b + W(4)^c + V(2)^d
\end{align*}
\noindent where $0 \le d \le 2$ and $g\alpha$ has Jordan normal form $J_{2m} \oplus J_{2m}$ on $W(2m)$ ($m=1,2$) and $J_2$ on $V(2)$, so $n=2b+4c+d$. Moreover, each decomposition represents a unique $SL_n$-class of unipotent elements in $I_4(SL_n \alpha)$. By \cite[Theorem~1.3]{LaLiSe2} we have
\begin{align*}
C_{SL^\epsilon_n(q)}(g\alpha) = DR
\end{align*}
\noindent where
\begin{align*}
|D|&=q^{\operatorname{dim}R_u(C_{SL_n}(g\alpha))} =\left\{ 
  \begin{array}{l l}
q^{2c^2+\frac{1}{2}d^2+4bc+2bd+2cd-2b-c-\frac{1}{2}d} \hspace{0.5cm} &\text{if} \ d \ge 1, \\[0.1cm]
q^{2c^2+4bc+c}  &\text{if} \ d=0, \\
\end{array}
\right. \\
R&=\left\{ 
  \begin{array}{l l}
Sp_{2b}(q) \times Sp_{2c}(q) \hspace{0.5cm} &\text{if} \ d \ge 1, \\[0.1cm]
Sp_{2b}(q) \times O_{2c}(q) &\text{if} \ d=0.
\end{array}
\right.
\end{align*}
\noindent It then follows that
\begin{align*}
|C_{SL^\epsilon_n(q)}(g\alpha))| >c'q^{\frac{1}{4}(2b+4c+d)^2+\frac{1}{4}(2b+d)(2b+d-2)} \ge c'q^{\frac{1}{4}(2b+4c+d)^2-\frac{1}{4}}> q^{\frac{n^2}{4}+c''}. 
\end{align*}
\noindent By \cite[Theorem~1.3]{LaLiSe2}, each $SL_n$-class of elements in $I_4(SL^\epsilon_n(q) \alpha)$ splits into at most 2 $SL^\epsilon_n(q)$-classes, and so there are fewer than $c''n$ $SL^\epsilon_n(q)$-classes of elements in $I_4(SL^\epsilon_n(q) \alpha)$. The result now follows for $H=PSL_n$.

\par Now suppose $H=PSO_n$, so $H^F \langle \alpha \rangle \cong SO^\epsilon_n(q)$. Then $H^F \langle \alpha \rangle \le Sp_n(q)$, and so the result follows from Lemma \ref{i4prop1} since $|Sp_n(q)|^{\frac{3}{4}+\frac{c}{n}} \sim |SO_n(q)|^{\frac{3}{4}+\frac{c'}{n}}$. This completes the proof.
\end{proof}

\section*{\center{6. Elements of order 4: maximal subgroups}}
\stepcounter{section}

\par Let $G$ be a finite simple classical group, with natural module of dimension $n$ over $\mathbb{F}_{q^\delta}$, where $\delta=2$ if $G$ is unitary and $\delta=1$ otherwise.

\par In this section, we prove Theorem \ref{bigthm1} over three subsections. Let $A$ and $B$ be nontrivial finite 2-groups with $|A|=a, |B|=b, x \in I_2(A)$ and $y \in I_4(B)$. Also assume $n \ge \operatorname{max}\{2a+2, 2b+2\}$, and embed $A$ and $B$ almost-freely into $G$ as in \S~2.4 with $n=k_aa+s_a=k_bb+s_b$, where $k_i$ is even and $2 \le s_i < 2i+2$ for $i=a,b$.
\par In \S~6.1, we prove the following result.

\begin{proposition} \label{i4Mprop1}
There exists an absolute constant $c$ such that for any non-parabolic maximal subgroup $M$ of $G$, 
\begin{align*}
i_4(M)<|M|^\frac{3}{4}q^{cn}.
\end{align*}
\end{proposition}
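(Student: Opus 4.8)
The plan is to bound $i_4(M)$ by induction on the structure of a non-parabolic maximal subgroup $M$, reducing in each Aschbacher class to a product of (quasi)simple classical groups of smaller rank together with a small-order extension, and then applying Proposition~\ref{i4prop} componentwise. First I would recall from Table~\ref{maxsubgroups} (class $\mathscr{C}_1$ non-parabolic) that $M$ is, modulo scalars, of the form $Cl_m(q) \times Cl_{n-m}(q)$ (or a small variant); more generally for classes $\mathscr{C}_2$--$\mathscr{C}_8$ the subgroup $M$ has a normal subgroup $M_0 = M_1 \times \cdots \times M_t$ (a direct product, a tensor-induced, or an extension-field construction) with each $M_i$ an almost simple classical group of rank at most $n/2$, and $|M:M_0|$ bounded by a polynomial in $n$ (coming from wreathing, graph/field twists, or scalar/diagonal factors). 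Since every element of order $4$ in $M$ has some power lying in $M_0$ and projects to an element of order dividing $4$ in each factor $M_i$, I would bound $i_4(M) \le |M:M_0| \cdot \big(\sum_{i} i_4(M_i) + (\text{lower order terms involving } i_2(M_i) \text{ and products thereof})\big)$, more precisely by expanding $(1 + i_2(M_i) + i_4(M_i))$ over the $t$ factors.

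The key step is then to apply Proposition~\ref{i4prop} to each factor $M_i$ (each $M_i$ is an almost simple classical group, possibly extended by an involutory automorphism, so the hypothesis $G \le H_i^{F_i}\langle\alpha\rangle$ is met), giving $i_4(M_i) < |M_i|^{3/4} q^{c_2 n_i}$ where $n_i \le n$ is the rank of $M_i$; and to apply Proposition~\ref{irprop} to get $i_2(M_i) < |M_i|^{1/2} q^{c n_i}$. The dominant contribution to the product $\prod_i (1 + i_2(M_i) + i_4(M_i))$ comes from taking the $i_4$-term in whichever single factor $M_i$ has largest order and the trivial term $1$ in the others (any other choice loses at least a factor $|M_j|^{1/4}$, which dominates the $q^{cn}$ slack once $n$ is large). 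Since $\prod_i |M_i| = |M_0| \sim |M|$ (up to the bounded index and scalars), this gives $i_4(M) < |M_0|^{3/4} q^{c'n} \cdot |M:M_0| < |M|^{3/4} q^{cn}$ after absorbing the polynomial index into $q^{cn}$.

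The main obstacle will be handling the classes where the ``smaller classical group'' is not literally a section over $\mathbb{F}_q$ of smaller rank: namely the extension-field cases $\mathscr{C}_3$ and $\mathscr{C}_5$ (factors like $Cl_m(q^t)$ or $Cl_n(q^{1/t})$), the tensor classes $\mathscr{C}_4$, $\mathscr{C}_7$, and the extraspecial-normalizer class $\mathscr{C}_6$. For $\mathscr{C}_3$ one uses $|Cl_m(q^t)| \sim |Cl_{mt}(q)|^{\dim Cl_m / (t \dim Cl_{mt})}$-type comparisons together with $t \le n$, so Proposition~\ref{i4prop} applied over $\mathbb{F}_{q^t}$ still yields the bound in terms of $|M|$; for $\mathscr{C}_5$, $M \le Cl_n(q^{1/t})$ is \emph{smaller} than $G$ so $i_4(M) \le i_4(Cl_n(q^{1/t})) < |Cl_n(q^{1/t})|^{3/4}q^{cn} < |M|^{3/4}q^{cn}$ directly; for $\mathscr{C}_6$ the group $M$ has order bounded by $q^{cn^2/\cdots}$ with $|M| = q^{o(n^2)}$ so even the crude bound $i_4(M) \le |M|$ suffices once one checks $|M| < |M|^{3/4}q^{cn}$, i.e. $|M| < q^{4cn}$, which holds as $|M|$ is polynomially bounded in these cases, or one treats it as a further direct product. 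I would organize the proof as a short case analysis over $\mathscr{C}_2,\dots,\mathscr{C}_8$, in each case exhibiting $M_0$, bounding $|M:M_0|$ by a polynomial in $n$, and invoking the product estimate above.
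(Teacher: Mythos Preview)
Your outline is broadly aligned with the paper's proof for the ``easy'' Aschbacher classes: for $\mathscr{C}_1$ (non-parabolic), $\mathscr{C}_4$, $\mathscr{C}_3$, $\mathscr{C}_5$, $\mathscr{C}_8$ the paper does exactly what you suggest, namely bound $i_4$ of a direct product (or of a single almost simple classical factor) by $\prod_i(1+i_2(M_i)+i_4(M_i))$ and invoke Propositions~\ref{irprop} and~\ref{i4prop}. For $\mathscr{C}_6$ the crude bound $|M|<q^{cn}$ is also what the paper uses.

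There is, however, a genuine gap in your treatment of the wreath-product classes $\mathscr{C}_2$ and $\mathscr{C}_7$, and this is precisely where the paper's argument is most delicate. Your claim that $|M:M_0|$ is ``bounded by a polynomial in $n$'' is false here: for $M\le Cl_m(q)\wr S_t$ with $M_0=Cl_m(q)^t$ one has $|M:M_0|=t!$, and $t$ can be as large as $n$ (e.g.\ the imprimitive subgroup with $m=1$). More seriously, the assertion that an element of order $4$ in $M$ ``projects to an element of order dividing $4$ in each factor $M_i$'' is incorrect: if $((g_1,\dots,g_t),\sigma)$ has order $4$ and $\sigma$ contains a $4$-cycle on $\{i,j,k,l\}$, the only constraint is $g_ig_jg_kg_l=\lambda$, so three of the four base components are completely free. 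Your bound $i_4(M)\le |M:M_0|\cdot\prod_i(1+i_2(M_i)+i_4(M_i))$ is therefore neither valid nor, even if it were, strong enough (it would give at best $t!\cdot|M_0|^{3/4}q^{cn}$, which exceeds $|M|^{3/4}q^{cn}$ by a factor of $(t!)^{1/4}$). The paper closes this gap with a genuinely new ingredient: Wilf's asymptotic for the number $j_4(S_t)$ of elements of order dividing $4$ in $S_t$, giving $j_4(S_t)<|S_t|^{3/4}e^{ct}$, combined with a count of compatible base tuples stratified by the cycle type $(l_1,l_2,l_4)$ of the permutation part.

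Two smaller points. First, you omit the class $\mathscr{S}$ entirely; the paper handles it via Liebeck's bound $|M|<q^{3n}$ when $\operatorname{soc}(M)\ne A_{n+1},A_{n+2}$, and via the same $j_4(S_t)$ estimate for the alternating exceptions. Second, your dominance analysis of $\prod_i(1+i_2(M_i)+i_4(M_i))$ is backwards: the largest term is obtained by taking $i_4(M_i)$ in \emph{every} factor, not in just one, which gives $\prod_i|M_i|^{3/4}q^{cn_i}=|M_0|^{3/4}q^{c\sum n_i}$. This does not affect the final bound, but the sentence as written is incorrect.
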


\par Recall the notation of \S~2.3. In \S~6.2, we prove 
\begin{proposition} \label{i4Mprop2}
 For a maximal parabolic subgroup $M=P_m \ (1 \le m \le \frac{n}{2})$,
\begin{equation*} 
\sum_{M^g \in M^G} \frac{|x^G \cap M^g|}{|x^G|}\frac{|y^G \cap M^g|}{|y^G|} < q^{-f+cm},
\end{equation*}
\noindent where $c=c(A,B)$ is a constant depending only on $A$ and $B$, and
\begin{align*}
 f=\left\{ 
  \begin{array}{l l}
\frac{1}{4}m(n-m) \hspace{0.3cm} &\text{if $G=PSL_n(q)$,} \\[0.2cm]
\frac{1}{8}m(2n-3m) &\text{if $G=PSp_n(q), P\Omega_n(q)$,} \\[0.2cm]
\frac{1}{4}m(2n-3m) &\text{if $G=PSU_n(q)$}.
\end{array}
\right. 
\end{align*}
\noindent In particular, $-f +cm < -\frac{\delta n}{4}+c'$ for some constant $c'=c'(A,B)$.
\end{proposition}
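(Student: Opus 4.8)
The plan is to estimate each factor $\tfrac{|x^G\cap P_m|}{|x^G|}$ and $\tfrac{|y^G\cap P_m|}{|y^G|}$ separately, and then combine them with the count $|P_m^G|$ of conjugates of $P_m$. Since $x\in I_{2,A}(G)$ comes from an almost-free embedding of $A$ and $y\in I_{4,B}(G)$ from an almost-free embedding of $B$, Proposition \ref{conjclassprop1} gives the lower bounds $|x^G|>|G|^{1/2+c/n}$ and $|y^G|>|G|^{3/4+c/n}$; so it remains only to bound the numerators $|x^G\cap P_m|$ and $|y^G\cap P_m|$ from above, and to control $|P_m^G|$. The number of $G$-conjugates of $P_m$ is $|G:P_m|$, which is of the form $q^{e(m)}$ up to bounded factors, where $e(m)=m(n-m)$ for $PSL_n$, and the analogous Schubert-cell exponents $\tfrac12 m(2n-3m)$ (type $C$/$D$) and $m(2n-3m)$ (unitary) in the other cases. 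The telescoping identity $\sum_{M^g\in M^G}|x^G\cap M^g|=|x^G|\,|M^G|\,\tfrac{|x^G\cap M|}{|x^G|}$ is not quite what we want; instead I will use that $\sum_{M^g\in M^G}\tfrac{|x^G\cap M^g|}{|x^G|}\tfrac{|y^G\cap M^g|}{|y^G|}\le |M^G|\cdot\tfrac{|x^G\cap M|}{|x^G|}\cdot\tfrac{|y^G\cap M|}{|y^G|}$ (all conjugates of $P_m$ are isomorphic, so the fractions are constant over the orbit), reducing everything to the three quantities $|P_m^G|$, $|x^G\cap P_m|$, $|y^G\cap P_m|$.

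For the numerators, the key input is the structure $P_m=Q_m{:}L_m$ with unipotent radical $Q_m$ of dimension roughly $m(n-m)$ (resp.\ $\tfrac12 m(2n-3m)$, $m(2n-3m)$) and Levi $L_m$ whose classical factor has natural module of dimension $n-2m$ (or $n-m$ for $PSL_n$). An element of $P_m$ of order $2$ or $4$ either lies in a proper subgroup of bounded-index type inside $L_m$ together with $Q_m$, and the point is that $|x^G\cap P_m|\le i_2(P_m)$ and $|y^G\cap P_m|\le i_4(P_m)$. To bound $i_4(P_m)$ I will bound $i_4(Q_m{:}L_m)$: a crude but sufficient bound is $i_4(P_m)\le |Q_m|\cdot i_4(L_m)\cdot(\text{number of }L_m\text{-orbits})$, or more carefully one counts via the projection to $L_m$ and bounds fibres by $|Q_m|$. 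The essential numerical fact will be $|P_m|\approx |Q_m|\cdot|L_m|$ with $\dim Q_m\sim f$ (up to the constant $cm$ error absorbing diagonal/component subtleties and the bounded number of classes), combined with Proposition \ref{irprop} applied to the classical Levi factor to get $i_4(L_m)<|L_m|^{3/4}q^{cm}$ and $i_2(L_m)<|L_m|^{1/2}q^{cm}$ (Proposition \ref{irprop} is stated for almost simple classical groups, so I must handle the reductive $L_m$ by passing to its classical factor and bounding the torus and outer parts by $q^{cm}$). Putting these together yields $|x^G\cap P_m|\le |G:C_G(x)\text{-type bound}|$... more directly: $\dfrac{|x^G\cap P_m|}{|x^G|}\le \dfrac{i_2(P_m)}{|x^G|}$ and likewise for $y$, and since $|P_m^G|=|G:P_m|$ the product telescopes to an expression of the shape $\dfrac{i_2(P_m)\,i_4(P_m)\,|G:P_m|}{|x^G|\,|y^G|}$, which I will show is $<q^{-f+cm}$ by plugging in the bounds above and the explicit orders.

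The main obstacle I anticipate is the bookkeeping of exponents: I must show that after substituting the lower bounds $|x^G|>|G|^{1/2+c/n}$, $|y^G|>|G|^{3/4+c/n}$, the upper bounds $i_2(P_m)<|P_m|^{1/2}q^{cm}$, $i_4(P_m)<|P_m|^{3/4}q^{cm}$ (these last two themselves requiring the $P_m=Q_m{:}L_m$ reduction and Proposition \ref{irprop} on the Levi, plus care that $|P_m|^{1/2+3/4}\cdot|G:P_m|/|G|^{5/4}$ collapses), the leftover exponent is exactly $-f$ in $q$, up to an additive $O(m)$. Concretely $|P_m|^{5/4}|G:P_m|=|G|^{5/4}|P_m|^{1/4}$, so the product is $\approx |P_m|^{1/4}/|G|^{c/n}\cdot q^{cm}$, and one checks $\tfrac14\dim P_m - f$ is bounded (indeed $\dim P_m = \dim Q_m+\dim L_m$ and $f=\dim Q_m$ up to $O(m)$, while $\dim L_m\le \dim G - 2\dim Q_m$... ), so the dominant saving $q^{-f}$ must be extracted from $|G:P_m|$ beating $|P_m|^{1/4}$; verifying $f - \tfrac14\dim P_m \ge f - c'm$ for the three families, i.e.\ that $\tfrac14\dim P_m\le c'm$ is false, so the correct balance instead comes from the $|G|^{-c/n}$ factors being replaced by the genuine class-size bounds of Proposition \ref{conjclassprop1} which are polynomially larger — is the delicate point. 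I would therefore be careful to use Proposition \ref{conjclassprop1}'s bound $|y^G|>|G|^{3/4+c/n}$ in the sharper form $|y^G|>q^{3\dim G/4+c'n}$ coming from its proof, and similarly for $x$, so that the $q^{cn}$-type errors in the numerator bounds are dominated; then the final inequality $-f+cm<-\delta n/4 + c'$ follows from $f\ge \delta n m /c''$ and $m\le n/2$, i.e.\ from the explicit lower bounds on $f$ as a function of $m$ and $n$ in each of the three cases.
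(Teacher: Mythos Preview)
Your approach has a genuine gap: the error term you obtain is $q^{O(n)}$, not $q^{O(m)}$, and this is fatal for the ``in particular'' clause and hence for the application in \S6.3. The problem appears in two places. First, the bounds $i_2(P_m)<|P_m|^{1/2}q^{cm}$ and $i_4(P_m)<|P_m|^{3/4}q^{cm}$ do not follow from Proposition~\ref{irprop} on the Levi: $L_m$ has a classical factor with natural module of dimension $n-m$ (or $n-2m$), so Proposition~\ref{irprop} applied to that factor contributes an error $q^{c(n-m)}$, which is $q^{O(n)}$ for small $m$. Second, even granting those bounds, the class-size lower bounds $|x^G|>|G|^{1/2+c/n}$ and $|y^G|>|G|^{3/4+c/n}$ from Proposition~\ref{conjclassprop1} themselves carry error $|G|^{c/n}\sim q^{cn}$. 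After your arithmetic the product collapses to $|G:P_m|^{-1/4}\cdot q^{O(n)}=q^{-f+O(n)}$. With an $O(n)$ error the ``in particular'' claim fails already at $m=1$ in $PSL_n(q)$: one gets $-(n-1)/4+Cn$, which is not below $-n/4+c'$ once $C>0$. You sense this yourself near the end, but the fix you suggest (using the ``polynomially larger'' class sizes) only buys another $q^{cn}$, not enough.

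The paper avoids this by never bounding $|x^G\cap P_m|$ by $i_2(P_m)$ at all. Instead it uses the identity
\[
\sum_{M^g\in M^G}\frac{|x^G\cap M^g|}{|x^G|}\,\frac{|y^G\cap M^g|}{|y^G|}
\;=\;\operatorname{fpr}(x,M^G)\cdot\operatorname{fix}(y,M^G),
\]
and computes each factor \emph{geometrically} as a count of $x$-invariant (respectively $y$-invariant) totally singular $m$-subspaces, exploiting the explicit eigenspace or Jordan structure of the almost-free elements (Lemmas~\ref{fprx} and~\ref{fpry}). Because these are Gaussian-binomial-type counts of $m$-spaces inside eigenspaces whose dimensions differ from $n/2$ (or $n/4$) by at most $s<2|A|+2$ (or $2|B|+2$), the correction factors accumulate over $m$ terms and give error $q^{O_{A,B}(m)}$; neither $|x^G|$ nor $i_2(P_m)$ ever appears separately. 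Your crude bound $|x^G\cap M|\le i_2(M)$ is in fact exactly what the paper uses for \emph{non}-parabolic maximal subgroups (Proposition~\ref{i4Mprop1}), where indices are large enough that the $\zeta$-function argument of Theorem~\ref{zetathm} absorbs the $q^{O(n)}$ slack; parabolics are precisely the case where that fails, which is why the direct subspace count is needed here.
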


\par Using Propositions \ref{i4Mprop1} and \ref{i4Mprop2}, we deduce Theorem \ref{bigthm1} in \S~6.3.

\subsection{Non-parabolic maximal subgroups}
 
\par Recall the descriptions of Aschbacher classes $\mathscr{C}_i, \ 1 \le i \le 8$ and $\mathscr{S}$ from \S~2.3. In this section, we prove Proposition \ref{i4Mprop1} for each Aschbacher class.

\begin{lemma}
Proposition \ref{i4Mprop1} holds for $M \in \mathscr{C}_1$ with $M$ non-parabolic.
\end{lemma}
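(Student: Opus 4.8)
The plan is to bound $i_4(M)$ for $M \in \mathscr{C}_1$ non-parabolic by reducing to the groups we already control via Proposition \ref{i4prop}. Recall from Table \ref{maxsubgroups} that in this case $M$ lies in the image modulo scalars of a product such as $GU_m(q) \times GU_{n-m}(q)$, $Sp_m(q) \times Sp_{n-m}(q)$, $O_m(q) \times O_{n-m}(q)$ (for $1 \le m \le n/2$), or $Sp_{n-2}(q)$ when $q$ is even. So it suffices to bound $i_4$ of a central quotient of a direct product $C_1 \times C_2$, where each $C_i$ is a classical group of the appropriate type on a space of dimension $m_i$ with $m_1 + m_2 = n$ (or of $Sp_{n-2}(q)$, which is a single factor and handled directly). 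Since passing to a quotient by a central $2$-group at most multiplies $i_4$ by a bounded power of $q$ (indeed each element of order $4$ upstairs maps to an element of order dividing $4$, and preimages of a fixed element form a coset of the centre, of size $O(1)$ when $q$ is odd, or one must note the central subgroup has order at most $\gcd(n,q-1)$, contributing $q^{O(1)}$), and since an element of order $4$ in $C_1 \times C_2$ is a pair $(z_1, z_2)$ with $\mathrm{lcm}(|z_1|,|z_2|) = 4$, we get
\begin{align*}
i_4(C_1 \times C_2) \le \big(i_4(C_1)+i_2(C_1)+1\big)\big(i_4(C_2)+i_2(C_2)+1\big).
\end{align*}

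Next I would invoke Proposition \ref{i4prop} to get $i_4(C_i) < |C_i|^{3/4} q^{c m_i}$, and Proposition \ref{irprop} (with $r=2$) to get $i_2(C_i) < |C_i|^{1/2 + c/m_i} \le |C_i|^{3/4} q^{c m_i}$ once $m_i$ is not too small (the finitely many small cases where $m_i$ is bounded are absorbed into the constant since then $|C_i| = q^{O(1)}$). Hence
\begin{align*}
i_4(C_1 \times C_2) < |C_1|^{3/4}|C_2|^{3/4} q^{c(m_1+m_2)} = |C_1 \times C_2|^{3/4} q^{cn},
\end{align*}
and since $|M| \sim |C_1 \times C_2|$ up to a factor $q^{O(1)}$ (bounded central subgroup, and the "$.2$" or field-automorphism decorations change the order only by a bounded factor), we obtain $i_4(M) < |M|^{3/4} q^{c'n}$ as required. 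The single-factor case $M \le Sp_{n-2}(q)$ with $q$ even is immediate from Proposition \ref{i4prop} applied to $Sp_{n-2}(q)$, using $|Sp_{n-2}(q)|^{3/4} \sim |M|^{3/4}$.

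The main obstacle is bookkeeping rather than conceptual: one must be careful that Proposition \ref{i4prop} is stated for $G \le H^F \langle \alpha \rangle$ with $H$ \emph{simple adjoint}, so to apply it to the factors $C_i = GU_{m_i}(q)$, $Sp_{m_i}(q)$, $O_{m_i}(q)$ etc.\ (which are neither simple nor adjoint) one passes through the inequalities $|C_i| \sim |H_i^F|$ and $i_4(C_i) \le i_4(\overline{C_i}) \cdot q^{O(1)} \le i_4(H_i^{F} \langle \alpha_i\rangle)\cdot q^{O(1)}$, where $\overline{C_i} = C_i / Z(C_i)$ sits between $H_i^F$ and $H_i^F\langle\alpha_i\rangle$ (e.g.\ $O_{m_i}(q)$ versus $P\Omega_{m_i}(q)$, the "$\alpha_i$" being a graph or diagonal involution). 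One also checks that the excluded small-rank or degenerate factors (very small $m_i$, or factors like $O_1$, $O_2$, $Sp_2$ that are not covered by the general bounds, or the excluded groups ${}^2F_4, {}^2G_2, {}^2B_2$ which do not arise here) contribute only $q^{O(1)}$ and so are swallowed by the constant $c$. Collecting all these bounded contributions into one absolute constant gives the stated inequality.
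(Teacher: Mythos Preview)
Your proposal is correct and follows essentially the same approach as the paper: factor $M$ into (the image modulo scalars of) a product of two classical factors, bound $i_4$ of the product by $(i_4+i_2+1)(i_4+i_2+1)$, and apply Propositions \ref{irprop} and \ref{i4prop} to each factor. The paper's version is terser (it passes directly to the projective factors $PGU_m(q)$, etc., and absorbs the central-quotient and small-rank bookkeeping into its constant $c$ without comment), but the argument is the same.
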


\begin{proof}
\par Let $M \in \mathscr{C}_1$ with $M$ non-parabolic as in Table \ref{maxsubgroups}. Suppose we are in the unitary case, so $M$ lies is the image modulo scalars of $GU_m(q) \times GU_{n-m}(q)$. Then by Propositions \ref{irprop} and \ref{i4prop},
\begin{align*}
i_4(M) &< c(i_4(PGU_{m}(q))+i_2(PGU_{m}(q))+1)(i_4(PGU_{n-m}(q))+i_2(PGU_{n-m}(q))+1) \\
&< |PGU_m(q)|^{\frac{3}{4}+\frac{c'}{m}}|PGU_{n-m}(q)|^{\frac{3}{4}+\frac{c''}{n-m}} \\
&< |M|^{\frac{3}{4}}q^{c'''n}.
\end{align*}
\noindent The remaining cases are proved similarly. \end{proof}

\begin{lemma} \label{i4class2}
Proposition \ref{i4Mprop1} holds for $M \in \mathscr{C}_2$.
\end{lemma}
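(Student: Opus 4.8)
\textbf{Proof plan for Lemma \ref{i4class2}.}

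The plan is to bound $i_4(M)$ for $M$ in Aschbacher class $\mathscr{C}_2$ by reducing to the factors appearing in the description of $M$ in Table \ref{maxsubgroups} and applying Propositions \ref{irprop} and \ref{i4prop} to each factor, exactly as in the $\mathscr{C}_1$ case. Recall from Table \ref{maxsubgroups} that an element $M \in \mathscr{C}_2$ is (modulo scalars) either of imprimitive type $Cl_m(q) \wr S_t$ with $n = mt$, $t > 1$, or of field-extension type $GL_{n/2}(q^2).2$, $GL_{n/2}(q).2$, etc. I would handle the two subcases separately. In subcase (ii), $M$ is (modulo scalars) an extension of $GL_{n/2}(q^{\delta'}).2$ for $\delta' \in \{1,2\}$; here $i_4(M) \le 2\, i_4(GL_{n/2}(q^{\delta'}).2)$, and since $GL_{n/2}(q^{\delta'}).2 \le H^F\langle \alpha\rangle$ for an appropriate involutory automorphism $\alpha$ (or its overgroup $PGL_{n/2}$ together with a graph automorphism) and an algebraic group $H$ of rank $\sim n/2$, Proposition \ref{i4prop} gives $i_4(M) < |GL_{n/2}(q^{\delta'})|^{3/4} q^{cn} \le |M|^{3/4} q^{c'n}$, absorbing the bounded index and scalar factors into the $q^{c'n}$ term since $|Z|, |M : M\cap(\mathrm{image\ of\ }GL_{n/2})|$ are at most $O(q^n)$.

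For subcase (i), the imprimitive case, I would write $M$ (modulo scalars and modulo a bounded-index subgroup) as sitting inside $Cl_m(q)^t . S_t$ with $mt = n$. An element $g$ of order $4$ in $Cl_m(q)^t.S_t$ projects to a permutation $\pi \in S_t$; its cycles have length $1$, $2$ or $4$. On a set of $j$ cycles of common length $\ell$, the $\ell$-th power lands in a diagonally-embedded $Cl_m(q)$ in each cycle, so the number of such elements is bounded by $|S_t| \cdot \prod (\text{number of elements of order dividing } 4 \text{ in } Cl_{m}(q^{\ell}) \text{-type factors})$. Using Propositions \ref{irprop} and \ref{i4prop} for each factor $Cl_m(q)$, together with the crude bound $|S_t| = t! \le t^t \le q^{cn}$ (valid since $t \le n$ and $m \ge 1$ means $q^{t} \ge 2^t \ge $ a fixed power, but more carefully $t \log t \le n \log n \le cn \log q$ when $q$ is bounded — and when $q$ is large the bound $t \le n$ suffices), one obtains
\begin{align*}
i_4(M) < q^{cn} \prod_{i=1}^{t} |Cl_m(q)|^{3/4} = q^{cn} |Cl_m(q)^t|^{3/4} < |M|^{3/4} q^{c'n}.
\end{align*}
The key point is that raising the product of the orders of the base factors to the power $3/4$ is what we want, and the permutation part $S_t$ together with the ``collapsing'' of cycles only costs a factor of $q^{O(n)}$.

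The main obstacle I anticipate is bookkeeping the case $t = n$, $m = 1$ (the full monomial subgroup $(q-1) \wr S_n$ or its analogues), where $|Cl_m(q)| = q - 1$ is tiny and the dominant contribution to $i_4(M)$ comes from $S_n$ itself; here $i_4(M) \le i_4(C_{q-1} \wr S_n)$ and one must check that $i_4(C_{q-1}\wr S_n) < |C_{q-1}\wr S_n|^{3/4} q^{cn}$. Since $|C_{q-1}\wr S_n| = (q-1)^n n!$ and the number of elements of order dividing $4$ in $S_n$ is well known to be at most $n!^{\,1/2 + o(1)}$ (it is $\exp((1+o(1))\tfrac{n}{2}\log n)$ versus $n! = \exp((1+o(1))n\log n)$), this is comfortably below $(n!)^{3/4}$ for large $n$, so the bound holds with room to spare. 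A small amount of care is also needed because $t$ may be composite and the cycle-type analysis of order-$4$ elements of $S_t$ must be combined correctly with which power of the base group the relevant components land in; but this is entirely routine and parallels standard wreath-product counting, so I would only sketch it. The constants throughout depend only on absolute data, not on $A$ or $B$, consistent with the statement of Proposition \ref{i4Mprop1}.
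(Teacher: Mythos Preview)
Your handling of subcase (ii) is correct and matches the paper. The genuine gap is in subcase (i), and it is precisely the step you flag as a ``crude bound'': the inequality $|S_t|=t!\le q^{cn}$ is \emph{false} whenever $q$ is bounded and $t$ is comparable to $n$. For instance with $q=2$ and $m=2$ (so $t=n/2$) one has $\log t!\sim \tfrac{n}{2}\log n$, which is not $O(n\log q)=O(n)$. Your intermediate bound $i_4(M)<q^{cn}\,|Cl_m(q)^t|^{3/4}$ is therefore not established, and in fact it is violated in such cases: already the permutations consisting entirely of $4$-cycles contribute $\sim(t!)^{3/4}$ elements of order $4$, which exceeds $q^{cn}|Cl_m(q)^t|^{3/4}$ for bounded $q$ and large $t$.

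Your attempted patch for $t=n$ also rests on a wrong estimate: the number of elements of order dividing $4$ in $S_n$ is $\exp\bigl((1+o(1))\tfrac{3n}{4}\log n\bigr)$, not $\exp\bigl((1+o(1))\tfrac{n}{2}\log n\bigr)$ (the dominant contribution comes from permutations that are products of $4$-cycles, and $n!/\bigl(4^{n/4}(n/4)!\bigr)$ has logarithm $\sim\tfrac{3n}{4}\log n$). So there is no ``room to spare'': the exponent is exactly $3/4$, and one needs a sharper statement than a first-order asymptotic. The paper obtains this from Wilf's asymptotic for $j_4(S_t)$, deducing $j_4(S_t)<|S_t|^{3/4}e^{c't}$; since $t\le n$ and $q\ge 2$ this error factor is absorbed into $q^{cn}$. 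This $|S_t|^{3/4}$ factor is then combined with the cycle-type count on the base group (fixed points contribute $i_{4,\lambda}(Cl_m(q))\le |Cl_m(q)|^{3/4+O(1/m)}$, transpositions contribute $i_{2,\lambda}(Cl_m(q))\cdot|Cl_m(q)|\le |Cl_m(q)|^{3/2+O(1/m)}$, and $4$-cycles contribute $|Cl_m(q)|^{3}$) to produce $|Cl_m(q)|^{\frac{3}{4}(l_1+2l_2+4l_4)}=|Cl_m(q)|^{3t/4}$ up to a $q^{O(n)}$ factor. That is the missing idea: you must keep the $|S_t|^{3/4}$ term rather than try to bound $|S_t|$ by $q^{cn}$.
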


\begin{proof}
\par Suppose $M \in \mathscr{C}_2$ is as in Table \ref{maxsubgroups}. First consider the case where $M$ is of the form $Cl_m(q) \wr S_t$ (see Table \ref{maxsubgroups}). Let $((g_1, \dots, g_t), \alpha) \in M$ be an element of order 4, where $g_i$ lies in $Cl_{m}(q)$ and $\alpha \in S_t$. Suppose $\alpha$ has $l_k$ $k$-cycles for $1 \le k \le t$. For $i \in \{1, \dots, t \}$, if $\alpha(i)=i$, then $g_i^4=\lambda$ for some scalar $\lambda$; if $\alpha(i)=j, \alpha(j)=i$, then $(g_i g_j)^2=\lambda$; if $i$ is contained in a 4-cycle of $\alpha$, then $g_ig_{\alpha(i)}g_{\alpha^2(i)}g_{\alpha^3(i)}=\lambda$. 

\par Let $j_4(S_t)$ denote the number of elements of order dividing 4 in $S_t$. From \cite{Wilf}, we have
\begin{align*}
j_4(S_t) \sim t! \frac{\tau^t}{\sqrt{8 \pi t}}\operatorname{exp}(\frac{1}{\tau}+\frac{1}{2\tau^2}+\frac{1}{4\tau^4}),
\end{align*}
\noindent where $\tau=\frac{1}{32}t^{-\frac{5}{4}}(1+8t^\frac{1}{4}+8t^\frac{1}{2}+32t)$. Since $t^{-\frac{1}{4}}<\tau<2t^{-\frac{1}{4}}$, we have
\begin{align*}
j_4(S_t)<c2^t t! t^{-\frac{t}{4}-\frac{1}{2}} \operatorname{exp}(t^\frac{1}{4}+2t^\frac{1}{2}+4t).
\end{align*}
\noindent for an absolute constant $c$. Using Stirling's formula, it is then easy to show 
\begin{equation} \label{6.1eqn1}
j_4(S_{t}) < |S_{t}|^\frac{3}{4}e^{c't}
\end{equation}
\noindent for some absolute constant $c'$.

\par For a scalar $\lambda$ let $i_{j, \lambda}(Cl_m(q))$ be the number of elements $g \in Cl_m(q)$ such that $g^j=\lambda$. Also, by Propositions \ref{conjclassprop1} and \ref{i4prop}, for absolute constants $c_i$ we have
\begin{align} 
i_{2, \lambda}(Cl_m(q)) < c_1 i_2(PCl_m(q)) < |PCl_m(q)|^{\frac{1}{2}+\frac{c_2}{m}},\label{6.1eqn2} \\
i_{4, \lambda}(Cl_m(q)) < c_3 i_4(PCl_m(q)) < |PCl_m(q)|^{\frac{3}{4}+\frac{c_4}{m}}. \label{6.1eqn3} 
\end{align}
\noindent Therefore, using (\ref{6.1eqn1})-(\ref{6.1eqn3}),
\begin{align*}
i_4(M) &<  j_4(S_t)  \sum_{\lambda} \sum_{l_1+2l_2+4l_4=t} i_{4, \lambda}(Cl_m(q))^{l_1} i_{2, \lambda}(Cl_m(q))^{l_2} |Cl_m(q)|^{3l_4+l_2} \\
&< |S_t|^\frac{3}{4}|Cl_m(q)|^{\frac{3}{4}(l_1+2l_2+4l_4)}q^{c''n} \\
&< |M|^\frac{3}{4}q^{c'''n}.
\end{align*}

\par Now suppose $M$ is of type $GL_\frac{n}{2}(q^\delta).2$. Here the result follows from Proposition \ref{i4prop}, as
\begin{align*}
i_4(M) <  c i_4(PGL_\frac{n}{2}(q^\delta).2) < |M|^{\frac{3}{4}+\frac{c'}{n}}
\end{align*}
\noindent for absolute constants $c, c'$. This completes the proof for the case $M \in \mathscr{C}_2$.
\end{proof}

\begin{lemma}
Proposition \ref{i4Mprop1} holds for $M \in \mathscr{C}_i, 3 \le i \le 8$, and for $M \in \mathscr{S}$.
\end{lemma}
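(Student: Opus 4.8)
The plan is to treat each of the remaining Aschbacher classes $\mathscr{C}_3,\dots,\mathscr{C}_8$ together with $\mathscr{S}$ in turn, in every case bounding $i_4(M)$ crudely by $|M|$ times nothing more than a polynomial-in-$n$ factor absorbed into $q^{cn}$, except that for the ``product-type'' and ``wreath-type'' classes we need the same bookkeeping as in Lemma \ref{i4class2}. For $M \in \mathscr{S}$, recall that $M$ is almost simple and acts absolutely irreducibly on $V$; by a theorem of Liebeck, $|M| < q^{3n}$ (indeed $|M|<q^{an}$ for a small absolute constant $a$), so trivially $i_4(M) \le |M| < q^{cn} < |M|^{3/4} q^{c'n}$ for a suitable absolute constant $c'$. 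This disposes of $\mathscr{S}$ immediately, and the same trivial estimate $i_4(M) \le |M| < |M|^{3/4}\,|M|^{1/4}$ handles any class where $|M|^{1/4} < q^{cn}$, i.e. where $\log_q|M| = O(n)$.

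For the classes where $M$ is (up to scalars) a quasisimple classical group over an extension field or of smaller dimension — namely $\mathscr{C}_3(\mathrm{i})$ ($Cl_m(q^t).t$ with $mt=n$), $\mathscr{C}_3(\mathrm{ii})$ ($GU_{n/2}(q).2$), $\mathscr{C}_5$ (subfield subgroups $Cl_n(q^{1/t})$, and $PGSp_{n/2}(q)$, $PGO_{n/2}(q)$ in the unitary case), and $\mathscr{C}_8$ (normalizers of $PSp_n(q)$, $P\Omega_n(q)$, $PSU_n(q^{1/2})$ inside $PSL_n(q)$, and $SO_n(q)$ inside $Sp_n(q)$) — the argument is identical to the ``$GL_{n/2}(q^\delta).2$'' case at the end of Lemma \ref{i4class2}: since $M^{(\infty)}$ is quasisimple with $\overline{M^{(\infty)}}$ a classical group of dimension $O(n)$ over a field of size bounded polynomially in $q^n$, Proposition \ref{i4prop} applied to that classical group together with the bounded index $|M:M^{(\infty)}|$ and bounded centre gives $i_4(M) < c\, i_4(\overline{M^{(\infty)}})\cdot q^{c'n} < |M|^{3/4+c''/n} < |M|^{3/4}q^{c'''n}$, where one uses $|\overline{M^{(\infty)}}| \sim |M|$. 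The class $\mathscr{C}_6$, where $M$ lies modulo scalars in $t^{1+2m}.Sp_{2m}(t)$ or $2^{1+2m}.O_{2m}(2)$ with $n=t^m$ (so $m \le \log_2 n$), has $|M| \le q^{O(\log n)}$, hence $\log_q|M| = O(n)$ and the trivial bound suffices.

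For $\mathscr{C}_4$ ($M \le PGL^\epsilon_d(q)\times PGL^\epsilon_e(q)$ etc. with $de=n$) and $\mathscr{C}_7$ ($M$ in $Cl_m(q)\wr S_t$ with $m^t=n$) the relevant estimate is the wreath/product bookkeeping already carried out in Lemma \ref{i4class2}. For $\mathscr{C}_4$ one factors an order-$4$ element as a pair $(g_1,g_2)$ with $g_i$ of order dividing $4$ in the two factors and the product of the squares equal to a scalar, then applies Propositions \ref{irprop} and \ref{i4prop} to each factor exactly as in the $\mathscr{C}_1$ lemma, noting $|PGL^\epsilon_d(q)|^{3/4}|PGL^\epsilon_e(q)|^{3/4} \le |M|^{3/4} q^{cn}$ since $d+e \le n$ (the loss is a $q^{O(d+e)} \le q^{O(n)}$ factor). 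For $\mathscr{C}_7$ one repeats verbatim the computation from Lemma \ref{i4class2}: an order-$4$ element of $Cl_m(q)\wr S_t$ is described by a permutation $\alpha \in S_t$ with $j_4(S_t) < |S_t|^{3/4} e^{c't}$ together with tuples constrained cycle-by-cycle, giving $i_4(M) < |S_t|^{3/4}|Cl_m(q)|^{\frac34 t}q^{cn} < |M|^{3/4}q^{c'n}$, using $t \log_q|Cl_m(q)| = O(n)$ when $m^t=n$ (in fact $t \le \log_2 n$ here, so the $q^{cn}$ slack is generous).

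The main obstacle is making sure the exponents line up cleanly in the product and wreath cases: one must verify in each instance that replacing $\sum_i \log_q|M_i|$ by $\frac34\sum_i\log_q|M_i|$ and paying $q^{cn}$ really does give $|M|^{3/4}q^{cn}$, i.e. that $\log_q|M|$ differs from $\sum_i \log_q|M_i|$ by only $O(n)$ (true because the wreathing permutation group $S_t$ contributes $\log_q|S_t| = O(t\log t) = O(n)$ in all the relevant regimes, and the extension degrees $t.\,$ or $.2$ are $O(\log n)$), and that the constant $c=c(A,B)$ can be taken absolute here since $A,B$ play no role in this section's estimates — only $x\in I_2$ and $y\in I_4$ being generic elements of $G$, not the almost-free copies, enter. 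Once these exponent checks are done the proof is a finite case-check with no new ideas beyond Propositions \ref{irprop}, \ref{i4prop} and Lemma \ref{i4class2}.
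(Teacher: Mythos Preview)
Your plan matches the paper's proof almost exactly for $\mathscr{C}_3$--$\mathscr{C}_8$: apply Proposition~\ref{i4prop} directly for $\mathscr{C}_3,\mathscr{C}_5,\mathscr{C}_8$; use the product estimate with Propositions~\ref{irprop} and~\ref{i4prop} on each factor for $\mathscr{C}_4$; observe $|M|<q^{cn}$ for $\mathscr{C}_6$; and recycle the wreath argument of Lemma~\ref{i4class2} for $\mathscr{C}_7$. (Minor point: for $\mathscr{C}_6$ the order is $q^{O((\log n)^2)}$ rather than $q^{O(\log n)}$, but this is still well inside $q^{cn}$.)

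There is, however, a genuine gap in your treatment of $\mathscr{S}$. Liebeck's bound $|M|<q^{3n}$ from \cite{Li} explicitly excludes the case $\operatorname{soc}(M)\in\{A_{n+1},A_{n+2}\}$, which arise from the (fully) deleted permutation module. For these subgroups $|M|\sim n!$, and when $q$ is bounded this grows like $q^{n\log_q n}$, so no bound of the form $|M|<q^{cn}$ with an absolute constant $c$ is available; your trivial estimate $i_4(M)\le|M|<|M|^{3/4}q^{c'n}$ therefore fails precisely here. The paper handles these two exceptional families separately, invoking the asymptotic \eqref{6.1eqn1} for $j_4(S_t)$ (derived from Wilf's formula) to get $i_4(M)<|M|^{3/4}e^{cn}$ directly. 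You need to add this case split: quote \cite{Li} only for $\operatorname{soc}(M)\notin\{A_{n+1},A_{n+2}\}$, and for the alternating exceptions appeal to the bound on $j_4(S_{n+1})$ or $j_4(S_{n+2})$ already established in Lemma~\ref{i4class2}.
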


\begin{proof}
\par If $M \in \mathscr{C}_3, \mathscr{C}_5$ or $\mathscr{C}_8$ as in Table \ref{maxsubgroups}, the result follows immediately from Proposition \ref{i4prop}.
\par For $M \in \mathscr{C}_4$ with $M \le Cl^1_d(q) \times Cl^2_e(q)$,
\begin{align*}
i_4(M) <(i_4(Cl^1_d(q))+ i_2(Cl^1_d(q))+1)(i_4(Cl^2_e(q))+ i_2(Cl^2_e(q))+1),
\end{align*} 
\noindent and using Propositions \ref{irprop} and \ref{i4prop} gives
\begin{align*}
i_4(M) &< |Cl^1_d(q)|^{\frac{3}{4}+\frac{c'}{d}} |Cl^2_e(q)|^{\frac{3}{4}+\frac{c''}{e}} < |M|^{\frac{3}{4}}q^{c'''n}.
\end{align*}
\par Suppose $M \in \mathscr{C}_6$. In each case it can easily be shown that $|M|<q^{cn}$ for some absolute constant $c$, and the result follows.
\par If $M \in \mathscr{C}_7$, then arguing as in Lemma \ref{i4class2} we can show $i_4(M)<|M|^\frac{3}{4}q^{cn}$ for some absolute constant $c$.
\par Finally suppose $M \in \mathscr{S}$. Then $M$ is an almost simple group acting absolutely irreducibly on $V$. If $\operatorname{soc}(M) \ne A_{n+1}, A_{n+2}$, then $|M|<q^{3n}$ by \cite{Li}. If $\operatorname{soc}(M) = A_{n+1}, A_{n+2}$, then $i_4(M)<|M|^\frac{3}{4}q^{cn}$ as in (\ref{6.1eqn1}) from the proof of Lemma \ref{i4class2}. The result follows. \end{proof}

\subsection{Parabolic maximal subgroups}

\par In this subsection we move towards proving Theorem \ref{bigthm1} by proving Proposition \ref{i4Mprop2}. Recall the definitions of $G, x \in A$ and $y \in B$. 

\par For the remainder of this section, we assume $M=P_m$ is a maximal parabolic subgroup of $G$ stabilizing a totally singular $m$-space for some $1 \le m \le \frac{n}{2}$. In the proof of Proposition \ref{i4Mprop2}, instead of considering $i_4(M)$ as in Proposition \ref{i4Mprop1}, we consider fixed points of $x$ and $y$ acting on $M^G=\{ M^g: g \in G\}$ in Lemmas \ref{fprx} and \ref{fpry} respectively. We then prove Proposition \ref{i4Mprop2} immediately afterwards.
\par For a $t$-dimensional vector space $W$ over $\mathbb{F}_{q^\delta}$ equipped with a zero, symplectic, orthogonal or unitary form (where $\delta=2$ if the form is unitary and $\delta=1$ otherwise), let $p_m(W)$ denote the number of totally singular $m$-subspaces of $W$. One can easily calculate 
\begin{align} 
\label{parabolicindex}  p_m(W) &\sim \left\{ 
  \begin{array}{l l}
q^{m(t-m)} \hspace{0.3cm} &\text{if $W$ is linear,}   \\ 
q^{m(t-\frac{3m}{2}+\frac{1}{2})} &\text{if $W$ is symplectic,} \\ 
q^{m(t-\frac{3m}{2}-\frac{1}{2})} &\text{if $W$ is orthogonal,}  \\
q^{m(2t-3m)} \ &\text{if $W$ is unitary.}  \\
\end{array}
\right.   
\end{align}
\par For an element $g \in GL(W)$ stabilizing a subspace $U \subset W$, write $g^U$ for the restriction of $g$ to $U$. Furthermore, if a basis $B$ of $U$ is specified, write $[g^U]_B$ for the matrix of $g^U$ with respect to $B$.
\par For positive integers $s,i$, define $J_{s,i} \in GL_{si}(q)$ as
\begin{equation} \label{unipotentelt}
J_{s,i} = 
\begin{pmatrix}
I_{s} \\
I_{s} & I_{s} \\
& I_{s} & I_{s} \\
&  & \ddots \\
& &  & I_{s} & I_{s}
\end{pmatrix}.
\end{equation}
\noindent It will be useful in the proofs of Lemmas \ref{fprx} and \ref{fpry} to note that from \cite[Theorem~7.1]{LiSe}, for $\sum_{1 \le i \le 4} il_i=m$, we have
\begin{equation} \label{unipotentcent}
|C_{GL_{m}(q)}\big( \bigoplus_{1 \le i \le 4} J_{l_i,i} \big)| \sim q^{\sum_{1 \le i \le 4} il_i^2+2\sum_{i<j}il_il_j}.
\end{equation}

\par Let $T$ be a finite group acting transitively on a set $\Omega$. For $t \in T$, define
\begin{equation*}
\operatorname{fix}(t,\Omega) = | \{ \omega \in \Omega:\omega t=\omega \} |.
\end{equation*}  
\noindent The fixed point ratio of $t \in T$ is defined as
\begin{equation*}
\operatorname{fpr}(t,\Omega)=\frac{\operatorname{fix}(t,\Omega) }{| \Omega |}.
\end{equation*}
\noindent For $\omega \in \Omega$, let $S=T_\omega$. An elementary argument counting pairs $\{ (\omega,s): \omega \in \Omega, s \in t^T, \omega s=\omega\}$ in two different ways shows we can also express the fixed point ratio as
\begin{equation} \label{fpreqn}
\operatorname{fpr}(t,\Omega) = \frac{| t^T \cap S |}{| t^T |}.
\end{equation}

\subsubsection{Fixed point ratios of almost-free involutions and elements of order 4}
\par Recall that $G$ is a finite simple classical group with natural module of dimension $n$ over $\mathbb{F}_{q^\delta}$, $A$ is a nontrivial 2-group embedded almost-freely into $G$ with $|A|=a, n=k_aa+s_a$ and $2 \le s < 2a+2$, and $x \in I_{2}(A)$.
\par We first bound the fixed point ratio of $x$.
\begin{lemma} \label{fprx}
Let $M=P_m$ be a maximal parabolic subgroup of $G$. Then
\begin{align*}
\operatorname{fpr}(x,M^G)  < q^{-f+cm}
\end{align*}
\noindent where $c=c(A)$ is a constant depending only on $A$, and 
\begin{align*}
 f=\left\{ 
  \begin{array}{l l}
\frac{1}{2}m(n-m) \hspace{0.3cm} &\text{if $G=PSL_n(q)$,} \\[0.2cm]
\frac{1}{4}m(2n-3m) &\text{if $G=PSp_n(q), P\Omega_n(q)$,} \\[0.2cm]
\frac{1}{2}m(2n-3m) &\text{if $G=PSU_n(q)$}.
\end{array}
\right. 
\end{align*}
\end{lemma}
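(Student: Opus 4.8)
The plan is to estimate $\operatorname{fpr}(x, M^G)$ by means of the identity \eqref{fpreqn}, which says that $\operatorname{fpr}(x, M^G) = |x^G \cap M|/|x^G|$. Since Proposition \ref{conjclassprop1} already gives a lower bound $|x^G| > |G|^{\frac{1}{2} + \frac{c}{n}}$ for the almost-free involution $x$, the task reduces to an upper bound for the number $|x^G \cap M|$ of conjugates of $x$ lying in the parabolic $M = P_m$. Equivalently, by the pair-counting argument that produces \eqref{fpreqn}, I would instead directly bound $\operatorname{fix}(x, M^G)$, the number of totally singular $m$-spaces $U$ of the natural module $V$ that are fixed by $x$, and then divide by $|M^G| = p_m(V)$, whose order of magnitude is recorded in \eqref{parabolicindex}.

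The key step is therefore to count the $x$-invariant totally singular $m$-subspaces of $V$. Because $A$ is embedded almost-freely, $V \downarrow A$ is a sum of copies of $R^2$ (with $R$ the regular module) together with a small trivial summand $I$ of dimension $s = s_a < 2a+2$; consequently $x$ acts on $V$ with eigenspaces (for $p \neq 2$) or Jordan structure (for $p = 2$) whose dimensions are essentially $n/2, n/2$ plus bounded error coming from $I$. An $x$-invariant subspace $U$ must decompose compatibly with this eigenspace/Jordan decomposition, so the count of such $U$ is at most the number of ways of choosing subspaces of the eigenspaces summing to dimension $m$, which is a product of Gaussian binomial coefficients; together with the totally singular condition this forces the exponent of $q$ to be at most roughly $\frac{1}{2} m(n-m)$ in the linear case, $\frac14 m(2n-3m)$ in the symplectic/orthogonal cases, and $\frac12 m(2n - 3m)$ in the unitary case, with an additive error of the form $cm$ absorbing the contributions of the bounded-dimensional trivial part $I$ and of the index $|C_{H^F}(x) : C_{H^F}(x)^\circ|$. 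Dividing by $p_m(V)$ from \eqref{parabolicindex} yields $q^{-f + cm}$ with $f$ exactly as in the statement: for instance in the linear case $\operatorname{fix}(x,M^G)$ has exponent $\sim \frac14(n-m)^2 + \frac14 m^2 + O(m)$ (choosing dimensions $\sim m/2$ inside each of the two $\sim n/2$-dimensional eigenspaces, maximized at the balanced choice) while $p_m(V)$ has exponent $\sim m(n-m)$, and the difference is $-\frac12 m(n-m) + O(m)$.

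Concretely I would organize the proof by the three (really four) classical types, and within each by the dichotomy $p \neq 2$ versus $p = 2$. For $p \neq 2$, $x$ is semisimple with two eigenspaces $V_+, V_-$ of dimensions differing by at most $s$; an $x$-invariant subspace is $U = (U \cap V_+) \oplus (U \cap V_-)$, the form restricts nondegenerately (up to a bounded radical) to $V_{\pm}$, and counting totally singular $U$ of total dimension $m$ reduces to a convolution over $m = m_+ + m_-$ of counts of totally singular subspaces in the two nondegenerate halves; maximizing the exponent over $(m_+, m_-)$ gives the stated $f$. For $p = 2$, $x$ is a unipotent involution with Jordan form $J_2^{n/2}$ up to the bounded trivial summand, and I would use the explicit description of its fixed space and of invariant subspaces (the image and kernel of $x - 1$ both have dimension $\sim n/2$), again reducing to Gaussian-binomial estimates with a bounded additive error; the orthogonal case in even characteristic requires a little extra care because of the quadratic form and the possible parameters $\delta_i$, but only affects the $O(m)$ term. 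The main obstacle I anticipate is handling the totally singular condition cleanly and uniformly across types — in particular controlling how the $2$-group's small trivial/degenerate summand $I$ interacts with the form so that its contribution is genuinely $O(m)$ rather than something worse — and verifying that the maximization over the internal dimension split really does produce precisely the claimed value of $f$ in each case rather than a slightly different constant.
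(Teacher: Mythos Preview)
Your proposal is correct and follows essentially the same approach as the paper: bound $\operatorname{fix}(x,M^G)$ by counting $x$-invariant totally singular $m$-subspaces of $V$ via the eigenspace decomposition (for odd $q$) or the Jordan structure (for even $q$), then divide by $|M^G|\sim p_m(V)$ from \eqref{parabolicindex}. Note a small numerical slip in your linear example --- the maximal exponent for $\operatorname{fix}(x,M^G)$ is $2\cdot\tfrac{m}{2}\bigl(\tfrac{n}{2}-\tfrac{m}{2}\bigr)=\tfrac{1}{2}m(n-m)$, not $\tfrac{1}{4}(n-m)^2+\tfrac{1}{4}m^2$ --- and be aware that in even characteristic for the symplectic and orthogonal cases the paper handles the totally singular condition by an explicit basis-matrix count (introducing an auxiliary linear map $\theta$) rather than a pure Gaussian-binomial argument, which is precisely the obstacle you anticipate.
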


\begin{proof}
\par If $V$ is the natural module of $G$, observe that $\operatorname{fix}(x,M^G)$ is less than or equal to the number of totally singular $m$-spaces of $V$ fixed by $x$. We count the number of possible totally singular $m$-subspaces $U \subset V$ invariant under $x$.
\par Throughout, write $k=k_a, s=s_a$ for ease of notation, so that $n=ka+s$.

\paragraph{Case $G=PSL_n(q).$} 
First suppose $q$ is odd. We have $V \downarrow x =E_{-1} \oplus E_1$, where $E_i$ is the $i$-eigenspace of $x$ for $i=\pm 1$, and $E_{-1}, E_1$ has dimension $\frac{ka}{2}, \frac{ka}{2}+s$ respectively. Hence for any $m$-subspace $U \subset V$ stabilized by $x$ we have $U=U_{-1} \oplus U_1$, where $U_{-1}$ is an $l$-subspace of  $E_{-1}$ and $U_1$ is an $(m-l)$-subspace of $E_1$, for some $0 \le l \le m$. Therefore, using (\ref{parabolicindex}) we have
\begin{align*}
\operatorname{fix}(x,M^G) &\le \sum_{0 \le l \le m} p_l(E_{-1}) p_{m-l}(E_1) \\
&< \sum_{0 \le l \le m} cq^{l(\frac{ka}{2}-l)+(m-l)(\frac{ka}{2}+s-(m-l))} \\
& <q^{\frac{1}{2}m(n-m)+c'm}
\end{align*}
\noindent for some constant $c'=c'(A)$ depending only on $A$. Hence
\begin{align*}
\operatorname{fpr}(x,M^G) < \frac{q^{\frac{1}{2}m(n-m)+c'm}}{c''q^{m(n-m)}} < q^{-\frac{1}{2}m(n-m)+c'''m}.
\end{align*}

\par Now suppose $q$ is even. With the almost-free embedding, there exists a basis $\{ e_i \}$ of $V$ with respect to which
\begin{align*}
x=\begin{pmatrix}
J_{\frac{ka}{2},2} \\
&  I_{s} 
\end{pmatrix}.
\end{align*}
\noindent Suppose $x$ stabilizes an $m$-subspace $U \subset V$. There exists a basis $\beta=\{ u_i \}_{i=1}^m$ of $U$ such that  
\begin{align} \label{xBeven}
[x^U]_\beta=\begin{pmatrix}
J_{l,2} \\ 
& I_{m-2l} 
\end{pmatrix}.
\end{align}
\noindent for some $0 \le l \le \frac{m}{2}$.
\par Let $u_i=\sum_{j=1}^n \alpha_{ij}e_j$ for $\alpha_{ij} \in \mathbb{F}_q$, and let $\alpha=(\alpha_{ij})$. Then 
\begin{equation} \label{xalphaeqn}
[x^U]_\beta^T \alpha = \alpha x^T.
\end{equation}
\noindent Therefore, we can write
\begin{align*}
\alpha = \begin{pmatrix}
A_1 & A_2 & B_1\\ & A_1 \\ & A_3 & B_2
\end{pmatrix}
\end{align*}
\noindent where
\begin{align*}
A_1, A_2 &\in M_{l,\frac{ka}{2}}(q), \\
 A_3 &\in M_{m-2l, \frac{ka}{2}}(q), \\
B_1 &\in M_{l, s}(q), \\
B_2 &\in M_{m-2l, s}(q).
\end{align*}
\par Let $\Delta(l)$ be the number of matrices of the form of $\alpha$ as above, so 
\begin{equation*}
\Delta(l)=q^{\frac{nm}{2}+(\frac{m}{2}-l)s}
\end{equation*}
\noindent Then $\Delta(l)$ is an upper bound for the number of bases $\beta$ of an $m$-subspace $U$ stabilized by $x$, with $x$ acting as (\ref{xBeven}).
\par Once $\alpha$ is fixed, $\beta$ (and hence $U$) is also fixed. Observe that $C_{GL_m(q)}([x^U]_\beta)$ acts regularly on the set of bases $\beta'$ of $U$ such that $[x^U]_\beta=[x^U]_{\beta'}$, and so the number of such bases is $|C_{GL_m(q)}([x^U]_\beta)|$. Hence, using (\ref{unipotentcent}),
\begin{align*}
\operatorname{fix}(x,M^G) <\sum_{0 \le l \le \frac{m}{2}} \frac{\Delta(l)}{|C_{GL_m(q)}([x^U]_\beta)|}<\sum_{0 \le l \le \frac{m}{2}} cq^{\frac{nm}{2}+(\frac{m}{2}-l)s-2l^2-(m-2l)^2-2l(m-2l)}<q^{\frac{nm}{2}-\frac{m^2}{2}+c'm}.
\end{align*}
 \noindent for a constant $c'=c'(A)$ depending only on $A$. Therefore
\begin{align*}
\operatorname{fpr}(x,M^G)<\frac{q^{\frac{nm}{2}-\frac{m^2}{2}+c'm}}{c''q^{m(n-m)}}<q^{-\frac{1}{2}m(n-m)+c'''m},
\end{align*}
\noindent completing the proof in the linear case.

\paragraph{Case $G=PSp_n(q).$}

First suppose $q$ is odd. We have $V \downarrow x=E_{-1} \perp E_1$, where $E_{-1}, E_1$ are as in the linear case. Hence for any $m$-subspace $U \subset V$ stabilized by $x$ we have $U=U_{-1} \perp U_1$, where $U_{-1}, U_1$ are totally singular $l, (m-l)$-subspaces of $E_{-1}, E_1$ respectively, for some $0 \le l \le m$. Therefore, by (\ref{parabolicindex}) we have
\begin{align*}
\operatorname{fix}(x,M^G) &\le \sum_{0 \le l \le m} p_l(E_{-1}) p_{m-l}(E_1) \\
&< \sum_{0 \le l \le m} cq^{l(\frac{ka}{2}-\frac{3l}{2}+\frac{1}{2})+(m-l)(\frac{ka}{2}+s-\frac{3(m-l)}{2}+\frac{1}{2})} \\
& <q^{\frac{1}{4}m(2n-3m)+c'm}
\end{align*}
\noindent for some constant $c'=c'(A)$ depending only on $A$. The result now follows as in the linear case using (\ref{parabolicindex}).

\par Now suppose $q$ is even. With the almost-free embedding, there exists a basis $\{ e_i \}$ of $V$ with Gram matrix 
\begin{equation*}
\begin{pmatrix} & I_{\frac{ka}{2}}  \\  I_{\frac{ka}{2}} \\ & & D \end{pmatrix}
\end{equation*} 
\noindent (where $D$ is a Gram matrix for an $s$-dimensional symplectic space) with respect to which
\begin{align} \label{xeven2}
x=\begin{pmatrix}
J_{\frac{ka}{4},2} \\
& J_{\frac{ka}{4},2}^T \\
& &  I_{s} 
\end{pmatrix}.
\end{align}
\noindent Suppose $x$ stabilizes a totally singular $m$-subspace $U \subset V$. There exists a basis $\beta=\{ u_i \}_{i=1}^m$ of $U$ such that $[x^U]_\beta$ is as in (\ref{xBeven}) for some $0 \le l \le \frac{m}{2}$.
\par Let $u_i=\sum_{j=1}^n \alpha_{ij}e_j$ for $\alpha_{ij} \in \mathbb{F}_q$, and let $\alpha=(\alpha_{ij})$. Then
\begin{equation*}
[x^U]_\beta^T \alpha = \alpha x^T.
\end{equation*}
\noindent Therefore, we can write
\begin{align*}
\alpha = \begin{pmatrix}
A_1 & A_2 & B_1 & B_2 & C_1\\ & A_1 & B_2 &  \\ & A_3 & B_3 & & C_2
\end{pmatrix}
\end{align*}
\noindent where 
\begin{align*}
A_1, A_2, B_1, B_2 &\in M_{l,\frac{ka}{4}}(q), \\
 A_3, B_3 &\in M_{m-2l, \frac{ka}{4}}(q), \\
 C_1 &\in M_{l, s}(q), \\ 
C_2 &\in M_{m-2l, s}(q).
\end{align*}
\par Let $\Delta(l)$ be the number of matrices of the form of $\alpha$ as above with linearly independent rows $u_i$ such that the subspace of $V$ spanned by $\{ u_i \}$ is totally singular. The number of choices for rows $u_i$ ($2l+1 \le i \le m$) of $\alpha$ is less than $q^{(m-2l)(\frac{ka}{2}+s)}$, as this gives
\begin{equation*}
\begin{pmatrix} 0 & A_3 & B_3 & 0 & C_2 \end{pmatrix}.
\end{equation*}
\noindent Given the above rows, we now count the number of choices for the remaining rows. Let $W_0$ be the subspace of $V$ spanned by the $u_i$ ($2l+1 \le i \le m$). Define a linear map $\theta:V \rightarrow V$ by
\begin{align} \label{thetaeqn}
\sum_{j=1}^{n} \gamma_j e_j &\mapsto \sum_{j=1}^{\frac{ka}{4}} \gamma_{j}e_{j+\frac{ka}{4}} +  \sum_{j=\frac{3ka}{4}+1}^{ka} \gamma_{j}e_{j-\frac{ka}{4}}.
\end{align}
\noindent Observe that for any $v_1,v_2 \in V$ we have $(v_1,  v_1\theta)=0$, $(v_1, v_2 \theta)=(v_1\theta, v_2)$ and $v_1\theta^2=0$. Inductively, for $1 \le i \le l$, choose $u_i \in W_{i-1}^\perp$ such that $\dim \left( \langle W_{i-1},u_i, u_i \theta \rangle \right)=\dim(W_{i-1})+2$; then define $W_i=\langle W_{i-1}, u_i, u_i\theta \rangle \subset V$ (note that $u_i \in W_{i-1}^\perp$ implies $u_i\theta \in W_{i-1}^\perp$ since $(u_i \theta,w)=(u_i,w \theta)=0$ for all $w \in W_{i-1}$). Then, for $1 \le i \le l$, $u_i$ are the rows of \begin{equation*}
\begin{pmatrix} A_1 & A_2 & B_1 &  B_2 & C_1 \end{pmatrix},
\end{equation*}
\noindent and $u_i \theta$ are the rows of 
\begin{equation*}
\begin{pmatrix} 0 & A_1 & B_2 & 0 & 0 \end{pmatrix}.
\end{equation*}
\noindent For $1 \le i \le l$, the number of choices for $u_i$ is less than $q^{n-(m-2l+2i-2)}$ since $\dim (W_{i-1})=m-2l+2i-2$. Therefore we have
\begin{align*}
\Delta(l)< q^{\left( \sum_{1 \le i \le l} n-(m-2l+2i-2) \right) + (m-2l)(\frac{ka}{2}+s)} = q^{\frac{nm}{2}+(\frac{m}{2}-l)s-l(m-l-1)}.
\end{align*}
\noindent Once $\alpha$ is fixed, $\beta$ (and hence $U$) is also fixed. As in the linear case, the number of bases $\beta'$ of $U$ such that $[x^U]_\beta=[x^U]_{\beta'}$ is $|C_{GL_m(q)}([x]_\beta)|$. Hence,
\begin{align} \label{xfixSpeven}
\operatorname{fix}(x,M^G)&<\sum_{0 \le l \le \frac{m}{2}} \frac{\Delta(l)}{|C_{GL_m(q)}([x]_\beta)|} \nonumber \\ &<\sum_{0 \le l \le \frac{m}{2}} cq^{\frac{nm}{2}+(\frac{m}{2}-l)s-l(m-l-1)-2l^2-(m-2l)^2-2l(m-2l)} \nonumber \\ &<q^{\frac{nm}{2}-\frac{3m^2}{4}+c'm}.
\end{align}
 \noindent for a constant $c'=c'(A)$ depending only on $A$. Therefore
\begin{align*}
\operatorname{fpr}(x,M^G)<\frac{q^{\frac{nm}{2}-\frac{3m^2}{4}+c'm}}{c''q^{m(n-\frac{3m}{2}+\frac{1}{2})}}<q^{-\frac{1}{4}m(2n-3m)+c'''m},
\end{align*}
\noindent for a constant $c'''=c'''(A)$ depending only on $A$, completing the proof in the symplectic case.

\paragraph{Case $G=P\Omega^\epsilon_n(q).$}
For $q$ odd, the proof is similar to the symplectic case.
\par For $q$ even, with the almost-free embedding there exists a basis $\{ e_i \}$ of $V$ such that $x$ is as in (\ref{xeven2}) and the quadratic form $Q$ on $V$ is defined as
\begin{equation*}
Q(\sum_{i=1}^n \lambda_i e_i) = \sum_{i=1}^\frac{ka}{2} \lambda_i \lambda_{i+\frac{ka}{2}} + Q'(\sum_{i=ka+1}^n \lambda_i e_i),
\end{equation*}
\noindent where $Q'$ is a quadratic form of type $\epsilon$ on the subspace $\langle e_{ka+1}, \dots, e_n \rangle \subset V$. This basis yields the Gram matrix 
\begin{equation*}
\begin{pmatrix} & I_{\frac{ka}{2}}  \\  I_{\frac{ka}{2}} \\ & & D \end{pmatrix}
\end{equation*}
\noindent (where $D$ is a Gram matrix for an $s$-dimensional orthogonal space of type $\epsilon$). 
\par If $x$ stabilizes a totally singular $m$-subspace of $U \subset V$, then $U$ is totally singular with respect to the symplectic form on $V$ determined by $Q$. Hence from (\ref{xfixSpeven}),
\begin{equation*}
\operatorname{fix}(x,M^G)<q^{\frac{nm}{2}-\frac{3m^2}{4}+cm}
\end{equation*}
 \noindent for a constant $c=c(A)$ depending only on $A$. Therefore
\begin{align*}
\operatorname{fpr}(x,M^G)<\frac{q^{\frac{nm}{2}-\frac{3m^2}{4}+c'm}}{c''q^{m(n-\frac{3m}{2}-\frac{1}{2})}}<q^{\frac{1}{4}m(3m-2n)+c'''m},
\end{align*}
\noindent for a constant $c'''=c'''(A)$ depending only on $A$, proving the result in the orthogonal case.

\paragraph{Case $G=PSU_n(q).$}

\par The proof in this case is similar to the symplectic case. \end{proof}

\par Recall that $G$ is a finite simple classical group with natural module of dimension $n$, $B$ is a nontrivial 2-group embedded almost-freely into $G$ with $|B|=b, n=k_bb+s_b$ and $2 \le s_b <2b+2$, and $y \in I_{4}(B)$. We now bound $\operatorname{fix}(y, M^G)$.

\begin{lemma} \label{fpry}
Let $M=P_m$ be a maximal parabolic subgroup of $G$. Then
\begin{align*}
\operatorname{fix}(y,M^G)  < q^{f+cm}
\end{align*}
\noindent where $c=c(B)$ is a constant depending only on $B$, and 
\begin{align*}
 f=\left\{ 
  \begin{array}{l l}
\frac{1}{4}m(n-m) \hspace{0.3cm} &\text{if $G=PSL_n(q)$,} \\[0.2cm]
\frac{1}{8}m(2n-3m) &\text{if $G=PSp_n(q), P\Omega_n(q)$,} \\[0.2cm]
\frac{1}{4}m(2n-3m) &\text{if $G=PSU_n(q)$}.
\end{array}
\right. 
\end{align*}
\end{lemma}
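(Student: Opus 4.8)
The plan is to bound $\operatorname{fix}(y,M^G)$ above by the number of totally singular $m$-subspaces $U\subseteq V$ invariant under $y$, and then to count these using the almost-free structure of $y$. Write $k=k_b$ and $s=s_b$, so that $n=kb+s$. When $p\neq 2$ the element $y$ is semisimple, with $(+1)$-eigenspace $E_1$ of dimension $\tfrac{kb}{4}+s$, $(-1)$-eigenspace $E_{-1}$ of dimension $\tfrac{kb}{4}$, and a ``primitive'' part $E'$ of dimension $\tfrac{kb}{2}$ on which $y^2=-1$; when $p=2$ the almost-free embedding makes $y$ the image modulo scalars of $J_{kb/4,\,4}\oplus I_s$ in the notation of (\ref{unipotentelt}), so $y$ has Jordan type $J_4^{kb/4}\oplus J_1^s$. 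The argument splits according to the type of $G$ and the parity of $q$, paralleling the proof of Lemma \ref{fprx}; the unitary case is handled as the symplectic one with $q$ replaced by $q^2$.

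Suppose first that $q$ is odd. A $y$-invariant subspace decomposes along the eigenspaces of $y$, over $\mathbb{F}_q$ when $q\equiv 1\pmod 4$ and over $\mathbb{F}_{q^2}$ on $E'$ when $q\equiv 3\pmod 4$ (the latter case being no worse, since it forces $U\cap E'$ to have even $\mathbb{F}_q$-dimension without enlarging the count), so write $U=U_1\perp U_{-1}\perp U'$ with $U_{\pm1}$ totally singular in $E_{\pm1}$ and $U'\subseteq E'$ totally singular and $y$-invariant. In the linear case one bounds $\operatorname{fix}(y,M^G)\le\sum p_{l_1}(E_1)\,p_{l_{-1}}(E_{-1})\cdot\bigl(\#\,y\text{-invariant subspaces of }E'\bigr)$ using (\ref{parabolicindex}); in the symplectic and orthogonal cases $E_1,E_{-1},E'$ are non-degenerate of the expected type, $E'=E_i\perp E_{-i}$ with $E_{\pm i}$ totally singular and perfectly paired, and $U'=U_i'\oplus U_{-i}'$ with $U_i'$ an arbitrary totally singular subspace and $U_{-i}'\subseteq(U_i')^{\perp}\cap E_{-i}$, a configuration counted by a product of Gaussian binomials. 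In each case the resulting exponent is $\tfrac{nm}{4}+O(m)$ minus a positive quadratic in the dimension parameters, constrained by $\dim U=m$; a Lagrange-multiplier computation then identifies its maximum as $f+O(m)$ with $f$ as stated --- for instance in the symplectic case one minimises $\tfrac32 l_1^2+\tfrac32 l_{-1}^2+j_1^2+j_1j_2+j_2^2$ on $l_1+l_{-1}+j_1+j_2=m$, obtaining $\tfrac38 m^2$ and hence $\tfrac{nm}{4}-\tfrac38 m^2=\tfrac18 m(2n-3m)$. Given (\ref{parabolicindex}) this case is essentially bookkeeping.

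The main obstacle is the case $p=2$, which I would treat by the device used for $x$ in Lemma \ref{fprx}. If $y$ stabilises a totally singular $m$-subspace $U$, fix a basis $\beta=\{u_i\}$ of $U$ with $[y^U]_\beta=\bigoplus_{1\le i\le 4}J_{l_i,i}$ and $\sum_i i\,l_i=m$; writing $u_i=\sum_j\alpha_{ij}e_j$ in the almost-free basis $\{e_j\}$ of $V$, the condition that $U$ is $y$-invariant becomes $[y^U]_\beta^T\alpha=\alpha y^T$, which determines the ``shape'' of $\alpha$ --- equivalently, it computes the intertwiner space between two $\mathbb{F}_q[t]/(t-1)^4$-modules assembled from Jordan blocks of sizes $1,2,3,4$. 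In the symplectic and orthogonal cases one must further impose total singularity (for orthogonal $G$ with $q$ even this again reduces to the symplectic count, a subspace totally singular for the quadratic form being totally singular for the associated alternating form), which I would do exactly as in Lemma \ref{fprx}, choosing the $u_i$ inductively inside a flag of mutually perpendicular subspaces $W_0\subseteq W_1\subseteq\cdots$ by means of an explicit shift map $\theta$ adapted to the $J_{\cdot,4}$ block, so that the $i$-th vector admits $q^{\,n-\dim W_{i-1}-O(1)}$ choices. Letting $\Delta(l_1,\dots,l_4)$ denote the resulting count of admissible matrices $\alpha$, and noting that $C_{GL_m(q)}([y^U]_\beta)$ acts regularly on the bases of $U$ inducing the same matrix, one obtains $\operatorname{fix}(y,M^G)<\sum\Delta(l_1,\dots,l_4)\big/\bigl|C_{GL_m(q)}(\bigoplus_i J_{l_i,i})\bigr|$, with the denominator given by (\ref{unipotentcent}). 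The final step is to check, summand by summand, that each exponent is at most $f+O(m)$; this reduces once more to a quadratic optimisation in the $l_i$, together with the extra parameters recording how the form restricts and the choice of Gram matrix in bases of the shape underlying (\ref{quadform}). This optimisation, heavier than in Lemma \ref{fprx} because of the $J_{\cdot,3}$ blocks and the larger number of Gram-matrix configurations, is the genuinely fiddly part of the proof.
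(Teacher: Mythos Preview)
Your proposal is correct and follows essentially the same strategy as the paper's own proof: eigenspace decomposition for $q$ odd (splitting into $q\equiv 1$ and $q\equiv 3\pmod 4$ on the primitive part $E'$), and for $q$ even the $\alpha$-matrix/intertwiner count together with an explicit shift map $\theta$ adapted to the $J_{\cdot,4}$ block to impose total singularity inductively, followed by division by $|C_{GL_m(q)}([y^U]_\beta)|$ and a quadratic optimisation in the $l_i$. The paper carries out exactly the computations you sketch, including the $\tfrac{3}{8}m^2$ optimisation in the symplectic case and the reduction of the orthogonal $q$-even case to the symplectic one; your identification of the $J_{\cdot,3}$ blocks and the more elaborate $\theta$ as the source of the extra bookkeeping is accurate.
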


\begin{proof}
\par The approach is similar to that of Lemma \ref{fprx}. If $V$ is the natural module of $G$, observe that $\operatorname{fix}(y,M^G)$ is less than or equal to the number of totally singular $m$-spaces $U \subset V$ invariant under $y$. As in Lemma \ref{fprx}, we count the number of possible $U$.
\par For ease of notation, let $k=k_b, s=s_b$, so that $n=kb+s$.
\paragraph{Case $G=PSL_n(q).$} 
First suppose $q$ is odd. Then $V \downarrow y = E_{i,-i} \oplus E_{-1} \oplus E_1$, where $E_{\pm 1}$ is the $\pm 1$-eigenspace of $y$, with $E_{-1}, E_{1}$ of dimension $\frac{kb}{4}, \frac{kb}{4}+s$ respectively, and over $\mathbb{F}_{q^2}$, $E_{i,-i}=E_i \oplus E_{-i}$ where $E_{\pm i}$ is the $\pm i$-eigenspace of $y$, each of dimension $\frac{kb}{4}$. For any $m$-subspace $U \subset V$ stabilized by $y$ we have $U=U_{i,-i} \oplus U_{-1} \oplus U_1$, where $U_j \subset E_j$ is an $l_j$-subspace for $j= \pm 1$, and $U_{i,-i} \subset E_{i,-i}$; moreover, over $\mathbb{F}_{q^2}$, $U_{i,-i}=U_i \oplus U_{-i}$ with $U_{j} \subset E_{j}$ an $l_{j}$-subspace for $j=\pm i$ such that $\sum_{j=\pm i, \pm 1} l_j = m$.
\par If $q \equiv 1 \mod 4$, then $E_i$ and $E_{-i}$ are defined over $\mathbb{F}_q$, and so the number of subspaces $U_{i,-i}$ of dimension $l_i+l_{-i}$ is $p_{l_i}(E_i)p_{l_{-i}}(E_{-i}$); if $q \equiv 3 \mod 4$, then we have $l_i=l_{-i}$, and the number of $2l_i$-subspaces of $E_i \oplus E_{-i}$ defined over $\mathbb{F}_q$ is $p_{l_i}(E_i)$ (where $E_{\pm i}$ is defined over $\mathbb{F}_{q^2}$). Therefore, using (\ref{parabolicindex}) we have
\begin{align*}
\operatorname{fix}(y,M^G) &\le \left\{ 
  \begin{array}{l l}
\sum_{\sum l_j =m} p_{l_i}(E_i) p_{l_{-i}}(E_{-i}) p_{l_{-1}}(E_{-1})p_{l_1}(E_1) \hspace{0.3cm} &\text{if $q \equiv 1 \mod 4$,} \\[0.2cm]
\sum_{2l_i+l_{-1}+l_1=m} p_{l_i}(E_{i}) p_{l_{-1}}(E_{-1})p_{l_1}(E_1) &\text{if $q \equiv 3 \mod 4$,}  \\[0.2cm]
\end{array}
\right. \\
&< \left\{ 
  \begin{array}{l l}
\sum_{\sum l_j =m} cq^{l_i(\frac{kb}{4}-l_i)+l_{-i}(\frac{kb}{4}-l_{-i})+l_{-1}(\frac{kb}{4}-l_{-1})+l_1(\frac{kb}{4}+s-l_1)} \\[0.2cm]
 \sum_{2l_i+l_{-1}+l_1=m} cq^{2l_i(\frac{kb}{4}-l_i)+l_{-1}(\frac{kb}{4}-l_{-1})+l_1(\frac{kb}{4}+s-l_1)}   \\[0.2cm]
\end{array}
\right.  \\
&< q^{\frac{1}{4}m(n-m)+c'm}
\end{align*}
\noindent for some absolute constant $c'$, completing the case for $q$ odd.

\par Now suppose $q$ is even. With an almost-free embedding, there exists a basis $\{ e_i \}$ of $V$ such that
\begin{align*}
y=\begin{pmatrix}
J_{\frac{kb}{4},4} \\
&  I_{s} 
\end{pmatrix}.
\end{align*}
\noindent Suppose $y$ stabilizes an $m$-subspace $U \subset V$. There exists a basis $\beta=\{ u_i \}_{i=1}^m$ of $U$ such that  
\begin{align} \label{yBeven}
[y^U]_\beta=\begin{pmatrix}
J_{l_4,4} \\ 
& J_{l_3,3} \\
& & J_{l_2,2} \\
& & & I_{l_1} 
\end{pmatrix}
\end{align}
\noindent where $\sum il_i=m$.
\par Let $u_i=\sum_{j=1}^n \alpha_{ij}e_j$ for $\alpha_{ij} \in \mathbb{F}_q$, and let $\alpha=(\alpha_{ij})$. Then 
\begin{align*}
[y^U]_\beta^T \alpha = \alpha y^T.
\end{align*} 
\noindent Therefore, we can write
\begin{align*}
\alpha = \begin{pmatrix}
A_1 & A_2 & A_3 & A_4 & B_4\\ & A_1 & A_2 & A_3 \\ & & A_1 & A_2  \\ & & & A_1 \\ &  A_5 & A_6 & A_7 & B_3 \\ & & A_5 & A_6 \\ & & & A_5 \\ & & A_8 & A_9 & B_2 \\ & & & A_8 \\ & & & A_{10} & B_1
\end{pmatrix}
\end{align*}
\noindent where 
\begin{align*}
A_1, \dots, A_4 &\in M_{l_4,\frac{kb}{4}}(q), \\
 A_5, A_6, A_7 &\in M_{l_3, \frac{kb}{4}}(q), \\
A_8, A_9 &\in M_{l_2, \frac{kb}{4}}(q), \\
A_{10} &\in M_{l_1, \frac{kb}{4}}(q) , \\
B_j &\in M_{l_j, s}(q).
\end{align*}
\par Let $\Delta(l_1,l_2,l_3,l_4)$ be the number of matrices of the form of $\alpha$ as above, so 
\begin{equation*}
\Delta(l_1,l_2,l_3,l_4)=q^{\frac{nm}{4}+(\sum l_j-\frac{m}{4})s}
\end{equation*}
\noindent Once $\alpha$ is fixed, $\beta$ and $U$ are also fixed. As in the proof of Lemma \ref{fprx}, the number of bases $\beta'$ of $U$ such that $[y^U]_{\beta'}=[y^U]_\beta$ is $|C_{GL_m(q)}([y^U]_\beta)|$. Observe that $\sum il_i^2 -2\sum_{i<j}il_il_j \ge \frac{1}{4}m^2$. Hence, using (\ref{unipotentcent}),
\begin{align*}
\operatorname{fix}(y,M^G)<\sum_{\sum il_i=m} \frac{\Delta(l)}{|C_{GL_m(q)}([y^U]_\beta)|}<\sum_{\sum il_i=m} cq^{\frac{nm}{4}+(\sum l_i-\frac{m}{4})s-\sum il_i^2 -2\sum_{i<j}il_il_j}<q^{\frac{1}{4}m(n-m)+cm}
\end{align*}
\noindent for a constant $c=c(B)$ depending only on $B$, proving the result in the linear case.

\paragraph{Case $G=PSp_n(q).$}

First suppose $q$ is odd. As in the linear case, we have $V \downarrow y=E_{i,-i} \perp E_{-1} \perp E_1$, where $E_{i,-i}, E_{\pm 1}$ are non-degenerate. If $U \subset V$ is a totally singular $m$-space stabilized by $y$, then $U=U_{i,-i} \perp U_{-1} \perp U_1$, where $U_{\pm 1} \subset E_{\pm 1}$ are totally singular $l_{\pm 1}$-subspaces, and $U_{i,-i} \subset E_{i,-i}$ is totally singular.
\par First suppose $q \equiv 1 \mod 4$, so that $E_{i,-i}=E_i \oplus E_{-i}$ (where $E_{\pm i}$ is totally singular) and $U_{i,-i}=U_i \oplus U_{-i}$ where $U_{\pm i} \subset E_{\pm i}$ is an $l_{\pm i}$ subspace, with $\sum_{j=\pm 1, \pm i} l_j=m$. Observe that for an $l_i$-dimensional subspace $U_i \subset E_i$ we have $\dim(E_{-i} \cap U_i^\perp)=\frac{kb}{4}-l_i$. Also note that $l_i^2+l_{-i}^2+l_il_{-i}+\frac{3}{2}l_{-1}^2+\frac{3}{2}l_1^2 \ge \frac{3}{8}m^2$. Therefore, using (\ref{parabolicindex}),
\begin{align*}
\operatorname{fix}(y,M^G) &\le \sum_{\sum l_j=m} \sum_{\substack{U_i \subset E_i \\ \dim(U_i)=l_i}} p_{l_{-i}}(E_{-i} \cap U_{i}^\perp) p_{l_{-1}}(E_{-1}) p_{l_1}(E_1) \\
&< \sum_{\sum l_j=m} cq^{l_{-i}(\frac{kb}{4}-l_i-l_{-i})+l_{-1}(\frac{kb}{4}-\frac{3}{2}l_{-1}+\frac{1}{2})+l_{1}(\frac{kb}{4}+s-\frac{3}{2}l_{1}+\frac{1}{2})} p_{l_i}(E_i) \\
&< \sum_{\sum l_j=m}  c'q^{l_i(\frac{kb}{4}-l_i)+l_{-i}(\frac{kb}{4}-l_i-l_{-i})+l_{-1}(\frac{kb}{4}-\frac{3}{2}l_{-1}+\frac{1}{2})+l_{1}(\frac{kb}{4}+s-\frac{3}{2}l_{1}+\frac{1}{2})} \\
&< q^{\frac{1}{8}m(2n-3m)+c''m}
\end{align*}
\noindent for a constant $c''=c''(B)$ depending only on $B$.

\par Now suppose $q \equiv 3 \mod 4$. In this case $E_{i,-i}$ is isomorphic (as an $\mathbb{F}_q \langle y \rangle$-module) to a unitary space of dimension $\frac{kb}{4}$ over $\mathbb{F}_{q^2}$, with $y \in Z(\overline{GU_{\frac{kb}{4}}(q)})$. Totally singular $2l_{i,-i}$- subspaces of $E_{i,-i}$ stabilized by $y$ correspond to totally singular $l_{i,-i}$-subspaces of the unitary space. Using (\ref{parabolicindex}) and the Cauchy-Schwarz inequality, we have
\begin{align*}
\operatorname{fix}(y,M^G) &\le \sum_{2l_{i,-i}+l_{-1}+l_1=m}  p_{l_{i,-i}}(E_{i,-i})p_{l_{-1}}(E_{-1}) p_{l_1}(E_1) \\
&<\sum_{2l_{i,-i}+l_{-1}+l_1=m}  cq^{l_{i,-i}(\frac{kb}{2}-3l_{i,-i})+l_{-1}(\frac{kb}{4}-\frac{3}{2}l_{-1}+\frac{1}{2})+l_{1}(\frac{kb}{4}+s-\frac{3}{2}l_{1}+\frac{1}{2})} \\
&< q^{\frac{1}{8}m(2n-3m)+c'm}
\end{align*}
\noindent for a constant $c'=c'(B)$ depending only on $B$.

\par Now suppose $q$ is even. With the almost-free embedding, there exists a basis $\{ e_i \}$ of $V$ with Gram matrix 
\begin{equation*}
\begin{pmatrix} & I_{\frac{kb}{2}}  \\  I_{\frac{kb}{2}} \\ & & D \end{pmatrix}
\end{equation*}
\noindent (where $D$ is a Gram matrix of an $s$-dimensional symplectic space) such that
\begin{align} \label{yeven2}
y=\begin{pmatrix}
J_{\frac{kb}{8},4} \\
& J_{\frac{kb}{8},4}^{-T} \\
& &  I_{s} 
\end{pmatrix}.
\end{align}
\noindent Suppose $y$ stabilizes a totally singular $m$-subspace $U \subset V$. There exists a basis $\beta=\{ u_i \}_{i=1}^m$ of $U$ such that $[y^U]_\beta$ is as in (\ref{yBeven}).
\par Let $u_i=\sum_{j=1}^n \alpha_{ij}e_j$ for $\alpha_{ij} \in \mathbb{F}_q$, and let $\alpha=(\alpha_{ij})$. As above, $[y^U]_\beta^T \alpha=\alpha y^T$. Therefore, we can write
\begin{align}  \label{yalpha2}
\alpha = \begin{pmatrix}
A_1 & A_2 & A_3 & A_4 & B_1 & B_2 & B_3 & B_4 & C_4 \\ & A_1 & A_2 & A_3 &  B_2+B_3+B_4 & B_3+B_4 & B_4 & \\ & & A_1 & A_2 &  B_3 & B_4 &  \\ & & & A_1 & B_4 \\ &  A_5 & A_6 & A_7 & B_5 & B_6 & B_7 & & C_3 \\ & & A_5 & A_6 & B_6+B_7 & B_7 \\ & & & A_5 & B_7 \\ & & A_8 & A_9 & B_8 & B_9 & & & C_2 \\ & & & A_8 & B_9 \\ & & & A_{10} & B_{10} & & & & C_1
\end{pmatrix}
\end{align}
\noindent where 
\begin{align*} 
A_1, \dots, A_4, B_1, \dots, B_4 &\in M_{l_4,\frac{kb}{8}}(q), \\
 A_5, A_6, A_7, B_5, B_6, B_7 &\in M_{l_3, \frac{kb}{8}}(q), \\
 A_8, A_9, B_8, B_9 &\in M_{l_2, \frac{kb}{8}}(q), \\
A_{10}, B_{10} &\in M_{l_1, \frac{kb}{8}}(q), \\
 C_i &\in M_{l_i, s}(q).
\end{align*}
\par Let $\Delta(l_1,l_2,l_3,l_4)$ be the number of matrices of the form of $\alpha$ as above with linearly independent rows $u_i$ such that the subspace $U \subset V$ spanned by $\{ u_i \}$ is totally singular. Define the following subspaces of $V$:
\begin{align*}
V_0=\langle  e_{\frac{kb}{8}+1}, \dots, e_{\frac{7kb}{8}} \rangle, \ V_1=\langle e_{\frac{3kb}{8}+1}, \dots, e_{\frac{5kb}{8}} \rangle,
\end{align*}
\noindent so that $V_0/V_1$ is a $\frac{kb}{2}$-dimensional symplectic space. Fix the rows $u_i$  ($m-2l_2-l_1+1 \le i \le m$) of $\alpha$ corresponding to $A_8, A_9, A_{10}, B_8, B_9, B_{10},C_1,C_2$. The number of choices for these rows is less than
\begin{align} \label{sum1}
q^{(\frac{kb}{4}+s)(l_1+2l_2)}.
\end{align}
\noindent Given these rows, we compute the number of choices for the remaining rows. Let $U_0 \subset V_0$ be the span of the rows of 
\begin{align*}
\begin{pmatrix} 0& A_8 & A_9 & B_8 & B_9 & 0 \end{pmatrix},
\end{align*}
\noindent and define $W_0=(U_0+V_1)/V_1$. By the assumption that these rows are linearly independent, $\dim(W_0)=l_2$. Define a linear map $\theta:V \rightarrow V$ by
\begin{align*}  
\sum_{j=1}^{n} \gamma_j e_j &\mapsto \sum_{j=\frac{kb}{8}+1}^{\frac{kb}{2}} \gamma_{j-\frac{kb}{8}}e_{j} +  \sum_{j=\frac{kb}{2}+1}^{\frac{5kb}{8}} (\gamma_{j+\frac{kb}{8}}+\gamma_{j+\frac{kb}{4}}+\gamma_{j+\frac{3kb}{8}})e_{j} \\ &\hspace{2cm} +\sum_{j=\frac{5kb}{8}+1}^{\frac{3kb}{4}} (\gamma_{j+\frac{kb}{8}}+\gamma_{j+\frac{kb}{4}})e_{j}+\sum_{j=\frac{3kb}{4}+1}^{\frac{7kb}{8}} \gamma_{j+\frac{kb}{8}} e_{j}.
\end{align*}
\noindent Observe that $V_1 \theta =0, V_0 \theta \subset V_0$, and so $\theta: V_0/V_1 \rightarrow V_0/V_1$ is well-defined. Also observe that for $v_1, v_2 \in V_0$, we have $(v_1, v_1 \theta)=0$ and $(v_1 \theta, v_2)=(v_1,v_2 \theta)$. Inductively, for $4l_4+1 \le i \le 4l_4+l_3$, choose $w_i \in (W_{i-4l_4-1})^\perp \subset V_0/V_1$ such that
\begin{align*}
\dim(\langle W_{i-4l_4-1},w_i, w_i\theta \rangle)=\dim(W_{i-4l_4-1})+2,
\end{align*}
\noindent and define $W_{i-4l_4}=\langle W_{i-4l_4-1}, w_i, w_i\theta \rangle \subset V_0/V_1$ (note that $(w_i, w_i \theta)=0$, and that $w_i \in (W_{i-4l_4-1})^\perp$ implies $w_i \theta \in (W_{i-4l_4-1})^\perp$ since $(w_i \theta, w)=(w_i, w \theta)=0$ for all $w \in W_{i-4l_4-1}$). Then this yields a totally singular subspace $W_{l_3}$ of dimension $l_2+2l_3$, with 
\begin{align*}
W_{l_3}=\langle W_0, w_{4l_4+1}, \dots, w_{4l_3+l_3}, w_{4l_4+1} \theta, \dots, w_{4l_4+l_3} \theta \rangle.
\end{align*}
\noindent For each $w_i$ ($4l_4+1 \le i \le 4l_4+l_3$), choose a preimage $u_i'$ in $V_0$. This yields the rows $u_i'$ of 
\begin{align*}
\begin{pmatrix} A_5 & A_6 & A_7 &  B_5 & B_6 & B_7 \end{pmatrix},
\end{align*}
\noindent rows $u_i' \theta$ of 
\begin{align*}
\begin{pmatrix} 0 & A_5 & A_6&  B_6 & B_7 & 0 \end{pmatrix},
\end{align*} 
\noindent and rows $u_i' \theta^2$ of  
\begin{align*}
\begin{pmatrix} 0 & 0 & A_5 &  B_7 & 0 & 0\end{pmatrix}.
\end{align*}
\noindent Finally, choose $C_3$ with rows $c_i$ ($4l_4+1 \le i \le 4l_4+l_3$) such that the rows
\begin{align*}
u_i=\begin{pmatrix} 0 & u_i ' & 0 & c_i \end{pmatrix}
\end{align*}
\noindent of
\begin{align*}
\begin{pmatrix} 0 & A_5 & A_6 & A_7 &  B_5 & B_6 & B_7 & 0 & C_3
\end{pmatrix}
\end{align*}
\noindent form a totally singular subspace of $V$. The number of choices for rows $u_i$ ($4l_4+1 \le i \le 4l_4+l_3$) is less than
\begin{align} \label{sum2}
q^{\sum_{1 \le i \le l_3} (\dim(W_{i-1}^\perp)+\dim(V_1))+l_3s} =  q^{\sum_{1 \le i \le l_3} (\frac{kb}{2}-(l_2+2i-2)+\frac{kb}{4})+l_3s} = q^{l_3(\frac{3}{4}kb-l_2-l_3+1+s)}.
\end{align} 
\par We then repeat this argument for the remaining rows $u_i$ ($1 \le i \le 4l_4$). The number of choices for these rows is less than
\begin{align} \label{sum3}
q^{l_4(kb-l_1-2l_2-3l_3-2l_4+2+s)}.
\end{align}
\noindent Therefore, combining (\ref{sum1})-(\ref{sum3}),  
\begin{align*}
\Delta(l_1,l_2,l_3,l_4)<q^{\frac{nm}{4}+(\sum l_i-\frac{m}{4})s+l_3(1-l_2-l_3)+l_4(2+2l_4-m)}.
\end{align*}
\noindent Once $\alpha$ is fixed, $\beta$ and $U$ are also fixed. As above, the number of bases $\beta'$ of $U$ such that $[y^U]_\beta=[y^U]_{\beta'}$ is $|C_{GL_m(q)}([y^U]_\beta)|$. It is elementary to verify that
\begin{align*}
l_3(-l_2-l_3)+l_4(2l_4-m)-\sum_{1 \le i \le 4} il_i^2-2\sum_{1 \le i<j \le 4}il_il_j \le -\frac{3}{8}m^2.
\end{align*}
\noindent Hence, using (\ref{unipotentcent}),
\begin{align} \label{yfixSpeven}
\operatorname{fix}(y,M^G)<\sum_{\sum il_i=m} \frac{\Delta(l_1,l_2,l_3,l_4)}{|C_{\overline{GL_m(q)}}([y]_\beta)|}<q^{\frac{1}{8}m(2n-3m)+cm},
\end{align}
 \noindent completing the proof in the symplectic case.

\paragraph{Case $G=P\Omega^\epsilon_n(q).$}

\par If $q$ is odd, then the proof is similar to the symplectic case.

\par Suppose $q$ is even. There exists a basis $\{ e_i \}$ of $V$ with the quadratic form $Q$ on $V$ defined as
\begin{align*}
Q(\sum_{i=1}^n \lambda_ie_i) = \sum_{i=1}^{\frac{kb}{2}} \lambda_i\lambda_{i+\frac{kb}{2}}+Q'(\sum_{i=kb+1}^n \lambda_ie_i)
\end{align*}
\noindent where $Q'$ is a quadratic form of type $\epsilon$ on the subspace $\langle e_{kb+1}, \dots, e_n \rangle \subset V$. Moreover, with this basis, we have $y$ as in (\ref{yeven2}). Such a basis yields the Gram matrix 
\begin{equation*}
\begin{pmatrix} & I_{\frac{kb}{2}}  \\  I_{\frac{kb}{2}} \\  & & D \end{pmatrix}
\end{equation*}
\noindent (where $D$ is a Gram matrix for an $s$-dimensional orthogonal space of type $\epsilon$).
\par Suppose $y$ stabilizes a totally singular $m$-subspace $U \subset V$. Then $U$ is totally singular with respect to the symplectic form on $V$ determined by $Q$. Therefore $\operatorname{fix}(y,M^G)$ can be bounded as in (\ref{yfixSpeven}). The result now follows.

\paragraph{Case $G=PSU_n(q).$}
The proof in this case is similar to the symplectic case.
\end{proof}

\begin{proof}[Proof of Proposition \ref{i4Mprop2}]
\par By considering the action of $G$ on $M^G$ and (\ref{fpreqn}), we have
\begin{align}
\sum_{M^g \in M^G}  \frac{|x^G \cap M^g|}{|x^G|} \frac{|y^G \cap M^g|}{|y^G|} &= |G:M|   \frac{|x^G \cap M|}{|x^G|} \frac{|y^G \cap M|}{|y^G|} \nonumber \\
&=  \operatorname{fpr}(x,M^G) \operatorname{fix}(y,M^G). \label{5.12eqn}
\end{align}
\noindent For fixed $m$, the number of $G$-classes of parabolic subgroups $P_m$ is at most 2 by \cite[\S~4.1]{KlLi}. The result now follows from Lemmas \ref{fprx} and \ref{fpry}.
\end{proof}

\subsection{Proof of Theorem \ref{bigthm1}}

\par Before we begin the proof of Theorem \ref{bigthm1}, we need two results from \cite{LiSh1} and \cite{LiSh2}. Let $G$ be a finite simple classical group. For a real number $s$, define the zeta function
\begin{equation*}
\zeta_{G}(s)=\sum_{M \operatorname{max}G} |G:M|^{-s}.
\end{equation*}

\vspace{-0.3cm}

\noindent As we are dealing with finite groups, $\zeta_{G}(s)$ is convergent. The following theorem is \cite[Theorem~2.1]{LiSh1}.

\begin{theorem}[\cite{LiSh1}] \label{zetathm}
For any fixed $s>1$, 
\begin{equation*}
\zeta_G(s) \rightarrow 0 \hspace{0.3cm} \text{as} \hspace{0.3cm} |G| \rightarrow \infty.
\end{equation*}
\end{theorem}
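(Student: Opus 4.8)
The plan is to bound $\zeta_G(s)$ by grouping the maximal subgroups of $G$ according to Aschbacher's theorem \cite{As} and estimating the contribution of each family separately. A convenient first reduction is to pass to conjugacy classes: if $[M_1],\dots,[M_k]$ are the conjugacy classes of maximal subgroups of $G$, then since $|M_j^G|=|G:N_G(M_j)|\le |G:M_j|$ we have
\[
\zeta_G(s)=\sum_{M\operatorname{max}G}|G:M|^{-s}=\sum_{j=1}^{k}|M_j^G|\,|G:M_j|^{-s}\le\sum_{j=1}^{k}|G:M_j|^{-(s-1)}.
\]
Writing $t=s-1>0$, it suffices to prove that $\sum_{j}|G:M_j|^{-t}\to 0$ as $|G|\to\infty$, the sum now being over conjugacy classes of maximal subgroups. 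Recall that $|G|\to\infty$ along finite simple classical groups forces $\max(n,\log q)\to\infty$, where $n$ is the dimension of the natural module and $q$ the field size, and that $|G|=q^{\Theta(n^2)}$.

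Next I would split the classes $[M_j]$ into the geometric families $\mathscr{C}_1,\dots,\mathscr{C}_8$ and the family $\mathscr{S}$. For the geometric families the structure theory of \cite{KlLi} supplies two facts. First, the total number of conjugacy classes of geometric maximal subgroups is bounded by a fixed polynomial in $n$ together with an $O(\log\log q)$ term coming from the subfield and field-extension subgroups in $\mathscr{C}_5$ and $\mathscr{C}_3$; in particular it is $|G|^{o(1)}$. Second, every geometric maximal subgroup has index at least $q^{c_0n}$ for an absolute constant $c_0>0$ (the minimum being, up to a constant, the index of the point stabiliser $P_1$, or of $O^\pm_n(q)\le Sp_n(q)$ in characteristic $2$). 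Hence the geometric contribution to $\sum_j|G:M_j|^{-t}$ is at most $|G|^{o(1)}\,q^{-c_0tn}$, which tends to $0$: if $n\to\infty$ this is immediate because the exponent of $q$ is linear in $n$, while if $n$ stays bounded and $q\to\infty$ then $q^{c_0tn}\to\infty$ and the number of classes grows only like $O(\log\log q)$.

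For the family $\mathscr{S}$ I would invoke the order bound of \cite{Li}: every $M\in\mathscr{S}$ satisfies $|M|<q^{3n}$, the only exceptions being subgroups with $\operatorname{soc}(M)=A_c$ for $c\in\{n+1,n+2\}$ acting on the fully deleted permutation module, of which there are only boundedly many conjugacy classes and which have index at least $|G|/(n+2)!$. In the non-exceptional case $|G:M|>|G|\,q^{-3n}=|G|^{1-o(1)}$. Moreover the number of conjugacy classes of subgroups of $G$ in $\mathscr{S}$ is $|G|^{o(1)}$: such an $M$ is almost simple, hence $2$-generated, and $\operatorname{soc}(M)$ carries a faithful absolutely irreducible module of dimension exactly $n$, so by the subgroup-structure bounds underlying \cite{KlLi} the number of possibilities for the isomorphism type of $\operatorname{soc}(M)$, for the module, and for the conjugacy class of $M$ given these, is in each case at most $|G|^{o(1)}$. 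Therefore the $\mathscr{S}$-contribution to $\sum_j|G:M_j|^{-t}$ is at most $|G|^{o(1)}\bigl(|G|^{1-o(1)}\bigr)^{-t}$ together with a term $O(1)\cdot\bigl(|G|/(n+2)!\bigr)^{-t}$, and since $t>0$ both tend to $0$ as $|G|\to\infty$. Combining this with the geometric estimate gives $\zeta_G(s)\to 0$.

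The main obstacle is the control of $\mathscr{S}$. The geometric part is entirely routine once one has the explicit list in \cite{KlLi} and the standard lower bounds on the indices of geometric subgroups, since those indices are already super-exponential in $n$. For $\mathscr{S}$, by contrast, one needs genuinely two ingredients: the order bound $|M|<q^{3n}$ from \cite{Li}, and a bound of the form $|G|^{o(1)}$ on the number of conjugacy classes. The latter is what a naive count cannot give — the number of isomorphism types of groups of order up to $q^{3n}$ is far too large — and it is precisely the constraint that $\operatorname{soc}(M)$ act irreducibly in the fixed dimension $n$ (via the Landauzi–Seitz type lower bounds on representation dimensions) that reduces the count to $|G|^{o(1)}$ and makes the estimate close.
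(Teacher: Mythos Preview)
The paper does not give its own proof of this statement; it simply quotes it as \cite[Theorem~2.1]{LiSh1}. Your sketch is essentially the strategy of the original Liebeck--Shalev argument: pass to conjugacy classes via $|M^G|\le |G:M|$, dispose of the geometric Aschbacher families using the explicit structure and index bounds in \cite{KlLi}, and handle $\mathscr{S}$ by combining the order bound of \cite{Li} with a polynomial bound on the number of $\mathscr{S}$-classes coming from representation--dimension constraints. So there is nothing to compare against in this paper, and your outline is correct in substance.

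Two small remarks. First, ``Landauzi--Seitz'' should read ``Landazuri--Seitz''. Second, as you yourself flag, the only genuinely delicate step is the $|G|^{o(1)}$ bound on the number of $\mathscr{S}$-classes; your justification (bounding isomorphism types of $\operatorname{soc}(M)$ via lower bounds on faithful representation dimensions, then bounding modules and embeddings) is the right shape, but in a written proof you would want to cite a precise statement rather than gesture at ``subgroup-structure bounds underlying \cite{KlLi}'', since \cite{KlLi} itself does not state this count. The relevant inputs are the Landazuri--Seitz bounds for cross-characteristic representations, the defining-characteristic case (where $\mathscr{S}$-subgroups of Lie type have bounded rank by irreducibility considerations), and the fact that a fixed simple group has only boundedly many absolutely irreducible modules in a given dimension over a given field.
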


\par The second result we need is proved in \cite[Lemma~2]{LiSh2}.
\begin{theorem}[\cite{LiSh2}] \label{i2ratiothm}
Let $G$ be as above. There exists an absolute constant $c$ such that for any $M \operatorname{max} G$ we have
\begin{align*}
\frac{i_2(M)}{i_2(G)} < |G:M|^{-\frac{1}{2}+\frac{c}{n}}.
\end{align*}
\end{theorem}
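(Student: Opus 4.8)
The plan is to prove this class-by-class, following the Liebeck--Shalev paradigm. Write $i_2(M)=\sum_x|x^G\cap M|$ and $i_2(G)=\sum_x|x^G|$, where $x$ runs over the $G$-classes of involutions (those meeting $M$, in the first sum); since $\tfrac{|x^G\cap M|}{|x^G|}=\operatorname{fpr}(x,M^G)$ by (\ref{fpreqn}), the quantity $\tfrac{i_2(M)}{i_2(G)}$ is a weighted average of fixed-point ratios. Two facts will be used throughout. First, the lower bound $i_2(G)>|G|^{1/2+c_0/n}$ with $c_0>0$ absolute: this is immediate from Proposition \ref{conjclassprop1} applied with $A=C_2$ embedded almost-freely (legitimate once $n$ is large), which gives an involution $x$ with $i_{2,C_2}(G)\ge|x^G|>|G|^{1/2+c_0/n}$. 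Second, the structural fact that the involution-class sizes of $G$ are governed by eigenspace dimensions and Jordan data and decay geometrically away from the balanced value, so that a sum over any family of such classes --- and, more generally, the combinatorial sums occurring in the estimates below --- is dominated, up to an absolute constant, by its largest term; this is what prevents polynomial-in-$n$ losses.

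For the vast majority of maximal subgroups $M$, crude estimates suffice. If $M$ lies in class $\mathscr{C}_6$, or $M\in\mathscr{S}$ with $\operatorname{soc}(M)\ne A_{n+1},A_{n+2}$, then $|M|<q^{cn}$ (using \cite{Li} in the latter case), so $|G:M|$ is astronomically large and even $i_2(M)\le|M|$ lies far below $i_2(G)\,|G:M|^{-1/2+c/n}$. If $M$ is assembled from classical groups of smaller dimension --- a subproduct $Cl_m(q)\wr S_t$ or $Cl_d(q)\times Cl_e(q)$ for $M\in\mathscr{C}_2,\mathscr{C}_4,\mathscr{C}_7$, a subfield or field-extension subgroup for $M\in\mathscr{C}_3,\mathscr{C}_5$, and so on --- then applying Proposition \ref{irprop} to the classical sections, summing over the ways an involution distributes over the block structure, and using the $S_t$-count of \cite{Wilf} as in Lemma \ref{i4class2} for the wreath products, yields $i_2(M)<|M|^{1/2}q^{cn}$; since these $M$ have index at least $q^{c'n^2}$, the factor $q^{cn}$ is comfortably absorbed into $|G:M|^{-1/2+c/n}$. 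The reducible non-parabolic subgroups of class $\mathscr{C}_1$ that stabilise a nonsingular point or hyperplane are almost simple classical groups of rank $n-O(1)$, so Proposition \ref{irprop} again handles them directly, the passage from rank $n$ to rank $n-O(1)$ costing only a bounded power of $q$.

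The genuinely hard case is that of small-index subgroups, principally the parabolics $P_m$ with $m$ small relative to $n$: there $|G:M|$ is only $q^{O(mn)}$, the slack in $|G:M|^{-1/2+c/n}$ is a bounded power of $q$, and $i_2(M)<|M|^{1/2}q^{cn}$ is useless. Here one must establish the sharp estimate $i_2(M)<C\,|M|^{1/2}\,i_2(G)\,|G|^{-1/2}$ with $C$ absolute; the factor $i_2(G)\,|G|^{-1/2}$ --- itself a power of $q$ when $q$ is even --- cancels against the matching factor contributed by the "large" involutions inside the classical part of the Levi factor, and this cancellation is the technical heart of the proof. The computation is in the spirit of Lemmas \ref{fprx} and \ref{fpry}: writing an involution of $P_m$ as $su$ with $s$ in the Levi factor $L$ and $u$ in the unipotent radical $Q$, one reduces $i_2(P_m)$ to $\sum_s|C_Q(s)|$, estimates $|C_Q(s)|$ from the Jordan type of $s$, and sums --- the result being, up to a $q^{O(m)}$ factor and with the sum geometrically dominated by the term where $s$ is ``balanced'', $i_2(P_m)\approx|Q|^{1/2}\,i_2(L)$, whence $i_2(P_m)/|P_m|^{1/2}\approx i_2(L)/|L|^{1/2}$, which is comparable to $i_2(G)/|G|^{1/2}$ because the classical part of $L$ has the same type as $G$ and rank differing by $O(m)$. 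Carrying this out uniformly across the linear, symplectic, orthogonal and unitary types and for both parities of $q$ (the $q$-even unipotent case being the most delicate, exactly as in the proof of Proposition \ref{conjclassprop2}), and checking that in each such case $\log_q|G:M|\gg m$ so the $q^{O(m)}$ loss is absorbed by $|G:M|^{-1/2+c/n}$, completes the argument.
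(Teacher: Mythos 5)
First, note that the paper does not prove this statement at all: it is quoted directly from \cite[Lemma~2]{LiSh2}, with only the remark that inspecting that proof lets one replace the $o_n(1)$ in the exponent by $\frac{c}{n}$. So you are reconstructing the Liebeck--Shalev argument from scratch, and your overall architecture --- crude bounds $i_2(M)<|M|^{1/2}q^{cn}$ for subgroups of index at least $q^{cn^2}$, versus a sharp ``main-term matching'' estimate $i_2(M)<C|M|^{1/2}i_2(G)|G|^{-1/2}$ for subgroups of small index, where the allowed slack $|G:M|^{c/n}$ is only a bounded power of $q$ --- is indeed the right one and is in the spirit of the source.

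There is, however, a step that fails as written. You dispose of the non-parabolic $\mathscr{C}_1$ subgroups stabilising a nonsingular point or hyperplane by saying Proposition \ref{irprop} ``handles them directly, the passage from rank $n$ to rank $n-O(1)$ costing only a bounded power of $q$.'' It does not: Proposition \ref{irprop} gives $i_2(M)<|M|^{\frac{1}{2}+\frac{c}{n}}$, and the error factor $|M|^{c/n}$ is $q^{\Theta(n)}$, while for these subgroups (and more generally for $\overline{Cl_m(q)\times Cl_{n-m}(q)}$ with $m$ bounded) the index $|G:M|$ is itself only $q^{\Theta(mn)}$, so $|G:M|^{c/n}$ is a bounded power of $q$ and a $q^{\Theta(n)}$ loss is fatal. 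Combining the crude upper bound on $i_2(M)$ with the crude lower bound $i_2(G)>|G|^{\frac{1}{2}+\frac{c_0}{n}}$ (whose constant, incidentally, Proposition \ref{conjclassprop1} does not guarantee to be positive, though only its form matters) yields $i_2(M)/i_2(G)<|G:M|^{-1/2}q^{O(n)}$, which is vacuous here. These low-index reducible subgroups therefore require exactly the same sharp comparison of leading terms --- $i_2(\overline{Cl_{n-m}(q)})$ against $i_2(G)$ to within an absolute constant times a bounded power of $q$ --- that you correctly identify as the technical heart of the parabolic case. The fix is routine given that you carry out the parabolic analysis anyway (the Levi of $P_m$ and the group $Cl_{n-2m}(q)$ appear in both), but as stated the $\mathscr{C}_1$ case is not covered, so the proposal has a genuine gap rather than being a complete alternative proof.
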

\noindent  There is a difference in notation between Theorem \ref{i2ratiothm} and \cite[Lemma~2]{LiSh2}: the reference proves the result for exponent $-\frac{1}{2}+o_n(1)$, where $o_n(1) \rightarrow 0$ as $n \rightarrow \infty$. However, inspection of the proof shows that we can replace $o_n(1)$ by $\frac{c}{n}$ for some absolute constant $c$.

\par We now prove Theorem \ref{bigthm1}, using Propositions \ref{i4Mprop1} and \ref{i4Mprop2}.

\begin{proof}[Proof of Theorem \ref{bigthm1}]
\par Let $G$ be a finite simple classical group with natural module of dimension $n$ over $\mathbb{F}_{q^\delta}$. Let $A$ and $B$ be nontrivial finite 2-groups, and assume $x \in I_2(A), y \in I_4(B)$, and $n \ge \operatorname{max}\{2|A|+2,2|B|+2\}$. Embed $A$ and $B$ almost-freely into $G$. 
\par We first consider the contribution to the summation in the statement of Theorem \ref{bigthm1} by non-parabolic subgroups. Let $\mathscr{M}_{\text{np}}$ be the set of non-parabolic maximal subgroups of $G$, and let $M \in \mathscr{M}_{\text{np}}$. By Propositions \ref{conjclassprop1}, \ref{irprop} and \ref{i4prop}, for an absolute constant $c$ we have $|x^G|>i_2(G)q^{cn}, |y^G|>i_4(G)q^{cn}$. By Proposition \ref{i4Mprop1}, $i_4(M)<|M|^{\frac{3}{4}}q^{c'n}$ for some absolute constant $c'$, and so $i_4(M)/i_4(G)<|G:M|^{-\frac{3}{4}+\frac{c''}{n}}$ by Proposition \ref{i4prop}. Therefore, by Theorem \ref{i2ratiothm} we have
\begin{align*}
 \sum_{M \in \mathscr{M}_{\text{np}}} \frac{|x^G \cap M|}{|x^G|} \frac{|y^G \cap M|}{|y^G|} &< \sum_{M \in\mathscr{M}_{\text{np}}} \frac{i_2(M)}{i_2(G)}\frac{i_4(M)}{i_4(G)}q^{c'''n} \\
&< \sum_{M \in\mathscr{M}_{\text{np}}}  |G:M|^{-\frac{5}{4}+\frac{c''''}{n}} \\
&< \zeta_G(\frac{5}{4}-\frac{c''''}{n}), 
\end{align*}
\noindent and by Theorem \ref{zetathm}, $\zeta_G(\frac{5}{4}-\frac{c''''}{n})<\frac{1}{2}$ for sufficiently large $n$.
\par We now consider the contribution by parabolic subgroups. Let $\mathscr{M}_{\text{p}}$ denote the set of maximal parabolic subgroups of $G$. By Proposition \ref{i4Mprop2} we have
\begin{align*}
 \sum_{M \in \mathscr{M}_{\text{p}}}  \frac{|x^G \cap M|}{|x^G|} \frac{|y^G \cap M|}{|y^G|} < q^{-\frac{n}{4}+c}
\end{align*}
\noindent for some absolute constant $c$. Clearly this contribution is less than $\frac{1}{2}$ for sufficiently large $n$. This completes the proof of Theorem \ref{bigthm1}.
\end{proof}

\section*{\center{7. Subgroups isomorphic to $C_2 \times C_2$: classical groups}}
\stepcounter{section}

\par Throughout this section, let $G$ be a finite almost simple classical group with natural module of dimension $n$ over $\mathbb{F}_{q^\delta}$ of characteristic $p$, and let $\operatorname{soc}(G)=S$. Let $H$ be the simple adjoint algebraic group over $l=\overline{\mathbb{F}_p}$ such that $(H^F)'=S$ for a Frobenius morphism $F$. 
\par Let $i_{2 \times 2}(G)$ denote the number of subgroups of $G$ isomorphic to $C_2 \times C_2$. In this section we bound $i_{2 \times 2}(G)$ for certain $G$, analogous to \S 5.

\begin{proposition} \label{i2x2prop}
There exist absolute constants $c_1, c_2$ such that if $S, G, H$ and $F$ are as above with $G \le H^F \langle \alpha \rangle$ for some $\alpha \in I_2(\operatorname{Aut}(S)) \cup \{ 1 \}$, then
\begin{equation*}
|G|^{\frac{3}{4}+\frac{c_1}{n}} < i_{2 \times 2}(G) < |G|^{\frac{3}{4}+\frac{c_2}{n}}.
\end{equation*}
\end{proposition}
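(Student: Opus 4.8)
The plan is to mirror the structure of the proof of Proposition \ref{i4prop}, replacing "element of order $4$" with "subgroup isomorphic to $C_2 \times C_2$" throughout. As in that proof, the lower bound is immediate from Proposition \ref{conjclassprop2} (embed a containing group $A$ almost-freely — e.g.\ take $A = C_2 \times C_2$ itself, or any fixed group containing a Klein four-group — so that $i_{2 \times 2}(G) \ge |K^G| > |G|^{3/4 + c/n}$), so the work is entirely in the upper bound, and for that it suffices to treat $G = H^F\langle\alpha\rangle$. I would split into the cases $\alpha = 1$ and $\alpha \ne 1$, and within each into $p$ odd versus $p = 2$, exactly as in Lemmas \ref{i4prop1} and the lemmas that follow.

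For $\alpha = 1$: a Klein four-group $K \le H^F$ is generated by two commuting involutions $y_1, y_2$, and $D = C_H(y_1) \cap C_H(y_2)$ is the centralizer of $K$. The key point is the dimension bound $\dim D^\circ > \tfrac14 \dim(G) + cn$ for an absolute constant $c$: for $p$ odd this follows from the classification of involution centralizers (Table \ref{auttable} / \cite[Theorem~4.5.1]{GoLySo}) together with a Cauchy--Schwarz estimate on eigenspace dimensions (two commuting semisimple involutions decompose $V$ into at most four common eigenspaces, and $\sum n_i^2 \ge \tfrac14(\sum n_i)^2$), exactly as in inequality (\ref{5.1eqn1}); for $p = 2$ it follows from \cite[Theorem~3.1, Theorem~4.2]{LiSe} applied to the unipotent element $y_1 y_2$ (or to the pair), since any two commuting unipotent involutions lie in a common centralizer whose dimension is bounded below by $\tfrac14 \dim(G)$ minus a linear term. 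Then Lemma \ref{centorder} (the second displayed inequality, with the $\tfrac1{12}$ constant and $|D : D^\circ| \le 2^4$, $d = \operatorname{rank}(Z(D^\circ/R_u(D^\circ))) \le 2^4$) gives $|K^{H^F}| < q^{\frac34 \dim(G) - cn + c'n}$. Finally one counts the number of $H^F$-classes of Klein four-groups: there are $O(n^2)$ choices of (unordered) pair of involution classes, each involution class contributes boundedly many $H^F$-classes by \cite[Theorem~7.1, Theorem~7.3]{LiSe}, and an $F$-stable $H$-orbit of Klein four-groups splits into boundedly many $H^F$-orbits; so the total number of classes is at most $q^{c''n}$, and summing $|K^{H^F}|$ over classes gives $i_{2\times2}(H^F) < |G|^{3/4 + c_2/n}$.

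For $\alpha \ne 1$: here I cannot use Shintani descent as directly as for elements of order $4$ (a Klein four-subgroup of $H^F\langle\alpha\rangle$ meeting the coset $H^F\alpha$ is not a cyclic-coset phenomenon), so instead I would bound the number of such subgroups combinatorially. A Klein four-group $K \le H^F\langle\alpha\rangle$ not contained in $H^F$ has $K \cap H^F$ of order $2$, say $K \cap H^F = \langle t \rangle$, and $K = \langle t, g\alpha\rangle$ for some $g\alpha$ an involution or element-of-order-$4$-style element normalizing $\langle t\rangle$; thus $g\alpha$ acts on $C_{H^F}(t)$, and as in the proof of the case "$\alpha \ne 1$" of Proposition \ref{i4prop}, $C_{C_{H^F}(t)}(g\alpha)$ is large — at least $|H^F|^{1/4 + c/n}$ — via the subgroup structure in Table \ref{auttable} and the bound $|C_{S_i}(g\alpha)| > |S_i|^{1/2 + c/n}$ on involution centralizers in the simple sections. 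So each such $K$ has $|K^{H^F\langle\alpha\rangle}| < |G|^{3/4 + c/n}$, and the number of $H^F$-classes of them is $q^{O(n)}$ by the same argument used there (count classes of $t$ in $H^F$, then classes of the relevant automorphism in $\operatorname{Out}(C_{H^F}(t))$, using \cite{dieu} to lift to the quasisimple level and \cite[Theorem~4.5.1]{GoLySo} to bound involution classes in $\operatorname{Out}$ of the simple sections). Summing over classes and adding the $\alpha = 1$ contribution already handled gives the upper bound. The subcase $p = 2$, $\alpha$ a graph automorphism of $PSL_n$ or $P\Omega_n$ is handled as in the corresponding $i_4$ lemma, viewing $g\alpha \in SL^\epsilon_n(q)\alpha$ inside $Sp_{2n}$ and using \cite[Theorem~1.2, Theorem~1.3]{LaLiSe2} (or reducing $SO_n \le Sp_n$ to Lemma-\ref{i4prop1}-style estimates); one only needs that the relevant centralizers have order $> q^{\dim(G)/4 - O(1)}$ and that there are $q^{O(n)}$ classes.

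The main obstacle I anticipate is the $p = 2$, $\alpha \ne 1$ analysis: unlike the $i_4$ case, one does not get a clean translation of "Klein four-groups in the coset" into "involutions in a twisted fixed-point group" via Shintani descent, so the bookkeeping of which pairs $(t, g\alpha)$ arise and how their $H^F$-classes are counted must be done by hand, carefully tracking the graph-automorphism subcases for $PSL_n$ and $P\Omega^\epsilon_n$; everything else is a routine adaptation of the order-$4$ arguments with the constants $2^3$ replaced by $2^4$ and single involutions replaced by commuting pairs.
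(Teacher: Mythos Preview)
Your overall architecture matches the paper's (lower bound from Proposition~\ref{conjclassprop2}; upper bound for $G=H^F\langle\alpha\rangle$, split by $\alpha=1$ versus $\alpha\ne1$ and by parity of $p$), and your treatment of $\alpha=1$, $p$ odd is essentially the paper's argument. But there is a genuine gap in the $\alpha=1$, $p=2$ case.

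Your claimed dimension bound $\dim D^\circ \ge \tfrac14\dim(G)-cn$ for $D=C_H(y_1)\cap C_H(y_2)$ is not justified. Applying \cite{LiSe} to $y_1y_2$ gives information about $C_H(y_1y_2)$, which \emph{contains} $D$, so it bounds $\dim D$ from above, not below; and \cite{LiSe} says nothing about centralizers of pairs. More fundamentally, the representation type of $k[C_2\times C_2]$ in characteristic~$2$ is wild, so there is no clean eigenspace-style decomposition of $V\downarrow K$, and hence no Cauchy--Schwarz argument of the kind you invoke; for the same reason your class-count ``$O(n^2)$ pairs of involution classes, each splitting into boundedly many $H^F$-classes'' does not translate to a bound on the number of $H^F$-classes of Klein four-groups. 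The paper avoids both problems by never bounding $\dim D$ or counting $K$-classes at all: instead it uses the decomposition
\[
i_{2\times2}(H^F)\;\le\;\sum_{[y_1]}|y_1^{H^F}|\cdot i_2\bigl(C_{H^F}(y_1)\bigr),
\]
so that one only needs the structure of the centralizer of a \emph{single} involution. Since $C_{H^F}(y_1)=UL$ with $U$ unipotent in characteristic~$2$, bounding $i_2(UL)$ still requires work: this is done by explicit matrix calculations (conditions (\ref{conditions1})--(\ref{conditions3}) and their symplectic analogues), and the key technical ingredient is Lemma~\ref{psi}, which counts pairs of matrices $(A,B)$ with $AB=0$. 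That lemma is what makes the $p=2$ case go through, and there is no shortcut around it.

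For $\alpha\ne1$ your approach is workable but organized less efficiently than the paper's. You fix $t\in K\cap H^F$ first and then analyze $g\alpha$; the paper instead fixes $y_1\in I_2(H^F\alpha)$ first and bounds $|y_1^{H^F}|\cdot i_2(C_{H^F\langle\alpha\rangle}(y_1))$. The paper's ordering is cleaner because there are very few classes of involutions in the outer coset (often one), and their centralizers $C_{H^F}(y_1)$ are themselves classical groups (over a subfield in the field/graph-field cases, or $PGSp_n$, $PGO_n$, etc.\ in the graph cases), so $i_2$ of them is immediately controlled by Proposition~\ref{irprop}. Your ordering forces you, in characteristic~$2$, to understand an outer involution acting on $C_{H^F}(t)=UL$ with nontrivial unipotent radical, which is exactly the difficulty you flag as the ``main obstacle''; the paper's choice sidesteps it. Neither Shintani descent nor \cite{LaLiSe2} is needed here.
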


\par Before we begin the proof, we require a technical result to help us count the number of elements in certain centralizers.

\begin{lemma} \label{psi}
Let $\psi(b,c,d)$ denote the number of pairs of matrices $(A, B)$ where $A \in M_{b,c}(q), B \in M_{c,d}(q)$ such that $AB=0$. For $i \le j$, let $Q_i(j)=\prod_{k=0}^{i-1} (q^j-q^k)$. Then
\begin{equation*}
\psi(b,c,d)=\sum_{r=0}^{\operatorname{min}(b,c)} \frac{Q_r(b)Q_r(c)}{Q_r(r)}q^{d(c-r)}.
\end{equation*}
\noindent In particular, there exists an absolute constant $c_0$ such that $\psi(b,c,d) < c_0q^{\frac{1}{4}(b+c-d)^2+cd}$.
\end{lemma}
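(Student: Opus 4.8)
The plan is to count pairs $(A,B)$ with $AB=0$ by stratifying according to $r=\operatorname{rank}(A)$. First I would fix $A\in M_{b,c}(q)$ of rank $r$; the number of such $A$ is a standard Gaussian-binomial count, which I would write in the form $\tfrac{Q_r(b)Q_r(c)}{Q_r(r)}$, where $Q_i(j)=\prod_{k=0}^{i-1}(q^j-q^k)$ counts the ordered $i$-tuples of linearly independent vectors in $\mathbb F_q^j$: indeed the number of rank-$r$ matrices in $M_{b,c}(q)$ equals (number of ways to choose an $r$-dimensional column space in $\mathbb F_q^b$, i.e. $Q_r(b)/Q_r(r)$) times (number of surjections $\mathbb F_q^c\to$ that space, i.e. $Q_r(c)$). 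Given such an $A$, the condition $AB=0$ says exactly that every column of $B$ lies in $\ker A$, a subspace of $\mathbb F_q^c$ of dimension $c-r$; hence the number of admissible $B\in M_{c,d}(q)$ is $q^{d(c-r)}$. Summing over $r$ from $0$ to $\min(b,c)$ gives the claimed closed formula
\begin{equation*}
\psi(b,c,d)=\sum_{r=0}^{\min(b,c)}\frac{Q_r(b)Q_r(c)}{Q_r(r)}\,q^{d(c-r)}.
\end{equation*}

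For the asymptotic bound I would first record the elementary estimates $q^{ij-i^2}\le Q_i(j)\le q^{ij}$ for $i\le j$ (lower bound since $q^j-q^k\ge q^{j-1}\ge q^{j-i}$ for $k\le i-1$, crudely; a cleaner bound $Q_i(j)\le 2q^{ij}$ or similar also works, with an absolute constant absorbed at the end). Plugging these into the $r$-th summand, its $q$-exponent is at most $rb+rc-r^2+d(c-r)$. I would then maximize the quadratic $g(r)=-r^2+r(b+c-d)+dc$ in $r$ over the reals: the maximum is attained at $r=(b+c-d)/2$ with value $\tfrac14(b+c-d)^2+cd$. Therefore each of the at most $\min(b,c)+1\le q^{O(1)}\cdot\text{(something)}$ — actually at most $b+1$ — summands is bounded by $q^{\frac14(b+c-d)^2+cd}$, and since the number of summands is at most $b+1$, which is dwarfed by a constant times $q$ only if we are careless, I would instead note $b+1$ can be absorbed by slightly enlarging the absolute constant $c_0$ (or, if one prefers, bound $b+1\le q^{b}$, but that would spoil the exponent, so the honest move is: the number of terms is polynomial in $\log q$ times $b$, hence at most $c_0'$ times $q^{\epsilon}$; cleanest is simply to observe $\sum_{r}q^{g(r)}\le (b+1)q^{\max_r g(r)}$ and fold $b+1$ into $c_0$ since in all our applications $b$ is bounded in terms of $|A|$ — but for a clean absolute statement one writes $(b+1)\le c_0''\cdot 1$ only when $b$ is bounded; here the safe route is $\sum_r q^{g(r)} < q^{\max_r g(r)+1}\cdot\frac{1}{q-1}\cdot$(geometric tail)). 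The simplest rigorous phrasing: since consecutive exponents $g(r)$ differ and the sum is dominated by a geometric series around its peak, $\psi(b,c,d)<c_0\,q^{\frac14(b+c-d)^2+cd}$ for an absolute constant $c_0$.

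The only genuinely delicate point is the bookkeeping in the last step: one must make sure the number of terms in the sum and the slack in the inequalities $q^{ij-i^2}\le Q_i(j)\le q^{ij}$ are genuinely absorbed into a single absolute constant $c_0$ independent of $q,b,c,d$. I expect this to be routine: away from the peak $r^\ast=(b+c-d)/2$ the summands decay geometrically (the exponent $g(r)$ is a downward parabola with leading coefficient $-1$, so $g(r^\ast)-g(r^\ast\pm k)=k^2$), so $\sum_r q^{g(r)}\le q^{g(r^\ast)}\sum_{k\in\mathbb Z}q^{-k^2}\le q^{g(r^\ast)}\cdot\bigl(1+2\sum_{k\ge1}q^{-k}\bigr)\le 5\,q^{g(r^\ast)}$ for $q\ge2$; combined with the $\le q^{ij}$ upper bound on each $Q_i(j)$ and the $\ge q^{ij-i^2}$ lower bound only being needed in the denominator $Q_r(r)$ (which contributes $q^{-r^2+O(r)}$, and the $O(r)$ slack is itself geometric), one gets the stated bound with, say, $c_0=100$. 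There is no serious obstacle here — the lemma is purely a linear-algebra count plus a Gaussian-sum estimate — so I would keep the write-up short and push the constant-chasing into a single displayed chain of inequalities.
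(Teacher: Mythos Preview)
Your approach to the exact formula is essentially identical to the paper's: stratify by $r=\operatorname{rank}(A)$, use the Landsberg count $\phi_r(b,c)=Q_r(b)Q_r(c)/Q_r(r)$ for rank-$r$ matrices (which you rederive directly rather than cite), and observe that the columns of $B$ must lie in $\ker A$, giving $q^{d(c-r)}$ choices. This is exactly what the paper does.

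For the ``In particular'' bound, the paper's written proof actually says nothing at all---it stops after establishing the formula---so you are supplying an argument the paper leaves to the reader. Your idea is correct: bound the $r$-th summand by a constant times $q^{g(r)}$ with $g(r)=-r^2+r(b+c-d)+cd$, note that $g$ is a downward parabola with real maximum $\tfrac14(b+c-d)^2+cd$, and use the quadratic decay $g(r^\ast)-g(r)=(r-r^\ast)^2$ to sum the terms geometrically. Two small points to tighten in a final write-up: (i) the lower bound you need on the denominator is $Q_r(r)\ge c_1 q^{r^2}$ for an \emph{absolute} $c_1>0$ (take $c_1=\prod_{j\ge 1}(1-2^{-j})$), not the crude $Q_r(r)\ge q^{r^2-r^2}=1$ that your stated inequality $q^j-q^k\ge q^{j-i}$ would give when $i=j=r$; and (ii) drop the meandering discussion of how to absorb the number of terms---your final parabola argument $\sum_r q^{g(r)}\le q^{g(r^\ast)}\sum_{k\in\mathbb Z}q^{-k^2}\le c\,q^{g(r^\ast)}$ already handles this cleanly and needs no apology.
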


\begin{proof}
Let $\phi_r(b,c)$ denote the number of matrices in $M_{b,c}(q)$ of rank $r$. By \cite{Landsberg},
\begin{equation*}
\phi_r(b,c)=\frac{Q_r(b)Q_r(c)}{Q_r(r)}.
\end{equation*}

\par For a given matrix $A \in M_{b,c}(q)$ with rank $r$ as above, the columns of a matrix $B$ with $AB=0$ must lie in the nullspace of $A$. Therefore, by the rank-nullity theorem, the number of matrices $B \in M_{c,d}(q)$ such that $AB=0$ is $q^{d(c-r)}$. Summing over $0 \le r \le \operatorname{min}(b,c)$ then yields the result. 
\end{proof}

\par Observe that to prove the upper bound in Proposition \ref{i2x2prop} it suffices to prove the result for $G=H^F\langle \alpha \rangle$. The lower bound holds by Proposition \ref{conjclassprop2}, and so throughout the proof we assume $G=H^F \langle \alpha \rangle$.
\par We require one further piece of notation before beginning the proof. For an even integer $m$, define
\begin{align*}
E_m = \begin{pmatrix}
J \\
& J \\
 & & \ddots \\
& & & J
\end{pmatrix} \in M_{m,m}(q)
\end{align*}
\noindent where 
\begin{align*}
J=\begin{pmatrix}
 & 1 \\ 1
\end{pmatrix}.
\end{align*}
\begin{lemma} \label{i2x2prop1}
Proposition \ref{i2x2prop} holds in the case $\alpha=1$.
\end{lemma}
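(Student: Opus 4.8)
The plan is to bound $i_{2\times 2}(H^F)$ from above, splitting as usual into the semisimple and unipotent cases according to whether $p$ is odd or $p=2$, and in each case to control two quantities: the number of $H$-classes of Klein four-subgroups (polynomially bounded in $qn$, hence absorbed into a $q^{cn}$ factor), and, for each such class, the number of $H^F$-classes together with the size $|K^{H^F}|$ of the class. For the latter I would apply Lemma~\ref{centorder} to the connected centralizer $D^\circ$, where $D=C_H(y_1)\cap C_H(y_2)$ for $K=\langle y_1,y_2\rangle$, reducing everything to the estimate
\begin{equation*}
\dim D^\circ > \tfrac14\dim(H)+cn
\end{equation*}
for an absolute constant $c$; combined with the bounded index $|D:D^\circ|$ and bounded torus rank this yields $|K^{H^F}|<|H^F|^{3/4+c/n}$, and summing over the at most $q^{cn}$ classes gives the result.

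For $p$ odd, $y_1$ and $y_2$ are semisimple, so $D=C_H(K)$ is the fixed points of a group generated by two commuting involutions. The natural module $V$ decomposes into at most four common eigenspaces (joint $\pm1$-eigenspaces), of dimensions $n_1,\dots,n_4$ with $\sum n_i=n$, and $D^\circ$ is a product of general linear, symplectic or orthogonal groups on these pieces (as in Table~\ref{prop5.3table}, with the roles of eigenspaces for two involutions rather than one element of order~4). The dimension bound $\dim D^\circ>\tfrac14\dim(H)+cn$ then follows from Cauchy--Schwarz exactly as in the proof of Lemma~\ref{i4prop1}: $\sum n_i^2\ge\tfrac14(\sum n_i)^2$, with the linear-form correction terms absorbed into $cn$. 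The number of $H$-classes of such $K$ is at most $c'(qn)^4$ (four eigenspace dimensions, finitely many form types), and $F$-stable members of an $H$-class split into at most $|D:D^\circ|$ $H^F$-classes; $|D:D^\circ|$ is bounded since $D/D^\circ$ embeds in a product of component groups and permutations of isomorphic factors. This closes the odd case.

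For $p=2$, $y_1$ and $y_2$ are commuting (unipotent) involutions; here I would invoke \cite[Theorem~3.1]{LiSe} for $H$ linear and \cite[Theorem~4.2]{LiSe} for $H$ symplectic or orthogonal to control $\dim C_H(y_i)$ and the structure of the centralizers, and then analyse $D^\circ=C_H(y_1)^\circ\cap C_H(y_2)^\circ$ by the kind of explicit matrix bookkeeping used in Proposition~\ref{conjclassprop2} — writing $D=UL$ with $U=R_u(D^\circ)$ and $L$ a reductive complement, and computing $\dim U$ and $\dim L$ via the block-matrix conditions imposed by commuting with the two unipotent elements. This is where the technical input, including Lemma~\ref{psi} (counting pairs $(A,B)$ with $AB=0$, equivalently pairs of block maps whose composite vanishes), is needed: it supplies the estimate $\psi(b,c,d)<c_0q^{\frac14(b+c-d)^2+cd}$ that governs the number of ``off-diagonal'' blocks and hence $\dim U$, and it is precisely what converts the raw block count into the clean bound $\dim D^\circ>\tfrac14\dim(H)+cn$. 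Once that bound is in hand, the rest is as before: the number of Jordan-type data for a pair of commuting involutions is polynomially bounded, $F$-stable members split into boundedly many $H^F$-classes by \cite[Theorems~7.1, 7.3]{LiSe}, $|D:D^\circ|$ and the relevant torus rank are bounded by \cite[Theorems~3.1, 4.2]{LiSe}, and Lemma~\ref{centorder} finishes the estimate.

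I expect the main obstacle to be the $p=2$ dimension computation for $D^\circ$ in the symplectic and orthogonal cases: one must carefully set up bases and Gram/quadratic forms compatible with \emph{both} commuting unipotent involutions simultaneously, track the interaction conditions between the two block structures (analogous to conditions \eqref{Kcondition1}--\eqref{Kcondition7}), and correctly apply Lemma~\ref{psi} to the resulting system so that the exponent comes out as $\tfrac14\dim(H)+O(n)$ rather than something weaker. The bookkeeping is delicate because the two involutions need not have ``aligned'' Jordan structures, so in the worst case one may need to reduce to a convenient normal form for the pair first (e.g.\ choosing $y_1$ in a fixed Jordan form and then parametrising commuting involutions $y_2$), and verify that the resulting centralizer dimension is minimised — up to the additive $cn$ error — exactly when the eigenspace/Jordan-block dimensions are as equal as possible.
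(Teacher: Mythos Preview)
Your plan for $p$ odd is essentially the paper's: reduce to the joint eigenspace decomposition, identify $D^\circ$ as a product of classical groups on the four pieces, and use Cauchy--Schwarz to get $\dim D^\circ>\tfrac14\dim H+cn$. The paper is more careful than your sketch about one point you gloss over: for $H$ symplectic or orthogonal, an involution $y_1\in H$ need not have $C_H(y_1)^\circ$ of product type $Cl_m\times Cl_{n-m}$ (it can be $\overline{GL_{n/2}}$), and once $y_1$ is fixed, $y_2$ may lie in $C_H(y_1)\setminus C_H(y_1)^\circ$ (e.g.\ swapping the two factors or inducing a graph automorphism). The paper handles these via explicit tables (Tables~\ref{d1timesd2} and \ref{outertable}), and the dimension bound still comes out, but your four-eigenspace picture does not cover these cases without further work.

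For $p=2$ the paper takes a genuinely different route from what you propose. You want to classify $H$-classes of Klein four-groups, compute $\dim D^\circ$ for each via matrix conditions in the style of Proposition~\ref{conjclassprop2}, and then apply Lemma~\ref{centorder}. The paper instead avoids any classification of commuting pairs: it writes
\[
i_{2\times 2}(H^F)\;\le\;\sum_{y_1}\,|y_1^{H^F}|\cdot i_2\bigl(C_{H^F}(y_1)\bigr),
\]
fixes $y_1$ in a standard Jordan form $j_{k,n}$ (or $a_{k,n},b_{k,n},c_{k,n}$ in the symplectic case), decomposes $C_{H^F}(y_1)=UL$, and then \emph{counts} involutions $uw\in UL$ directly over $\mathbb{F}_q$. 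Lemma~\ref{psi} enters not to compute a dimension but to count the number of block pairs $(P_2,R_2)$ with $R_2P_2=0$; the resulting exponent is then combined with the conjugacy-class size of $w$ in $L$ and of $y_1$ in $H^F$ and shown to be at most $\tfrac34\dim H$ term by term. This sidesteps two obstacles your approach would face: first, you would need a normal-form classification of commuting pairs of unipotent involutions in $Sp_n$ and $SO_n$ in characteristic~$2$ (Proposition~\ref{conjclassprop2} only handles a single specific pair coming from the almost-free embedding); second, the references you cite (\cite[Theorems~3.1, 4.2, 7.1, 7.3]{LiSe}) concern single unipotent elements, not pairs, so bounding $|D:D^\circ|$ and the number of $H^F$-classes in a fixed $H$-class of Klein four-groups would need a separate argument. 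Your framing of Lemma~\ref{psi} as ``governing $\dim U$'' is also slightly off: $\{(A,B):AB=0\}$ is not irreducible, and the lemma is used as a point count over $\mathbb{F}_q$, not as a dimension formula.
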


\begin{proof}
\par First suppose $p \ne 2$. By \cite[Theorem 4.3.2]{GoLySo}, involutions $y_1 \in H^F$ have $C_H(y_1)^\circ$ isomorphic to one of
\begin{align*}
\overline{\left( GL_m \times GL_{n-m} \right) \cap SL_n}  \hspace{0.5cm} &\text{if} \ H=PSL_n, \\
\overline{Sp_m \times Sp_{n-m}} \hspace{0.5cm} \text{or} \hspace{0.5cm} \overline{GL_\frac{n}{2}}\hspace{0.5cm} &\text{if} \ H=PSp_n, \\
\overline{SO_m \times SO_{n-m}}  \hspace{0.5cm} \text{or} \hspace{0.5cm} \overline{GL_\frac{n}{2}} \hspace{0.5cm} &\text{if}\  H=PSO_n,
\end{align*}
\noindent with at most 4 $H^F$-conjugacy classes of involutions for each isomorphism type. 
\par Let $C=C_H(y_1)$, and let $y_2 \in I_2(C^F)$ with $y_1 \ne y_2$. First suppose $y_2 \in C^\circ$. Observe that $y_2 \notin Z(C^\circ)$, and so we can consider $y_2$ as an involution in $C^\circ/Z(C^\circ)$. Then $C_{C^\circ/Z(C^\circ)}(y_2)^\circ \cong D_1 \times D_2$ where the possibilities for $D_1$ and $D_2$ are listed in Table \ref{d1timesd2}. Each possibility for $C_{C^\circ}(y_2)^\circ$ represents at most 16 $C^F$-classes of involutions $y_2 \in (C^\circ)^F$.

\begin{table}[h]
\begin{center}
\def\arraystretch{1.4}
\caption {Possibities for $C_{C^\circ/Z(C^\circ)}(y_2)^\circ$}
\label{d1timesd2}
\hspace*{-1.77cm}
\begin{tabular}{ c c c c }
$H$  &  $C^\circ/Z(C^\circ)$ & $D_1$ & $D_2$ \\ \hline
$PSL_n$ & $PSL_m \times PSL_{n-m}$ & $\overline{\left( GL_{l_1} \times GL_{m-l_1} \right) \cap SL_{m}}$ &  $\overline{\left( GL_{l_2} \times GL_{n-m-l_2} \right) \cap SL_{n-m}}$  \\ \hline 
$PSp_n$ & $PSp_m \times PSp_{n-m}$ & $\overline{Sp_{l_1} \times Sp_{m-l_1}}, \overline{GL_{\frac{m}{2}}}$ & $ \overline{Sp_{l_2} \times Sp_{n-m-l_2}}, \overline{GL_{\frac{n-m}{2}}}$ \\
& $PGL_\frac{n}{2}$ & $\overline{GL_m \times GL_{\frac{n}{2}-m}}$ & 1 \\ \hline
$PSO_n$ & $PSO_m \times PSO_{n-m}$ & $\overline{SO_{l_1} \times SO_{m-l_1}}, \overline{GL_{\frac{m}{2}}}$ & $ \overline{SO_{l_2} \times SO_{n-m-l_2}}, \overline{GL_{\frac{n-m}{2}}}$ \\ 
 & $PGL_\frac{n}{2}$ & $\overline{GL_m \times GL_{\frac{n}{2}-m}}$ & 1 \\ \hline
\end{tabular}
\end{center}
\end{table}
\setlength\LTleft{0cm}

\par We have $\dim(C_{C^\circ}(y_2)) \ge \dim(C_{C^\circ/Z(C^\circ)}(y_2))$, and by the Cauchy-Schwarz inequality
\begin{align} \label{7.1eqn1}
 \dim(C_{C^\circ/Z(C^\circ)}(y_2)) > \frac{1}{4} \dim(H)+cn
\end{align}
\noindent for some absolute constant $c$.
   
\par Now suppose $y_2 \in C \backslash C^\circ$. Then $C^\circ$ and the structure of $C/C^\circ$ can be found in \cite[Theorem~4.3.1]{GoLySo}, and we describe them in Table \ref{outertable}. In all but one case, $C/C^\circ \cong C_2$, and an element in $C \backslash C^\circ$ either switches the factors of $C^\circ$ or acts as a graph automorphism on one or two factors (if two factors are present), and this element is described by the symbol $\leftrightarrow, \gamma \  1$ or $\gamma \ \gamma$ respectively (if there is an automorphism on one of two factors, it acts on the left factor). In the case $H=PSO_n$ and $C^\circ = \overline{SO_\frac{n}{2}^2}$ we have $C/C^\circ \cong C_2 \times C_2$, generated by an element switching the factors and an element inducing a graph automorphism on both factors; we denote the three nontrivial elements of $C/C^\circ$ by the symbols $\leftrightarrow, \gamma \ \gamma$, and $\leftrightarrow \times \gamma \ \gamma$. For $y_2 \in C \backslash C^\circ$ with its image in the outer automorphism group of $C^\circ$ as listed, the centralizer $C_{C^\circ}(y_2)^\circ$ is given in the fourth column up to isomorphism.

\begin{table}[h]
\footnotesize
\begin{center}
\def\arraystretch{1.5}
\caption {Involutions in $C \backslash C^\circ$}
\label{outertable}
\hspace*{-1.4cm}
\begin{tabular}{c c c c c}
$H$  & $C^\circ $ & $C/C^\circ$ & $C_{C^\circ}(y_2)^\circ$ & Conditions \\ \hline
$PSL_n$ & $\overline{GL_\frac{n}{2}^2 \cap SL_n}$ & $\leftrightarrow$ & $\overline{GL_\frac{n}{2}}$ & $n$ even   \\ \hline 
$PSp_n$ & $\overline{Sp_\frac{n}{2}^2}$ & $\leftrightarrow$ &  $\overline{Sp_\frac{n}{2}}$ & $\frac{n}{2}$ even \\
& $\overline{GL_\frac{n}{2}}$ & $\gamma$ & $\overline{SO_\frac{n}{2}} \ \text{or}\ \overline{Sp_\frac{n}{2}}$ & \\ \hline
$PSO_n$ & $\overline{SO_m \times SO_{n-m}}$ & $\gamma \ 1$  & $\overline{SO_{l_0} \times SO_{m-l_0} \times SO_{n-m}}$ & $n, l_0$ odd, $m$ even \\ 
& $\overline{SO_m \times SO_{n-m}}$  & $\gamma \ \gamma$  & $\overline{SO_{l_1} \times SO_{m-l_1} \times SO_{l_2} \times SO_{n-m-l_2}}$ & $n, m$ even, $l_1, l_2$ odd, $m \ne \frac{n}{2}$ \\ 
& $\overline{SO_\frac{n}{2}^2}$ & $\leftrightarrow$ & $\overline{SO_\frac{n}{2}}$ & $\frac{n}{2}$ even  \\
& & $\gamma \ \gamma$ & $\overline{SO_{l_1} \times SO_{\frac{n}{2}-l_1} \times SO_{l_2} \times SO_{\frac{n}{2}-l_2}}$ & $\frac{n}{2}$ even, $l_1, l_2$ odd \\
& & $\leftrightarrow \times \gamma \ \gamma$ & $\overline{SO_\frac{n}{2}}$ & $\frac{n}{2}$ even  \\
& $\overline{GL_\frac{n}{2}}$ & $\gamma$ & $\overline{SO_\frac{n}{2}} \ \text{or}\ \overline{Sp_\frac{n}{2}}$ & \\ \hline
\end{tabular}
\end{center}
\end{table}

\par Each $C^\circ$ listed in Table \ref{outertable} represents at fewer than $cq$ $C^F$-classes of involutions $y_2 \in C^F \backslash (C^\circ)^F$, for some absolute constant $c$. Moreover, in each case we have
\begin{align} \label{7.1eqn2}
\dim(C_{C^\circ}(y_2)^\circ) > \frac{1}{4} \dim(H)+cn
\end{align}
\noindent for some absolute constant $c$.
\par Therefore there are fewer than $c'qn$ $H^F$ classes of subgroups $\langle y_1,y_2 \rangle \cong C_2 \times C_2$ in $H^F$, and for each we have
\begin{align*}
\dim(C_H(y_1) \cap C_H(y_2)) > \frac{1}{4}\dim(H) + c''n
\end{align*}
\noindent by (\ref{7.1eqn1}) and (\ref{7.1eqn2}). Let $D=C_H(y_1) \cap C_H(y_2)$, and observe that $\operatorname{max}\{ \operatorname{rank}(Z(D^\circ)), |D:D^\circ| \} \le 4$ by \cite[Theorem~4.3.1]{GoLySo}. Hence, by Lemma \ref{centorder}, for any commuting involutions $y_1, y_2 \in H^F$,
\begin{align*}
|\langle y_1,y_2 \rangle^{H^F}| < q^{\frac{3}{4}\dim(H)+c''n}
\end{align*}
\noindent for some absolute constant $c''$, and the result follows.

\par Now suppose $p=2$. First consider the linear case. Involutions in $H^F$ are conjugate to
\begin{align*}
j_{k,n} = \begin{pmatrix}
I_k \\
& I_{n-2k} \\
I_k & & I_k
\end{pmatrix}
\end{align*}
\noindent for some $1 \le k \le \frac{n}{2}$, and so
\begin{align} \label{7.2eqn2}
i_{2 \times 2}(H^F) &< \sum_{1 \le k \le \frac{n}{2}} |j_{k,n}^{H^F}|  i_2(C_{H^F}(j_{k,n})) .
\end{align}

\par The centralizer $C_{H^F}(j_{k,n})$ contains matrices of the form
\begin{align*}
\begin{pmatrix}
X \\
P & Y \\
Q & R & X
\end{pmatrix}
\end{align*}
\noindent with $X \in GL_k(q), Y \in GL_{n-2k}(q)$ and arbitrary $P, Q, R$, and so 
\begin{align} \label{7.2eqn3}
|j_{k,n}^{H^F}|<cq^{2k(n-k)}.
\end{align} 
\par Observe that $C_{H^F}(j_{k,n})=UL$, where $L$ is the subgroup of $C_{H^F}(j_{k,n})$ with $P=Q=R=0$, and $U$ is the subgroup with $X=Y=1$. Involutions in $C_{H^F}(j_{k,n})$ are $L$-conjugate to
\begin{align*}
\begin{pmatrix}
j_{l_1, k} \\
P & j_{l_2,n-2k} \\
Q & R & j_{l_1,k}
\end{pmatrix}
\end{align*}
\noindent for some $0 \le l_1 \le \frac{k}{2}, 0 \le l_2 \le \frac{n-2k}{2}$ and some $P,Q,R$ such that 
\begin{align} 
Pj_{l_1,k}=j_{l_2,n-2k}P, \nonumber \\
j_{l_1,k}R=Rj_{l_2,n-2k}, \label{conditions1} \\
j_{l_1,k}Q+Qj_{l_1,k}=RP. \nonumber
\end{align}
\noindent We can therefore write
\begin{align} \label{conditions2}
P=\begin{pmatrix}
P_1\\
P_3 & P_2 \\
P_4 & P_5 & P_1
\end{pmatrix}, \ 
R=\begin{pmatrix}
R_1\\
R_3 & R_2 \\
R_4 & R_5 & R_1
\end{pmatrix},
\end{align}
\noindent where 
\begin{align}
P_1, R_1^T &\in M_{l_2,l_1}(q), \nonumber \\
P_2, R_2^T &\in M_{n-2k-2l_2,k-2l_1}(q),  \label{conditions3} \\
R_2P_2 &=0. \nonumber
\end{align} 
\par Let $w_{l_1,l_2,k,n} \in L$ denote the involution
\begin{align*}
\begin{pmatrix}
j_{l_1, k} \\
 & j_{l_2,n-2k} \\
 &  & j_{l_1,k}
\end{pmatrix},
\end{align*}
\noindent and let $I_{l_1, l_2,k,n}$ denote the number of involutions $uw_{l_1,l_2,k_,n} \in UL$. Then we have
\begin{align} \label{7.2eqn4}
i_2(C_{H^F}(j_{k,n})) < \sum_{\substack{0 \le l_1 \le \frac{k}{2} \\ 0 \le l_2 \le \frac{n-2k}{2}}} |w_{l_1,l_2,k,n}^L| I_{l_1, l_2,k,n}.
\end{align}

\noindent Using Lemma \ref{psi}, the number of pairs ($R_2, P_2$) such that (\ref{conditions3}) holds is
\begin{align*}
\psi(k-2l_1,n-2k-2l_2,k-2l_1) < c q^{\frac{1}{4}(n-2k-2l_2)^2+(k-2l_1)(n-2k-2l_2)}
\end{align*}
\noindent for some abolsute constant $c$. Using (\ref{conditions2}), we count the number of possible $P_i, R_i$ ($i=1,3,4,5$); we then calculate the number of possible $Q$ given $R$ and $P$ using (\ref{conditions1}), and we find
\begin{align*}
I_{l_1,l_2,k,n} &< cq^{\frac{1}{4}(n-2k-2l_2)^2+(k-2l_1)(n-2k-2l_2)+2l_1(n-2k-2l_2)+2l_2(k-2l_1)+4l_1l_2+2l_1^2+2l_1(k-2l_1)+(k-2l_1)^2} \\
&= cq^{\frac{1}{4}n^2-l_2(n-2k-l_2)-2l_1(k-l_1)}.
\end{align*}

\noindent Therefore, by (\ref{7.2eqn3}) and (\ref{7.2eqn4}), we have
\begin{align*}
i_2(C_{H^F}(j_{k,n})) < \sum_{\substack{0 \le l_1 \le \frac{k}{2} \\ 0 \le l_2 \le \frac{n-2k}{2}}}   c'q^{\frac{1}{4}n^2+l_2(n-2k-l_2)}.
\end{align*}

\noindent Hence by (\ref{7.2eqn2}),
\begin{align*}
i_{2 \times 2}(H^F) &< \sum_{1 \le k \le \frac{n}{2}} \sum_{\substack{0 \le l_1 \le \frac{k}{2} \\ 0 \le l_2 \le \frac{n-2k}{2}}}   c''q^{\frac{1}{4}n^2+2k(n-k)+l_2(n-2k-l_2)}.
\end{align*}
\noindent It can easily be shown that $\frac{1}{4}n^2+2k(n-k)+l_2(n-2k-l_2) \le \frac{3}{4}n^2$, and the result follows for the linear case.

\par Next consider the symplectic case. By \cite{AsSe}, involutions in $H^F$ are conjugate to $a_{k,n} \ (2 \le k \le \frac{n}{2}, k \ \text{even}), b_{k,n} \ (1 \le k \le \frac{n}{2}, k \ \text{odd}),$ or $c_{k,n} \ (2 \le k \le \frac{n}{2}, k \ \text{even})$, where
\begin{align} \label{7.2eqn9}
|a_{k,n}^{H^F}| \sim q^{k(n-k)}, |b_{k,n}^{H^F}|, |c_{k,n}^{H^F}| \sim q^{k(n-k+1)},
\end{align}
\noindent and each has Jordan normal form $j_{k,n}$ as in the linear case. We have
\begin{align} 
i_{2 \times 2}(H^F) &< \sum_{a_{k,n}} |a_{k,n}^{H^F}| i_{2 \times 2}(C_{H^F}(a_{k,n})) + \sum_{b_{k,n}} |b_{k,n}^{H^F}| i_{2 \times 2}(C_{H^F}(b_{k,n})) + \sum_{c_{k,n}} |c_{k,n}^{H^F}| i_{2 \times 2}(C_{H^F}(c_{k,n})) \nonumber \\
&= \Sigma_a+\Sigma_b+\Sigma_c. \label{7.2eqn10}
\end{align}
\par We first consider $\Sigma_a$. Consider the involution $a_{k,n}$, which acts as $j_{k,n}$ with respect to a Gram matrix 
\begin{equation*}
J=\begin{pmatrix} & & F \\ & E \\ F \end{pmatrix},
\end{equation*}
\noindent where $F \in M_{k,k}(q), E \in M_{n-2k, n-2k}(q)$. The centralizer $C_{H^F}(a_{k,n})$ contains matrices of the form
\begin{align*}
\begin{pmatrix}
X \\ P & Y \\ Q & R & X
\end{pmatrix}
\end{align*}
\noindent where $X \in Sp_k(q), Y \in Sp_{n-2k}(q)$, and $P, Q, R$ satisfy
\begin{align*}
P^TEY+X^TFR&=0, \\
Q^TFX + P^TEP + X^TFQ&=0.
\end{align*}
\noindent Let $L$ be the subgroup of $C_{H^F}(a_{k,n})$ with $P=Q=R=0$, and let $U$ be the subgroup with $X=Y=1$. As in the linear case, $C_{H^F}(a_{k,n})=UL$. Involutions in $L$ are conjugate to
\begin{align*}
w_{l_1,l_2,k,n}(d,e)=
\begin{pmatrix}
d_{l_1,k} \\
& e_{l_2,n-2k} \\
& & d_{l_1,k}
\end{pmatrix},
\end{align*}
\noindent where $d,e \in \{ a,b,c \}$. Recall the definition of $E_m \in GL_m(q)$ preceding Lemma \ref{i2x2prop1}. We have $d_{l_1,k}, e_{l_2,n-2k}$ acting as $j_{l_1,k}, j_{l_2,n-2k}$ with respect to a basis such that $E, F$ are of the form
\begin{align*}
E&=\begin{pmatrix}
& & E_e \\ & E_{n-2k-2l_2} \\ E_e
\end{pmatrix}, \\
F &=\begin{pmatrix}
& & F_d \\ & E_{k-2l_1} \\ F_d
\end{pmatrix},
\end{align*}
\noindent where
\begin{align*}
  E_e &= \left\{ 
  \begin{array}{l l}
E_{l_2} \hspace{0.3cm} &\text{if $e=a$,} \\[0.2cm]
\begin{pmatrix} 1 \\ & E_{l_2-1}  \end{pmatrix} &\text{if $e=b$,} \\[0.5cm]
\begin{pmatrix} & & 1 \\ & E_{l_2-2} \\ 1 & & 1 \end{pmatrix} &\text{if $e=c$}, \\[0.2cm]
\end{array}
\right. 
\end{align*}
\noindent and $F_d \in M_{l_1,l_1}(q)$ is defined similarly.
\par Let $I_{l_1,l_2,k,n}(d,e)$ be the number of involutions equal to
\begin{align*}
u w_{l_1,l_2,k,n}(d,e) = \begin{pmatrix}
d_{l_1,k} \\
P & e_{l_2,n-2k} \\
Q & R & d_{l_1,k}
\end{pmatrix}  \in UL.
\end{align*}
\noindent We now count the number of such involutions. The matrices $P,Q,R$ are subject to the conditions
\begin{align}
Pd_{l_1,k}=e_{l_2,n-2k}P, \label{7.2eqn11} \\
d_{l_1,k}Q+Qd_{l_1,k}=RP, \label{7.2eqn12} \\
P^TEe_{l_2,n-2k}=d_{l_1,k}^TFR, \label{7.2eqn13} \\
Q^TFd_{l_1,k}+d_{l_1,k}^TFQ=P^TEP. \label{7.2eqn14}
\end{align}
\noindent Condition (\ref{7.2eqn11}) implies $P$ is of the form
\begin{align} \label{Pform}
\begin{pmatrix}
P_1 \\
P_3 & P_2 \\
P_4 & P_5 & P_1
\end{pmatrix}
\end{align}
\noindent where $P_1 \in M_{l_2,l_1}(q)$, $P_2 \in M_{n-2k-2l_2, k-2l_1}(q)$. Condition (\ref{7.2eqn13}) implies that $R$ is determined by $P$, with 
\begin{align*}
R= \begin{pmatrix} R_1 \\ R_3 & R_2 \\ R_4 & R_5 & R_1 \end{pmatrix} =\begin{pmatrix}
F_d P_1^T E_e \\
E_{k-2l_1} P_5^T E_e & E_{k-2l_1} P_2^T E_{n-2k-2l_2} \\
F_d P_4^T E_e & F_d P_3^T E_{n-2k-2l_2} & F_d P_1^T E_e 
\end{pmatrix}.
\end{align*}
\noindent By condition (\ref{7.2eqn12}), if
\begin{align*}
\begin{pmatrix}
Q_1 & Q_2 & Q_3 \\
Q_4 & Q_5 & Q_6 \\
Q_7 & Q_8 & Q_9
\end{pmatrix},
\end{align*} 
\noindent then $Q_1+Q_9,Q_2,Q_3,Q_6$ are determined by $RP$, and we also have 
\begin{align}
R_2P_2 =E_{k-2l_1} P_2^T E_{n-2k-2l_2} P_2=0. \label{r2p2condition}
\end{align}
\noindent If we write $P_2$ as the block matrix
\begin{align*}
\begin{pmatrix}
S_1 & S_2 \\ S_3 & S_4
\end{pmatrix}
\end{align*}
\noindent where 
\begin{align*}
S_1, S_2 &\in M_{2\lfloor \frac{1}{4}(n-2k-2l_2) \rfloor, \frac{1}{2}(k-2l_1)}(q), \\
S_3, S_4 &\in M_{2\lfloor \frac{1}{4}(n-2k-2l_2+2) \rfloor, \frac{1}{2}(k-2l_1)}(q),
\end{align*}
\noindent then (\ref{r2p2condition}) implies
\begin{align*}
\begin{pmatrix}
S_1^T E_{2\lfloor \frac{1}{4}(n-2k-2l_2) \rfloor} & S_3^T E_{2\lfloor \frac{1}{4}(n-2k-2l_2+2) \rfloor}
\end{pmatrix}
\begin{pmatrix}
S_2 \\ S_4 
\end{pmatrix}=0.
\end{align*}
\noindent Therefore, by Lemma \ref{psi}, the number of choices for $P_2$ is less than 
\begin{align*}
\psi(\frac{1}{2}(k-2l_1),n-2k-2l_2,\frac{1}{2}(k-2l_1))<cq^{\frac{1}{4}(n-2k-2l_2)(2(k-2l_1)+(n-2k-2l_2))}
\end{align*}
\noindent for some absolute constant $c$. By (\ref{Pform}), the number of choices for $P$ is therefore less than
\begin{align*}
cq^{2l_1l_2+l_1(n-2k-2l_2)+l_2(k-2l_1)+\frac{1}{4}(n-2k-2l_2)(2(k-2l_1)+(n-2k-2l_2))}.
\end{align*}
\noindent Given $P$, by (\ref{7.2eqn12}) we have that $Q_1+Q_9,Q_2, Q_3$ and $Q_6$ are determined. Moreover, by (\ref{7.2eqn14}) and matrix calculations, the number of choices for the remaining blocks of $Q$ is less than
\begin{align*}
q^{l_1^2+l_1(k-2l_1)+\frac{1}{2}(k-2l_1)^2+c'n}
\end{align*}
\noindent for an absolute constant $c'$. Hence for any $d,e \in \{ a,b,c \}$ we have
\begin{align*}
I_{l_1,l_2,k,n}(d,e) <q^{\frac{1}{2}k(n-2k)+\frac{1}{4}(n-2k-2l_2)^2+l_1^2+\frac{1}{2}(k-2l_1)^2+l_1(k-2l_1)+c''n}.
\end{align*}
\noindent Therefore, using (\ref{7.2eqn3}) and (\ref{7.2eqn9}),
\begin{align*}
\hspace{-0.2cm} i_{2 \times 2}(C_{H^F}(a_{k,n})) &< \sum_{\substack{0 \le l_1 \le \frac{k}{2} \\ 0 \le l_2 \le \frac{n-2k}{2} \\ d,e \in \{ a,b,c \} }} |w_{l_1,l_2,k,n}(d,e)^L| I_{l_1,l_2,k,n}(d,e) \\ 
&< \sum_{\substack{0 \le l_1 \le \frac{k}{2} \\ 0 \le l_2 \le \frac{n-2k}{2} \\ d,e \in \{ a,b,c \} }} q^{\frac{1}{2}k(n-2k)+\frac{1}{4}(n-2k-2l_2)^2+l_1^2+\frac{1}{2}(k-2l_1)^2+l_1(k-2l_1)+l_1(k-l_1)+l_2(n-2k-l_2)+c'''n} \\
&=\sum_{\substack{0 \le l_1 \le \frac{k}{2} \\ 0 \le l_2 \le \frac{n-2k}{2} \\ d,e \in \{ a,b,c \} }} q^{\frac{1}{4}(n-k)^2+\frac{1}{4}k^2+c'''n} \\
&< q^{\frac{1}{4}(n-k)^2+\frac{1}{4}k^2+c''''n}.
\end{align*}
\noindent Therefore
\begin{align*}
\Sigma_a &= \sum_{\substack{2 \le k \le \frac{n}{2} \\ k \ \text{even}}} |a_{k,n}^{H^F}| i_{2 \times 2}(C_{H^F}(a_{k,n})) < \sum_{2 \le k \le \frac{n}{2}} q^{\frac{1}{8}n^2+\frac{1}{8}(n-2k)^2+k(n-k)+d'n}.
\end{align*}
\noindent for some absolute constant $d'$. Since $\frac{1}{8}n^2+\frac{1}{8}(n-2k)^2+k(n-k) \le \frac{3n^2}{8}$, we have $\Sigma_a<q^{\frac{3n^2}{8}+d''n}$ for some absolute constant $d''$. 
\par Similar calculations show $\operatorname{max}(\Sigma_b, \Sigma_c)<q^{\frac{3n^2}{8}+d'''n}$ for an absolute constant $d'''$, and this completes the proof in the symplectic case by (\ref{7.2eqn10}).

\par Next consider the case where $H$ is orthogonal. In this case the result follows immediately from the symplectic result, since $H^F \le PGSp_n(q)$.

\par Finally, in the case where $H^F=PGU_n(q)$, the proof is similar to the symplectic case.
\end{proof}

\begin{lemma}
Proposition \ref{i2x2prop}  holds if $\alpha \ne 1$.
\end{lemma}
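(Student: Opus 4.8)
The plan is to reduce the outer-coset case $\alpha\ne 1$ to the inner case (Lemma~\ref{i2x2prop1}) and to the element-counting results of \S~5--6, following the template of Lemmas 5.5--5.7 in the proof of Proposition~\ref{i4prop}. By Lemma~\ref{i2x2prop1} it suffices to bound the number of subgroups $\cong C_2\times C_2$ that meet the coset $H^F\alpha$, i.e. Klein four-groups $K=\langle y_1,y_2\rangle$ with at least one of $y_1,y_2,y_1y_2$ lying in $H^F\alpha$. Such a $K$ is determined by an involution $t\in I_2(H^F)$ (the product of the two ``non-coset'' generators, or one of them if two of the three involutions lie in the coset) together with an involution in the coset of $\operatorname{Aut}(C_{H^F}(t))$ or in $C_{H^F\alpha}(t)$. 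So the two things to control are: (i) for each class of $t\in I_2(H^F)$, the number of $C_{H^F}(t)$-classes of involutions in $I_2(C_{H^F}(t)\alpha)$ times the typical class size, and (ii) that $|C_{C_{H^F}(t)}(s)|$ is large (at least $|H^F|^{1/4+c/n}$) for every such $s$. Both are exactly the ingredients already assembled in \S~5.

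Concretely I would split into the same three cases as for $i_4$. If $p\ne 2$ and $\alpha\ne 1$: here one uses that $C_{H^F}(t)$ contains a subgroup $C=C_0$ from Table~\ref{auttable} with $C/Z(C)$ simple or a product of two simples, that any involution $s\in C_{H^F}(t)\alpha$ acts on $C/Z(C)$, and (as in Lemma~5.6) that $|C_{S_i}(s)|>|S_i|^{1/2+c/n}$, giving $|C_C(s)|>|H^F|^{1/4+c'/n}$ and hence $|\langle t,s\rangle^{H^F\langle\alpha\rangle}|<|H^F|^{3/4+c''/n}$; combined with $|K^G|$ being comparable to the conjugacy-class size of the subgroup and $|K^G| \ge \tfrac12|\langle t\rangle^{H^F}|\cdot(\text{stuff})$ up to $q^{cn}$. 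For counting the number of classes, one uses \cite[Theorem~4.5.1]{GoLySo} to bound the number of classes of $t$ by $c n$ and the argument from Lemma~5.6 (via \cite{dieu}) to bound the number of $C_{H^F}(t)$-classes of outer involutions by $q^{cn}$; so the total number of Klein four-subgroups meeting the coset is $<q^{cn}$ times the maximal class size, yielding $i_{2\times2}(H^F\langle\alpha\rangle)<|H^F|^{3/4+c/n}$.

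If $p=2$ and $\alpha$ is a field or graph-field automorphism: use Shintani descent exactly as in Lemma~5.7, with the Frobenius $F_0$ satisfying $F_0^2=F$ and $F_0\sim_{\operatorname{Inndiag}}\alpha$. By Lemma~\ref{shintani}, $H^F\langle\alpha\rangle$-classes in $H^F\alpha$ of a given order correspond to $H^{F_0}$-classes, with matching centralizer orders; so Klein four-subgroups meeting $H^F\alpha$ correspond to pairs (involution class in $H^{F_0}$, commuting involution) — in effect one bounds $i_{2\times2}$ contributions from the coset by $i_{2\times2}(H^{F_0})$ together with $i_2(H^{F_0})$ data, which by Lemma~\ref{i2x2prop1} applied to $H^{F_0}$ (and Proposition~\ref{irprop}) is $<|H^{F_0}|^{3/4+c/n}\sim|H^F|^{3/4+c'/n}$. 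If $p=2$ and $\alpha$ is a graph automorphism, then $H=PSL_n$ or $PSO_n$; for $PSL_n$ reduce to $SL_n^\epsilon(q)\alpha$ and realize $g\alpha$ as a unipotent element of $Sp_{2n}$ as in Lemma~5.7 (via \cite{LaLiSe2}), bounding the centralizer from below by $q^{n^2/4+c}$ and the number of classes by $c n$; for $PSO_n$ use $H^F\langle\alpha\rangle\cong SO_n^\epsilon(q)\le Sp_n(q)$ together with $|Sp_n(q)|^{3/4+c/n}\sim|SO_n(q)|^{3/4+c'/n}$ and Lemma~\ref{i2x2prop1} for the symplectic group. In each case one then multiplies the (sub)group-class size bound by the number-of-classes bound and adds the $\alpha=1$ contribution from Lemma~\ref{i2x2prop1}.

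The main obstacle I expect is the bookkeeping in case (i), $p\ne 2$, $\alpha\ne1$: one must handle the possibility that an involution $s\in C_{H^F}(t)\alpha$ swaps two isomorphic simple factors of $C/Z(C)$ (in which case $C_{C/Z(C)}(s)\cong S_1$ rather than a product, which is actually fine since $|S_1|>|H^F|^{1/4+c/n}$), and one must also be careful that the three involutions of $K$ can be distributed among the inner part and the coset in several ways, so the correspondence ``$K\leftrightarrow(t,s)$'' is at most three-to-one — harmless for the upper bound. A secondary technical point is making precise the relation between $|K^{H^F\langle\alpha\rangle}|$ and the quantity $i_{2\times2}$, which requires noting that $K$ has at most $|S_3|$ choices of ordered generating pair up to the relevant normalizer action, exactly as in the proof of Lemma~\ref{centorder}; this only costs an absolute constant. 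Once these are in place, the numerical inequalities are routine and mirror those already done for $i_4$.
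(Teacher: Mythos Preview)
Your approach is workable in principle but takes an unnecessarily circuitous route, and contains one numerical slip. The paper's proof is structurally simpler: rather than fixing the \emph{inner} involution $t\in K\cap H^F$ first and then analysing outer involutions of the complicated centralizer $C_{H^F}(t)$, it fixes the \emph{outer} involution $y_1\in K\cap H^F\alpha$ first and bounds
\[
|y_1^{H^F}|\cdot i_2\bigl(C_{H^F\langle\alpha\rangle}(y_1)\bigr)<q^{\frac{3}{4}\dim H + cn}
\]
for each class of $y_1$. This is easier because for an outer involution $y_1$ the centraliser $C_{H^F}(y_1)$ is (essentially) a single classical group --- a subfield group for field/graph-field $\alpha$, and $PGSp_n(q)$, $PGO_n(q)$ or a product $SO_{2i-1}(q)\times SO_{n-2i+1}(q)$ for graph $\alpha$ --- so Proposition~\ref{irprop} applies directly to bound $i_2$ of it. The $i_4$ template you are following was forced there because a coset element of order $4$ has no direct centraliser description and one must pass through its square $t=(g\alpha)^2\in H^F$; here the coset element already has order $2$, so that detour is not needed.

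Two specific points. First, no Shintani descent is used at all in the paper's proof: the field/graph-field case is handled uniformly in $p$ by \cite[Prop.~4.9.1]{GoLySo}, which says all involutions in $H^F\alpha$ are conjugate to $\alpha$, with known centraliser. Second, your claim that $|H^{F_0}|^{3/4+c/n}\sim|H^F|^{3/4+c'/n}$ is false: since $|H^{F_0}|\sim q^{\frac{1}{2}\dim H}$ while $|H^F|\sim q^{\dim H}$, these differ by a factor of $q^{\frac{3}{8}\dim H}$. The correct count in that case is $|\alpha^{H^F}|\cdot i_2(H^{F_0})\sim q^{\frac{1}{2}\dim H}\cdot q^{\frac{1}{4}\dim H}=q^{\frac{3}{4}\dim H}$, which does give the right bound but not by the route you stated. (Also note that exactly two of the three involutions of $K$ lie in $H^F\alpha$ and one lies in $H^F$, so the parenthetical in your first paragraph is slightly garbled.)
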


\begin{proof}
\par By Lemma \ref{i2x2prop1}, it suffices to count Klein four-groups $\langle y_1,y_2 \rangle \le H^F\langle \alpha \rangle$ with $y_1 \in H^F\alpha$. In most cases, we bound the number of conjugacy classes of $y_1 \in I_2(H^F \alpha)$, and for each class representative we show
\begin{align} \label{6.4eqn1}
|y_1^{H^F}| i_2(C_{H^F\langle \alpha \rangle}(y_1)) < q^{\frac{3}{4}\dim(H)+cn}
\end{align}
\noindent for some absolute constant $c$. The result follows from this.
\par First suppose $\alpha$ is a field or graph-field automorphism. By \cite[Proposition~4.1]{GoLySo}, $y_1 \in I_2(H^F \alpha)$ is $H^F$-conjugate to $\alpha$, and either
\begin{align*}
C_{H^F}(\alpha) \cong PGL_n(q^\frac{1}{2}), PGSp_n(q^\frac{1}{2}), PGO^+_n(q^\frac{1}{2})
\end{align*}
\noindent if $\alpha$ is a field automorphism and $H$ is linear, symplectic, orthogonal respectively, or
\begin{align*}
C_{H^F}(\alpha) \cong PGU_n(q^\frac{1}{2}), PGO^-_n(q^\frac{1}{2})
\end{align*}
\noindent if $\alpha$ is a graph-field automorphism and $H$ is linear, orthogonal respectively. Therefore, by Proposition \ref{irprop}, 
\begin{align*}
i_2(C_{H^F \langle \alpha \rangle}(\alpha)) < q^{\frac{1}{4}\dim(H)+cn}
\end{align*}
\noindent for some absolute constant $c$. We have
\begin{align*}
|\alpha^{H^F}| < q^{\frac{1}{2}\dim(H)+c'n}
\end{align*}
\noindent for an absolute constant $c'$, and so (\ref{6.4eqn1}) holds.
\par Now suppose $\alpha$ is a graph automorphism, so $H$ is linear or orthogonal. In the orthogonal case, we have $H^F \langle \alpha \rangle \cong PGO^\epsilon_n(q)$, and so if $q$ is even the result follows by Lemma \ref{i2x2prop1} since $H^F \langle \alpha \rangle \le Sp_n(q)$. If $q$ is odd, then by \cite[Theorem~4.5.1]{GoLySo} there are fewer than $\frac{n}{4}+2$ $H^F$-classes of involutory graph automorphisms $y_1 \in H^F \alpha$ (assuming $n>8$), with centralizer in $H^F$ isomorphic to one of
\begin{align*}
C_{H^F}(y_1) \cong
SO_{n-2i+1}(q) \times SO_{2i-1}(q) \ (1 \le i \le \frac{n}{4}), \\
SO_{\frac{n}{2}}(q)^2.2 \ \text{($\frac{n}{2}$ odd)}, 
SO_\frac{n}{2}(q^2).2 \ \text{($\frac{n}{2}$ odd)}, 
\end{align*}
\noindent where in the final two cases the outer involution acts by switching the factors and as a field automorphism respectively. If $C_{H^F}(y_1) \cong SO_{n-2i+1}(q) \times SO_{2i-1}(q)$, then by Proposition \ref{irprop} we have
\begin{align*}
i_2(C_{H^F}(y_1)) &< (i_2(SO_{n-2i+1}(q)+1)(i_2(SO_{2i-1}(q))+1) \\ &< |SO_{n-2i+1}(q)|^{\frac{1}{2}+\frac{c}{n}}|SO_{2i-1}(q)|^{\frac{1}{2}+\frac{c}{n}},
\end{align*}
\noindent and so 
\begin{align*}
|y_1^{H^F\langle \alpha \rangle}| i_2(C_{H^F}(y_1)) < \frac{|H^F \langle \alpha \rangle|}{|C_{H^F\langle \alpha \rangle}(y_1)|^{\frac{1}{2}+\frac{c'}{n}}},
\end{align*}
\noindent and the result follows in this case as (\ref{6.4eqn1}) holds. If $C_{H^F}(y_1) \cong SO_{\frac{n}{2}}(q)^2.2$, then
\begin{align*}
i_2(C_{H^F}(y_1)) < i_2(SO_\frac{n}{2}(q)^2)+|SO_\frac{n}{2}(q)|,
\end{align*}
\noindent and the result now follows as above. If $C_{H^F}(y_1) \cong SO_\frac{n}{2}(q^2).2$, then let
\begin{align*}
C_{H^F}(y_1) \cong SO_{\frac{n}{2}}(q^2)\langle \phi \rangle,
\end{align*}
\noindent where $\phi$ is an involutory field automorphism of $SO_\frac{n}{2}(q^2)$. By \cite[Proposition~4.1]{GoLySo}, all involutions in $SO_{\frac{n}{2}}(q^2) \phi$ are $SO_\frac{n}{2}(q^2)$-conjugate to $\phi$, and so by Proposition \ref{irprop} we have
\begin{align*}
i_2(C_{H^F}(y_1)) &= i_2(SO_\frac{n}{2}(q^2))+|\phi^{SO_\frac{n}{2}(q^2)}| \\ &= i_2(SO_\frac{n}{2}(q^2)) +\frac{|SO_{\frac{n}{2}}(q^2)|}{|SO_{\frac{n}{2}}(q)|} \\
&< q^{\frac{n^2}{8}+c'n},
\end{align*}
\noindent and the result now follows as (\ref{6.4eqn1}) holds.
\par In the linear case, if $q$ is even then by \cite[\S~19.9]{AsSe} there are at most 2 classes of involutions $y_1 \in H^F\alpha$, and we have $C_{H^F}(y_1)$ isomorphic to a subgroup of $PGSp_n(q)$. If $q$ is odd, then by \cite[Theorem~4.5.1]{GoLySo} there are at most 3 classes of involutions $y_1 \in H^F \alpha$, and $C_{H^F}(y_1)$ is isomorphic to $PGSp_n(q)$ or $PGO_n(q)$. The result now follows from Proposition \ref{irprop} and (\ref{6.4eqn1}).
\end{proof}

\section*{\center{8. Subgroups isomorphic to $C_2 \times C_2$: maximal subgroups}}
\stepcounter{section}
\par Let $G=S$ be a finite simple classical group with natural module of dimension $n$ over $\mathbb{F}_{q^\delta}$ as in \S~2.1, and let $A$ and $B$ be nontrivial finite elementary abelian 2-groups with $|A|=a, |B|=b$. Assume $|B|>2$ and $n \ge \operatorname{max}\{2a+2, 2b+2\}$, and embed $A$ and $B$ into $G$ almost-freely as in \S~2.4, with $n=k_a a+s_a=k_b b+s_b$ such that $k_i$ is even and $2 \le s_i < 2i+2$ for $i=a,b$. Let $x \in I_{2}(A)$ and $K=\langle y_1, y_2 \rangle \in I_{2 \times 2}(B)$.

\par In this section we prove Theorem \ref{bigthm2} over three subsections, analogous to \S~6. In \S~8.1, we prove the following result.

\begin{proposition} \label{i2x2Mprop1}
There exists an absolute constant $c$ such that for any non-parabolic maximal subgroup $M$ of $G$,
\begin{align*}
i_{2 \times 2}(M) < |M|^\frac{3}{4}q^{cn}.
\end{align*}
\end{proposition}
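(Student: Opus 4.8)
The plan is to run the same case analysis over Aschbacher classes $\mathscr{C}_1,\dots,\mathscr{C}_8,\mathscr{S}$ that was used for Proposition~\ref{i4Mprop1}, replacing every appeal to the bounds on $i_4$ by the corresponding bound on $i_{2\times2}$. The key inputs are Proposition~\ref{i2x2prop} (giving $i_{2\times2}(G')<|G'|^{3/4+c/n'}$ for $G'$ an almost simple classical group of rank $n'$ between $H^{F}$ and an extension by an involutory automorphism), Proposition~\ref{irprop} (giving $i_2(G')<|G'|^{1/2+c/n'}$), and the crude bound $i_{2\times2}(G')\le i_2(G')^2$ valid for any finite group, which lets us dominate $i_{2\times2}$ of a direct product $G_1\times G_2$ by a product of the individual quantities. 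First I would dispose of the easy classes: for $M\in\mathscr{C}_3,\mathscr{C}_5,\mathscr{C}_8$ the group $M$ is (modulo scalars and a bounded index) an almost simple classical group of rank $\sim n$, so Proposition~\ref{i2x2prop} applies directly; for $\mathscr{C}_6$ one has $|M|<q^{cn}$ so the bound is trivial; for $\mathscr{C}_1$ non-parabolic and $\mathscr{C}_4$, $M$ lies in a product $Cl^1_d \times Cl^2_e$ of two classical groups, and a $C_2\times C_2$ in $M$ is generated by two commuting involutions, each of which projects to an element of order $\le 2$ in each factor, so
\[
i_{2\times2}(M)<\big(i_2(Cl^1_d)+1\big)^2\big(i_2(Cl^2_e)+1\big)^2<|Cl^1_d|^{1+c/d}|Cl^2_e|^{1+c/e}<|M|^{3/4}q^{c'n},
\]
where the last step uses $d,e<n$ together with the observation that $|Cl^1_d|^{1/4}|Cl^2_e|^{1/4}$ is at most $q^{c'n}$ times $|M|^{-1/2}$ — actually one should argue as in the $\mathscr{C}_4$ case of Lemma for $i_4$, splitting according to whether one factor is large, so that the dominant factor contributes exponent $\tfrac34$ and the small factor is absorbed into $q^{c'n}$.

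The substantive cases are the imprimitive/tensor-imprimitive classes $\mathscr{C}_2$ and $\mathscr{C}_7$, where $M\le Cl_m(q)\wr S_t$ with $mt=n$ (or $m^{t}=n$ for $\mathscr{C}_7$). Here I would imitate the proof of Lemma~\ref{i4class2} verbatim. A subgroup $C_2\times C_2$ of $M$ is determined by a pair of commuting involutions $((g_1,\dots,g_t),\beta)$, $((h_1,\dots,h_t),\gamma)$ with $\beta,\gamma\in S_t$; the number of choices for the permutation parts is bounded by $j_2(S_t)^2\le j_4(S_t)^2<|S_t|^{3/2}e^{ct}$ using the estimate~(\ref{6.1eqn1}) already established; and given the permutation parts, the base-group coordinates are constrained cycle-by-cycle exactly as in Lemma~\ref{i4class2}, so that each coordinate contributes at most $i_{2,\lambda}(Cl_m(q))$ or $|Cl_m(q)|$ depending on whether the relevant cycle is fixed or moved — giving a total of at most $|Cl_m(q)|^{3t/4}q^{c''n}$ by~(\ref{6.1eqn2}) and the analogue of~(\ref{6.1eqn3}) for $i_{2\times2}$, and hence $i_{2\times2}(M)<|M|^{3/4}q^{c'''n}$. (For $\mathscr{C}_2$ ii) of type $GL_{n/2}(q^\delta).2$ the bound is immediate from Proposition~\ref{i2x2prop}.) Finally, for $M\in\mathscr{S}$: if $\operatorname{soc}(M)\ne A_{n+1},A_{n+2}$ then $|M|<q^{3n}$ by~\cite{Li} and the bound is trivial, while if $\operatorname{soc}(M)$ is alternating of degree $n+1$ or $n+2$ then $i_{2\times2}(M)<|M|^{3/4}q^{cn}$ by the symmetric-group estimate, exactly as in the corresponding step of the $i_4$ proof.

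The main obstacle I anticipate is the bookkeeping in the $\mathscr{C}_2$/$\mathscr{C}_7$ wreath-product case: one has to verify that the constraint "$(g_ig_jg_k\cdots)^{?}=\lambda$ on each cycle" still yields, after summing over all cycle types $\beta$ and over the compatible $\gamma$, the clean exponent $\tfrac34\dim$ rather than something larger, and in particular that the presence of \emph{two} commuting involutions (rather than one element of order $4$) does not inflate the count — but since $i_{2\times2}(Cl_m(q))$ and $i_{2,\lambda}(Cl_m(q))^2$ obey the same $\tfrac34$- and $\tfrac12$-type bounds as $i_4$ and $i_2$, the arithmetic closes in exactly the same way, with only the base of the exponential (and the constant $c$) changing. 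Everything else is a routine transcription of §6.1 with "$4$" replaced by "$2\times2$".
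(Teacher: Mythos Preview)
Your overall plan is right, but there are two genuine gaps.

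\textbf{The product cases $\mathscr{C}_1$ (non-parabolic) and $\mathscr{C}_4$.} Your displayed chain
\[
i_{2\times2}(M)<\big(i_2(Cl^1_d)+1\big)^2\big(i_2(Cl^2_e)+1\big)^2<|Cl^1_d|^{1+c/d}|Cl^2_e|^{1+c/e}<|M|^{3/4}q^{c'n}
\]
fails at the last step: the middle expression is $\sim |M|$, not $|M|^{3/4}$, and no ``splitting according to whether one factor is large'' rescues it (in $\mathscr{C}_1$ both factors can have dimension $\sim n/2$; in $\mathscr{C}_4$ the large factor alone has order $\sim q^{n^2/4}$, so $|M|^{1/4}$ is not absorbed by $q^{cn}$). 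The fix is easy but different from what you wrote: a Klein four-group in $G_1\times G_2$ projects to a subgroup of order $1$, $2$ or $4$ in each factor, giving
\[
i_{2\times2}(G_1\times G_2)\le c\big(i_{2\times2}(G_1)+i_2(G_1)+1\big)\big(i_{2\times2}(G_2)+i_2(G_2)+1\big),
\]
and then you apply Proposition~\ref{i2x2prop} (not merely Proposition~\ref{irprop}) to each factor, obtaining exponent $\tfrac34$ on each. This is the exact analogue of the $i_4$ argument; the crude bound $i_{2\times2}\le i_2^2$ is never used.

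\textbf{The wreath-product case $\mathscr{C}_2$ (and the alternating-socle case in $\mathscr{S}$).} Your bound on the permutation parts, $j_2(S_t)^2<|S_t|^{3/2}e^{ct}$, is too weak by a factor of $|S_t|^{3/4}$, and for small $m$ (hence $t$ close to $n$) this factor $(t!)^{3/4}$ is \emph{not} absorbed by $q^{cn}$. What you actually need is a bound on the number of \emph{commuting} pairs of involutions in $S_t$, i.e.\ on $i_{2\times2}(S_t)$ itself, of the form $i_{2\times2}(S_t)<|S_t|^{3/4}e^{ct}$. This is not a consequence of the $j_4(S_t)$ estimate~(\ref{6.1eqn1}); the paper proves it as a separate lemma via $i_{2\times2}(S_t)\le\sum_m |x_m^{S_t}|\,i_2(C_{S_t}(x_m))$ together with the Moser--Wyman asymptotics for $i_2(S_k)$ and Stirling. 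With that in hand your base-group count does close, although the orbit analysis for $\langle\sigma,\tau\rangle$ acting on $\{1,\dots,t\}$ now involves five orbit types (fixed by both; transposed by one and fixed by the other, two ways; transposed identically by both; a $4$-point orbit) rather than the three cycle-lengths for a single element of order $4$, and you should verify the exponent arithmetic for each.
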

\noindent In \S~8.2 we prove
\begin{proposition} \label{i2x2Mprop2}
For a maximal parabolic subgroup $M=P_m$ of $G$ ($1 \le m \le \frac{n}{2}$) 
\begin{align*}
\sum_{M^h \in M^G} \frac{|x^G \cap M^h|}{|x^G|} \frac{| \{ K^g: g \in G, K^g \le M^h \} |}{|K^G|}<q^{-f+cm}
\end{align*}
\noindent where $c=c(A,B)$ is a constant depending only on $A$ and $B$, and
\begin{align*}
 f=\left\{ 
  \begin{array}{l l}
\frac{1}{4}m(n-m) \hspace{0.3cm} &\text{if $G=PSL_n(q)$,} \\[0.2cm]
\frac{1}{44}m(11n-21m) &\text{if $G=PSp_n(q), P\Omega_n(q)$,} \\[0.2cm]
\frac{1}{22}m(11n-21m) &\text{if $G=PSU_n(q)$}.
\end{array}
\right. 
\end{align*}
\noindent In particular, $-f+cm < -\frac{\delta n}{4}+c'$ for some constant $c'=c'(A,B)$.
\end{proposition}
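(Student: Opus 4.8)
The plan is to mirror the proof of Proposition~\ref{i4Mprop2} in \S~6.2, with the Klein four-group $K$ playing the role of the element $y\in I_4(B)$. First I would rewrite the displayed sum using transitivity of the conjugation action of $G$ on $M^G$ and on $K^G$: since $|x^G\cap M^h|=|x^G\cap M|$ for every $h$, and counting pairs $(M^h,K^g)$ with $K^g\le M^h$ in two ways gives $\sum_{M^h\in M^G}|\{K^g:K^g\le M^h\}|=|K^G|\cdot|\{M^h\in M^G:K\le M^h\}|$, the sum equals
\begin{equation*}
\operatorname{fpr}(x,M^G)\cdot|\{M^h\in M^G:K\le M^h\}|.
\end{equation*}
Now $K\le M^h$ exactly when $K$ stabilizes the totally singular $m$-space attached to $M^h$, so (using that there are at most $2$ classes of $P_m$, by \cite[\S~4.1]{KlLi}) the second factor is at most $2\operatorname{fix}(K)$, where $\operatorname{fix}(K)$ denotes the number of totally singular $m$-subspaces of the natural module $V$ invariant under $K$. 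The first factor is controlled by Lemma~\ref{fprx}: $\operatorname{fpr}(x,M^G)<q^{-f_x+cm}$ with $f_x=\tfrac12m(n-m)$, $\tfrac14m(2n-3m)$, $\tfrac12m(2n-3m)$ in the linear, symplectic/orthogonal, unitary cases respectively.

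The work is therefore to prove a lemma bounding $\operatorname{fix}(K)$, the analogue of Lemma~\ref{fpry}. For $q$ odd, $K=\langle y_1,y_2\rangle$ acts semisimply; since every character of $C_2\times C_2$ takes values in $\{\pm1\}\subseteq\mathbb F_q$ there is no Galois pairing of eigenspaces (unlike the order-$4$ case), so $V$ splits over $\mathbb F_q$ — orthogonally when a form is present — into the four common eigenspaces $E_0,E_1,E_2,E_3$ of $K$, each non-degenerate (a character of $C_2\times C_2$ is self-inverse, so the form pairs each $E_i$ with itself) and, by almost-freeness, of dimension $\tfrac{k_bb}{4}$ or $\tfrac{k_bb}{4}+s_b$. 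A $K$-invariant totally singular $m$-space is then $U_0\perp U_1\perp U_2\perp U_3$ with $U_i\subseteq E_i$ totally singular of dimension $l_i$ and $\sum l_i=m$; summing $\prod_i p_{l_i}(E_i)$ via (\ref{parabolicindex}) and using $\sum l_i^2\ge m^2/4$ gives $\operatorname{fix}(K)<q^{f_K+cm}$ with exponent $\tfrac14m(n-m)$, $\tfrac18m(2n-3m)$, $\tfrac14m(2n-3m)$ — comfortably better than needed.

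For $q$ even the involutions $y_1,y_2$ are unipotent, and this is where I expect the \textbf{main obstacle}. The argument parallels the characteristic-two computations in Lemmas~\ref{fprx} and~\ref{fpry}: fix the basis $\{e_j\}$ of $V$ in which the almost-free copy of $K$ acts by the explicit matrices of Proposition~\ref{conjclassprop2} (so $y_1,y_2$ act via Jordan blocks of size $\le2$), and for a $K$-stable totally singular $m$-space $U$ choose a basis $\beta=\{u_i\}$ of $U$ adapted to its $\mathbb F_qK$-module structure, so that $[y_1^U]_\beta$ and $[y_2^U]_\beta$ are simultaneously in a standard block form parametrized by finitely many integers; writing $u_i=\sum_j\alpha_{ij}e_j$, the relations $[y_1^U]_\beta^T\alpha=\alpha y_1^T$ and $[y_2^U]_\beta^T\alpha=\alpha y_2^T$ pin down the block shape of $\alpha$. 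One then counts the admissible $\alpha$ (for a form, restricting further to those spanning a totally singular subspace, built up via the linear-map device used in Lemma~\ref{fpry}), divides by $|C_{GL_m(q)}(\langle[y_1^U]_\beta,[y_2^U]_\beta\rangle)|$ estimated from (\ref{unipotentcent}), and sums over the polynomially many block types. The delicate point is that two commuting unipotent involutions must be handled at once, and — as the unusual coefficients in the statement signal — this does not return the clean $\tfrac38$-type exponent of the $q$-odd case; careful bookkeeping yields only $\operatorname{fix}(K)<q^{f_K+cm}$ with $f_K=\tfrac14m(n-m)$ (linear), $\tfrac1{44}m(11n-12m)$ (symplectic/orthogonal), $\tfrac1{22}m(11n-12m)$ (unitary), which is exactly what is needed.

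Combining, the displayed sum is at most $2\operatorname{fpr}(x,M^G)\operatorname{fix}(K)<q^{-f_x+f_K+c'm}=q^{-f+c'm}$, and a direct computation shows $f_x-f_K$ equals $\tfrac14m(n-m)$, $\tfrac1{44}m(11n-21m)$, $\tfrac1{22}m(11n-21m)$ in the three cases, which is the stated $f$ (e.g.\ $\tfrac14m(2n-3m)-\tfrac1{44}m(11n-12m)=\tfrac1{44}m(11n-21m)$). Finally, $f(m)-c'm$ is a downward-opening quadratic in $m$, and on the range $1\le m\le\tfrac n2$ its minimum is attained at $m=1$, where it equals $\tfrac{\delta n}{4}$ minus an additive constant; hence $-f+c'm<-\tfrac{\delta n}{4}+c''$, completing the proof.
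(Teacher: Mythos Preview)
Your reduction to $\operatorname{fpr}(x,M^G)\cdot\operatorname{fix}(K,M^G)$, your invocation of Lemma~\ref{fprx}, the $q$-odd eigenspace count, and the final arithmetic are exactly what the paper does (the paper packages the $\operatorname{fix}(K,M^G)$ bound as Lemma~\ref{fprk} with the very exponents you state). So at the level of the proof of Proposition~\ref{i2x2Mprop2} itself you are spot on.

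The one substantive divergence is in your sketched $q$-even computation of $\operatorname{fix}(K,M^G)$. You propose to choose $\beta$ so that $[y_1^U]_\beta$ and $[y_2^U]_\beta$ are \emph{simultaneously} in a standard form ``parametrized by finitely many integers''. In characteristic~2 the group algebra $\mathbb{F}_q[C_2\times C_2]$ has tame representation type, so $\mathbb{F}_qK$-module structures on $U$ are not parametrized by a bounded list of integers (band modules carry a parameter ranging over irreducible polynomials), and the centralizer $C_{GL_m(q)}(K|_U)$ you propose to divide by would vary accordingly; this makes your bookkeeping considerably harder than you suggest. The paper sidesteps this entirely: it normalizes only $[y_1^U]_\beta$ (a single unipotent involution, governed by one integer $l$), and encodes $y_2|_U$ by the nilpotent matrix $\lambda=[(y_2+1)^U]_\beta^T$ satisfying $\lambda\alpha=\alpha(y_2+1)^T$ and $\lambda^2=0$. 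The shape of $\alpha$ forces $\lambda$ into block upper-triangular form, and an auxiliary counting result (Lemma~\ref{nilpotent}) bounds the number of such $\lambda$; one then divides only by $|C_{GL_m(q)}([y_1^U]_\beta)|$. In the symplectic case the totally singular constraint is not handled by the $\theta$-map device of Lemma~\ref{fpry} but by extracting from the form a symmetry condition on $\lambda_{11}(A_2B_3^T+B_3A_2^T)$, which translates into a totally singular subspace count and is precisely what produces the $\tfrac{1}{44}m(11n-12m)$ exponent (and hence the $21$ in the statement after subtracting $f_x$).
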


\par Using Propositions \ref{i2x2Mprop1} and \ref{i2x2Mprop2} we prove Theorem \ref{bigthm2} in \S~8.3.

\subsection{Non-parabolic maximal subgroups}

\par Recall the descriptions of Aschbacher classes $\mathscr{C}_i, 1 \le i \le 8$ and $\mathscr{S}$ from \S~2.3. Here, we prove that Proposition \ref{i2x2Mprop1} holds for each Aschbacher class. We first require a result counting Klein four-groups in symmetric groups.

\begin{lemma} \label{i2x2sn}
There exists an absolute constant $c$ such that for all $n$, 
\begin{align*}
i_{2 \times 2}(S_n)<|S_n|^{\frac{3}{4}}e^{cn}.
\end{align*}
\end{lemma}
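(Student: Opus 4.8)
The quantity $i_{2\times 2}(S_n)$ counts unordered Klein four-subgroups of $S_n$. A Klein four-group $\langle \sigma,\tau\rangle$ is determined by a pair of commuting involutions, so it suffices to estimate the number of ordered pairs $(\sigma,\tau)$ of commuting involutions (this overcounts each $C_2\times C_2$ by the bounded factor $|\mathrm{Aut}(C_2\times C_2)|=6$, and also counts some pairs generating only $C_2$, which only helps the upper bound). The plan is therefore: first bound the number of involutions $\sigma\in S_n$ together with the conjugacy-class size; then, for a fixed involution $\sigma$ with $k$ transpositions (so $\sigma$ fixes $n-2k$ points), bound $i_2(C_{S_n}(\sigma))$; finally sum over $k$ and check the exponent never exceeds $\tfrac34 n^2$... wait — the target is $|S_n|^{3/4}e^{cn}$, and $|S_n|\sim (n/e)^n$, so the exponent to control is of size $\tfrac34 n\log n$, not $n^2$. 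So more precisely: I would write $i_{2\times 2}(S_n)\le \tfrac16\sum_{\sigma\in I_2(S_n)} i_2(C_{S_n}(\sigma))$ and bound the whole double sum directly.

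The key structural fact is that $C_{S_n}(\sigma)\cong (C_2\wr S_k)\times S_{n-2k}$ when $\sigma$ is a product of $k$ disjoint transpositions. Both factors have order at most $2^k k!\,(n-2k)!$. The number of elements of order dividing $2$ in any symmetric group $S_m$ satisfies the classical bound $i_2(S_m)\le |S_m|^{1/2}e^{c m}$ (this is the $r=2$ case of the asymptotic in \cite{Wilf}, exactly as used for the bound (\ref{6.1eqn1}) in Lemma \ref{i4class2}); an analogous bound $i_2(C_2\wr S_k)\le |C_2\wr S_k|^{1/2}e^{ck}$ holds by an identical Stirling-formula computation (or can be deduced from the $S_m$ bound since $i_2(C_2\wr S_k)\le 2^k\,i_2(S_k)\cdot(\text{something bounded})$, or most cleanly: $C_2\wr S_k\le S_{2k}$ so $i_2(C_2\wr S_k)\le i_2(S_{2k})\le|S_{2k}|^{1/2}e^{2ck}$). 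Then $i_2(C_{S_n}(\sigma))\le i_2(C_2\wr S_k)\,i_2(S_{n-2k})\le |C_2\wr S_k|^{1/2}|S_{n-2k}|^{1/2}e^{c'n}=|C_{S_n}(\sigma)|^{1/2}e^{c'n}$.

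Now sum: $\sum_{\sigma\in I_2(S_n)} i_2(C_{S_n}(\sigma)) \le \sum_{\sigma} |C_{S_n}(\sigma)|^{1/2}e^{c'n} = e^{c'n}\sum_{k=0}^{\lfloor n/2\rfloor}|\sigma_k^{S_n}|\,|C_{S_n}(\sigma_k)|^{1/2}$ where $\sigma_k$ is a representative with $k$ transpositions and $|\sigma_k^{S_n}|=|S_n|/|C_{S_n}(\sigma_k)|$. Thus each summand is $|S_n|\,|C_{S_n}(\sigma_k)|^{-1/2}$, which is maximized when $|C_{S_n}(\sigma_k)|$ is smallest. One checks $|C_{S_n}(\sigma_k)|=2^k k!\,(n-2k)!$ is minimized over $0\le k\le n/2$ at a value of order $(n/e)^{n/2}$ up to $e^{O(n)}$ (the minimum is comparable to $|S_n|^{1/2}$ up to exponential factors — essentially a near-perfect-matching $k\approx n/2$, where $(n-2k)!$ is tiny and $2^k k!\approx 2^{n/2}(n/2)!\sim |S_n|^{1/2}e^{O(n)}$). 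Hence each summand is at most $|S_n|/|S_n|^{1/2-o(1)}\cdot e^{O(n)}=|S_n|^{1/2}e^{O(n)}$, and there are only $n/2+1$ summands, which is absorbed into $e^{O(n)}$. Therefore $i_{2\times 2}(S_n)\le \tfrac16 e^{c'n}\cdot (n/2+1)\cdot |S_n|^{1/2}e^{O(n)}$, which is not $|S_n|^{3/4}e^{cn}$ but the far stronger $|S_n|^{1/2}e^{cn}$ — so the claimed bound follows a fortiori.

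\textbf{Main obstacle.} The only genuinely delicate point is verifying uniformly in $n$ that $\min_{0\le k\le n/2} 2^k k!\,(n-2k)! \ge |S_n|^{1/2}e^{-cn}$ for an absolute constant $c$; this is a one-variable Stirling estimate — writing $k=\lambda n$ and optimizing $\lambda\log\lambda + (1-2\lambda)\log(1-2\lambda)$ type terms — but it must be done carefully at the endpoints $\lambda\to 0$ and $\lambda\to 1/2$. An alternative that sidesteps this entirely: embed $C_2\times C_2 \hookrightarrow S_4$ and use that every Klein four-subgroup of $S_n$ decomposes the $n$ points into orbits of sizes in $\{1,2,4\}$ (with the 2-orbits of three possible "colours" according to which involution acts trivially), giving $i_{2\times 2}(S_n)\le \sum \binom{n}{\cdots}\cdot(\text{products of }|C_2\times C_2|^{\#\text{orbits}}$-type factors$)$, then bound this multinomial sum by $|S_n|^{3/4}e^{cn}$ exactly as the bound (\ref{6.1eqn1}) for $j_4(S_t)$ is obtained via Stirling in Lemma \ref{i4class2}; this is the route I would actually write up, as it is the most parallel to the existing machinery in the paper.
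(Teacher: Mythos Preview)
Your approach is essentially the same as the paper's: both start from
\[
i_{2\times 2}(S_n)\le \sum_{k}|\sigma_k^{S_n}|\,i_2(C_{S_n}(\sigma_k)),
\]
use the Moser--Wyman/Wilf asymptotic $i_2(S_m)\le |S_m|^{1/2}e^{cm}$ on the factors of $C_{S_n}(\sigma_k)\cong (C_2\wr S_k)\times S_{n-2k}$, and finish with Stirling. Your repackaging via $i_2(C_{S_n}(\sigma))\le |C_{S_n}(\sigma)|^{1/2}e^{c'n}$ is a clean way to organise this.

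There is, however, an arithmetic slip in the final step. You correctly argue that $\min_k 2^kk!(n-2k)!\ge |S_n|^{1/2}e^{-cn}$, but the summand is $|S_n|/|C_{S_n}(\sigma_k)|^{1/2}$, so this lower bound only gives $|C_{S_n}(\sigma_k)|^{1/2}\ge |S_n|^{1/4}e^{-cn/2}$ and hence each summand is at most $|S_n|^{3/4}e^{c'n}$, not $|S_n|^{1/2}e^{c'n}$. Thus your argument yields exactly the stated bound $|S_n|^{3/4}e^{cn}$, not the stronger $|S_n|^{1/2}e^{cn}$. In fact the stronger bound is false: partitioning $\{1,\dots,4m\}$ into $m$ blocks of size~$4$ and letting $C_2\times C_2$ act regularly on each block produces at least $(4m)!/(24^m m!)$ distinct Klein four-groups, and Stirling shows this is $|S_{4m}|^{3/4}e^{-O(m)}$. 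So $|S_n|^{3/4}$ is the correct order and your conclusion ``far stronger $|S_n|^{1/2}e^{cn}$'' should be deleted.

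A minor point: the parenthetical route ``$C_2\wr S_k\le S_{2k}$ so $i_2(C_2\wr S_k)\le |S_{2k}|^{1/2}e^{2ck}$'' does not give what you need, since $|S_{2k}|^{1/2}\sim 2^k k!$ is of order $|C_2\wr S_k|$ rather than $|C_2\wr S_k|^{1/2}$. The direct bound $i_2(C_2\wr S_k)\le 2^k\,i_2(S_k)\le 2^k|S_k|^{1/2}e^{ck}=|C_2\wr S_k|^{1/2}e^{c'k}$ is the one that works.
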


\begin{proof}
\par Let $x_m \in S_n$ be an involution with $m$ 2-cycles ($1 \le m \le \frac{n}{2}$). Then $C_{S_n}(x_m)\cong S_{n-2m} \times S_2 \wr S_m$. By \cite{MW},
\begin{align*}
i_2(S_{n-2m}) &\sim \operatorname{exp}(\frac{n-2m}{2}(\operatorname{log}(n-2m)-1)+\sqrt{n-2m}) \hspace{0.5cm} \text{if $n \ne 2m$}, \\
2^mi_2(S_m) &\sim 2^m\operatorname{exp}(\frac{m}{2}(\operatorname{log}(m)-1)+\sqrt{m}).
\end{align*}
\noindent Also, by Stirling's formula, if $n \ne 2m$ then
\begin{align*}
|x_m^{S_n}| \sim 2^{-m}\operatorname{exp}\bigg( n(\log n -1)-(n-2m)&(\log (n-2m) -1) \\ &-m(\log m -1)+\frac{1}{2}\log \frac{n}{2\pi m(n-2m)} \bigg). 
\end{align*}
\noindent In the case where $n=2m$ we have
\begin{align*}
|x_{\frac{n}{2}}^{S_n}| \sim e^{-\frac{n}{2}}n^{\frac{n}{2}}.
\end{align*}
 \noindent Therefore,
\begin{align*}
\hspace{0cm} i_{2 \times 2}({S_n}) &< \sum_{m=1}^{\lfloor \frac{n}{2} \rfloor} |x_m^{S_n} | \  i_2(C_{S_n}(x_m)) \\
&< \sum_{m=1}^{\lfloor \frac{n-1}{2} \rfloor} c\operatorname{exp} \bigg( n \left( \log n-1 \right)-\frac{n-2m}{2} \left( \operatorname{log}(n-2m)-1 \right) \\ &\hspace{2cm} -\frac{m}{2}\left( \operatorname{log}(m)-1 \right) +\sqrt{n-2m}+\sqrt{m}+\frac{1}{2}\log \frac{n}{2\pi m(n-2m)} \bigg) \\
& \hspace{5cm} +c2^{\frac{n}{2}+\frac{1}{2}}\operatorname{exp}\left( \frac{n}{4}\left( \log \frac{n}{2}-1 \right)+\frac{n}{2} \left( \log n-1 \right)+\sqrt{\frac{n}{2}} \ \right).
\end{align*}
\noindent It is elementary to show that
\begin{align*}
n(\log n-1)-\frac{n-2m}{2}(\operatorname{log}(n-2m)-1)-\frac{m}{2}(\operatorname{log}(m)-1)<\frac{3}{4}n\log n +c'n
\end{align*}
\noindent for $n \ne 2m$ and for some absolute constant $c'$, and similarly
\begin{align*}
\frac{n}{4}(\log \frac{n}{2}-1)+\frac{n}{2}(\log n-1)<\frac{3}{4}n\log n +c'n.
\end{align*}
\noindent The result follows by further use of Stirling's formula.
\end{proof}

\begin{lemma}
Proposition \ref{i2x2Mprop1}  holds for $M \in \mathscr{C}_1$ with $M$ non-parabolic.
\end{lemma}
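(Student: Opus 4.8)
The plan is to prove that Proposition \ref{i2x2Mprop1} holds for non-parabolic $M \in \mathscr{C}_1$ by reducing the count of Klein four-groups in $M$ to counts of involutions and Klein four-groups in the (smaller) classical factors, and then invoking the bounds already established: Proposition \ref{irprop} for $i_2$ and Proposition \ref{i2x2prop} for $i_{2\times 2}$. Recall from Table \ref{maxsubgroups} that a non-parabolic $M \in \mathscr{C}_1$ lies in the image modulo scalars of a subgroup of the shape $Cl_m(q) \times Cl_{n-m}(q)$ (or $Sp_{n-2}(q)$ in the orthogonal case with $q$ even), where $Cl_i(q)$ denotes the relevant classical group ($GU_i$, $Sp_i$ or $O_i$) of rank comparable to $i$.

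First I would fix a Klein four-group $K \le M$ and project it into each factor $\overline{Cl_m(q)}$ and $\overline{Cl_{n-m}(q)}$. The image of $K$ in each factor is a quotient of $C_2 \times C_2$, hence trivial, cyclic of order $2$, or again a Klein four-group; in all cases its preimage in the factor is generated by at most two involutions together with a bounded-order scalar contribution. Counting crudely, the number of such $K$ is at most a constant times
\begin{align*}
\bigl(i_{2\times 2}(\overline{Cl_m(q)}) + i_2(\overline{Cl_m(q)}) + 1\bigr)\bigl(i_{2\times 2}(\overline{Cl_{n-m}(q)}) + i_2(\overline{Cl_{n-m}(q)}) + 1\bigr),
\end{align*}
with an extra factor of $q^{cn}$ absorbing the preimage ambiguity modulo scalars. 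Then by Propositions \ref{irprop} and \ref{i2x2prop} this is bounded by $|\overline{Cl_m(q)}|^{3/4 + c'/m}\,|\overline{Cl_{n-m}(q)}|^{3/4 + c''/(n-m)}$, and since $|M| \sim |\overline{Cl_m(q)}|\,|\overline{Cl_{n-m}(q)}|$ up to a factor $q^{O(n)}$, this is at most $|M|^{3/4}q^{c'''n}$ as required. The case $M \le \overline{Sp_{n-2}(q)}$ is even easier, following directly from Proposition \ref{i2x2prop}.

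The main obstacle, such as it is, is purely bookkeeping: one must make sure the exponents $c'/m$ and $c''/(n-m)$ do not blow up when $m$ or $n-m$ is small (say bounded), but in that regime the corresponding factor has order at most $q^{O(n)}$ outright, so the contribution is harmlessly absorbed into $q^{cn}$; and one must check that passing between $Cl_i(q)$ and its image modulo scalars only costs a bounded power of $q$, which is standard since $|Z(Cl_i(q))| \le q+1$. This mirrors exactly the argument used for Proposition \ref{i4Mprop1} in the $\mathscr{C}_1$ case in \S~6, with $i_2$ replaced throughout by $i_{2\times 2}$, so no new ideas are needed.
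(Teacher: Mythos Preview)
Your proposal is correct and matches the paper's proof essentially line for line: both bound $i_{2\times 2}(M)$ by a product $(i_{2\times 2}+c\,i_2+1)$ over the two classical factors, invoke Propositions \ref{irprop} and \ref{i2x2prop} to get $|PCl_m(q)|^{3/4+c'/m}|PCl_{n-m}(q)|^{3/4+c''/(n-m)}$, and absorb the lower-order terms into $q^{cn}$. Your extra remarks on the small-$m$ edge case and the scalar ambiguity are sound bookkeeping that the paper leaves implicit.
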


\begin{proof}
\par Let $M \in \mathscr{C}_1$ with $M$ non-parabolic as in Table \ref{maxsubgroups}. Suppose first that $G$ is unitary and $M$ lies in the image modulo scalars of $GU_m(q) \times GU_{n-m}(q)$. Then by Propositions \ref{irprop} and \ref{i2x2prop},
\begin{align*}
i_{2 \times 2}(M) &< c(i_{2 \times 2}(PGU_m(q))+2i_2(PGU_{m}(q))+1)(i_{2 \times 2}(PGU_{n-m}(q))+2i_2(PGU_{n-m}(q))+1)\\
&< |PGU_m(q)|^{\frac{3}{4}+\frac{c'}{m}}|PGU_{n-m}(q)|^{\frac{3}{4}+\frac{c'}{n-m}} \\
&< |M|^{\frac{3}{4}}q^{c''n}
\end{align*}
\noindent for some absolute constant $c''$.
\par The remaining cases are proved similarly.
\end{proof}

\begin{lemma} \label{i2x2class2}
Proposition \ref{i2x2Mprop1}  holds for $M \in \mathscr{C}_2$.
\end{lemma}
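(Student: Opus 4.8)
The plan is to mirror the structure of Lemma \ref{i4class2}, replacing the count of elements of order $4$ by the count of Klein four-subgroups, and replacing the bound $j_4(S_t) < |S_t|^{3/4}e^{c't}$ by Lemma \ref{i2x2sn}. Recall from Table \ref{maxsubgroups} that a $\mathscr{C}_2$ subgroup is either (imprimitive type) in the image modulo scalars of $Cl_m(q)\wr S_t$ with $n=mt$, $t>1$, or (field-extension type) in the image modulo scalars of $GL_{n/2}(q^\delta).2$. The second case is immediate: by Proposition \ref{i2x2prop}, $i_{2\times2}(M) < c\, i_{2\times2}(PGL_{n/2}(q^\delta).2) < |M|^{3/4+c'/n}$, which gives the claim for this subfamily. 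So the substance is the wreath-product case.

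For $M$ inside the image of $Cl_m(q)\wr S_t$, I would count a Klein four-group $K \le M$ by first recording the image $\bar K$ of $K$ in the top group $S_t$: this is a subgroup of $S_t$ isomorphic to $1$, $C_2$, or $C_2\times C_2$. The number of such images is at most $i_{2\times2}(S_t) + i_2(S_t) + 1$, and by Lemma \ref{i2x2sn} together with the classical count $i_2(S_t) < |S_t|^{1/2}e^{ct}$ this is $< |S_t|^{3/4}e^{c't}$. Fixing $\bar K$, the generators $y_1,y_2$ of $K$ act on the set $\{1,\dots,t\}$ of base coordinates; the orbits of $\bar K$ have size $1$, $2$, or $4$, say $l_1$ orbits of size $1$, $l_2$ of size $2$, $l_4$ of size $4$, with $l_1+2l_2+4l_4 = t$. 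On a fixed orbit the data of $K$ restricted to that orbit is a homomorphism (up to scalars) from $K$ into $Cl_m(q)\wr(\text{transitive subgroup of }S_{|\text{orbit}|})$ with prescribed image in the quotient; the number of such restrictions on an orbit of size $1$ is $i_{2\times2,\lambda}(Cl_m(q))$-type quantity ($< |PCl_m(q)|^{3/4 + c/m}$ by Propositions \ref{conjclassprop2} and \ref{i2x2prop}, modulo the scalar issues handled exactly as in (\ref{6.1eqn2})--(\ref{6.1eqn3})), on an orbit of size $2$ it is at most $|Cl_m(q)|^{1+c/m}\cdot(\text{something}<|PCl_m(q)|^{1/2+c/m})$, and on an orbit of size $4$ at most $|Cl_m(q)|^{3}$ — in every case bounded by $|Cl_m(q)|^{(3/4)(\text{orbit size})}q^{cm}$. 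Multiplying over the orbits and summing over the $O(t^{2})$ choices of $(l_1,l_2,l_4)$ gives
\begin{align*}
i_{2\times2}(M) &< |S_t|^{3/4}e^{c't}\sum_{l_1+2l_2+4l_4=t} |Cl_m(q)|^{(3/4)(l_1+2l_2+4l_4)}q^{c''n} \\
&< |S_t|^{3/4}|Cl_m(q)|^{3t/4}q^{c'''n} < |M|^{3/4}q^{c''''n},
\end{align*}
using $|M| \sim |Cl_m(q)|^t |S_t|$ and $mt = n$ to absorb the error terms; this is the desired bound.

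The main obstacle is bookkeeping on orbits of size $2$ and $4$: one must verify that the constraints coming from the condition that $y_1^2=y_2^2=(y_1y_2)^2=1$ (working modulo scalars, since we are in $\overline{Cl_m(q)\wr S_t}$ rather than in the linear group itself) do not give more than $|Cl_m(q)|^{3/4\cdot(\text{orbit size})}q^{cm}$ configurations per orbit. On a $2$-orbit $\{i,j\}$, after normalizing one can take the swap generator to have a standard form, and then the remaining freedom is one element of $Cl_m(q)$ squaring to a scalar and one free element of $Cl_m(q)$ — giving $|Cl_m(q)|^{1+c/m}\cdot |PCl_m(q)|^{1/2+c/m} < |Cl_m(q)|^{3/2+c/m}$ as needed; on a $4$-orbit the two generators each involve a $4$-cycle or a pair of transpositions, and the analogous normalization leaves roughly $|Cl_m(q)|^{3}$ choices. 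These are routine but need care with the scalar quotient; everything else is a direct transcription of the proof of Lemma \ref{i4class2}. The argument for $M\in\mathscr{C}_7$ referenced later in the paper will be identical with $S_t$ replaced by the relevant wreath top group, which is again bounded by Lemma \ref{i2x2sn}.
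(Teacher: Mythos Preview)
Your proposal is correct and follows essentially the same approach as the paper. The paper's proof also handles the $GL_{n/2}(q^\delta).2$ case directly via Proposition~\ref{i2x2prop}, and for the wreath type $Cl_m(q)\wr S_t$ it likewise projects to $S_t$, invokes Lemma~\ref{i2x2sn}, decomposes by the orbit structure of the projected pair $(\sigma,\tau)$, and bounds the per-orbit contributions using Propositions~\ref{irprop} and~\ref{i2x2prop}. The only organizational difference is that the paper refines your three orbit sizes $(l_1,l_2,l_4)$ into five types $r_1,\dots,r_5$, distinguishing the three kinds of $2$-orbit (only $\sigma$ swaps; only $\tau$ swaps; both swap); since, as you note, all three kinds yield the same bound $|Cl_m(q)|^{3/2+c/m}$, this refinement is bookkeeping rather than substance.
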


\begin{proof}
\par Suppose $M \in \mathscr{C}_2$ as in Table \ref{maxsubgroups}, and first suppose that $M$ is of the form $Cl_{m}(q) \wr S_t$. For a scalar $\lambda$, let $I_{2,\lambda}(Cl_m(q))$ denote the set of elements $z \in Cl_m(q)$ such that $z^2= \lambda$. Suppose we have commuting involutions in $M$, and let $x=((a_1, \dots, a_t), \sigma)$ and $y=((b_1, \dots, b_t), \tau)$ be preimages. Then for fixed scalars $\lambda, \mu$ and $\gamma$ we have $\sigma \tau = \tau \sigma$, $x^2=\lambda, y^2 =\mu$, $a_ib_{\sigma(i)}=\gamma b_ia_{\tau(i)}$ for all $1 \le i \le t$, and the following hold:
\begin{enumerate}[label=(\roman*)]
\item if $\sigma(i)=\tau(i)=i,$ then $a_i  \in I_{2, \lambda}(Cl_m(q)), b_i \in I_{2, \mu}(Cl_m(q))$ and $a_ib_i=\gamma b_ia_i$;
\item if $\sigma(i)=j \ne i, \tau(i)=i$ (so $\tau(j)=j$), then $a_i \in I_{2, \lambda}(Cl_m(q)), b_ib_j=\mu$ and $b_j=\gamma b_i^{a_i}$;
\item if $\sigma(i)=i, \tau(i)=j \ne i$ (so $\sigma(j)=j$), then $b_i \in I_{2, \mu}(Cl_m(q)), a_ia_j=\lambda$ and $a_j=\gamma a_i^{b_i}$;
\item if $\sigma(i)=\tau(i)=j \ne i$, then $a_i=\lambda a_j^{-1}, b_i=\mu b_j^{-1},$ and $a_ib_j \in I_{2,\lambda \mu \gamma}(Cl_m(q))$;
\item if $\sigma(i)=j, \tau(i)=l$, then $\sigma(l)=\tau(j)=s$, and $a_i=\lambda a_l^{-1},a_j=\lambda a_s^{-1},b_i=\mu b_j^{-1},b_l=\mu b_s^{-1}, a_ib_j=\gamma b_ia_l$.
\end{enumerate}
\noindent Let 
\vspace{-0.2cm}
 \begin{align*}
r_1&=r_1(\sigma, \tau)=| \{1 \le i \le t: \sigma(i)=\tau(i)=i \} |, \\
r_2 &=r_2(\sigma, \tau) = | \{ 1 \le i \le t: \sigma(i)=j, \tau(i)=i \ \text{for some $j \ne i$}\} |, \\
r_3 &= r_3(\sigma, \tau) = | \{ 1 \le i \le t:\sigma(i)=i, \tau(i)=j \ \text{for some $j \ne i$}\} |, \\
r_4 &= r_4(\sigma, \tau) = | \{1 \le i \le t: \sigma(i)=\tau(i)=j \ \text{for some $j \ne i$} \} |, \\
r_5 &= r_5(\sigma, \tau) = | \{ 1 \le i \le t: \sigma(i)=j, \tau(i)=l \ \text{for some $j,l \ne i, j \ne l$}\} |,
\end{align*}
\noindent so $t=r_1+2r_2+2r_3+2r_4+4r_5$. Also, let $I(s_1, \dots, s_5)$ be the number of unordered pairs of commuting involutions $\sigma, \tau \in S_t$ with $r_i(\sigma, \tau)=s_i$ for $1 \le i \le 5$. Then by Propositions \ref{irprop}, \ref{i2x2prop} and Lemma \ref{i2x2sn}, 
\begin{align*}
i_{2 \times 2}(M) &< \sum_{r_1+2r_2+2r_3+2r_4+4r_5=t} I(r_1,\dots, r_5) i_{2 \times 2}(PCl_m(q))^{r_1} \\ &\hspace{5cm}  \times i_2(PCl_m(q))^{r_2+r_3+r_4} |Cl_m(q)|^{r_2+r_3+r_4+3r_5}q^{ct} \\[0.1em]
&< |S_n|^\frac{3}{4} |Cl_m(q)|^{\frac{3}{4}(r_1+2r_2+2r_3+2r_4+4r_5)}q^{c'n} \\
&< |M|^\frac{3}{4}q^{c''n}
\end{align*}
\noindent for some absolute constant $c''$.
\par Now suppose $M$ is of type $GL_\frac{n}{2}(q^\delta).2$. We have $i_{2 \times 2}(M)<|M|^{\frac{3}{4}+\frac{c}{n}}$ by Proposition \ref{i2x2prop}, and the result follows.
\end{proof}

\begin{lemma}
Proposition \ref{i2x2Mprop1}  holds for $M \in \mathscr{C}_i, 3 \le i \le 8$, and for $M \in \mathscr{S}$.
\end{lemma}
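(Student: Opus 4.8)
The plan is to prove Proposition \ref{i2x2Mprop1} for the remaining Aschbacher classes $\mathscr{C}_i$ with $3 \le i \le 8$ and for $\mathscr{S}$, essentially mirroring the argument used for the element-of-order-4 analogue in \S~6 (the final lemma of \S~6), but with $i_{2\times2}$ in place of $i_4$. The key point throughout is that each such maximal subgroup $M$ is built from one or two classical groups of smaller rank (possibly over an extension field, possibly wreathed by a symmetric group), so that counting Klein four-subgroups of $M$ reduces to Proposition \ref{i2x2prop}, Proposition \ref{irprop}, and Lemma \ref{i2x2sn}, together with the bookkeeping that a Klein four-group is determined by an involution and a commuting involution.

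First I would handle the classes that reduce immediately. For $M \in \mathscr{C}_3, \mathscr{C}_5$ or $\mathscr{C}_8$, $M$ is (modulo scalars and bounded-index extensions) a single classical group $\mathrm{Cl}_{n/t}(q^t)$ or $\mathrm{Cl}_n(q^{1/t})$ or a classical subgroup of the same rank, so $i_{2\times2}(M) < |M|^{3/4} q^{cn}$ follows directly from Proposition \ref{i2x2prop} applied to the relevant $H^F\langle\alpha\rangle$ (the bounded $.t$ or $.2$ factor contributes at most $q^{cn}$, and in the $\mathscr{C}_3$(ii) case the unitary-over-$\mathbb{F}_{q^2}$ group still has order $\sim q^{cn^2}$ of the right shape). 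For $M \in \mathscr{C}_4$ with $M \le \mathrm{Cl}^1_d(q) \times \mathrm{Cl}^2_e(q)$, $n=de$, I would write
\begin{align*}
i_{2\times2}(M) < \big(i_{2\times2}(\mathrm{Cl}^1_d(q)) + 2i_2(\mathrm{Cl}^1_d(q)) + 1\big)\big(i_{2\times2}(\mathrm{Cl}^2_e(q)) + 2i_2(\mathrm{Cl}^2_e(q)) + 1\big),
\end{align*}
since a commuting pair of involutions in a direct product is a pair of commuting involutions in each factor (or an involution in one factor alone), and then Propositions \ref{irprop} and \ref{i2x2prop} give $i_{2\times2}(M) < |\mathrm{Cl}^1_d(q)|^{3/4 + c/d}|\mathrm{Cl}^2_e(q)|^{3/4 + c/e} < |M|^{3/4} q^{c'n}$. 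For $M \in \mathscr{C}_6$, the group $M$ has order bounded by $q^{cn}$ (it is $t^{1+2m}.\mathrm{Sp}_{2m}(t)$ or $2^{1+2m}.O_{2m}(2)$ with $n = t^m$ or $2^m$, so $\log|M| = O(n)$), whence $i_{2\times2}(M) \le |M| < q^{cn} \le |M|^{3/4}q^{cn}$ trivially.

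The main obstacle is the wreath-product class $\mathscr{C}_7$, where $M$ lies in $\mathrm{Cl}_m(q) \wr S_t$ with $n = m^t$. Here I would follow exactly the strategy of Lemma \ref{i2x2class2}: parametrise a commuting pair of involutions $x = ((a_1,\dots,a_t),\sigma)$, $y = ((b_1,\dots,b_t),\tau)$ in $\mathrm{Cl}_m(q) \wr S_t$ by the joint cycle structure of $\sigma$ and $\tau$ (the quantities $r_1,\dots,r_5$ defined in that lemma), bound the number of such $(\sigma,\tau)$ by $i_{2\times2}(S_t) < |S_t|^{3/4} e^{ct}$ via Lemma \ref{i2x2sn}, and for each position count the base-group contribution using Propositions \ref{irprop} and \ref{i2x2prop} (an $I_{2,\lambda}$-type or $I_{2\times2}$-type factor of $\mathrm{Cl}_m(q)$ where $\sigma,\tau$ both fix, a $|\mathrm{Cl}_m(q)|$ factor where they move things around). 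The resulting product telescopes to $|S_t|^{3/4}|\mathrm{Cl}_m(q)|^{3/4\cdot t} q^{c'n} < |M|^{3/4} q^{c''n}$, since $t = \log n / \log m \le n$ so all the $e^{ct}$ and $q^{c'n}$ error terms are absorbed. The subtlety is that $n = m^t$ grows exponentially in $t$, so one must check the error exponents are genuinely $O(n)$ and not $O(n^{1+\epsilon})$ — but since $t \le \log_2 n$ and $m \le n$, every correction term is at worst $q^{O(n)}$, which is fine. Finally, for $M \in \mathscr{S}$: if $\mathrm{soc}(M) \ne A_{n+1}, A_{n+2}$ then $|M| < q^{3n}$ by \cite{Li} and the bound is trivial as in the $\mathscr{C}_6$ case; if $\mathrm{soc}(M) = A_{n+1}$ or $A_{n+2}$ then $i_{2\times2}(M) < |M|^{3/4} q^{cn}$ follows from Lemma \ref{i2x2sn} (the alternating/symmetric estimate), completing the proof. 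The whole lemma is essentially routine given the machinery already in place; $\mathscr{C}_7$ is the only place requiring genuine (though short) matrix-counting bookkeeping.
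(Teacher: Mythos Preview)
Your proposal is correct and follows essentially the same approach as the paper: $\mathscr{C}_3, \mathscr{C}_5, \mathscr{C}_8$ directly from Proposition~\ref{i2x2prop}; $\mathscr{C}_4$ via the product bound with Propositions~\ref{irprop} and~\ref{i2x2prop}; $\mathscr{C}_6$ via $|M|<q^{cn}$; $\mathscr{C}_7$ by rerunning the wreath-product argument of Lemma~\ref{i2x2class2}; and $\mathscr{S}$ via the $|M|<q^{3n}$ bound from~\cite{Li} together with Lemma~\ref{i2x2sn} for the alternating-socle exceptions. The only minor remark is that your closing description of $\mathscr{C}_7$ as ``matrix-counting bookkeeping'' is a slight misnomer (it is wreath-product combinatorics, not matrix manipulation), but the argument you actually describe is the right one.
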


\begin{proof}
\par If $M \in \mathscr{C}_3, \mathscr{C}_5$ or $\mathscr{C}_8$, then the result follows from Proposition \ref{i2x2prop}.
\par Suppose $M \in \mathscr{C}_4$ with $M \le Cl^1_d(q) \times Cl^2_e(q)$. Then
\begin{align*}
i_{2 \times 2}(M) <(i_{2 \times 2}(Cl^1_d(q))+2i_2(Cl^1_d(q))+1)(i_{2 \times 2}(Cl^2_e(q))+2i_2(Cl^2_e(q))+1),
\end{align*} 
\noindent and using Propositions \ref{irprop} and \ref{i2x2prop} gives
\begin{align*}
i_{2 \times 2}(M) &< |Cl^1_d(q)|^{\frac{3}{4}+\frac{c}{d}} |Cl^2_e(q)|^{\frac{3}{4}+\frac{c'}{e}} \\
&< |M|^{\frac{3}{4}}q^{c''n}
\end{align*}
\noindent for some absolute constant $c''$.
\par For $M \in \mathscr{C}_6$, it can easily be shown that $|M|<q^{cn}$ for some absolute constant $c$, and the result follows.
\par If $M \in \mathscr{C}_7$, then arguing as in Lemma \ref{i2x2class2} we can show $i_{2 \times 2}(M)<|M|^\frac{3}{4}q^{cn}$ for some absolute constant $c$. 
\par Finally suppose $M \in \mathscr{S}$. Then $M$ is an almost simple group acting absolutely irreducibly on $V$, the natural module of $G$. By \cite{Li}, if $\operatorname{soc}(M) \ne A_{n+1}, A_{n+2}$, then $|M|<q^{3n}$. If $\operatorname{soc}(M)=A_{n+1}$ or $A_{n+2}$, then $i_{2 \times 2}(M) \le |M|^\frac{3}{4}q^{cn}$ by Lemma \ref{i2x2sn}. This completes the proof.   \end{proof}

\subsection{Parabolic maximal subgroups}

\par In this subsection we prove Proposition \ref{i2x2Mprop2}. Recall the definitions of $G$, $x \in A$ and $K \le B$. Let $M=P_m$ be a maximal parabolic subgroup of $G$. As in \S~6.2, instead of bounding $i_{2 \times 2}(M)$, we consider the fixed points of $K$ acting on $M^G$ in Lemma \ref{fprk}.
\par Let $T$ be a group acting transitively on a set $\Omega$, and recall the definition of $\operatorname{fix}(t,\Omega)$ for $t \in T$ from \S~6.2. We extend this definition: for a subgroup $S \le T$, define 
\begin{align*}
\operatorname{fix}(S, \Omega) = |\{ \omega \in \Omega: \omega t=\omega \ \forall t \in S \}|,
\end{align*}
\noindent so that $\operatorname{fix}(t,\Omega)=\operatorname{fix}(\langle t \rangle, \Omega)$. For $\omega \in \Omega$, let $R=T_\omega$. An elementary counting argument (similar to the argument used to show (\ref{fpreqn})) shows
\begin{align} \label{fpreqn2}
\frac{| \{ S^t: t \in T, S^t \le R \} |}{|S^T|} =  \frac{\operatorname{fix}(S, \Omega) }{| \Omega |}.
\end{align}
\par Before we begin the proof of Proposition \ref{i2x2Mprop2} , we require a technical result. To state the result, let $L_i$ be the Jordan block of size $i$ with diagonal entries equal to 0.

\begin{lemma} \label{nilpotent}
Let $\Lambda(l_1,l_2,l,m)$ denote the number of nilpotent block matrices of the form
\begin{align*}
\lambda=\begin{pmatrix}
\lambda_{11} & \lambda_{12} & \lambda_{13} \\
 & \lambda_{22} & \lambda_{23} \\
 &  & \lambda_{11} \\
\end{pmatrix} \in M_{m,m}(q)
\end{align*}
\noindent where $\lambda^2=0, \lambda_{11} \in M_{l,l}(q), \lambda_{22} \in M_{m-2l, m-2l}(q)$, and $\lambda_{11}, \lambda_{22}$ have Jordan normal forms $L_2^{l_1} \oplus L_1^{l-2l_1}, L_2^{l_2} \oplus L_1^{m-2l-2l_2}$ respectively. Then 
\begin{align*}
\Lambda(l_1,l_2,l,m) < cq^{4l_1(l-l_1)+2l_2(m-2l-l_2)+2(l-l_1)(m-2l-l_2)+2l_1l_2}
\end{align*} 
\noindent for some absolute constant $c$. In particular, for some absolute constant $c'$,
\begin{align*}
\sum_{\substack{0 \le l_1 \le \frac{l}{2}, \\ 0 \le l_2 \le \frac{m-2l}{2}}} \Lambda(l_1,l_2,l,m) < c'q^{\frac{1}{2}(m^2-2lm+2l^2)}.
\end{align*}
\end{lemma}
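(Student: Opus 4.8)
The plan is to use the single relation $\lambda^2=0$ to stratify the count by the diagonal blocks. Since $\lambda_{11}$ and $\lambda_{22}$ have Jordan type with all blocks of size at most $2$ we have $\lambda_{11}^2=\lambda_{22}^2=0$, so writing $\lambda^2$ out block by block the condition $\lambda^2=0$ is equivalent to the three equations
\begin{align*}
\lambda_{11}\lambda_{12}+\lambda_{12}\lambda_{22}&=0,\\
\lambda_{22}\lambda_{23}+\lambda_{23}\lambda_{11}&=0,\\
\lambda_{11}\lambda_{13}+\lambda_{13}\lambda_{11}+\lambda_{12}\lambda_{23}&=0.
\end{align*}
First I would observe that the number of valid triples $(\lambda_{12},\lambda_{13},\lambda_{23})$ depends on $\lambda_{11},\lambda_{22}$ only through their $GL$-conjugacy classes, since conjugating $\lambda$ by $\operatorname{diag}(P,Q,P)$ replaces $\lambda_{11}$ by $P^{-1}\lambda_{11}P$ and $\lambda_{22}$ by $Q^{-1}\lambda_{22}Q$ (and simply transports the off-diagonal blocks). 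Hence $\Lambda(l_1,l_2,l,m)=|\mathcal O_1|\,|\mathcal O_2|\cdot N$, where $\mathcal O_1,\mathcal O_2$ are the relevant nilpotent $GL_l(q)$- and $GL_{m-2l}(q)$-orbits and $N$ is the number of triples with $\lambda_{11},\lambda_{22}$ in Jordan form; by the centralizer order estimate (\ref{unipotentcent}) one gets $|\mathcal O_1|\sim q^{2l_1(l-l_1)}$ and $|\mathcal O_2|\sim q^{2l_2(m-2l-l_2)}$.

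To count $N$, note that the first two equations are homogeneous linear systems in $\lambda_{12}$ and in $\lambda_{23}$, whose solution spaces are spaces of module homomorphisms between nilpotent $\mathbb F_q[t]$-modules; each has dimension $\sum_{i,j}\min(\mu_i,\nu_j)$ over the Jordan block sizes $\mu$ of $\lambda_{11}$ and $\nu$ of $\lambda_{22}$, which evaluates to $l_1(m-2l)+(l-2l_1)(m-2l-l_2)$. Given $\lambda_{12},\lambda_{23}$, the third equation is inhomogeneous in $\lambda_{13}$: writing $\phi(C)=\lambda_{11}C+C\lambda_{11}$, it is solvable precisely when $\lambda_{12}\lambda_{23}\in\operatorname{im}\phi$, and then has exactly $q^{\dim\ker\phi}$ solutions, where $\dim\ker\phi$ equals the dimension of the centralizer of $\lambda_{11}$, namely $l^2-2l_1(l-l_1)$. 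The crucial bookkeeping point is the cancellation $|\mathcal O_1|\cdot q^{\dim\ker\phi}\sim q^{l^2}$, which reduces the whole count to $q^{l^2}\,q^{2l_2(m-2l-l_2)}$ times the number of pairs $(\lambda_{12},\lambda_{23})$ lying in their respective Hom-spaces and satisfying the bilinear constraint $\lambda_{12}\lambda_{23}\in\operatorname{im}\phi$.

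To estimate the number of such pairs I would invoke Lemma \ref{psi}. In the extreme case $l_1=0$ we have $\operatorname{im}\phi=0$, so the constraint becomes $\lambda_{12}\lambda_{23}=0$ and the pair count is $\psi(l,m-2l,l)$ (the Hom conditions being vacuous when $l_2=0$ as well, and only cutting down further otherwise); for $l_1>0$ one may discard the constraint and use the bound $q^{2[l_1(m-2l)+(l-2l_1)(m-2l-l_2)]}$ coming from the two Hom-spaces, which after the cancellation above is already small enough. Substituting these into $\Lambda=|\mathcal O_1||\mathcal O_2|N$ and simplifying the exponent by elementary algebra yields the claimed bound on $\Lambda(l_1,l_2,l,m)$. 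For the ``in particular'' statement, since the set of summands $(l_1,l_2)$ has size polynomial in $m$ it suffices to bound each $\Lambda(l_1,l_2,l,m)$ by $c\,q^{\frac12(m^2-2lm+2l^2)}$; one plugs in the per-term bound and checks that the resulting quadratic expression in $l_1,l_2$, maximized over the ranges $0\le l_1\le l/2$, $0\le l_2\le (m-2l)/2$, never exceeds $\tfrac12(m^2-2lm+2l^2)$, which is a routine boundary/Lagrange computation.

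The step I expect to be the main obstacle is the pair count with the bilinear constraint $\lambda_{12}\lambda_{23}\in\operatorname{im}\phi$: treating $\lambda_{12},\lambda_{13},\lambda_{23}$ as independent overshoots the target, so one genuinely has to exploit that $\lambda_{12}\lambda_{23}$ is pinned to a subspace of dimension $2l_1(l-l_1)$ (equivalently, that $\lambda_{12}\lambda_{23}$ has constrained rank) and carry out the rank-stratified estimate of Lemma \ref{psi} compatibly with the Jordan-block accounting for $\lambda_{11}$ and $\lambda_{22}$. The subsequent optimization over $l_1,l_2,l$ needed for the summed bound, though elementary, is also somewhat delicate.
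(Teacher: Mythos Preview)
Your setup matches the paper exactly: the paper also writes out the three block equations coming from $\lambda^2=0$, reduces $\lambda_{11},\lambda_{22}$ to Jordan form via the $\operatorname{diag}(P,Q,P)$-conjugation, counts $|\mathcal O_1|,|\mathcal O_2|$ through centraliser orders, and computes the Hom-space dimension $(l-l_1)(m-2l-l_2)+l_1l_2$ (equal to your expression) for each of $\lambda_{12},\lambda_{23}$. Where the paper diverges from you is at the $\lambda_{13}$ step: it asserts directly that, for any $(\lambda_{12},\lambda_{23})$ lying in their respective Hom-spaces, the number of $\lambda_{13}$ satisfying the third equation is $q^{2l_1(l-l_1)}$, and then simply multiplies the five counts. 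It makes no appeal to Lemma~\ref{psi} and imposes no bilinear constraint on the pair $(\lambda_{12},\lambda_{23})$.

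Your alternative route via $q^{\dim\ker\phi}=q^{l^2-2l_1(l-l_1)}$ for the $\lambda_{13}$ fiber, compensated by the constraint $\lambda_{12}\lambda_{23}\in\operatorname{im}\phi$, has a genuine gap as written: the proposed case-split does not recover either the per-term bound or the summed one. For $l_1>0$ with the constraint discarded the resulting exponent is $l^2+2l_2(m-2l-l_2)+2(l-l_1)(m-2l-l_2)+2l_1l_2$; at $l_1=1$, $l_2=0$, $l\approx m/3$ this is $\sim m^2/3$, whereas the ``in particular'' target $\tfrac12(m^2-2lm+2l^2)$ is $\sim 5m^2/18$. For $l_1=0$ your $\psi$-bound ignores the Hom conditions, so the factor $|\mathcal O_2|\sim q^{2l_2(m-2l-l_2)}$ survives uncancelled; at $l_2=(m-2l)/2$ the resulting exponent exceeds the target by $(m-2l)^2/4$. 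To make your route go through one would need to impose the Hom restrictions on $(\lambda_{12},\lambda_{23})$ and the rank/image constraint \emph{simultaneously} (a $\psi$-type estimate carried out inside the Hom-spaces), not one or the other; the paper's argument sidesteps this entirely by taking the image-dimension count $q^{2l_1(l-l_1)}$ for the $\lambda_{13}$ fiber.
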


\begin{proof}
We count the number of possible entries of $\lambda$. Since $\lambda^2=0$, we have the following conditions:
\begin{enumerate}[label=(\roman*)]
\item $\lambda_{ii}^2=0, i=1,2;$ 
\item $\lambda_{11}\lambda_{12}+\lambda_{12}\lambda_{22}=0; $ 
\item $\lambda_{22}\lambda_{23}+\lambda_{23}\lambda_{11}=0;$ 
\item $\lambda_{11}\lambda_{13}+\lambda_{12}\lambda_{23}+\lambda_{13}\lambda_{11}=0.$
\end{enumerate}
\par We consider condition (i). Suppose $\lambda_{11}$ has Jordan normal form $L_{2}^{l_1} \oplus L_1^{l-2l_1}$. Then $\lambda_{11}+1=u$ for a unipotent element $u \in GL_l(q)$ with Jordan normal form $J_{2}^{l_1} \oplus J_{1}^{l-2l_1}$, using the notation of (\ref{unipotentelt}). Observe that $C_{GL_l(q)}(\lambda_{11})=C_{GL_l(q)}(u)$. By \cite[Theorem~7.1]{LiSe}, 
\begin{align*}
|C_{GL_l(q)}(u)| > cq^{l^2-2l_1(l-l_1)}
\end{align*}
\noindent for some absolute constant $c$, and so
\begin{align*} 
|\lambda_{11}^{GL_l(q)}| < c'q^{2l_1(l-l_1)}.
\end{align*}
\noindent Similarly, if $\lambda_{22}$ has Jordan normal form $L_2^{l_2}\oplus L_1^{m-2l-2l_2}$, then
\begin{align*} 
|\lambda_{22}^{GL_{m-2l}(q)}| < c''q^{2l_2(m-2l-l_2)}.
\end{align*}
\par Next consider condition (ii). For $g \in GL_l(q), h \in GL_{m-2l}(q)$, let $\phi(g,h)$ be the map
\begin{align*}
\phi(g,h):M_{l,m-2l}(q) &\longrightarrow M_{l,m-2l}(q) \\
\mu \mapsto g^{-1} \mu h.
\end{align*}
\noindent We can conjugate by $\lambda_{11}, \lambda_{22}$ to their respective Jordan normal forms, since condition (ii) holds for $(\lambda_{11}, \lambda_{12}, \lambda_{22})$ if and only if it holds for $(\lambda_{11}^g, \lambda_{12}\phi(g,h), \lambda_{22}^h)$ for $g \in GL_{l}(q), h \in GL_{m-2l}(q)$. We calculate that for given $\lambda_{11}, \lambda_{22}$ as above, the number of $\lambda_{12}$ satisfying condition (ii) is $q^{(l-l_1)(k-2l-l_2)+l_1l_2}$.

\par We now consider the number of $\lambda_{23}$ satisfying condition (iii) given $\lambda_{11}, \lambda_{22}$ with Jordan normal forms as above. Similar reasoning to the above paragraph shows the number of $\lambda_{23}$ is also $q^{(l-l_1)(k-2l-l_2)+l_1l_2}$.

\par We now consider the number of $\lambda_{13}$ satisfying condition (iv) given $\lambda_{11}, \lambda_{12}, \lambda_{23}$. For given $\lambda_{12}, \lambda_{23}$ satisfying conditions (ii) and (iii), we calculate the number of $\lambda_{13}$ satisfying condition (iv) is $q^{2l_1(l-l_1)}$.
\par Hence
\begin{align*}
\Lambda(l_1,l_2,l,m) <  c'''q^{4l_1(l-l_1)+2l_2(m-2l-l_2)+2(l-l_1)(k-2l-l_2)+2l_1l_2}.
\end{align*}
\noindent It is elementary to show $4l_1(l-l_1)+2l_2(m-2l-l_2)+2(l-l_1)(k-2l-l_2)+2l_1l_2 \le \frac{1}{2}(m^2-2lm+2l^2)$, and the result follows.
\end{proof}

\par Recall that $G$ is a finite simple classical group with natural module of dimension $n$ over $\mathbb{F}_{q^\delta}$. Let $A$ and $B$ be nontrivial finite 2-groups embedded almost-freely into $G$, with $|A|=a, |B|=b$ and $n=k_ii+s_i$ with $2 \le s_i \le 2i+1$ and $k_i$ even for $i=a,b$. Assume $x \in I_2(A)$ and $K=\langle y_1,y_2 \rangle \in I_{2 \times 2}(B)$. We now move towards proving Proposition \ref{i2x2Mprop2} by proving the following result.

\begin{lemma} \label{fprk}
Let $M=P_m$ be a maximal parabolic subgroup of $G$. Then
\begin{align*}
\operatorname{fix}(K,M^G)  < q^{f+cm}
\end{align*}
\noindent where $c=c(B)$ is a constant depending only on $B$, and 
\begin{align*}
 f=\left\{ 
  \begin{array}{l l}
\frac{1}{4}m(n-m) \hspace{0.3cm} &\text{if $G=PSL_n(q)$,} \\[0.2cm]
\frac{1}{44}m(11n-12m) &\text{if $G=PSp_n(q), P\Omega_n(q)$,} \\[0.2cm]
\frac{1}{22}m(11n-12m) &\text{if $G=PSU_n(q)$}.
\end{array}
\right.
\end{align*}
\end{lemma}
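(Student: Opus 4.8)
The plan is to bound $\operatorname{fix}(K, M^G)$ by the number of totally singular $m$-subspaces $U \subset V$ invariant under the whole Klein four-group $K = \langle y_1, y_2\rangle$, and then estimate that number case by case on the type of $G$, mimicking the structure of Lemmas \ref{fprx} and \ref{fpry} but now imposing \emph{two} commuting constraints instead of one. Writing $k = k_b$, $s = s_b$ so that $n = kb + s$, the almost-free embedding gives an explicit simultaneous normal form for $y_1, y_2$: in odd characteristic they are simultaneously diagonalizable, and $V$ decomposes as a direct (orthogonal, in the form cases) sum of the four joint eigenspaces $E_{++}, E_{+-}, E_{-+}, E_{--}$ together with the trivial part of dimension $s$, each nontrivial summand having dimension $\tfrac{kb}{4}$; in characteristic $2$ we may take $y_1 = j_{kb/2,n}$ and $y_2$ a block-diagonal involution of the form exhibited in the proof of Proposition \ref{conjclassprop2}, so that the joint centralizer action decomposes $V$ into blocks governed by the matrices $j_{\cdot,\cdot}$ of \S 6.2.

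First I would dispatch $G = PSL_n(q)$. For $q$ odd, any $K$-invariant $m$-subspace $U$ splits as $\bigoplus U_j$ with $U_j \subseteq E_j$ of dimension $l_j$ and $\sum l_j = m$ (with the extra wrinkle, as in Lemma \ref{fpry}, that when $q \not\equiv 1 \bmod 4$ certain eigenspaces are only defined over $\mathbb{F}_{q^2}$ and must be paired); then (\ref{parabolicindex}) and a convexity estimate on $\sum l_j(\tfrac{kb}{4} - l_j)$ give $\operatorname{fix}(K, M^G) < q^{\frac14 m(n-m) + c m}$. For $q$ even, I would follow the matrix-coordinate method of Lemmas \ref{fprx} and \ref{fpry}: pick a basis $\beta$ of $U$ realizing the restricted action $[K^U]_\beta$, write out the coefficient matrix $\alpha$ constrained by $[y_i^U]_\beta^T \alpha = \alpha y_i^T$ for $i = 1, 2$, count the admissible $\alpha$, and divide by $|C_{GL_m(q)}(\,\cdot\,)|$ using (\ref{unipotentcent}). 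The two simultaneous intertwining conditions force $\alpha$ into a more rigid block shape than in the single-element case, and the key inequality will again be a quadratic estimate showing the exponent is at most $\tfrac14 m(n-m) + cm$.

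Next come the symplectic, orthogonal and unitary cases, which is where the exponent $\tfrac{1}{44}m(11n - 12m)$ (rather than the cleaner $\tfrac18 m(2n-3m)$ of Lemma \ref{fpry}) signals genuine difficulty. The source of the loss is that in characteristic $2$ one cannot cleanly iterate the ``choose $u_i$ in $W_{i-1}^\perp$, adjoin $u_i\theta$'' argument of Lemma \ref{fpry} when building a totally singular subspace compatible with \emph{both} $y_1$ and $y_2$ simultaneously — the relevant nilpotent-action blocks (governed by $L_2$ and $L_1$ Jordan types) interact, and this is precisely why Lemma \ref{nilpotent} is stated: $\Lambda(l_1, l_2, l, m)$ counts the nilpotent block matrices $\lambda$ with $\lambda^2 = 0$ of the shape that arises as $[K^U]_\beta - 1$ on the relevant invariant flag, and its bound $\sum \Lambda < c' q^{\frac12(m^2 - 2lm + 2l^2)}$ feeds directly into the count of admissible coefficient matrices. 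For $q$ odd, the argument parallels Lemma \ref{fpry}'s odd case: decompose $V$ into joint eigenspaces, split $U$ accordingly into totally singular pieces, use $\dim(E_{j'} \cap U_j^\perp)$ bounds and (\ref{parabolicindex}), and extract the exponent by a convexity inequality. The orthogonal case with $q$ even reduces to the symplectic one since a totally singular subspace for the quadratic form is totally singular for the associated alternating form; the unitary case is handled analogously to symplectic, with the usual $\delta = 2$ bookkeeping in (\ref{parabolicindex}).

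\textbf{The main obstacle} I expect is the characteristic-$2$ symplectic/orthogonal count: assembling, from the simultaneous normal form of $y_1$ and $y_2$, the correct block decomposition of the coefficient matrix $\alpha$, verifying that the totally-singular constraint combined with the two intertwining relations is captured by the $\psi$ and $\Lambda$ functions of Lemmas \ref{psi} and \ref{nilpotent}, and then checking that the resulting quadratic form in the block-size parameters $l_1, l_2, l, m$ (and the cycle-type parameters of the restricted action) is genuinely bounded above by $f + cm$ with $f = \tfrac{1}{44}m(11n - 12m)$. The fractional coefficient $\tfrac{1}{44}$ strongly suggests the optimization is not tight in the obvious way and that some slack has been deliberately built in; I would therefore aim to prove a clean, possibly non-optimal, quadratic inequality rather than chase the best constant, since all that is ultimately needed (as the ``In particular'' clause of Proposition \ref{i2x2Mprop2} and the proof of Theorem \ref{bigthm2} in \S 8.3 make clear) is that $-f + cm < -\tfrac{\delta n}{4} + c'$, i.e.\ that the parabolic contribution decays like a negative power of $q$.
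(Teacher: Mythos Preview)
Your plan matches the paper's approach closely: eigenspace decomposition in odd characteristic, the matrix-coordinate method with Lemma \ref{nilpotent} in characteristic $2$, orthogonal reduced to symplectic, unitary parallel to symplectic. Two small corrections: since the eigenvalues of involutions are $\pm 1 \in \mathbb{F}_q$ for $q$ odd, there is no $q \equiv 3 \pmod 4$ complication here (that arose in Lemma \ref{fpry} only because of the eigenvalue $i$); and Lemma \ref{psi} is not used in this proof --- the characteristic-$2$ count runs entirely through Lemma \ref{nilpotent}. On the obstacle you flag: the paper normalizes only $[y_1^U]_\beta$ and then encodes the $y_2$-constraint by a single nilpotent matrix $\lambda$ with $\lambda\alpha = \alpha(y_2+1)^T$; the right-invertibility of $\alpha$ forces $\lambda$ into the block-upper-triangular shape of Lemma \ref{nilpotent}, and in the symplectic case the totally-singular condition further forces the even-indexed rows of a block pair $(A_2,B_3)$ to span a totally singular subspace of a $\tfrac{kb}{4}$-dimensional symplectic space, contributing an extra saving of roughly $q^{-l_1^2/2}$ --- this, together with the $\Lambda$-bound and optimization over $l$, is exactly what produces the $\tfrac{1}{44}$.
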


\begin{proof}
\par Observe that $\operatorname{fix}(K,M^G)$ is less than or equal to the number of totally singular $m$-spaces $U \subset V$ invariant under $K$. We count the number of these subspaces.
\par For ease of notation, let $k_b=k, s_b=s,$ so $n=kb+s$.

\paragraph{Case $G=PSL_n(q).$} 
First suppose $q$ is odd. Let $E_j^s$ be the $j$-eigenspace of $y_t$ for $j=\pm 1, t=1,2$. Then $V\downarrow K=E_{-1,-1} \oplus E_{-1,1} \oplus E_{1,-1} \oplus E_{1,1}$, where $E_{i,j}=E_i^1 \cap E_j^2$ for $i, j = \pm 1$, and 
\begin{align*}
\dim(E_{i,j}) = \left\{ 
  \begin{array}{l l}
\frac{kb}{4} \hspace{0.3cm} &\text{if $(i,j) \ne (1,1)$,} \\[0.2cm]
\frac{kb}{4}+s &\text{if $i=j=1$.} \\[0.2cm]
\end{array}
\right. 
\end{align*}
\noindent If $U \subset V$ is an $m$-space stabilized by $K$, then $U=U_{-1,-1}\oplus U_{-1,1} \oplus U_{1,-1} \oplus U_{1,1}$, where $U_{i,j} \subset E_{i,j}$ is an $l_{i,j}$-subspace for $i, j = \pm 1$ such that $\sum l_{i,j}=m$. Therefore, by (\ref{parabolicindex}) and use of the Cauchy-Schwarz inequality, we have
\begin{align*}
\operatorname{fix}(K,M^G) &\le \sum_{\sum l_{i,j}=m} p_{l_{-1,-1}}(E_{-1,-1})p_{l_{-1,1}}(E_{-1,1})p_{l_{1,-1}}(E_{1,-1})p_{l_{1,1}}(E_{1,1}) \\
&< \sum_{\sum l_{i,j}=m} cq^{l_{-1,-1}\left( \frac{kb}{4}-l_{-1,-1} \right) + l_{-1,1}\left( \frac{kb}{4}-l_{-1,1} \right) + l_{1,-1}\left( \frac{kb}{4}-l_{1,-1} \right) + l_{1,1}\left( \frac{kb}{4}+s-l_{1,1} \right)} \\
&<q^{\frac{1}{4}m(n-m)+c'm}
\end{align*}
\noindent for some absolute constant $c'$, completing the proof of the linear case for $q$ odd.

\par Now suppose $q$ is even. With respect to some basis $\{ e_i \}$ of $V$ we have (using the notation of (\ref{unipotentelt}))
\begin{align*}
y_1=\begin{pmatrix}
J_{\frac{kb}{2},2} \\
&  I_{s} 
\end{pmatrix}, \hspace{0.5cm} 
y_2=\begin{pmatrix}
J_{\frac{kb}{4},2} \\
& J_{\frac{kb}{4},2} \\
& &  I_{s} 
\end{pmatrix}.
\end{align*}
\noindent Suppose $K$ stabilizes an $m$-subspace $U \subset V$. There exists a basis $\beta=\{ u_i \}_{i=1}^m$ of $U$ with respect to which
\begin{align} \label{xKeven}
[y_1^U]_\beta=\begin{pmatrix}
J_{l,2} \\ 
& I_{m-2l} 
\end{pmatrix}
\end{align}
\noindent for some $0 \le l \le \frac{m}{2}$.
\par Let $u_i=\sum_{j=1}^n \alpha_{ij}e_j$ for $\alpha_{ij} \in \mathbb{F}_q$, and let $\alpha=(\alpha_{ij})$. Then
\begin{align} \label{Keveneqn}
[y_1^U]_\beta^T \alpha =  \alpha y_1^T.
\end{align}
 \noindent Therefore, we can write
\begin{align} \label{Kalphaodd}
\alpha = \begin{pmatrix}
A_1 & A_2 & A_3& A_4 & B_1\\ & &  A_1 & A_2\\ & & A_5 & A_6 & B_2
\end{pmatrix}
\end{align}
\noindent where 
\begin{align*}
A_1, \dots, A_4 &\in M_{l,\frac{kb}{4}}(q), \\
A_5, A_6 &\in M_{m-2l, \frac{kb}{4}}(q), \\
B_1 &\in M_{l, s}(q), \\ 
B_2 &\in M_{m-2l, s}(q).
\end{align*}
\par Observe that $y_2$ stabilizes $U$ if and only if $y_2+1$ stabilizes $U$, and this occurs if and only if there exists $\lambda \in M_{m,m}(q)$ such that
\begin{align} \label{Keveneqn2}
\lambda \alpha = \alpha (y_2+1)^T,
\end{align}
\noindent so $[(y_2+1)^U]_\beta=\lambda^T$ and $\lambda^2=0$. Write $\lambda$ as
\begin{align}
\begin{pmatrix}
\lambda_{11} & \lambda_{12} & \lambda_{13} \\
\lambda_{21} & \lambda_{22} & \lambda_{23} \\
\lambda_{31} & \lambda_{32} & \lambda_{33} \\
\end{pmatrix}
\end{align}
\noindent where 
\begin{align*}
\lambda_{11}, \lambda_{22} &\in M_{l,l}(q), \\
 \lambda_{3,3} &\in M_{m-2l, m-2l}(q).
\end{align*}
\noindent Then we have
\begin{align}
\lambda_{21} \begin{pmatrix} A_1 & A_2 \end{pmatrix} &= 0 \label{lambdaeqn1},\\
\lambda_{31}  \begin{pmatrix} A_1 & A_2 \end{pmatrix} &= 0 \label{lambdaeqn2},
\end{align}
\noindent and so $\lambda_{21}, \lambda_{31}=0$ since $\alpha$ is right-invertible (the rows of $\alpha$ are linearly independent by assumption, as they form a basis of $U$). Also, we find
\begin{align} \label{lambdaeqn3}
\begin{pmatrix} \lambda_{11}+\lambda_{22} & \lambda_{23} \end{pmatrix} \begin{pmatrix} A_1 & A_2 \\ A_5 & A_6 \end{pmatrix} =0,
\end{align}
\noindent which implies
\begin{align} \label{lambdaeqn4}
\lambda=\begin{pmatrix}
\lambda_{11} & \lambda_{12} & \lambda_{13} \\
 & \lambda_{11} &  \\
 & \lambda_{32} & \lambda_{33} \\
\end{pmatrix}.
\end{align}
\par  From (\ref{Keveneqn}) and (\ref{Keveneqn2}) it is clear that $\alpha$ (and hence $U$) is determined by $\lambda, A_2, A_4, A_6, B_1, B_2$. Let $\Delta(l)$ be the number of tuples $(\lambda, A_2, A_4, A_6, B_1, B_2)$ with $\lambda$ as in (\ref{lambdaeqn4}), and let $\Delta(l,l_1,l_2)$ be the number of tuples such that $\lambda_{11}, \lambda_{33}$ have Jordan normal forms $L_2^{l_1} \oplus L_1^{l-2l_1}, L_2^{l_2} \oplus L_1^{m-2l-2l_2}$ respectively ($0 \le l_1 \le \frac{l}{2}, 0 \le l_2 \le \frac{m-2l}{2}$), so that
\begin{align*}
\Delta(l) =  \sum_{\substack{0 \le l_1 \le \frac{l}{2} \\ 0 \le l_2 \le \frac{m-2l}{2}}} \Delta(l,l_1,l_2).
\end{align*} 
\noindent Then $\Delta(l)$ is the total number of bases $\beta$ of an $m$-subspace $U$ such that (\ref{Keveneqn}) holds and (\ref{Keveneqn2}) holds for some $\lambda$. Observe that $\lambda$ is conjugate by
\begin{align*}
\begin{pmatrix}
I_l \\ & & I_l \\
& I_{m-2l}
\end{pmatrix}
\end{align*}
\noindent to a matrix considered in Lemma \ref{nilpotent}, and so 
\begin{align*}
\Delta(l) =  \sum_{\substack{0 \le l_1 \le \frac{l}{2} \\ 0 \le l_2 \le \frac{m-2l}{2}}}  \Delta(l,l_1,l_2)< \sum_{\substack{0 \le l_1 \le \frac{l}{2} \\ 0 \le l_2 \le \frac{m-2l}{2}}} \Lambda(l_1,l_2,l,m)q^{\frac{mn}{4}+(\frac{3m}{4}-l)s}<cq^{\frac{mn}{4}+\frac{1}{2}(m^2-2lm+2l^2)+(\frac{3m}{4}-l)s}.
\end{align*}
\par Observe that $C_{GL_m(q)}([y_1^U]_\beta)$ acts regularly on the set of bases $\beta'$ of $U$ such that $[y_1^U]_\beta=[y_1^U]_{\beta'}$, and so the number of such bases is $|C_{GL_m(q)}([y_1^U]_\beta)|$. By (\ref{unipotentcent}),
\begin{equation*}
|C_{GL_m(q)}([y_1^U]_\beta)| > c'q^{l^2+(m-l)^2}
\end{equation*}
\noindent for some absolute constant $c'$. Hence
\begin{align*}
\operatorname{fix}(K,M^G) &< \sum_{0 \le l \le \frac{m}{2}}  \frac{\Delta(l)}{|C_{GL_m(q)}([y_1^U]_\beta)|} \\ &<\sum_{0 \le l \le \frac{m}{2}} c''q^{\frac{mn}{4}+\frac{1}{2}(m^2-2lm+2l^2)+(\frac{3m}{4}-l)s-l^2-(m-l)^2} \\ &<q^{\frac{1}{4}m(n-m)+c'''m},
\end{align*}
\noindent completing the proof in the linear case.

\paragraph{Case $G=PSp_n(q).$} 
First suppose $q$ is odd. Let $E_j^t$ and $E_{i,j}$ be as in the linear case for $i, j=\pm 1, t=1,2$. Then $V\downarrow K=E_{-1,-1} \perp E_{-1,1} \perp E_{1,-1} \perp E_{1,1}$. If $U \subset V$ is an $m$-space stabilized by $K$, then $U=U_{-1,-1} \perp U_{-1,1} \perp U_{1,-1} \perp U_{1,1}$, where $U_{i,j} \subset E_{i,j}$ is a totally singular $l_{i,j}$-subspace for $i, j = \pm 1$ such that $\sum l_{i,j}=m$. Therefore, by (\ref{parabolicindex}) and using the Cauchy-Schwarz inequality, we have
\begin{align*}
 \hspace{-0.3cm}\operatorname{fix}(K,M^G) &\le \sum_{\sum l_{i,j}=m} p_{l_{-1,-1}}(E_{-1,-1})p_{l_{-1,1}}(E_{-1,1})p_{l_{1,-1}}(E_{1,-1})p_{l_{1,1}}(E_{1,1}) \\
&< \sum_{\sum l_{i,j}=m} cq^{l_{-1,-1}\left( \frac{kb}{4}-\frac{3}{2}l_{-1,-1} +\frac{1}{2}\right) + l_{-1,1}\left( \frac{kb}{4}-\frac{3}{2}l_{-1,1} +\frac{1}{2} \right) + l_{1,-1}\left( \frac{kb}{4}-\frac{3}{2}l_{1,-1} +\frac{1}{2} \right) + l_{1,1}\left( \frac{kb}{4}+s-\frac{3}{2}l_{1,1} +\frac{1}{2} \right)} \\
&<q^{\frac{1}{8}m(2n-3m)+c'm}
\end{align*}
\noindent for some absolute constant $c'$.

\par Now suppose $q$ is even. With the almost-free embedding, there exists a basis $\{ e_i \}$ of $V$ with Gram matrix 
\begin{equation*}
\begin{pmatrix} & I_\frac{kb}{2} \\ I_\frac{kb}{2} \\ & & D \end{pmatrix}
\end{equation*}
\noindent (where $D$ is a Gram matrix of an $s$-dimensional symplectic space) with respect to which
\begin{align} \label{xyKeven2}
y_1=\begin{pmatrix}
J_{\frac{kb}{4},2} \\
& J_{\frac{kb}{4},2}^T \\
& &  I_{s} 
\end{pmatrix}, \hspace{0.5cm} 
y_2=\begin{pmatrix}
J_{\frac{kb}{8},2} \\
& J_{\frac{kb}{8},2} \\
& & J_{\frac{kb}{8},2}^T \\
& & & J_{\frac{kb}{8},2}^T \\
& &  & & I_{s} 
\end{pmatrix}.
\end{align}
\noindent Suppose $K$ stabilizes a totally singular $m$-subspace $U \subset V$. There exists a basis $\beta=\{ u_i \}_{i=1}^m$ of $U$ such that $[y_1^U]_\beta$ is as in (\ref{xKeven}).
\par Let $u_i=\sum_{j=1}^n \alpha_{ij}e_j$ for $\alpha_{ij} \in \mathbb{F}_q$, and let $\alpha=(\alpha_{ij})$. Then
\begin{align} \label{Keveneqn3}
[y_1^U]_\beta^T \alpha = \alpha y_1^T,
\end{align} 
\noindent and we can write
\begin{align} \label{Kevenalpha2}
\alpha = \begin{pmatrix}
A_1 & A_2 & A_3& A_4 & B_1 & B_2 & B_3 & B_4 & C_1\\ & &  A_1 & A_2 & B_3 & B_4 \\ & & A_5 & A_6 & B_3 & B_4 & & & C_2
\end{pmatrix}
\end{align}
\noindent where 
\begin{align*}
A_1, \dots, A_4, B_1, \dots, B_4 &\in M_{l,\frac{kb}{8}}(q), \\
A_5, A_6, B_5, B_6 &\in M_{m-2l, \frac{kb}{8}}(q), \\
C_1 &\in M_{l, s}(q), \\
C_2 &\in M_{m-2l, s}(q).
\end{align*}
\par Observe that if $y_2$ stabilizes $U$, then $y_2+1$ stabilizes $U$, and so there exists $\lambda \in M_{m,m}(q)$ such that 
\begin{align} \label{Keveneqn4}
\lambda \alpha = \alpha (y_2+1)^T,
\end{align}
\noindent so $[(y_2+1)^U]_\beta=\lambda^T$ and $\lambda^2=0$. As in the linear case, we can write $\lambda$ as in (\ref{lambdaeqn4}).

\par For rows $u_i, u_j$ of $\alpha$, consider $(u_i, u_j)$, where $( \cdot  \hspace{0.05cm} , \cdot)$ denotes the symplectic form on $V$. Since the rows of $\alpha$ generate a totally singular subspace, then by considering $(u_i, u_j)$ for $1 \le i \le l<j \le 2l$, we see that 
\begin{align*}
A_1B_3^T+A_2B_4^T
\end{align*} 
\noindent is symmetric. Since $A_1=\lambda_{11}A_2$ and $B_4=\lambda_{11}B_3$ from (\ref{Keveneqn4}), this implies that
\begin{align} \label{symeqn2}
\lambda_{11} \begin{pmatrix} A_2B_3^T+B_3A_2^T \end{pmatrix}
\end{align}
\noindent is symmetric. For $g \in GL_l(q)$, observe that the tuple $(\lambda_{11},A_2,B_3)$ satisfies (\ref{symeqn2}) if and only if $(\lambda_{11}^g,g^{-1}A_2,g^{-1}B_3)$ satisfies (\ref{symeqn2}). Suppose $\lambda_{11}$ is equal to its Jordan normal form $L_2^{l_1} \oplus L_1^{l-2l_1}$ ($0 \le l_1 \le \frac{l}{2}$). Writing
\begin{align} \label{A2B3eqn}
A_2=\begin{pmatrix}
E \\ F
\end{pmatrix}, \hspace{0.5cm} 
B_3= \begin{pmatrix}
P \\ Q
\end{pmatrix}
\end{align}
\noindent where 
\begin{align*}
E, P &\in M_{2l_1, \frac{kb}{8}}(q), \\ 
F,Q &\in M_{l-2l_1,\frac{kb}{8}}(q),
\end{align*}
\noindent if $A_2, B_3$ satisfy (\ref{symeqn2}) with $\lambda_{11}=L_2^{l_1} \oplus L_1^{l-2l_1}$ then 
\begin{align} \label{symeqn3}
\begin{pmatrix} EP^T+PE^T \end{pmatrix}_{2i, 2j}=0
\end{align}
\noindent for $1 \le i, j \le l_1$. Writing $E_i$ for the $i$th row of $E$ and similarly for $P$, (\ref{symeqn3}) is equivalent to 
\begin{align*}
E_{2i} \cdot P_{2j} + P_{2i} \cdot E_{2j}=0,
\end{align*}
\noindent and this in turn is equivalent to the even-indexed rows of $\begin{pmatrix} E & P \end{pmatrix}$ generating a totally singular subspace of a $\frac{kb}{4}$-dimensional symplectic space. Therefore, given $\lambda$ with $\lambda_{11}$ having Jordan normal form $L_2^{l_1} \oplus L_1^{l-2l_1}$, by (\ref{parabolicindex}) the number of possible even-indexed rows of $E$ and $P$ is less than $cq^{l_1(\frac{kb}{4}-\frac{l_1}{2}+\frac{1}{2})}$, and so the number of possible $A_2, B_3$ is less than
\begin{align} \label{A2B3eqn}
c'q^{\frac{lkb}{4}-\frac{l_1^2}{2}+\frac{l_1}{2}}.
\end{align} 
\par  From (\ref{Keveneqn3}) and (\ref{Keveneqn4}), $\alpha$ is determined by $\lambda, A_2, A_4, A_6, B_1, B_3, B_5, C_1, C_2$. Let $\Delta(l)$ be the number of tuples $(\lambda,A_2, A_4, A_6, B_1, B_3, B_5, C_1, C_2)$ with $\lambda$ as in (\ref{lambdaeqn4}) such that $\alpha$ obtained from (\ref{Keveneqn3}) and (\ref{Keveneqn4}) has linearly independent rows and generates a totally singular subspace of $V$. Also, let $\Delta(l,l_1,l_2)$ be the number of such tuples with $\lambda_{11}, \lambda_{33}$ having Jordan normal form $L_2^{l_1} \oplus L_1^{l-2l_1},  L_2^{l_2} \oplus L_1^{m-2l-2l_2}$ respectively ($0 \le l_1 \le \frac{l}{2}, 0 \le l_2 \le \frac{m-2l}{2}$), so that
\begin{align*}
\Delta(l) = \sum_{\substack{0 \le l_1 \le \frac{l}{2} \nonumber \\ 0 \le l_2 \le \frac{m-2l}{2}}} \Delta(l,l_1,l_2).
\end{align*}
\noindent Then $\Delta(l)$ is the total number of bases $\beta$ of a totally singular $m$-subspace $U$ such that (\ref{Keveneqn3}) holds and (\ref{Keveneqn4}) holds for some $\lambda$. Observe that, as in the linear case, $\lambda$ is conjugate to a matix considered in Lemma \ref{nilpotent}, and so by this result and (\ref{A2B3eqn}) we have
\begin{align*}
\Delta(l) &=  \sum_{\substack{0 \le l_1 \le \frac{l}{2} \nonumber \\ 0 \le l_2 \le \frac{m-2l}{2}}} \Delta(l,l_1,l_2)  \\ &<\sum_{\substack{0 \le l_1 \le \frac{l}{2} \nonumber \\ 0 \le l_2 \le \frac{m-2l}{2}}}  \Lambda(l_1,l_2,l,m)q^{\frac{mn}{4}+(\frac{3m}{4}-l)s-\frac{l_1^2}{2}+\frac{l_1}{2}} \\
&<\sum_{\substack{0 \le l_1 \le \frac{l}{2} \nonumber \\ 0 \le l_2 \le \frac{m-2l}{2}}}  cq^{\frac{mn}{4}+\frac{1}{2}l_1(8l-9l_1+1)+2l_2(m-2l-l_2)+2(l-l_1)(m-2l-l_2)+2l_1l_2+(\frac{3m}{4}-l)s}
\end{align*}
\noindent for an absolute constant $c$. It is elementary to show that $\frac{1}{2}l_1(8l-9l_1+1)+2l_2(m-2l-l_2)+2(l-l_1)(m-2l-l_2)+2l_1l_2 \le \frac{m^2}{2}-lm+\frac{9l^2}{10}+c'm$ for a constant $c'=c'(B)$ depending only on $B$, and so
\begin{align*}
\Delta(l)< q^{\frac{nm}{4}+\frac{m^2}{2}-lm+\frac{9l^2}{10}+c'm}.
\end{align*}
\noindent As in the linear case, the number of bases $\beta'$ of $U$ such that $[y_1^U]_\beta=[y_1^U]_{\beta'}$ is $|C_{GL_m(q)}([y_1^U]_\beta)|$. Hence, by (\ref{unipotentcent}),
\begin{align} 
\operatorname{fix}(K,M^G) &<\sum_{0 \le l \le \frac{m}{2}} \frac{\Delta(l)}{|C_{GL_m(q)}([y_1^U]_\beta)|} \nonumber \\ &<\sum_{0 \le l \le \frac{m}{2}} q^{\frac{9l^2}{10}+\frac{m^2}{2}-lm+\frac{mn}{4}+c''m-l^2-(m-l)^2} \nonumber \\ &<q^{\frac{1}{44}m(11n-12m)+c''''m}, \label{symplecticKeqn}
\end{align}
\noindent completing the proof in the symplectic case.

\paragraph{Case $G=P\Omega^\epsilon_n(q).$} 
In the case where $q$ is odd, the proof is similar to the symplectic case.

\par Now suppose $q$ is even. There exists a basis $\{ e_i \}$ of $V$ with quadratic form $Q$ on $V$ defined as
\begin{align*}
Q \left( \sum_{i=1}^n \lambda_i e_i \right) = \sum_{i=1}^\frac{kb}{2} \lambda_i \lambda_{\frac{kb}{2}+i}+Q' \left( \sum_{i=kb+1}^n \lambda_i e_i \right)
\end{align*}
\noindent where $Q'$ is a quadratic form of type $\epsilon$ on the $s$-space $\langle e_{kb+1}, \dots, e_n \rangle \subset V$. With respect to this basis, we have $y_1$ and $y_2$ as in (\ref{xyKeven2}), and Gram matrix
\begin{equation*}
\begin{pmatrix} & I_\frac{kb}{2} \\ I_\frac{kb}{2} \\ & & D \end{pmatrix}
\end{equation*}
\noindent (where $D$ is the Gram matrix of the $s$-dimensional space with quadratic form $Q'$).
\par Any totally singular $m$-subspace of $V$ with respect to $Q$ is totally singular with respect to the symplectic form determined by $Q$. Therefore $\operatorname{fix}(K, M^G)$ can be bounded as in (\ref{symplecticKeqn}), completing the proof in the orthogonal case.

\paragraph{Case $G=PSU_n(q).$} 
The proof in this case is similar to the symplectic case.
\end{proof}

\begin{proof}[Proof of Proposition \ref{i2x2Mprop2} ]
\par By considering the action of $G$ on $M^G=P_m^G$ ($1 \le m \le \frac{n}{2}$) and (\ref{fpreqn2}), we have
\begin{align*}
\sum_{M^h \in M^G}  \frac{|x^G \cap M^h|}{|x^G|} \frac{|\{ K^g: g \in G, K^g \le M^h \} |}{|K^G|} &= |G:M|  \frac{|x^G \cap M|}{|x^G|} \frac{|\{ K^g: g \in G, K^g \le M \} |}{|K^G|} \\
&= \operatorname{fpr}(x,M^G) \operatorname{fix}(K,M^G).
\end{align*}
\noindent The number of $G$-classes of parabolic subgroups $P_m$ is at most 2 by \cite[\S~4.1]{KlLi}, and the result now follows by Lemmas \ref{fprx} and \ref{fprk}. 
\end{proof}

 \vspace{-0.4cm}
\subsection{Proof of Theorem \ref{bigthm2}}

We now prove Theorem \ref{bigthm2}, using Propositions \ref{i2x2Mprop1} and \ref{i2x2Mprop2}. This completes the proof of the main theorem by \S~3.

\begin{proof}[Proof of Theorem \ref{bigthm2}]
\par Let $G$ be a finite simple classical group with natural module of dimension $n$ over $\mathbb{F}_{q^\delta}$. Let $A$ and $B$ be nontrivial finite 2-groups, and assume $x \in I_2(A), K \in I_{2 \times 2}(B)$, and $n \ge \operatorname{max}\{2|A|+2,2|B|+2\}$. Embed $A$ and $B$ almost-freely into $G$. 
\par We first consider the contribution to the summation in Theorem \ref{bigthm2} by non-parabolic subgroups $M$. Let $\mathscr{M}_{\text{np}}$ be the set of non-parabolic maximal subgroups of $G$, and let $M \in \mathscr{M}_{\text{np}}$. By Proposition \ref{i2x2Mprop1}, $i_{2 \times 2}(M)<|M|^{\frac{3}{4}}q^{cn}$ for some absolute constant $c$. By Propositions \ref{conjclassprop1}, \ref{irprop} and \ref{conjclassprop2}, for an absolute constant $c'$ we have $|x^G|>i_2(G)q^{c'n}, |K^G|>|G|^{\frac{3}{4}+\frac{c'}{n}}$. Therefore, using Theorem \ref{i2ratiothm} we have
\begin{align*}
 \sum_{M \in \mathscr{M}_{\text{np}}} \frac{|x^G \cap M|}{|x^G|} \frac{|\{ K^g: g \in G, K^g \le M \} |}{|K^G|}  &< \sum_{M \in \mathscr{M}_{\text{np}}} \frac{i_2(M)}{i_2(G)}\frac{|M|^\frac{3}{4}}{|G|^\frac{3}{4}}q^{c''n} \\
&< \sum_{M \in \mathscr{M}_{\text{np}}}  |G:M|^{-\frac{5}{4}+\frac{c'''}{n}} \\
&< \zeta_G(\frac{5}{4}-\frac{c'''}{n}),
\end{align*}
\noindent and for $n$ sufficiently large we have $\zeta_G(\frac{5}{4}-\frac{c'''}{n})<\frac{1}{2}$ by Theorem \ref{zetathm}.
\par We now consider the contribution to the summation by parabolic subgroups. Denote by $\mathscr{M}_{\text{p}}$ the set of maximal parabolic subgroups of $G$. By Proposition \ref{i2x2Mprop2} we have
\begin{align*}
 \sum_{M \in \mathscr{M}_{\text{p}}} \frac{|x^G \cap M|}{|x^G|} \frac{|\{ K^g: g \in G, K^g \le M \} |}{|K^G|} < q^{-\frac{\delta n}{4}+c''''}
\end{align*}
\noindent for some absolute constant $c''''$. Clearly, for sufficiently large $n$, this contribution is less than $\frac{1}{2}$. This completes the proof.
\end{proof}


\begin{thebibliography}{20}
\bibitem{As}
M. Aschbacher, On the maximal subgroups of the finite classical groups. {\it Invent. Math} {\bf 76}  (1984), 469-514.
\bibitem{AsSe}
M. Aschbacher, G. M. Seitz, Involutions in Chevalley groups over fields of even order. {\it Nagoya Math. J.} {\bf 63} (1976), 1-91.
\bibitem{Bi}
J. N. S. Bidwell, Automorphisms of direct products of finite groups. II. {\it Arch. Math. (Basel)} {\bf 91} (2008), 111-121. 
\bibitem{BrCoRe}
M. R. Bridson, M. D. E. Conder, A. W. Reid, Determining Fuchsian groups by their finite quotients. {\it Israel J. Math.} {\bf 214} (2016), 1-41. 
\bibitem{BuGu}
T. C. Burness, S. Guest, On the uniform spread of almost simple linear groups. {\it Nagoya Math. J.} {\bf 209} (2013), 35-109.
\bibitem{Ca}
R. W. Carter, Finite Groups of Lie Type: Conjugacy Classes and Complex Characters. {\it Wiley-Interscience, London} (1985).
\bibitem{Co}
M. D. E. Conder, Hurwitz groups: a brief survey. {\it Bull. Amer. Math. Soc.} {\bf 23} (1990), 359-370.
\bibitem{dieu}
J. Dieudonn\'e, On the automorphisms of the classical groups. {\it Mem. Amer. Math. Soc.} {\bf 2} (1951).
\bibitem{Eve}
B. Everitt, Alternating quotients of Fuchsian groups. {\it J. Algebra} {\bf 223} (2000), 457-476.
\bibitem{GoLy}
D. Gorenstein, R. Lyons, The local structure of finite groups of characteristic 2 type. {\it Mem. Amer. Math. Soc.} {\bf 276}, {\it Am. Math. Soc.} (1983).
\bibitem{GoLySo}
D. Gorenstein, R. Lyons, R. Solomon, The Classification of the Finite Simple Groups, Number 3. {\it Math. Surveys and Monographs} {\bf 40}, {\it Am. Math. Soc.} (1998).
\bibitem{KlLi}
P. B. Kleidman, M. W. Liebeck, The Subgroup Structure of the Finite Classical Groups. {\it London Math. Soc. Lecture Note
Series} {\bf 129}, {\it Cambridge Univ. Press} (1990).
\bibitem{Landsberg}
G. Landsberg, {\"U}ber eine Anzahlbestimmung und eine damit zusammenh{\"a}ngende Reihe.{\it J. Reine Angew. Math.} {\bf 111} (1893), 87-88.
\bibitem{LaLuMa}
M. Larsen, A. Lubotzky, C. Marion, Deformation theory and finite simple quotients of triangle groups I. {\it J. Eur. Math. Soc.} {\bf 16} (2014), 1349-1375.
\bibitem{LaLiSe}
R. Lawther,  M. W. Liebeck, G. M. Seitz, Fixed point ratios in actions of finite exceptional groups of Lie type. {\it Pacific J. Math.} {\bf 205} (2002), 393-464.
\bibitem{LaLiSe2}
R. Lawther, M. W. Liebeck, G. M. Seitz, Outer unipotent classes in automorphism groups of simple algebraic groups. {\it Proc. Lond. Math. Soc.} {\bf 109} (2014), 553-595.
\bibitem{Li}
M. W. Liebeck, On the orders of maximal subgroups of the finite classical groups. {\it Proc. London Math. Soc.} {\bf 50}  (1985), 426-446.
\bibitem{LiSe}
M. W. Liebeck, G. M. Seitz, Unipotent and Nilpotent Classes in Simple Algebraic Groups and Lie Algebras. {\it Math. Surveys and Monographs} {\bf 180}, {\it Am. Math. Soc.} (2012).
\bibitem{LiSh1}
M. W. Liebeck, A. Shalev, Classical groups, probabilistic methods and the $(2,3)$-generation problem. {\it Ann. Math.} {\bf 144}  (1996), 77-125.
\bibitem{LiSh2}
M. W. Liebeck, A. Shalev, Random $(r,s)$-generation of finite classical groups. {\it Bull. London Math. Soc.} {\bf 34} (2002), 185-188.
\bibitem{LiSh3}
M. W. Liebeck, A. Shalev, Residual properties of the modular group and other free products. {\it J. Algebra} {\bf 268} (2003), 264-285.
\bibitem{LiSh4}
M. W. Liebeck, A. Shalev, Fuchsian groups, finite simple groups and representation varieties. {\it Invent. Math.} {\bf 159} (2005), 317-367. 
\bibitem{MW}
L. Moser, M. Wyman, On solutions of $x^d=1$ in symmetric groups. {\it Canad. J. Math.} {\bf 7} (1955), 159-168.
\bibitem{TaWi1}
M. C. Tamburini, J. S. Wilson, A residual property of certain free products. {\it Math. Z.} {\bf 186} (1984), 525-530. 
\bibitem{TaWi2}
M. C. Tamburini, J. S. Wilson, On the generation of finite simple groups by pairs of subgroups. {\it J. Algebra} {\bf 116} (1988), 316-333.
\bibitem{Wilf}
 H. S. Wilf, The asymptotics of $e^{P(z)}$ and the number of elements of each order in $S_n$. {\it Bull. Amer. Math. Soc. (N.S.)} {\bf 15} (1986), 228-232.
\end{thebibliography}
\end{document}